\newtheorem*{rep@theorem}{\rep@title}
\newcommand{\newreptheorem}[2]{%
\newenvironment{rep#1}[1]{%
 \def\rep@title{#2 \ref{##1}}%
 \begin{rep@theorem}}%
 {\end{rep@theorem}}}
\newtheorem{theorem}{Theorem}[section]
\newtheorem{lemma}[theorem]{Lemma}
\newtheorem{proposition}[theorem]{Proposition}
\newtheorem{corollary}[theorem]{Corollary}
\newtheorem*{genericthm*}{\thistheoremname}
\newenvironment{namedthm*}[1]
  {\renewcommand{\thistheoremname}{#1}
   \begin{genericthm*}}
  {\end{genericthm*}}
\theoremstyle{definition}
\newtheorem{definition}[theorem]{Definition}
\theoremstyle{remark}
\newtheorem{remark}[theorem]{Remark}
\newtheorem{question}[theorem]{Question}
\newcommand{\bR}{\mathbb{R}}
\newcommand{\prm}{^{\prime}}
\numberwithin{equation}{section}
\newcommand{\bZ}{\mathbb{Z}}
\newcommand{\norm}[1]{\left\lVert#1\right\rVert}
\begin{document}
\title[Genericity on submanifolds]{Genericity on submanifolds and application to Universal hitting time statistics}
\author{Han Zhang }
\address{Yau Mathematical Sciences Center, Tsinghua University, Beijing, 100084, China}
\email{hanzhang3906@tsinghua.edu.cn}

\date{}

\maketitle

\begin{abstract}
         We investigate Birkhoff genericity on submanifolds of homogeneous space $X=SL_d(\bR)\ltimes (\bR^d)^k/ SL_d(\bZ)\ltimes (\bZ^d)^k$, where $d\geq 2$ and $k\geq 1$ are fixed integers. The submanifolds we consider are parameterized by unstable horospherical subgroup $U$ of a diagonal flow $a_t$ in $SL_d(\bR)$. As long as the intersection of the submanifold with any affine rational subspace has Lebesgue measure zero, we show that the trajectory of $a_t$ along Lebesgue almost every point on the submanifold gets equidistributed on $X$. This generalizes the previous work of Fr\k{a}czek, Shi and Ulcigrai in \cite{Shi_Ulcigrai_Genericity_on_curves_2018}.
     
Following the scheme developed by Dettmann, Marklof and Str\"{o}mbergsson in \cite{Marklof_Universal_hitting_time_2017}, we then deduce an application to universal hitting time statistics for integrable flows.
\end{abstract}


\section{Introduction}
Let $(X,\mathcal{B},\mu,R)$ be a probability measure preserving system, where $(X,\mathcal{B})$ is a Borel measurable space with probability measure $\mu$, and $R^t:X\to X$ is an $\bR$-action (or $\bZ$-action) preserving $\mu$. Assume that $R$ is ergodic, then Birkhoff's ergodic theorem (cf. \cite{Einsiedler_Ward_2011_book_MR2723325}) asserts that, for any $f\in L^1_{\mu}(X)$, 
\begin{align}\label{Birkhoff genericity for general measure preserving system}
    \frac{1}{T}\int_0^T f(R^t x)dt\xrightarrow{T\to \infty} \int_X f d\mu, \text{ (or } \frac{1}{N}\sum_{n=1}^N f(R^n x)\xrightarrow{N\to \infty}\int_X f d\mu ),
\end{align}
for $\mu$-almost every $x\in X$. In particular, (\ref{Birkhoff genericity for general measure preserving system}) holds for any $f\in C_c(X)$, where $C_c(X)$ denotes the collection of all compactly supported and continuous functions on $X$. Therefore, (\ref{Birkhoff genericity for general measure preserving system}) implies that for $\mu$-almost every $x\in X$, 
\begin{align}\label{Birkhoff genericity in weak* topology}
    \frac{1}{T}\int_0^T \delta_{R^t x} dt\xrightarrow{T\to \infty} \mu, \text{ (or }\frac{1}{N}\sum_{n=1}^N \delta_{R^n x}\xrightarrow{N\to \infty} \mu),
\end{align}
in the weak* topology on the set of all probability measures on $X$. Here $\delta$ is the Dirac measure on $X$.

For $x\in X$, we say that $x$ is \textbf{Birkhoff generic} with respect to $(\mu,R)$ if $x$ satisfies (\ref{Birkhoff genericity in weak* topology}). Given a {Radon} measure $\nu$ on $X$ (possibly singular to $\mu$), if $\nu$-almost every $x\in X$ is Birkhoff generic with respect to $(\mu,R)$, we say that $\nu$ is Birkhoff generic with respect to $(\mu,R)$. It is then natural to ask the following

\begin{question}\label{question 1}
Under what conditions the measure $\nu$ is Birkhoff generic with respect to $(\mu,R)$?
\end{question}

This question had been previously studied in the case of $X=\bR/\bZ$, $\mu_X$ is the Lebesgue measure on $X$ and $R^n=\times n \text{ mod } \bZ$ in \cite{Bernard_1995_Nombres}. It was shown that for any $m,n\in \mathbb{N}$, any $R^m$ invariant ergodic probability measure $\nu$ is Birkhoff generic with respect to $(R^n,\mu_X)$. This result was strengthened later in \cite{Hochman_Shmerkin_2015_Equidistribution_from_fractal_measures}. An analogous question was also studied in the context of moduli space of translation surfaces in \cite{Chaika_Eskin_2015_Every_flat_surface_is_Birkhoff_MR3395258}.

We consider Question \ref{question 1} in the setting of homogeneous dynamics. Let $X=G/\Gamma$, where $G$ is a Lie group and $\Gamma$ is a lattice in $G$. {Here and hereafter} let $\mu=\mu_X$ be the $G$-invariant probability measure on $X$. Let $\{R^t\}_{t\in \bR}$ be a one-parameter flow in $X$. Assume that $R^t=u(t)$, where $\{u(t)\}_{t\in \bR}$ is a one-parameter unipotent flow, that is, the adjoint action of $u(t)$ on the Lie algebra of $G$ is unipotent. In this case, Ratner's uniform distribution theorem \cite{ratner1991raghunathanduke} says that for any $x\in X$, $x$ is Birkhoff generic with respect to $(\nu,u(t))$, where $\nu$ is the $u(t)$-invariant probability measure supported on the orbit closure $\overline{\{u(t) x:t\in \bR\}}$. By Ratner's orbit closure theorem \cite{ratner1991raghunathanannals}, these orbit closures are all homogeneous. Thus this provides a satisfactory answer to Question \ref{question 1}.

On the other hand, when $R^t=a_t$, {here and hereafter} $\{a_t\}_{t\in \bR}$ is a one-parameter diagonal flow on $G$, that is, the adjoint action of $a_t$ on Lie algebra of $G$ is semisimple, a description of Birkhoff genericity of $x\in X$ under the flow $a_t$ is much harder. Indeed, the question of describing the orbit closures of diagonal action remains open (cf. \cite{Margulis_2000_problems_and_conjectures_MR1754775}\cite[Conjecture 1]{Tomanov_Actions_of_maximal_tori_2000}).

Nevertheless, there is a natural class of probability measures $\nu$ on $X$ that are interesting to study with respect to Question \ref{question 1}. This class of measures {is} given as follows. We define the unstable horospherical subgroup $U^+$ with respect to $a_t$ by
\begin{align*}
    U^+:=\{g\in G:a_{-t}g a_t\xrightarrow{t\to \infty} Id\},
\end{align*}
where $Id$ is the identity element of $G$. Let $Y\subset U^+$ be a submanifold and $\nu$ be {a normalized bounded supported volume measure of $Y$ (here and hereafter by normalized measure, we mean that $\nu$ is renormalized to be a probability measure).} It has been proved in \cite{Shah_Limitdistributionsofexpandingtranslatesofcertain_1996}\cite{shah2009equidistribution}\cite{shah2009limiting}\cite{yang2020equidistribution} that if $Y$ {satisfies certain algebraic conditions}, then the translation of the measure $\nu$ under $a_t$ converges weakly to $\mu$ as $t\to \infty$. That is, for any $f\in C_c(X)$,
\begin{align*}
    \int f(a_t x)d\nu(x)\xrightarrow{t\to \infty} \int f d\mu.
\end{align*}
As in Question \ref{question 1}, it is curious to ask the following 
\begin{question}\label{question 2}
{Assume that a bounded supported normalized volume measure} $\nu$ of a submanifold $Y\subset U^+$ is such that the translate of $\nu$ under $a_t$ is equidistributed with respect to $\mu$, is it true that $\nu$ is {also} Birkhoff generic with respect to $(\mu,a_t)$?
\end{question}

{Roughly speaking, Question \ref{question 2} is answered when the manifold $Y$ is considerably "large" compared to the unstable horosphere subgroup of $a_t$.} In \cite{Shi_Pointwiseequidistribution_2020}, Shi {considered} the situation where
$G$ is a semisimple Lie group, $Y=U$ is the $a_t$ expanding subgroup (cf. \cite{Shi_expanding_cone_2015}) of $U^+$ and $\nu$ {is a normalized bounded supported} Haar measure on $U$. {By \cite{Shah_Limitdistributionsofexpandingtranslatesofcertain_1996}, $\nu$ satisfies the assumption of Question \ref{question 2}. Shi showed that $\nu$ is also Birkhoff generic with respect to ($\mu,a_t$)}, and thus gave an affirmative answer to Question \ref{question 2}. In the special case where $G=SL_d(\bR)$ and $\Gamma=SL_d(\bZ)$, authors in \cite{Kleinbock_pointwiseequidistributionwithanerrorrate_2017} also obtained the effective convergence rate of (\ref{Birkhoff genericity in weak* topology}).

In \cite{Shi_Ulcigrai_Genericity_on_curves_2018}, the authors considered the setting {where} $G=SL_2(\bR)\ltimes \bR^2$, $\Gamma=SL_2(\bZ)\ltimes \bZ^2$ and $X=G/\Gamma$. {One of the main results in \cite{Shi_Ulcigrai_Genericity_on_curves_2018} asserts that} if $Y$ is a $C^1$ curve in $U^+$ that intersects any affine rational line in a {Lebesgue} null set, then {a normalized bounded supported volume} measure $\nu$ on $Y$ is Birkhoff generic with respect to $(\mu,a_t)$. By the equidistribution result of translation of such $\nu$ under $a_t$ in \cite{Marklof_Universal_hitting_time_2017}, 
{the result in \cite{Shi_Ulcigrai_Genericity_on_curves_2018}} also gives an affirmative answer to Question \ref{question 2} in the case {where} $Y$ is a curve.

{Let $X=SL_3(\bR)/SL_3(\bZ)$. In a recent preprint \cite[Theorem 1.4]{Kleinbock_Saxce_Shah_Yang_2021_Equidistribution_in_the_space_of}}, it is shown that when the natural measure $\nu$ on the planar line $L\subset U^+$ gets equidistributed under $a_t$, then for $\nu$ almost every point $x$ in $L$, the orbit $\{a_t x\}_{t\geq 0}$ is dense in $X$. This supports an affirmative answer to Question \ref{question 2}.

The aim of this paper is to generalize the genericity results in \cite{Shi_Ulcigrai_Genericity_on_curves_2018} to $X=SL_d(\bR)\ltimes (\bR^d)^k/ SL_d(\bZ)\ltimes (\bZ^d)^k$ and gives an affirmative answer to Question \ref{question 2} {when the manifold $Y$ satisfies certain diophantine condition.}
We also deduce an application of our results to the statistics of universal hitting time for integrable flows.

\subsection{Notations} From now on, any vector in the Euclidean space will be taken to be a column vector, and we will use boldface letters to denote vectors and matrices. 
Also, a.e. will be the shorthand for Lebesgue almost everywhere.
$|\cdot|$ will denote Lebesgue measure of measurable subsets of Euclidean space or absolute value of real numbers. $\norm{\cdot}$ will denote the standard Euclidean norm and $\norm{\cdot}_{\infty}$ the sup norm of a vector or matrix. {Throughout this article, for two matrices $A$ and $B$, $A \cdot B$  will denote \textbf{matrix multiplication.}}

For $m,n\in \mathbb{N}$, $Mat_{m\times n}(\bR)$ will denote the space of $m$ by $n$ real matrices. $(\bR^n)^m$ is the direct product of m copies of $\bR^n$. 

Fix integers $d\geq 2$, $k\geq 1$. Let $G\prm=SL_d(\bR)$, $G=SL_d(\bR)\ltimes (\bR^d)^k$, $\Gamma\prm=SL_d(\bZ)$ and $\Gamma=SL_d(\bZ)\ltimes (\bZ^d)^k$. It is well known that $\Gamma\prm$ is a lattice in $G\prm$ and $\Gamma$ is a lattice in $G$.

Let $X=G/\Gamma$. {Denote} $\mu_X$ the $G$-invariant probability measure on $G/\Gamma$. Note that the action of $G\prm$ on $(\bR^d)^k$ is given by 
\begin{align*}
    g\cdot \mathbf{v}=(g\cdot \mathbf{v_1},\cdots,g\cdot \mathbf{v_k}),
\end{align*}
where $g\in G\prm$ and $\mathbf{v}=(\mathbf{v_1},\cdots,\mathbf{v}_k)$ with $\mathbf{v_i}\in \bR^d$. The multiplication law in $G$ is given by
\begin{align*}
    (g,\mathbf{v})\cdot (g\prm,\mathbf{v\prm})=(g\cdot g\prm, \mathbf{v}+g\cdot \mathbf{v\prm}).
\end{align*}
$G\prm$ is naturally embedded into $G$ by
\begin{align*}
    G\prm \cong (G\prm,\mathbf{0})\le G.
\end{align*}

Fix an $r\in \{1,\cdots,d-1\}$. For $t\in \bR$ and $\mathbf{s}\in Mat_{r\times (d-r)}(\bR)$, denote 
\begin{align}
    &a_t=diag[e^{(d-r)t},\cdots,e^{(d-r)t},e^{-rt},\cdots,e^{-rt}],\label{definition of at}\\
&u(\mathbf{s})=
\begin{bmatrix}
 \mathbf{1}_r & \mathbf{s}\\
        \mathbf{0}_{d-r,r} & \mathbf{1}_{d-r}
\end{bmatrix},\\
& U:=\{u(\mathbf{s}):\mathbf{s}\in Mat_{r\times (d-r)}(\bR)\}\label{definition of U}\cong Mat_{r\times (d-r)}(\bR).
\end{align}

For a column vector or a matrix $\mathbf{v}$, let $(\mathbf{v})_{\leq r}$ (or $(\mathbf{v})_{>r})$ be the first $r$ rows (or last $d-r$ rows) of $\mathbf{v}$. For example, if $\mathbf{v}\in (\bR^d)^k$, then
\begin{align*}
    (\mathbf{v})_{\leq r}\in  (\bR^r)^k, \textbf{ }
    (\mathbf{v})_{>r}\in (\bR^{d-r})^k.
\end{align*}

With the above notations, the unstable horospherical subgroup $U^+$ of $a_t$ in $G$ is 
\begin{align*}
   U^+= U\cdot \{(Id,\begin{bmatrix}
       (\mathbf{v})_{\leq r}\\
       \mathbf{0}
   \end{bmatrix}):\mathbf{v}\in (\bR^d)^k\}.
\end{align*}

{Lastly,} for a map $\boldsymbol{\varphi}:Mat_{r\times(d-r)}(\bR)\to (\bR^d)^k$, we write $\boldsymbol{\varphi}(\mathbf{s})=(\varphi_{ij}(\mathbf{s}))_{1\leq i \leq d, 1\leq j \leq k}$. 

We also write 
\begin{align}\label{definition of u_varphi}
    u_{\boldsymbol{\varphi}}(\mathbf{s}):=u(\mathbf{s})\cdot (Id,\boldsymbol{\varphi}(\mathbf{s})).
\end{align}

\subsection{Main results}

For any $\mathbf{s}\in Mat_{r\times(d-r)}(\bR)$ and $T>0$, define the probability measure
\begin{align}\label{definition of mu s}
    \mu_{\mathbf{s},T}=\frac{1}{T}\int_0^T \delta_{a_t u_{\boldsymbol{\varphi}}(\mathbf{s})\Gamma} dt.
\end{align}
As before, we say that $u_{\boldsymbol{\varphi}}(\mathbf{s})\Gamma$ is Birkhoff generic with respect to $(X,\mu_X,a_t)$ if $\mu_{\mathbf{s},T}$ converges to $\mu_X$ {in the weak*-topology} as $T\to \infty$.

One of our main results is the following:
\begin{theorem}\label{genericity for special trajectories}
Let $\mathcal{U}\subset Mat_{r\times(d-r)}(\bR)$ be a bounded open subset. Let $\boldsymbol{\varphi}:\mathcal{U}\to (\bR^d)^k$ be a $C^1$-map satisfying $(\boldsymbol{\varphi}(\mathbf{s}))_{>r}\equiv \mathbf{0}$ for any $\mathbf{s}\in \mathcal{U}$. Assume that for any $\mathbf{m}\in \bZ^{k}\setminus \{\mathbf{0}\}$,
\begin{align}\label{genericity condition for special}
    |\{\mathbf{s}\in \mathcal{U}:(\boldsymbol{\varphi}(\mathbf{s}))_{\leq r}\cdot \mathbf{m}\in \mathbf{s} \cdot \bZ^{d-r}+\bZ^r\}|=0,
\end{align}
then for Lebesgue a.e. $\mathbf{s}\in \mathcal{U}$, $u_{\boldsymbol{\varphi}}(\mathbf{s})\Gamma$ is Birkhoff generic with respect to $(X,\mu_X,a_t)$. 
\end{theorem}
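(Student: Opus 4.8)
The plan is to reduce the statement to a genericity criterion of Burkhoff type for the diagonal flow $a_t$ acting on $X = SL_d(\mathbb{R})\ltimes(\mathbb{R}^d)^k / SL_d(\mathbb{Z})\ltimes(\mathbb{Z}^d)^k$, using the fact that $u_{\boldsymbol{\varphi}}(\mathbf{s})$ lies on the full unstable horospherical subgroup $U^+$ of $a_t$ (since $(\boldsymbol{\varphi}(\mathbf{s}))_{>r}\equiv\mathbf{0}$ guarantees the translation part stays in the expanding directions). The core mechanism is the equidistribution-plus-genericity philosophy of Shi and of Fr\k{a}czek--Shi--Ulcigrai: first establish that $(a_t)_*\nu \to \mu_X$ weakly for the volume measure $\nu$ on $Y=u_{\boldsymbol{\varphi}}(\mathcal{U})$, then upgrade weak convergence of translates to pointwise (Birkhoff) genericity almost everywhere by a self-improvement / maximal-inequality argument.

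First I would verify the non-escape of mass (no escape to the cusp) for the family of translated measures $(a_t)_*\nu$, using non-divergence of unipotent-type expansion --- here one exploits that the only obstruction to equidistribution on $X$ comes from affine rational subspaces, and hypothesis (\ref{genericity condition for special}) is precisely the Diophantine condition that rules out the intermediate invariant subvarieties (the "sub-tori" coming from $\mathbb{Z}^k$-lattice vectors $\mathbf{m}$). Concretely, the obstruction subgroups are the stabilizers of rational subspaces $\mathbf{s}\cdot\mathbb{Z}^{d-r}+\mathbb{Z}^r$ together with the translation classes $(\boldsymbol{\varphi}(\mathbf{s}))_{\le r}\cdot\mathbf{m}$; condition (\ref{genericity condition for special}) says that for every $\mathbf{m}\neq\mathbf{0}$ the set of bad $\mathbf{s}$ is Lebesgue-null, so almost every trajectory avoids all of them. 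I would make this quantitative by a Borel--Cantelli / covering argument along a sequence of times $t_n$ (say $t_n = n$), partitioning $\mathcal{U}$ into small cubes on which $a_{t_n} u_{\boldsymbol{\varphi}}(\mathbf{s})$ can be compared, via the $C^1$-regularity of $\boldsymbol{\varphi}$, to $a_{t_n} u(\mathbf{s})$ times a bounded error in $U^+$; the expansion of $a_{t_n}$ on $U^+$ then spreads these cubes to equidistribute.

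The main technical engine is a quantitative equidistribution estimate for $\int f(a_t u_{\boldsymbol{\varphi}}(\mathbf{s}))\,d\mathbf{s}$: fixing a nice test function $f$ on $X$ (say a smooth compactly supported function, or one obtained from unfolding against Eisenstein-type observables as in Marklof--Str\"ombergsson), expand $f$ in terms of its behaviour along the $(\mathbb{R}^d)^k$-fibers --- i.e. Fourier-analyze in the torus directions $(\mathbb{R}^d)^k/(\mathbb{Z}^d)^k$ --- so that the zeroth Fourier mode reduces to equidistribution of $a_t u(\mathbf{s})$ on $SL_d(\mathbb{R})/SL_d(\mathbb{Z})$ (known, e.g., by Shah / Kleinbock--Margulis expanding-translate results since $U$ is the $a_t$-expanding subgroup there), while the nonzero modes $\mathbf{m}\in\mathbb{Z}^k\setminus\{\mathbf{0}\}$ carry oscillatory factors $e(\langle\mathbf{m}, (\boldsymbol{\varphi}(\mathbf{s}))_{\le r}\text{-linear form}\rangle)$ whose stationary phase / non-concentration is controlled exactly by (\ref{genericity condition for special}). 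Averaging these estimates over $t\in[0,T]$ and then applying a second-moment bound (variance estimate) over $\mathbf{s}$, combined with a Chebyshev + Borel--Cantelli passage along $T_n\to\infty$ and an interpolation to fill the gaps between $T_n$, yields $\mu_{\mathbf{s},T}\to\mu_X$ for a.e. $\mathbf{s}$; a countable dense family of $f$'s and separability of $C_c(X)$ in the weak* sense finishes the argument.

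The hard part will be the variance estimate and the self-improvement step: controlling $\int_{\mathcal{U}}\big|\mu_{\mathbf{s},T}(f)-\mu_X(f)\big|^2\,d\mathbf{s}$ requires understanding correlations $\int f(a_t u_{\boldsymbol{\varphi}}(\mathbf{s})) \overline{f(a_{t'} u_{\boldsymbol{\varphi}}(\mathbf{s}))}\,d\mathbf{s}$ for $t\neq t'$, where the geometry of $U^+$ under $a_{t}a_{t'}^{-1}$ partially contracts some directions; one must show the off-diagonal $|t-t'|$ large contributions decay (effective mixing of $a_t$, or the thickening argument of Fr\k{a}czek--Shi--Ulcigrai where one replaces a single trajectory by a small transversal ball and uses that $a_t$ expands it). Handling the $C^1$ (rather than smooth or analytic) regularity of $\boldsymbol{\varphi}$ without losing the oscillation in the $\mathbf{m}\neq\mathbf{0}$ modes --- since one cannot integrate by parts freely --- is the subtle point, and I expect it is dealt with by a direct non-concentration estimate: the level sets $\{\mathbf{s}: (\boldsymbol{\varphi}(\mathbf{s}))_{\le r}\cdot\mathbf{m}\in \mathbf{s}\cdot\mathbb{Z}^{d-r}+\mathbb{Z}^r + (\text{small ball})\}$ have small measure, quantitatively, because of (\ref{genericity condition for special}) plus continuity, and this smallness is what one feeds into the Borel--Cantelli step in place of a rate.
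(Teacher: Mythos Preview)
Your overall philosophy (the Diophantine condition rules out intermediate closed orbits, then upgrade to a.e.\ via Borel--Cantelli) is right, but the concrete mechanism you propose is not the paper's, and it has a genuine gap.

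The paper does \emph{not} first prove equidistribution of translates and then upgrade via a Fourier/variance argument. Instead it works directly with an arbitrary weak-$*$ limit $\mu_{\mathbf{s}}$ of $\mu_{\mathbf{s},T}$. A correlation estimate (exponential decay of $\int_{\mathcal{U}}\psi_t(\mathbf{s})\psi_l(\mathbf{s})\,d\mathbf{s}$ in $|l-t|$, Lemma~\ref{Lemma: correlation}) fed into the pointwise ergodic theorem of Kleinbock--Shi--Weiss shows $\mu_{\mathbf{s}}$ is $U$-invariant for a.e.\ $\mathbf{s}$; since it is also $\{a_t\}$-invariant and $DU$ is epimorphic in $G'=SL_d(\mathbb{R})$, Mozes' theorem forces $G'$-invariance, and Ratner's measure classification then says every ergodic component is Haar on a $G'$-orbit closure. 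Those orbit closures are classified in Section~\ref{Section Orbit closure}: either all of $X$, or contained in some $X_{\mathbf{m}}$. The remaining task---showing $\mu_{\mathbf{s}}(X)=1$ and $\mu_{\mathbf{s}}(X_{\mathbf{m}})=0$ for all $\mathbf{m}$---is handled by a \emph{mixed height function} $\beta_{\mathbf{m}}$ (Margulis' height on $SL_d(\mathbb{R})/SL_d(\mathbb{Z})$ plus a piece measuring distance to $X_{\mathbf{m}}$) satisfying a uniform contraction inequality under the averaging operator; this yields exponential decay of the bad set in Proposition~\ref{limit measure spend very little measure on singular sets} and Borel--Cantelli finishes.

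Two specific problems with your route. First, the fibration of $X$ over $SL_d(\mathbb{R})/SL_d(\mathbb{Z})$ has \emph{twisted} torus fibers, so the natural Fourier dual is $(\mathbb{Z}^d)^k$ with a $\Gamma'$-equivariance, not $\mathbb{Z}^k$; the parameter $\mathbf{m}\in\mathbb{Z}^k\setminus\{\mathbf{0}\}$ in condition~(\ref{genericity condition for special}) enters not as a Fourier frequency but through the Ratner classification of intermediate subgroups $SL_d(\mathbb{R})\ltimes L(\boldsymbol{U})$ for rational $\boldsymbol{U}\subset\mathbb{R}^k$ (Lemmas~\ref{description of intermediate subgroups}--\ref{consequence of assumption on xi}), so your identification of ``nonzero modes'' with the obstruction index is off. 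Second, and more seriously, you correctly flag that for a merely $C^1$ map $\boldsymbol{\varphi}$ one cannot integrate by parts; but then neither stationary phase nor van der Corput is available, so the oscillatory bounds you need for the nonzero-mode contribution and for the off-diagonal correlations in your variance estimate are not supplied by anything you name. The height-function machinery in Sections~\ref{section Margulis' height function}--\ref{section mixed height functions} is engineered precisely to sidestep any such analytic input: it needs only Lipschitz comparisons and $(C,\alpha)$-good estimates on $C^1$ functions (Lemma~\ref{good property of functions}), which is exactly why $C^1$ regularity suffices in the theorem.
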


Using the observation that if a trajectory equidistributes with respect to $\mu_X$, then all other parallel trajectories will also equidistribute with respect to $\mu_X$ (see Lemma \ref{two asymptotic parallel trajectories converge to the same}), we can remove the assumption that $(\boldsymbol{\varphi}(\mathbf{s}))_{>r}\equiv \mathbf{0}$ and strengthen Theorem \ref{genericity for special trajectories} to the following

\begin{corollary}\label{corollary genericity for any c1 varphi}
Let $\mathcal{U}$ be a bounded open subset of $Mat_{r\times(d-r)}(\bR)$. Let $\boldsymbol{\varphi}:\mathcal{U}\to {(\mathbb{R}^d)^k}$ be a $C^1$ map.
If for any $\mathbf{m}\in \bZ^k\setminus\{\mathbf{0}\}$,
\begin{align*}
    |\{\mathbf{s}\in \mathcal{U}:(\boldsymbol{\varphi}(\mathbf{s}))_{\leq r} \cdot \mathbf{m}\in \mathbf{s} \cdot \bZ^{d-r}+\bZ^r\}|=0,
\end{align*}
then for Lebesgue a.e. $\mathbf{s}\in \mathcal{U}$, $u_{\boldsymbol{\varphi}}(\mathbf{s})\Gamma$ is Birkhoff generic with respect to $(X,\mu_X,a_t)$. 
\end{corollary}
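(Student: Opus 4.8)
The plan is to reduce the general case to Theorem \ref{genericity for special trajectories} by "killing" the bottom component $(\boldsymbol{\varphi}(\mathbf{s}))_{>r}$ of the displacement, using the stated fact (Lemma \ref{two asymptotic parallel trajectories converge to the same}) that two $a_t$-forward-asymptotic trajectories are simultaneously Birkhoff generic or not. Concretely, given an arbitrary $C^1$ map $\boldsymbol{\varphi}:\mathcal{U}\to(\bR^d)^k$, decompose $\boldsymbol{\varphi}=\boldsymbol{\varphi}^{\leq r}+\boldsymbol{\varphi}^{>r}$ according to the top $r$ rows and bottom $d-r$ rows, so that $u_{\boldsymbol{\varphi}}(\mathbf{s})=u(\mathbf{s})\cdot(Id,\boldsymbol{\varphi}^{\leq r}(\mathbf{s}))\cdot(Id,\boldsymbol{\varphi}^{>r}(\mathbf{s}))$, where the middle factor has top-rows-only displacement. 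Write $\boldsymbol{\psi}(\mathbf{s})$ for the map whose top rows equal $(\boldsymbol{\varphi}(\mathbf{s}))_{\leq r}$ and whose bottom rows vanish; then $u_{\boldsymbol{\psi}}(\mathbf{s})$ satisfies the hypothesis of Theorem \ref{genericity for special trajectories}, because the Diophantine condition (\ref{genericity condition for special}) only involves $(\boldsymbol{\varphi}(\mathbf{s}))_{\leq r}=(\boldsymbol{\psi}(\mathbf{s}))_{\leq r}$, and this is exactly the hypothesis we are assuming in the Corollary. Hence for a.e. $\mathbf{s}\in\mathcal{U}$, $u_{\boldsymbol{\psi}}(\mathbf{s})\Gamma$ is Birkhoff generic with respect to $(X,\mu_X,a_t)$.

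It then remains to show that for every $\mathbf{s}$, the trajectories $\{a_t u_{\boldsymbol{\psi}}(\mathbf{s})\Gamma\}$ and $\{a_t u_{\boldsymbol{\varphi}}(\mathbf{s})\Gamma\}$ are $a_t$-forward-asymptotic, so that Lemma \ref{two asymptotic parallel trajectories converge to the same} transfers genericity from the former to the latter. Writing $u_{\boldsymbol{\varphi}}(\mathbf{s})=u_{\boldsymbol{\psi}}(\mathbf{s})\cdot h(\mathbf{s})$ with $h(\mathbf{s})=(Id,\boldsymbol{\varphi}^{>r}(\mathbf{s}))$ — here one must check the group law in $G=SL_d(\bR)\ltimes(\bR^d)^k$ carefully, since $u(\mathbf{s})$ acts nontrivially on the $(\bR^d)^k$ factor, so the exact form of the correction $\boldsymbol{\varphi}^{>r}$ that appears is $u(\mathbf{s})$-twisted; this bookkeeping is routine but is the one place to be attentive — the element $h(\mathbf{s})$ is a pure translation by a vector supported in the bottom $d-r$ rows. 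Such a vector lies in the stable horospherical subgroup of $a_t$: conjugating, $a_{-t} h(\mathbf{s}) a_t = (Id, e^{-dt}\,(\text{bottom rows}))\to Id$ as $t\to\infty$. Therefore $a_t u_{\boldsymbol{\varphi}}(\mathbf{s})\Gamma = a_t u_{\boldsymbol{\psi}}(\mathbf{s}) (a_{-t} h(\mathbf{s}) a_t)\,a_t\Gamma$ and the inserted factor $a_{-t}h(\mathbf{s})a_t\to Id$, which is precisely the asymptotic-parallel condition needed.

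Combining the two steps: for a.e. $\mathbf{s}\in\mathcal{U}$ the point $u_{\boldsymbol{\psi}}(\mathbf{s})\Gamma$ is Birkhoff generic, and then by Lemma \ref{two asymptotic parallel trajectories converge to the same} so is $u_{\boldsymbol{\varphi}}(\mathbf{s})\Gamma$, which is the assertion of the Corollary. I expect no genuine obstacle here — the substantive work is entirely in Theorem \ref{genericity for special trajectories}, which we are permitted to assume — the only thing requiring care is the explicit computation in the semidirect product $SL_d(\bR)\ltimes(\bR^d)^k$ showing that after peeling off the top-row displacement the remaining correction is genuinely a stable-horospherical (contracted) element, and that the Diophantine hypothesis is literally unchanged under the passage from $\boldsymbol{\varphi}$ to $\boldsymbol{\psi}$.
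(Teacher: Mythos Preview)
Your overall strategy is exactly the paper's: peel off the bottom $d-r$ rows of $\boldsymbol{\varphi}$, invoke Theorem \ref{genericity for special trajectories} for the truncated map $\boldsymbol{\psi}$, and then transfer genericity via Lemma \ref{two asymptotic parallel trajectories converge to the same}. The Diophantine hypothesis is indeed unchanged. However, your execution of the second step is flawed in a way that is not mere bookkeeping.

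You factor on the \emph{right}, writing $u_{\boldsymbol{\varphi}}(\mathbf{s})=u_{\boldsymbol{\psi}}(\mathbf{s})\cdot h(\mathbf{s})$. But Lemma \ref{two asymptotic parallel trajectories converge to the same} is stated for \emph{left} multiplication: it transfers genericity from $x$ to $gx$ when $a_t g a_{-t}\to Id$. A right factor $h(\mathbf{s})$ acts on the $\Gamma$-coset and produces a \emph{different} point of $X$; your displayed identity $a_t u_{\boldsymbol{\varphi}}(\mathbf{s})\Gamma = a_t u_{\boldsymbol{\psi}}(\mathbf{s}) (a_{-t} h(\mathbf{s}) a_t)\,a_t\Gamma$ is not valid (note $a_t\Gamma\neq\Gamma$). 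Moreover, the right factor $h(\mathbf{s})=u_{\boldsymbol{\psi}}(\mathbf{s})^{-1}u_{\boldsymbol{\varphi}}(\mathbf{s})$ computes to $(Id,\,u(-\mathbf{s})\begin{bmatrix}\mathbf{0}\\ (\boldsymbol{\varphi}(\mathbf{s}))_{>r}\end{bmatrix})=(Id,\begin{bmatrix}-\mathbf{s}(\boldsymbol{\varphi}(\mathbf{s}))_{>r}\\ (\boldsymbol{\varphi}(\mathbf{s}))_{>r}\end{bmatrix})$, which is \emph{not} supported in the bottom rows; and even for a pure bottom-row translation one has $a_{-t}(Id,[\mathbf{0};\mathbf{w}])a_t=(Id,[\mathbf{0};e^{rt}\mathbf{w}])$, which diverges rather than contracting.

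The fix is to factor on the left, exactly as the paper does:
\[
(u(\mathbf{s}),\boldsymbol{\varphi}(\mathbf{s}))=\Bigl(Id,\begin{bmatrix}\mathbf{0}\\ (\boldsymbol{\varphi}(\mathbf{s}))_{>r}\end{bmatrix}\Bigr)\cdot\Bigl(u(\mathbf{s}),\begin{bmatrix}(\boldsymbol{\varphi}(\mathbf{s}))_{\leq r}\\ \mathbf{0}\end{bmatrix}\Bigr),
\]
so that with $g=(Id,\begin{bmatrix}\mathbf{0}\\ (\boldsymbol{\varphi}(\mathbf{s}))_{>r}\end{bmatrix})$ one gets $a_t g a_{-t}=(Id,\begin{bmatrix}\mathbf{0}\\ e^{-rt}(\boldsymbol{\varphi}(\mathbf{s}))_{>r}\end{bmatrix})\to Id$, and Lemma \ref{two asymptotic parallel trajectories converge to the same} applies directly with $x=u_{\boldsymbol{\psi}}(\mathbf{s})\Gamma$. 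With this correction your proof coincides with the paper's.
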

We also obtain the following variants of Theorem \ref{genericity for special trajectories}.
\begin{theorem}\label{theorem:genericity for general trajectory}
Let $\mathcal{U}$ be a bounded open subset of $Mat_{r\times(d-r)}(\bR)$. Let $\boldsymbol{\varphi}:\mathcal{U}\to (\bR^d)^k$ be a $C^1$ map. Let $\mathbf{M}\in SL_d(\bR)$. If for any $\mathbf{m}\in \bZ^{k}\setminus\{\mathbf{0}\}$,
\begin{align}\label{genericity condition for general trajectory}
    |\{\mathbf{s}\in \mathcal{U}: \boldsymbol{\varphi}(\mathbf{s})\cdot \mathbf{m}\in \mathbf{M}^{-1}u(-\mathbf{s})\cdot \begin{bmatrix}
        \mathbf{0}\\
        \bR^{d-r}
    \end{bmatrix}
    +{\mathbb{Z}^d}\}|=0.
\end{align}
Then for Lebesgue a.e. $\mathbf{s}\in \mathcal{U}$, $u(\mathbf{s})\mathbf{M}(Id,\boldsymbol{\varphi}(\mathbf{s}))\Gamma$ is Birkhoff generic with respect to $(X,\mu_X,a_t)$.
\end{theorem}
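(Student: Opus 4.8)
The plan is to deduce the general statement from Theorem \ref{genericity for special trajectories} by a change-of-variables argument together with the parallel-trajectory principle (Lemma \ref{two asymptotic parallel trajectories converge to the same}). First I would analyze the point $u(\mathbf{s})\mathbf{M}(Id,\boldsymbol{\varphi}(\mathbf{s}))\Gamma$ and compare it with a point of the form treated in Theorem \ref{genericity for special trajectories}. The key algebraic observation is that $\mathbf{M} \in SL_d(\bR) = G\prm$ acts on $X$ and, after the diagonal flow $a_t$ is applied, the $\mathbf{M}$-component and the components of $\boldsymbol{\varphi}(\mathbf{s})$ lying in the stable/neutral directions of $a_t$ get contracted; so the asymptotic behaviour of $a_t u(\mathbf{s})\mathbf{M}(Id,\boldsymbol{\varphi}(\mathbf{s}))\Gamma$ should be governed only by the unstable components. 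Concretely, writing $\mathbf{M} = \mathbf{M}^- \mathbf{M}^0 \mathbf{M}^+$ (or using the Bruhat-type decomposition relative to $a_t$), conjugation by $a_t$ sends $\mathbf{M}^0 \mathbf{M}^+$ to something that, together with the $u(\mathbf{s})$-part, can be absorbed into a reparameterization $\mathbf{s} \mapsto \widetilde{\mathbf{s}}(\mathbf{s})$ of the horospherical coordinate, while $\mathbf{M}^-$ (the stable part) contracts to the identity and hence, by Lemma \ref{two asymptotic parallel trajectories converge to the same}, does not affect equidistribution.

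The technical steps, in order, would be: (1) record explicitly how $a_t$ conjugates $u(\mathbf{s})\mathbf{M}(Id,\boldsymbol{\varphi}(\mathbf{s}))$, separating out the piece that converges to the identity as $t\to\infty$; (2) invoke Lemma \ref{two asymptotic parallel trajectories converge to the same} to reduce to the trajectory through the "unstable part" of the point, which will have the form $u(\widetilde{\mathbf{s}})(Id,\widetilde{\boldsymbol{\varphi}}(\widetilde{\mathbf{s}}))\Gamma$ for a new $C^1$ map $\widetilde{\boldsymbol{\varphi}}$ with $(\widetilde{\boldsymbol{\varphi}})_{>r}\equiv\mathbf{0}$, where the reparameterization $\mathbf{s}\mapsto\widetilde{\mathbf{s}}$ is a $C^1$ diffeomorphism of $\mathcal{U}$ onto its image (being given by the Bruhat coordinates of $u(\mathbf{s})\mathbf{M}$ on the big cell, up to discarding a null set where the decomposition fails); (3) translate the Diophantine hypothesis (\ref{genericity condition for general trajectory}) into the hypothesis (\ref{genericity condition for special}) for $\widetilde{\boldsymbol{\varphi}}$ under this change of variables — crucially, since $\mathbf{s}\mapsto\widetilde{\mathbf{s}}$ is a diffeomorphism it preserves Lebesgue-null sets, and the condition $\boldsymbol{\varphi}(\mathbf{s})\cdot\mathbf{m}\in \mathbf{M}^{-1}u(-\mathbf{s})\cdot\begin{bmatrix}\mathbf{0}\\\bR^{d-r}\end{bmatrix}+\bZ^d$ is precisely the pullback of the condition $(\widetilde{\boldsymbol{\varphi}}(\widetilde{\mathbf{s}}))_{\leq r}\cdot\mathbf{m}\in\widetilde{\mathbf{s}}\cdot\bZ^{d-r}+\bZ^r$, because $\mathbf{M}^{-1}u(-\mathbf{s})$ sends the span $\begin{bmatrix}\mathbf{0}\\\bR^{d-r}\end{bmatrix}$ to the relevant neutral/stable subspace and $\Gamma$ acts by $\bZ^d$; (4) apply Theorem \ref{genericity for special trajectories} to $\widetilde{\boldsymbol{\varphi}}$ to conclude that for a.e. $\widetilde{\mathbf{s}}$, hence for a.e. $\mathbf{s}$, the trajectory is Birkhoff generic.

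The main obstacle I anticipate is step (3): making the correspondence between the two Diophantine conditions genuinely exact, including a careful bookkeeping of how $\mathbf{M}^{-1}$, the unipotent $u(-\mathbf{s})$, and the lattice $\Gamma=SL_d(\bZ)\ltimes(\bZ^d)^k$ interact. One must verify that the "bad set" in (\ref{genericity condition for general trajectory}) maps bijectively (modulo null sets) onto the "bad set" in (\ref{genericity condition for special}) for $\widetilde{\boldsymbol{\varphi}}$, and that the finitely many lattice directions one can ignore (where the Bruhat cell degenerates, or where $a_t$-conjugation does not contract as expected) form a Lebesgue-null set of parameters. A secondary subtlety is ensuring that the reparameterization $\widetilde{\boldsymbol{\varphi}}$ remains $C^1$ on a full-measure open subset of $\mathcal{U}$ and that the decomposition of $\mathbf{M}$ can be chosen uniformly; if $\mathbf{M}$ happens to lie outside the big Bruhat cell one may need to cover $\mathcal{U}$ by finitely many pieces or to perturb $\mathbf{s}$, but since the statement is only about almost-every $\mathbf{s}$ this causes no real difficulty. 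Once the dictionary between the two hypotheses is established, the theorem follows immediately from Theorem \ref{genericity for special trajectories} and Lemma \ref{two asymptotic parallel trajectories converge to the same}.
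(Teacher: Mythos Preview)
Your proposal is correct and follows essentially the same route as the paper's proof: the paper writes $u(\mathbf{s})\mathbf{M}$ in block form, factors it as a lower-triangular piece times $u(\phi(\mathbf{s}))$ with $\phi(\mathbf{s})=\mathbf{A}(\mathbf{s})^{-1}\mathbf{B}(\mathbf{s})$ (Lemma \ref{Lemma:difermorphism} shows this is a local diffeomorphism off the null set $\{\det\mathbf{A}(\mathbf{s})=0\}$), invokes Lemma \ref{two asymptotic parallel trajectories converge to the same} to discard the contracting factor, and then applies Corollary \ref{corollary genericity for any c1 varphi} to the resulting $\widetilde{\boldsymbol{\varphi}}$ after checking that condition (\ref{genericity condition for general trajectory}) unwinds to condition (\ref{genericity condition for special}) under the change of variables. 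Your anticipated obstacle in step (3) is exactly where the paper spends its effort, carrying out the matrix algebra to show that $(\widetilde{\boldsymbol{\varphi}}(\widetilde{\mathbf{s}}))_{\leq r}\cdot\mathbf{m}\in\widetilde{\mathbf{s}}\cdot\bZ^{d-r}+\bZ^r$ is equivalent to $\boldsymbol{\varphi}(\mathbf{s})\cdot\mathbf{m}\in\mathbf{M}^{-1}u(-\mathbf{s})\cdot\begin{bmatrix}\mathbf{0}\\\bR^{d-r}\end{bmatrix}+\bZ^d$.
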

\begin{remark}
By the equidistribution result in \cite{Marklof_Universal_hitting_time_2017}, Theorem \ref{theorem:genericity for general trajectory} gives an affirmative answer to Question \ref{question 2}.
\end{remark}
\begin{remark}\label{Remark: necessity of conditions on Theorem 1.3,1.5}
{The conditions (\ref{genericity condition for special}) and (\ref{genericity condition for general trajectory}) are indeed necessary. For example in Theorem \ref{genericity for special trajectories}, suppose that $(\boldsymbol{\varphi}(\mathbf{s}))_{\leq r}\cdot \mathbf{m}\in \mathbf{s} \cdot \bZ^{d-r}+\bZ^r$ for some $\mathbf{s}\in \mathcal{U}$ and $\mathbf{m}\in \mathbb{Z}^k\setminus \{\mathbf{0}\}$, then the $a_t$ trajectory along $u_{\boldsymbol{\varphi}}(\mathbf{s})\Gamma$ will concentrate on a proper submanifold of $G/\Gamma$. }
\end{remark}
\begin{corollary}\label{Corollary: genericity for general base point after u_varphi}
{Let $\mathcal{U}$ be a bounded open subset of $Mat_{r\times(d-r)}(\bR)$. Let $\boldsymbol{\varphi}:\mathcal{U}\to (\bR^d)^k$ be a $C^1$ map. Let $(\mathbf{M},\mathbf{v})\in G$. If for any $\mathbf{m}\in \mathbb{Z}^k\setminus \{\mathbf{0}\}$,
\begin{align*}
    |\{\mathbf{s}\in \mathcal{U}: (\boldsymbol{\varphi}(\mathbf{s})+\mathbf{v}) \cdot \mathbf{m}\in u(-\mathbf{s})\cdot \begin{bmatrix}
        \mathbf{0}\\
        \bR^{d-r}
    \end{bmatrix}
    +\mathbf{M}\cdot \mathbb{Z}^d\}|=0,
\end{align*}
then for Lebesgue a.e. $\mathbf{s}\in \mathcal{U}$, $u_{\boldsymbol{\varphi}}(\mathbf{s})(\mathbf{M},\mathbf{v})\Gamma$ is Birkhoff generic with respect to $(X,\mu_X,a_t)$.
}
\end{corollary}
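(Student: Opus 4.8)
The plan is to deduce Corollary~\ref{Corollary: genericity for general base point after u_varphi} from Theorem~\ref{theorem:genericity for general trajectory} by a purely group-theoretic reduction; no new dynamical input is needed, and the whole argument is bookkeeping with the semidirect product law.

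First I would rewrite the base point $u_{\boldsymbol{\varphi}}(\mathbf{s})(\mathbf{M},\mathbf{v})$ in the form handled by Theorem~\ref{theorem:genericity for general trajectory}. Using the definition~(\ref{definition of u_varphi}) together with the multiplication law $(g,\mathbf{w})(g',\mathbf{w}')=(gg',\mathbf{w}+g\mathbf{w}')$ and the embedding $G\prm\cong(G\prm,\mathbf{0})\le G$, one computes
\begin{align*}
    u_{\boldsymbol{\varphi}}(\mathbf{s})(\mathbf{M},\mathbf{v})
    &=u(\mathbf{s})\cdot(Id,\boldsymbol{\varphi}(\mathbf{s}))\cdot(\mathbf{M},\mathbf{v})
    =u(\mathbf{s})\cdot(\mathbf{M},\boldsymbol{\varphi}(\mathbf{s})+\mathbf{v})\\
    &=u(\mathbf{s})\mathbf{M}\cdot\bigl(Id,\,\mathbf{M}^{-1}(\boldsymbol{\varphi}(\mathbf{s})+\mathbf{v})\bigr),
\end{align*}
where the middle equality uses $(Id,\boldsymbol{\varphi}(\mathbf{s}))(\mathbf{M},\mathbf{v})=(\mathbf{M},\boldsymbol{\varphi}(\mathbf{s})+\mathbf{v})$ and the last uses $(\mathbf{M},\mathbf{w})=(\mathbf{M},\mathbf{0})(Id,\mathbf{M}^{-1}\mathbf{w})$. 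Hence, setting $\boldsymbol{\psi}(\mathbf{s}):=\mathbf{M}^{-1}(\boldsymbol{\varphi}(\mathbf{s})+\mathbf{v})\in(\bR^d)^k$ (with $\mathbf{M}^{-1}\in SL_d(\bR)$ acting componentwise on $(\bR^d)^k$), the map $\boldsymbol{\psi}:\mathcal{U}\to(\bR^d)^k$ is $C^1$ since $\boldsymbol{\varphi}$ is $C^1$ and $\mathbf{M}^{-1}$ is a fixed matrix, and $u_{\boldsymbol{\varphi}}(\mathbf{s})(\mathbf{M},\mathbf{v})=u(\mathbf{s})\mathbf{M}(Id,\boldsymbol{\psi}(\mathbf{s}))$, which is exactly the form of base point in Theorem~\ref{theorem:genericity for general trajectory} applied to $\boldsymbol{\psi}$ and the given $\mathbf{M}$.

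Next I would check that the diophantine hypothesis of the corollary is precisely hypothesis~(\ref{genericity condition for general trajectory}) written for $\boldsymbol{\psi}$. For $\mathbf{m}\in\bZ^k\setminus\{\mathbf{0}\}$, linearity of the $SL_d(\bR)$-action gives $\boldsymbol{\psi}(\mathbf{s})\cdot\mathbf{m}=\mathbf{M}^{-1}\bigl((\boldsymbol{\varphi}(\mathbf{s})+\mathbf{v})\cdot\mathbf{m}\bigr)$, so multiplying the membership relation through by $\mathbf{M}$ shows that
\begin{align*}
    \boldsymbol{\psi}(\mathbf{s})\cdot\mathbf{m}\in\mathbf{M}^{-1}u(-\mathbf{s})\cdot\begin{bmatrix}\mathbf{0}\\\bR^{d-r}\end{bmatrix}+\bZ^d
\end{align*}
holds if and only if
\begin{align*}
    (\boldsymbol{\varphi}(\mathbf{s})+\mathbf{v})\cdot\mathbf{m}\in u(-\mathbf{s})\cdot\begin{bmatrix}\mathbf{0}\\\bR^{d-r}\end{bmatrix}+\mathbf{M}\cdot\bZ^d .
\end{align*}
Thus the two subsets of $\mathcal{U}$ coincide, and the corollary's assumption says exactly that this common set is Lebesgue null for every $\mathbf{m}$, i.e. that~(\ref{genericity condition for general trajectory}) holds for $\boldsymbol{\psi}$. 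Applying Theorem~\ref{theorem:genericity for general trajectory} then yields that for Lebesgue a.e. $\mathbf{s}\in\mathcal{U}$ the point $u(\mathbf{s})\mathbf{M}(Id,\boldsymbol{\psi}(\mathbf{s}))\Gamma=u_{\boldsymbol{\varphi}}(\mathbf{s})(\mathbf{M},\mathbf{v})\Gamma$ is Birkhoff generic with respect to $(X,\mu_X,a_t)$, which is the claim.

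There is no genuine obstacle here beyond careful bookkeeping: one must use the semidirect product law in the correct order in the computation above, and keep track that clearing the factor $\mathbf{M}^{-1}$ sends the lattice $\bZ^d$ to $\mathbf{M}\cdot\bZ^d$ while simultaneously absorbing the $\mathbf{M}^{-1}$ standing in front of $u(-\mathbf{s})\cdot\begin{bmatrix}\mathbf{0}\\\bR^{d-r}\end{bmatrix}$. Everything else is immediate from Theorem~\ref{theorem:genericity for general trajectory}.
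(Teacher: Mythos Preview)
Your proposal is correct and follows essentially the same approach as the paper: both rewrite $u_{\boldsymbol{\varphi}}(\mathbf{s})(\mathbf{M},\mathbf{v})$ as $u(\mathbf{s})\mathbf{M}(Id,\tilde{\boldsymbol{\varphi}}(\mathbf{s}))$ with $\tilde{\boldsymbol{\varphi}}(\mathbf{s})=\mathbf{M}^{-1}(\boldsymbol{\varphi}(\mathbf{s})+\mathbf{v})$, then observe that the diophantine hypothesis transforms into exactly condition~(\ref{genericity condition for general trajectory}) for $\tilde{\boldsymbol{\varphi}}$, and apply Theorem~\ref{theorem:genericity for general trajectory}. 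Your write-up just spells out the semidirect product bookkeeping in slightly more detail.
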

For $1\leq i\leq d$, let $\mathbf{e}_i\in \bR^d$ be the column vector such that $i$-th row of $\mathbf{e}_i$ is $1$ and others are $0$. 
\begin{corollary}\label{genericity on orbit of maximal compact group}
Let $\mathcal{U}$ be a bounded open subset of $Mat_{r\times(d-r)}(\bR)$. Let $\mathbf{E_1}:\mathcal{U}\to SO_d(\bR)$ be a smooth map such that the map $\mathbf{s}\mapsto \mathbf{E_1}(\mathbf{s})^{-1}\cdot [\mathbf{e}_{r+1},\cdots,\mathbf{e}_d]$ has a nonsingular differential at Lebesgue almost every $\mathbf{s}\in \mathcal{U}$. Let $\boldsymbol{\varphi}:\mathcal{U}\to (\bR^d)^k$ be a $C^1$ map. Assume that for any $\mathbf{m}\in \bZ^{k}\setminus\{\mathbf{0}\}$,
\begin{align*}
    |\{\mathbf{s}\in \mathcal{U}:\boldsymbol{\varphi}(\mathbf{s})\cdot\mathbf{m}\in \mathbf{E}_1(\mathbf{s})^{-1}\cdot \begin{bmatrix}
        \mathbf{0}\\
        \bR^{d-r}
    \end{bmatrix}
    +{\mathbb{Z}^d} \}|=0,
\end{align*}
then for Lebesgue a.e. $\mathbf{s}\in \mathcal{U}$, $\mathbf{E}_1(\mathbf{s})(Id,\boldsymbol{\varphi}(\mathbf{s}))\Gamma$ is Birkhoff generic with respect to $(X,\mu_X,a_t)$.
\end{corollary}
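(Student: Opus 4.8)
The plan is to reduce Corollary~\ref{genericity on orbit of maximal compact group} to Theorem~\ref{theorem:genericity for general trajectory} (and, in the degenerate case, to the underlying submanifold equidistribution theorem of the paper) by means of the Gauss decomposition of $SL_d(\bR)$ relative to the parabolic $P=LU$ whose Levi factor $L=\{\mathrm{diag}(\mathbf P,\mathbf Q):\mathbf P\in GL_r,\ \mathbf Q\in GL_{d-r},\ \det\mathbf P\det\mathbf Q=1\}$ is the centralizer of $\{a_t\}$, together with the opposite contracting horospherical subgroup $U^-$. Recall that $U^-LU$ is a dense open subset of $SL_d(\bR)$, that $L$ commutes with $a_t$ while $\mu_X$ is $L$-invariant, and that $a_t$ conjugates $u^-(\mathbf w)\in U^-$ to $u^-(e^{-dt}\mathbf w)\to Id$. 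Since Birkhoff genericity of a.e.\ point of $\mathcal U$ is a local property, it suffices to prove the conclusion on a small neighbourhood of a.e.\ $\mathbf s_0\in\mathcal U$.

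Fix $\mathbf s_0$ at which $\mathbf s\mapsto\mathbf E_1(\mathbf s)^{-1}[\mathbf e_{r+1},\cdots,\mathbf e_d]$ has nonsingular differential; by hypothesis a.e.\ $\mathbf s_0$ qualifies, and we may arrange $\mathbf E_1(\mathbf s_0)\in U^-LU$ (its complement is a proper real-algebraic subvariety of $SL_d(\bR)$, and a routine argument using the nondegeneracy hypothesis rules out $\mathbf E_1(\mathcal U)$ being contained in it on a positive measure set). On a neighbourhood $\mathcal U'$ of $\mathbf s_0$ write $\mathbf E_1(\mathbf s)=u^-(\mathbf w(\mathbf s))\,\ell(\mathbf s)\,u(\boldsymbol\psi(\mathbf s))$ with $u^-(\mathbf w(\mathbf s))\in U^-$, $\ell(\mathbf s)=\mathrm{diag}(\mathbf P(\mathbf s),\mathbf Q(\mathbf s))\in L$, and $\boldsymbol\psi(\mathbf s)\in Mat_{r\times(d-r)}(\bR)$, all of class $C^1$. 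A direct computation, conjugating $a_t$ through this decomposition and using $a_t\ell=\ell a_t$, shows that the $a_t$-orbit through $\mathbf E_1(\mathbf s)(Id,\boldsymbol\varphi(\mathbf s))\Gamma$ stays at distance $\to 0$ from the $a_t$-orbit through $\ell(\mathbf s)\cdot u(\boldsymbol\psi(\mathbf s))(Id,\boldsymbol\varphi(\mathbf s))\Gamma$; by Lemma~\ref{two asymptotic parallel trajectories converge to the same} these two orbits have the same Birkhoff limit, and by $L$-invariance of $\mu_X$ (together with $a_t\ell=\ell a_t$) the left factor $\ell(\mathbf s)$ is immaterial. Hence it suffices to prove that $u(\boldsymbol\psi(\mathbf s))(Id,\boldsymbol\varphi(\mathbf s))\Gamma$ is Birkhoff generic for a.e.\ $\mathbf s\in\mathcal U'$; discarding also the stable component $(\boldsymbol\varphi(\mathbf s))_{>r}$ by Lemma~\ref{two asymptotic parallel trajectories converge to the same}, this is a statement about the $C^1$ submanifold $\mathbf s\mapsto u(\boldsymbol\psi(\mathbf s))\bigl(Id,(\boldsymbol\varphi(\mathbf s))_{\leq r}\bigr)$ of $U^+$ --- precisely the kind of statement Theorem~\ref{theorem:genericity for general trajectory} delivers when $\boldsymbol\psi$ is a $C^1$ diffeomorphism (and the general submanifold equidistribution theorem of the paper delivers otherwise).

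It remains to translate the two hypotheses. For the Diophantine one, note that $u^-(-\mathbf w)$ and $\ell^{-1}$ both preserve the subspace $\begin{bmatrix}\mathbf 0\\\bR^{d-r}\end{bmatrix}$, so from $\mathbf E_1(\mathbf s)^{-1}=u(-\boldsymbol\psi(\mathbf s))\,\ell(\mathbf s)^{-1}u^-(-\mathbf w(\mathbf s))$ one gets $\mathbf E_1(\mathbf s)^{-1}\begin{bmatrix}\mathbf 0\\\bR^{d-r}\end{bmatrix}=u(-\boldsymbol\psi(\mathbf s))\begin{bmatrix}\mathbf 0\\\bR^{d-r}\end{bmatrix}$, so that the null-set condition of Corollary~\ref{genericity on orbit of maximal compact group} is exactly the affine-rational-subspace condition --- of the same shape as \eqref{genericity condition for general trajectory} --- for the submanifold above. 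For the nondegeneracy hypothesis, one must show that ``$\mathbf s\mapsto\mathbf E_1(\mathbf s)^{-1}[\mathbf e_{r+1},\cdots,\mathbf e_d]$ has nonsingular differential'' forces that submanifold to be nondegenerate in the sense the equidistribution theorem requires; here one exploits $\mathbf E_1(\mathbf s)\in SO_d(\bR)$, which forces $\mathbf Q(\mathbf s)^{\mathsf T}\mathbf Q(\mathbf s)=\boldsymbol\psi(\mathbf s)^{\mathsf T}\boldsymbol\psi(\mathbf s)+\mathbf 1_{d-r}$ and so links the ``compact'' parameter $\mathbf Q(\mathbf s)$ to $\boldsymbol\psi(\mathbf s)$. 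With both hypotheses in place the cited theorem applies and Corollary~\ref{genericity on orbit of maximal compact group} follows.

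The main obstacle is this last point: transporting the geometric nondegeneracy of the frame map $\mathbf s\mapsto\mathbf E_1(\mathbf s)^{-1}[\mathbf e_{r+1},\cdots,\mathbf e_d]$ into the nondegeneracy --- with the correct dimension count and non-concentration on rational affine subspaces --- of the submanifold $\mathbf s\mapsto u(\boldsymbol\psi(\mathbf s))\bigl(Id,(\boldsymbol\varphi(\mathbf s))_{\leq r}\bigr)$ that feeds the equidistribution theorem, in particular covering the situation where $\boldsymbol\psi$ alone degenerates but the combined data does not; this, together with the bookkeeping needed to turn the Gauss-decomposition manipulations into a clean statement in the coordinates of Theorem~\ref{theorem:genericity for general trajectory}, is where essentially all the work lies.
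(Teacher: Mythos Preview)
Your approach is essentially the paper's. Both arguments localise near a point $\mathbf{s}_0$ of nondegeneracy, factor the orthogonal matrix through a parabolic decomposition (the paper writes $\mathbf{E}_1(\mathbf{s})=\mathbf{E}_2(\mathbf{s})_{-}\cdot u(\phi(\mathbf{s}))\cdot\mathbf{E}_1(\mathbf{s}_0)$ with $\mathbf{E}_2(\mathbf{s})_{-}$ block lower-triangular, while you write $\mathbf{E}_1(\mathbf{s})=u^-(\mathbf{w})\,\ell\,u(\boldsymbol\psi)$), discard the nonexpanding left factor via Lemma~\ref{two asymptotic parallel trajectories converge to the same} together with the $L$-invariance of $\mu_X$ that you spell out, verify that $\phi$ (respectively $\boldsymbol\psi$) is a local diffeomorphism, change variables, and invoke Theorem~\ref{theorem:genericity for general trajectory}. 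Your translation of the Diophantine hypothesis via $\mathbf{E}_1(\mathbf{s})^{-1}\begin{bmatrix}\mathbf 0\\\bR^{d-r}\end{bmatrix}=u(-\boldsymbol\psi(\mathbf{s}))\begin{bmatrix}\mathbf 0\\\bR^{d-r}\end{bmatrix}$ is exactly the verification the paper carries out. The only cosmetic difference is that the paper keeps $\mathbf{E}_1(\mathbf{s}_0)$ on the right and uses it as the $\mathbf{M}$ in Theorem~\ref{theorem:genericity for general trajectory}, whereas you absorb everything and take $\mathbf{M}=Id$.

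Where you drift off course is the last paragraph. You flag the nondegeneracy transfer as ``where essentially all the work lies'' and propose, should $\boldsymbol\psi$ alone degenerate, to fall back on ``the general submanifold equidistribution theorem of the paper''. No such fallback exists: every genericity statement in the paper is parameterised by a full-dimensional open set in $Mat_{r\times(d-r)}(\bR)$, so a degenerate $\boldsymbol\psi$ cannot be fed into any of them. The paper does not regard this as an obstacle at all; it simply asserts that nonsingularity of $\mathbf{s}\mapsto\mathbf{E}_1(\mathbf{s})^{-1}[\mathbf{e}_{r+1},\dots,\mathbf{e}_d]$ at $\mathbf{s}_0$ forces nonsingularity of $\phi$ (equivalently $\boldsymbol\psi$) at $\mathbf{s}_0$, and moves on. In your coordinates this is visible from $\mathbf{E}_1(\mathbf{s})^{-1}[\mathbf{e}_{r+1},\dots,\mathbf{e}_d]=\begin{bmatrix}-\boldsymbol\psi(\mathbf{s})\mathbf Q(\mathbf{s})^{-1}\\ \mathbf Q(\mathbf{s})^{-1}\end{bmatrix}$, so $\boldsymbol\psi$ is the Grassmannian chart of the frame map. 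There is no case split and $\boldsymbol\varphi$ plays no role in this step; your concern about ``combined data'' is misplaced.
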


Corollary \ref{genericity on orbit of maximal compact group} will allow us to deduce an application to universal hitting time for integrable flows in $d$-torus $\mathbb{T}^d$ (see Theorem \ref{theorem on universal hitting time}).

\subsection{Ingredients of the proof}
Proof of Theorem \ref{genericity for special trajectories} follows the similar strategy as in \cite{Shi_Ulcigrai_Genericity_on_curves_2018}. However, some new ingredients are required. We need the description of orbit closures of $G\prm$ in $X$. This can be done using Ratner's orbit closure theorem following the approach in \cite{Marklof_Universal_hitting_time_2017}. 

We need to construct a suitable mixed height function in our situation, which measures the distance of point to the cusp and singular submanifolds. 

Also due to higher rank, some technical difficulties arise in the proof of uniform contraction property of the mixed height function. To overcome these difficulties, we apply a linear algebra lemma (see Lemma \ref{basic linear algebra lemma}) which is inspired by the proof of \cite[Proposition 3.4]{kleinbock1998flows}.

\subsection{Overview}
In Section \ref{section Reductions and proof}, we make some reductions and give a proof of Theorem \ref{genericity for special trajectories} and Corollary \ref{corollary genericity for any c1 varphi}.

In Section \ref{Section Orbit closure}, we will investigate the orbit closure of $G\prm$ in $X$ using Ratner's orbit closure theorem. 

In Section \ref{section unipotent invariance for a.e. s}, we prove that for a.e. $\mathbf{s}\in \mathcal{U}$, the limit measure is invariant under the unipotent group $U$. This enables us to apply Ratner's measure classification theorem.

In Section \ref{section Margulis' height function} and Section \ref{section mixed height functions}, we will construct mixed height function $\beta_{\mathbf{m}}$ for $\mathbf{m}\in \bZ^k\setminus \{\mathbf{0}\}$, and {give a proof of} its uniform contraction property.

In Section \ref{section proof of main proposition}, we prove Proposition \ref{limit measure spend very little measure on singular sets} using mixed height function.

In Section \ref{section: more general version of theorem genericity condition for special}, we deduce variants of Theorem \ref{genericity for special trajectories}.

In Section \ref{section: application to universal hitting time}, we deduce an application to universal hitting time statistics.

\section{Reductions and proof of Theorem \ref{genericity for special trajectories}}\label{section Reductions and proof}

In this section, assuming several Theorems/Propositions/Lemmas that will be proved later, we give a proof of Theorem \ref{genericity for special trajectories}.

By Proposition \ref{unipotent invariance for a.e. s}, for a.e. $\mathbf{s}\in \mathcal{U}$, {after possibly} passing to a subsequence the weak* limit $\mu_{\mathbf{s}}$ of $\mu_{\mathbf{s},T}$ is $U$-invariant.
From the definition of $\mu_{\mathbf{s},T}$ (see (\ref{definition of mu s})), it follows that $\mu_{\mathbf{s}}$ is also $D=\{a_t:t\in \bR\}$-invariant.
Hence for a.e. $\mathbf{s}\in \mathcal{U}$, $\mu_{\mathbf{s}}$ is $DU$-invariant.
Note that $DU$ is an epimorphic subgroup of $G\prm=SL_d(\bR)$. By \cite{Mozes_Epimorphic_subgroups_1995}, {as} $\mu_{\mathbf{s}}$ is a probability measure invariant under $DU$, $\mu_{\mathbf{s}}$ is $G\prm$-invariant. By Ratner's measure classification theorem, any $G\prm$ invariant and ergodic probability measure is supported on an orbit {closure} of $G\prm$ on $X$.

A consequence of Ratner's orbit closure theorem ({Theorem} \ref{orbit closure of certain base point}) shows that any orbit closure of $G\prm$ is either

(1) the whole $X$, or

(2) {concentrated} in a proper closed submanifold $X_{\mathbf{m}}$ for some $\mathbf{m}\in \bZ^k\setminus \{\mathbf{0}\}$, where \begin{align*}
    X_{\mathbf{m}}=\{(g,g\mathbf{v})\Gamma:g\in G\prm,\mathbf{v}\cdot\mathbf{m}\in \bZ^d\}.
\end{align*}

Therefore, it remains to show that for a.e. $\mathbf{s}\in \mathcal{U}$, $\mu_{\mathbf{s}}$ is a probability measure on $X$ and $\mu_{\mathbf{s}}(X_{\mathbf{m}})=0$ for any $\mathbf{m}\in \bZ^k\setminus \{\mathbf{0}\}$.

Let 
\begin{align}\label{definition of M1}
    M_1:=N_1 \cdot (\max_{1\leq i \leq r, 1\leq j \leq d-r}\sup_{s\in \mathcal{U}}\norm{\partial_{ij}\boldsymbol{\varphi}(\mathbf{s})}_{\infty})+1,
\end{align}
{where $N_1=8r^2 k^{1/2}(d-r)$, and $\partial_{ij} \boldsymbol{\varphi}$ is a $d$ by $k$ matrix whose $(p,q)$-th entry is $\partial \varphi_{pq}/ \partial s_{ij}$. Here the choice of $N_1$ is flexible, we just choose a value for $N_1$ that is convenient for us.}

By assumption (\ref{genericity condition for special}) of Theorem \ref{genericity for special trajectories}, for any $\mathbf{m}\in \bZ^k\setminus \{\mathbf{0}\}$, the set
\begin{align}\label{genericity condition under regularity}
   Bad_{\mathbf{m}}=\{\mathbf{s}\in \mathcal{U}:\exists \mathbf{a}\in \bZ^{d-r}, \mathbf{b}\in \bZ^r \text{ such that } (\boldsymbol{\varphi}(\mathbf{s}))_{\leq r}\boldsymbol{\cdot }\mathbf{m}= \mathbf{s}
    \boldsymbol{\cdot} \mathbf{a}+\mathbf{b},\nonumber \\ \text{ and } \norm{\mathbf{a}}_{\infty}\leq M_1 \norm{\mathbf{m}}
    \}
\end{align}
has Lebesgue measure zero.

Since there are only finitely many $\mathbf{a}\in \bZ^{d-r}$ such that $\norm{\mathbf{a}}_{\infty}\leq M_1\cdot \norm{\mathbf{m}}$, $Bad_{\mathbf{m}}$ is a closed set with Lebesgue measure zero. Thus to prove Theorem \ref{genericity for special trajectories}, it suffices to prove it for a closed cube contained in $\mathcal{U}\setminus Bad_{\mathbf{m}}$. {Now let's fix a closed cube $I\subset \mathcal{U}\setminus Bad_{\boldsymbol{m}}$.}

Let $K$ be a measurable subset of $X$. For any $T>0$, we define the average operator $\mathcal{A}_K^T:\mathcal{U}\to [0,1]$ by 
\begin{align*}
    \mathcal{A}_K^{T}(\mathbf{s})=\frac{1}{T}\int_0^T \chi_K(a_t u_{\boldsymbol{\varphi}}(\mathbf{s})\Gamma)dt=\mu_{\mathbf{s},T}(K),
\end{align*}
where $\chi_K$ is the characteristic function of $K$. 

The key proposition, which ensures that $\mu_{\mathbf{s}}$ is a probability measure putting zero mass on $X_{\mathbf{m}}$ for a.e. $\mathbf{s}\in I$, is the following:

\begin{proposition}\label{limit measure spend very little measure on singular sets}
Let $\mathbf{m}\in \bZ^k\setminus \{\mathbf{0}\}$. Let $\boldsymbol{\varphi}:I\to (\bR^d)^k$ be a $C^1$ map satisfying $(\boldsymbol{\varphi}(\mathbf{s}))_{>r}\equiv \mathbf{0}$ for any $\mathbf{s}\in I$. Suppose that 
\begin{align}\label{consequence of regularity condition}
      \inf_{\mathbf{s}\in I}\{\norm{(\boldsymbol{\varphi}(\mathbf{s}))_{\leq r}\boldsymbol{\cdot }\mathbf{m}-\mathbf{s}
    \boldsymbol{\cdot} \mathbf{a}-\mathbf{b}}_{\infty}
    : \norm{\mathbf{a}}_{\infty}\leq M_1 \norm{\mathbf{m}}, \mathbf{a}\in \bZ^{d-r},\mathbf{b}\in \bZ^r
    \}>0.
\end{align}
Then for any $\epsilon>0$, there exists a compact subset ${K_{\epsilon}}\subset X\setminus X_{\mathbf{m}}$ and $\mathfrak{v}>0$ such that for any $T>0$,
\begin{align}\label{exponential decay of average operator}
    |\{\mathbf{s}\in I: {\mathcal{A}_{K_{\epsilon}}^T}(\mathbf{s})\leq 1-\epsilon \}|\leq e^{-\mathfrak{v}T}|I|.
\end{align}
\end{proposition}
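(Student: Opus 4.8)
The plan is to prove Proposition \ref{limit measure spend very little measure on singular sets} by the now-standard exponential-decay-of-averages technique based on a height function with uniform contraction, adapted from \cite{Shi_Ulcigrai_Genericity_on_curves_2018} and ultimately \cite{Eskin_Margulis_Recurrence}, \cite{kleinbock1998flows}. The core object is a mixed height function $\beta_{\mathbf{m}}$ on $X$, to be constructed in Sections \ref{section Margulis' height function}--\ref{section mixed height functions}, which simultaneously blows up near the cusp and near the singular submanifold $X_{\mathbf{m}}$ (using the condition $(\ref{consequence of regularity condition})$ to guarantee the relevant ``short'' integer vectors stay bounded away from the obstruction locus along $I$). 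I would first record the key analytic input: a uniform contraction inequality of the shape
\begin{align}\label{plan:contraction}
    \frac{1}{L}\int_0^L \beta_{\mathbf{m}}(a_t g \Gamma)\, dt \leq c\, \beta_{\mathbf{m}}(g\Gamma) + b
\end{align}
for suitable fixed $L>0$, $c\in(0,1)$, $b>0$, valid for all $g\Gamma$ in the relevant piece of $X$ (or at least along trajectories $a_t u_{\boldsymbol{\varphi}}(\mathbf{s})\Gamma$ with $\mathbf{s}\in I$); this is where Lemma \ref{basic linear algebra lemma} enters to handle the higher-rank bookkeeping. From $(\ref{plan:contraction})$ one iterates to get, for all $T>0$ and $\mathbf{s}\in I$, a bound of the form $\frac{1}{T}\int_0^T \beta_{\mathbf{m}}(a_t u_{\boldsymbol{\varphi}}(\mathbf{s})\Gamma)\,dt \leq C_1 \beta_{\mathbf{m}}(u_{\boldsymbol{\varphi}}(\mathbf{s})\Gamma) + C_2$, with $C_1,C_2$ independent of $T$; since $\beta_{\mathbf{m}}$ is bounded on the compact set $u_{\boldsymbol{\varphi}}(I)\Gamma$, this gives a uniform-in-$T$ bound $B$ on the time average of $\beta_{\mathbf{m}}$ along every trajectory based in $I$.

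Next I would combine this with a variance/large-deviation estimate in the transversal parameter $\mathbf{s}$. The point is that a single trajectory need not avoid the cusp, but the family over $\mathbf{s}\in I$ is ``transversal enough'' (here one uses the $C^1$ structure of $\boldsymbol{\varphi}$, the lower bound $(\ref{consequence of regularity condition})$, and the expansion of $U$ under $a_t$) that after applying $a_t$ the parameter slice spreads across $X$. Concretely, I would aim to show that the function $\mathbf{s}\mapsto \beta_{\mathbf{m}}(a_t u_{\boldsymbol{\varphi}}(\mathbf{s})\Gamma)$, integrated over $I$, satisfies the same kind of contraction as $(\ref{plan:contraction})$, i.e. an integrated inequality $\int_I \beta_{\mathbf{m}}(a_L u_{\boldsymbol{\varphi}}(\mathbf{s})\Gamma)\,d\mathbf{s} \leq c'\int_I \beta_{\mathbf{m}}(u_{\boldsymbol{\varphi}}(\mathbf{s})\Gamma)\,d\mathbf{s} + b'|I|$, possibly after replacing $I$ by a slightly smaller cube and subdividing into small subcubes on which $u_{\boldsymbol{\varphi}}$ is well-approximated by its linearization; the self-similar structure of $U$ under $a_t$ (so that $a_L u(\mathbf{s}) = u(e^{dL}\mathbf{s}) a_L$, up to the unipotent correction from $\boldsymbol{\varphi}$) is what makes this work, and the singular-set part of $\beta_{\mathbf{m}}$ is controlled precisely because $I \cap Bad_{\mathbf{m}} = \emptyset$ forces the offending lattice points to have norm $> M_1\|\mathbf{m}\|$, outside the range that could cause trouble.

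Then the proof concludes as follows. Given $\epsilon>0$, choose the compact set $K_\epsilon = \{x\in X : \beta_{\mathbf{m}}(x) \leq R_\epsilon\}$ for $R_\epsilon$ large; since $\beta_{\mathbf{m}}$ is proper and blows up near $X_{\mathbf{m}}$, $K_\epsilon$ is a compact subset of $X\setminus X_{\mathbf{m}}$. If $\mathcal{A}_{K_\epsilon}^T(\mathbf{s}) \leq 1-\epsilon$, then the trajectory spends at least an $\epsilon$-proportion of $[0,T]$ with $\beta_{\mathbf{m}}(a_t u_{\boldsymbol{\varphi}}(\mathbf{s})\Gamma) > R_\epsilon$, so $\frac{1}{T}\int_0^T \beta_{\mathbf{m}}(a_t u_{\boldsymbol{\varphi}}(\mathbf{s})\Gamma)\,dt > \epsilon R_\epsilon$. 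Hence $\{\mathbf{s}\in I : \mathcal{A}_{K_\epsilon}^T(\mathbf{s}) \leq 1-\epsilon\} \subset \{\mathbf{s}\in I : \frac{1}{T}\int_0^T \beta_{\mathbf{m}}(a_t u_{\boldsymbol{\varphi}}(\mathbf{s})\Gamma)\,dt > \epsilon R_\epsilon\}$, and the measure of the latter is estimated by a Chebyshev/Markov argument fed by the iterated integrated contraction inequality: one shows $\int_I \frac{1}{T}\int_0^T \beta_{\mathbf{m}}(a_t u_{\boldsymbol{\varphi}}(\mathbf{s})\Gamma)\,dt\, d\mathbf{s}$ stays bounded, but to get the \emph{exponential} rate $e^{-\mathfrak{v}T}$ one instead applies the contraction at the discrete times $t = jL$ to control $\int_I e^{\delta \sum_j \log(1+\beta_{\mathbf{m}})}$ or uses the Eskin--Margulis-type argument directly on $\int_I (\beta_{\mathbf{m}}(a_T u_{\boldsymbol{\varphi}}(\mathbf{s})\Gamma))\,d\mathbf{s} \leq c'^{T/L}\,(\sup_I \beta_{\mathbf{m}}(u_{\boldsymbol{\varphi}}(\cdot)\Gamma))|I| + \frac{b'}{1-c'}|I|$ together with a maximal inequality to pass from the single time $T$ to the whole average over $[0,T]$; choosing $R_\epsilon$ large relative to the stationary bound $\frac{b'}{1-c'}$ and extracting the geometric factor gives $(\ref{exponential decay of average operator})$ with $\mathfrak{v}$ depending on $\epsilon$, $\mathbf{m}$, $c'$, $L$. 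The main obstacle I anticipate is establishing the integrated contraction inequality in the transversal direction with a genuinely uniform rate: this requires the uniform contraction property of $\beta_{\mathbf{m}}$ (the content of the linear algebra Lemma \ref{basic linear algebra lemma} and the delicate case analysis of which integer vectors realize the height) together with a careful choice of subcube scale so that the linearization error in $\boldsymbol{\varphi}$ does not destroy the contraction — this is exactly the ``technical difficulties arise in the proof of uniform contraction property'' flagged in the introduction, and it is where the bulk of the work lies.
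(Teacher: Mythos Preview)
Your overall architecture matches the paper's: build the mixed height function $\beta_{\mathbf m}$ (Proposition \ref{contraction of mixed height functions}), take $K_\epsilon=\{\beta_{\mathbf m}\le R\}$, and convert a contraction inequality into the exponential estimate (\ref{exponential decay of average operator}). Two points, however, are off. First, the trajectory-wise inequality you label (\ref{plan:contraction}) is not what is proved or used; the paper never establishes contraction along a single $a_t$-orbit. The operative contraction is purely in the transversal variable $\mathbf s$, and crucially it holds not on $I$ but on each atom of a refining filtration $\{\mathcal F_n\}$ whose boxes at level $n$ have sidelength $\asymp e^{-dnt}$ (Proposition \ref{contraction of mixed height functions}(5)). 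The filtration is essential: after conjugating by $a_{nt}$, the relevant ``local'' piece of $U$ has shrunk by $e^{-dnt}$, and the contraction plus the Lipschitz property (3) are calibrated to that scale.

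Second, your mechanism for extracting the exponential rate does not work as written. A Chebyshev bound fed by $\int_I\frac1T\int_0^T\beta_{\mathbf m}\,dt\,d\mathbf s\le C|I|$ gives only a rate independent of $T$; and your alternative $\int_I\beta_{\mathbf m}(a_T u_{\boldsymbol\varphi}(\mathbf s)\Gamma)\,d\mathbf s\le c'^{T/L}M|I|+\tfrac{b'}{1-c'}|I|$ has a nonvanishing additive term, so Markov on it yields at best $O(1/R_\epsilon)|I|$, again not $e^{-\mathfrak v T}$. The paper instead invokes \cite[Lemma 6.20]{Shi_Ulcigrai_Genericity_on_curves_2018} (restated here as the lemma preceding the proof of Proposition \ref{limit measure spend very little measure on singular sets}), which takes as input exactly the filtration-level contraction (7.1), the Lipschitz property (7.2), and the initial boundedness (7.3), and outputs the exponential large-deviation bound directly. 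The exponential gain comes from iterating the contraction \emph{atom by atom down the filtration}, which is a supermartingale/return-time argument rather than a single Markov inequality; your sketch of ``maximal inequality'' gestures at this but does not supply it. In short: drop the pointwise contraction, set up the filtration explicitly, verify (7.1)--(7.3) from Proposition \ref{contraction of mixed height functions}, and cite \cite[Lemma 6.20]{Shi_Ulcigrai_Genericity_on_curves_2018} for the passage to (\ref{exponential decay of average operator}).
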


It will be proved in Lemma \ref{proof of consequence of regularity condition} that condition (\ref{consequence of regularity condition}) in Proposition \ref{limit measure spend very little measure on singular sets} follows from condition (\ref{genericity condition for special}) in Theorem \ref{genericity for special trajectories}.

Proposition \ref{limit measure spend very little measure on singular sets} will be proved in Section \ref{section mixed height functions}.
Combining Borel-Cantelli lemma, a direct consequence of Proposition \ref{limit measure spend very little measure on singular sets} is the following:

\begin{proposition}\label{limit measure spend zero measure on singular sets}
Under the assumptions of Theorem \ref{genericity for special trajectories}, for a.e. $\mathbf{s}\in \mathcal{U}$, by possibly passing to a subsequence, { $\mu_{\mathbf{s},T}$ converges to a probability measure $\mu_{\mathbf{s}}$ on $X$ in weak*-topology as $T\to \infty$, and $\mu_{\mathbf{s}}(X_{\mathbf{m}})=0$ for any $\mathbf{m}\in \bZ^k\setminus \{\mathbf{0}\}$.}
\end{proposition}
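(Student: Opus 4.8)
The plan is to derive Proposition~\ref{limit measure spend zero measure on singular sets} from Proposition~\ref{limit measure spend very little measure on singular sets} by a Borel--Cantelli argument. First I would recall the setup: by Proposition~\ref{unipotent invariance for a.e. s} and the epimorphicity argument already given, for a.e.\ $\mathbf{s}\in\mathcal{U}$ every weak* subsequential limit $\mu_{\mathbf{s}}$ of $\mu_{\mathbf{s},T}$ that happens to be a probability measure is $G\prm$-invariant, hence (by Ratner plus Theorem~\ref{orbit closure of certain base point}) is supported either on all of $X$ or on some $X_{\mathbf{m}}$. So the content to be proved is that, for a.e.\ $\mathbf{s}$, no mass escapes to the cusp \emph{and} $\mu_{\mathbf{s}}(X_{\mathbf{m}})=0$ for every $\mathbf{m}\in\bZ^k\setminus\{\mathbf 0\}$. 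Since $\bZ^k\setminus\{\mathbf 0\}$ is countable, it suffices to fix $\mathbf{m}$, show the conclusion holds for a.e.\ $\mathbf{s}$, and intersect over $\mathbf{m}$. Moreover, by the reduction already made in the text (the sets $Bad_{\mathbf{m}}$ are closed and null), it suffices to work on a fixed closed cube $I\subset\mathcal{U}\setminus Bad_{\mathbf{m}}$; on such a cube, Lemma~\ref{proof of consequence of regularity condition} guarantees that hypothesis~(\ref{consequence of regularity condition}) of Proposition~\ref{limit measure spend very little measure on singular sets} is satisfied, so the proposition applies.

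Next, fix $\mathbf{m}$ and such a cube $I$. For each $n\in\mathbb{N}$ apply Proposition~\ref{limit measure spend very little measure on singular sets} with $\epsilon=1/n$: there is a compact $K_{1/n}\subset X\setminus X_{\mathbf{m}}$ and $\mathfrak{v}_n>0$ with
\begin{align*}
    |\{\mathbf{s}\in I:\mathcal{A}_{K_{1/n}}^{T}(\mathbf{s})\le 1-\tfrac1n\}|\le e^{-\mathfrak{v}_n T}|I|
\end{align*}
for all $T>0$. Take a sequence $T_j\to\infty$ (say $T_j=j$) and set $B_{n,j}=\{\mathbf{s}\in I:\mathcal{A}_{K_{1/n}}^{T_j}(\mathbf{s})\le 1-\tfrac1n\}$. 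Since $\sum_j e^{-\mathfrak{v}_n T_j}<\infty$, Borel--Cantelli gives that for a.e.\ $\mathbf{s}\in I$ one has $\mathbf{s}\notin B_{n,j}$ for all large $j$, i.e.\ $\liminf_{j\to\infty}\mu_{\mathbf{s},T_j}(K_{1/n})\ge 1-\tfrac1n$. Intersecting over $n\in\mathbb{N}$ (a countable intersection of conull sets), we get a conull set $I_{\mathbf{m}}\subset I$ such that for every $\mathbf{s}\in I_{\mathbf{m}}$ and every $n$, $\liminf_{j}\mu_{\mathbf{s},T_j}(K_{1/n})\ge 1-\tfrac1n$.

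Now fix $\mathbf{s}\in I_{\mathbf{m}}$. The family $\{\mu_{\mathbf{s},T_j}\}_j$ has uniformly bounded mass $1$, so by weak* compactness on the one-point compactification (or directly, using the $K_{1/n}$ as an exhausting sequence of compacta) some subsequence converges weak* to a Radon measure $\mu_{\mathbf{s}}$ on $X$; the lower bound $\liminf_j\mu_{\mathbf{s},T_j}(K_{1/n})\ge 1-1/n$ together with the portmanteau inequality for the open set $X\setminus K_{1/n}^{c}$ — more precisely, $\mu_{\mathbf{s}}(X)\ge\mu_{\mathbf{s}}(\mathrm{int}\,K_{1/n})\ge\limsup$ is the wrong direction, so instead one uses that for any compact $K$, $\mu_{\mathbf{s}}(K)\ge\limsup_j\mu_{\mathbf{s},T_j}(U)$ for open $U\supset K$ fails too; the clean statement is: for compact $K_{1/n}$ and any open $O\supset K_{1/n}$ with $\overline{O}$ compact, $\mu_{\mathbf{s}}(\overline O)\ge\limsup_j\mu_{\mathbf{s},T_j}(K_{1/n})$ via a bump function $f$ with $\chi_{K_{1/n}}\le f\le\chi_{O}$, giving $\mu_{\mathbf{s}}(\overline O)\ge\int f\,d\mu_{\mathbf{s}}=\lim_j\int f\,d\mu_{\mathbf{s},T_j}\ge\liminf_j\mu_{\mathbf{s},T_j}(K_{1/n})\ge 1-\tfrac1n$. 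Letting $n\to\infty$ shows $\mu_{\mathbf{s}}$ is a probability measure (no escape of mass), and since $\mu_{\mathbf{s}}$ is concentrated on $\bigcup_n K_{1/n}\subset X\setminus X_{\mathbf{m}}$ we get $\mu_{\mathbf{s}}(X_{\mathbf{m}})=0$. Finally, intersecting the conull sets $I_{\mathbf{m}}$ over all $\mathbf{m}\in\bZ^k\setminus\{\mathbf 0\}$ (after patching the cube reductions, which is harmless since a countable union of null sets is null) yields a conull subset of $\mathcal{U}$ on which every subsequential limit along $(T_j)$ is a probability measure annihilating all $X_{\mathbf{m}}$; combined with the $G\prm$-invariance established earlier, this is exactly the claim.

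The main obstacle I anticipate is bookkeeping rather than depth: one must be careful that the compact sets $K_{1/n}$ produced by Proposition~\ref{limit measure spend very little measure on singular sets} depend on both $\mathbf{m}$ and $n$, so the exhaustion argument must be organized $\mathbf{m}$ by $\mathbf{m}$, and the final conull set is a countable intersection over $(\mathbf{m},n)$ and over the cubes tiling $\mathcal{U}\setminus Bad_{\mathbf{m}}$. One also has to handle the passage from ``liminf of masses of a fixed sequence $\mu_{\mathbf{s},T_j}$'' to ``every weak* subsequential limit is a probability measure'': the cleanest route is to first extract, for each $\mathbf{s}$ in the conull set, \emph{some} subsequential limit $\mu_{\mathbf{s}}$ (which is all Proposition~\ref{limit measure spend zero measure on singular sets} claims — note the phrase ``by possibly passing to a subsequence''), and verify the two properties for that $\mu_{\mathbf{s}}$; no uniqueness of the limit is asserted or needed here.
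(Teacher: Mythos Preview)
Your proposal is correct and follows essentially the same route as the paper: apply Proposition~\ref{limit measure spend very little measure on singular sets} with $\epsilon=1/n$, run Borel--Cantelli along integer times $T_j=j$ to obtain a conull set $I_{\mathbf{m}}$ on which $\mu_{\mathbf{s}}(X)\ge 1-1/n$ and $\mu_{\mathbf{s}}(X_{\mathbf{m}})\le 1/n$ for every $n$, then intersect over the countably many $\mathbf{m}$. The only cosmetic difference is that the paper suppresses the portmanteau/bump-function step you spell out (and stumble through) in the middle; in a clean write-up you should delete the false starts and go straight to the sandwich $\chi_{K_{1/n}}\le f\le\chi_{O}$ with $\overline{O}$ compact in $X\setminus X_{\mathbf{m}}$.
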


\begin{proof}
Fix an $\mathbf{m}\in \bZ^k\setminus \{\mathbf{0}\}$ and $\epsilon>0$. By Proposition \ref{limit measure spend very little measure on singular sets}, we can choose a compact subset $K_{\epsilon}$ of $X\setminus X_{\mathbf{m}}$ such that (\ref{exponential decay of average operator}) holds for any $T>0$. Let $T=n\in \mathbb{N}$ and apply Borel-Cantelli lemma to the collection of the sets
\begin{align*}
    \{\mathbf{s}\in I:\mathcal{A}_{K_{\epsilon}}^n(\mathbf{s})\leq 1-\epsilon\},  n\in \mathbb{N}.
\end{align*}
We can find a measurable subset $I_{\mathbf{m}}^{\epsilon}$ of $I$ with full measure such that for any $\mathbf{s}\in I_{\mathbf{m}}^{\epsilon}$, $\mathcal{A}_{K_{\epsilon}}^n(\mathbf{s})> 1-\epsilon$ for {all sufficiently large} $n\in \mathbb{N}$.
Therefore, for any $\mathbf{s}\in I_{\mathbf{m}}^{\epsilon}$, $\mu_{\mathbf{s}}(X)\geq 1-\epsilon$ and $\mu_{\mathbf{s}}(X_{\mathbf{m}})\leq \epsilon$. Let $I_{\mathbf{m}}=\cap_{n=1}^{\infty}I_{\mathbf{m}}^{\frac{1}{n}}$, then $I_{\mathbf{m}}$ has full Lebesgue measure in $I$, and for any $\mathbf{s}\in I_{\mathbf{m}}$, $\mu_{\mathbf{s}}(X)=1$ and $\mu_{\mathbf{s}}(X_{\mathbf{m}})=0$.

To complete the proof, we let $I\prm=\bigcap_{\mathbf{m}\in \bZ^k\setminus\{\mathbf{0}\}}I_{\mathbf{m}}$. Then $I\prm$ has full Lebesgue measure in $I$ and the proposition holds for all $\mathbf{s}\in I\prm$.
\end{proof}

Assuming Proposition \ref{limit measure spend zero measure on singular sets}, Proposition \ref{unipotent invariance for a.e. s} and Theorem \ref{orbit closure of certain base point}, we are ready to prove Theorem \ref{genericity for special trajectories}:

\begin{proof}[Proof of Theorem \ref{genericity for special trajectories}]
By Proposition \ref{limit measure spend zero measure on singular sets} and Proposition \ref{unipotent invariance for a.e. s}, we conclude that for a.e. $\mathbf{s}\in \mathcal{U}$, the weak* limit $\mu_{\mathbf{s}}$ of $\mu_{\mathbf{s},T}$ as $T\to \infty$ is

(1) a probability measure on $X$, and $\mu_{\mathbf{s}}(X_{\mathbf{m}})=0$ for any $\mathbf{m}\in \bZ^k\setminus \{\mathbf{0}\}$;

(2) $DU$-invariant.

Since $DU$ is an epimorphic subgroup of $G\prm=SL_d(\bR)$, and $\mu_{\mathbf{s}}$ is a $DU$-invariant probability measure on $X$, $\mu_{\mathbf{s}}$ is $G\prm$-invariant  by \cite[Theorem 1]{Mozes_Epimorphic_subgroups_1995}.

By Ratner's measure classification theorem \cite{ratner1991raghunathanannals}, any ergodic component of such $\mu_{\mathbf{s}}$ is supported on an orbit closure of $G\prm$ on $X$. Theorem \ref{orbit closure of certain base point} describes all the possible orbit closures of $G\prm$ on $X$: either it is $X$ or it is concentrated on $X_{\mathbf{m}}$ for some $\mathbf{m}\in \bZ^k\setminus \{\mathbf{0}\}$.

Since $\mu_{\mathbf{s}}(X_{\mathbf{m}})=0$ for any $\mathbf{m}\in \bZ^k\setminus \{\mathbf{0}\}$, we conclude that for a.e. $\mathbf{s}\in \mathcal{U}$, $\mu_{\mathbf{s}}=\mu_X$.
\end{proof}
We note the following
\begin{lemma}\label{two asymptotic parallel trajectories converge to the same}
Assume that for some $x\in X=G/\Gamma$,  
\begin{align*}
    \frac{1}{T}\int_0^T \delta_{a_t x}dt\xrightarrow{T\to \infty}\mu_{G/\Gamma}, \text{ in weak*-topology},
    \end{align*}
and for some $g\in G$, $a_t g a_{-t}\to Id\in G$ as $t\to \infty$, then
\begin{align*}
    \frac{1}{T}\int_0^T \delta_{a_t gx}dt\xrightarrow{T\to\infty}\mu_{G/\Gamma}, \text{ in weak*-topology}.
\end{align*}
\end{lemma}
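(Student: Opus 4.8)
The plan is to exploit the fact that the difference between the orbits $\{a_t x\}$ and $\{a_t g x\}$ contracts to the identity, so that averaging over long time windows smears out the initial discrepancy. Concretely, fix $f \in C_c(X)$; since $X=G/\Gamma$ is a homogeneous space and $f$ has compact support, $f$ is uniformly continuous with respect to a right-invariant metric on $G$ (equivalently, $f(h_1 x) - f(h_2 x) \to 0$ uniformly in $x$ as $h_1 h_2^{-1} \to Id$). I would write
\[
\frac{1}{T}\int_0^T f(a_t g x)\,dt = \frac{1}{T}\int_0^T f\bigl((a_t g a_{-t})\, a_t x\bigr)\,dt,
\]
and set $g_t := a_t g a_{-t}$, so that $g_t \to Id$ as $t \to \infty$ by hypothesis.

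Next I would split the time integral at a threshold $T_0$. On $[0,T_0]$ the integrand is bounded by $\norm{f}_\infty$, so that piece contributes at most $\tfrac{T_0}{T}\norm{f}_\infty \to 0$ as $T\to\infty$. On $[T_0,T]$, given $\varepsilon>0$, choose $T_0$ large enough (using $g_t\to Id$ and uniform continuity of $f$) so that $|f(g_t y) - f(y)| < \varepsilon$ for all $y\in X$ and all $t\ge T_0$. Then
\[
\left|\frac{1}{T}\int_{T_0}^T f(g_t a_t x)\,dt - \frac{1}{T}\int_{T_0}^T f(a_t x)\,dt\right| \le \varepsilon,
\]
and the second average converges to $\int_X f\,d\mu_{G/\Gamma}$ as $T\to\infty$ by the hypothesis on $x$ (the $[0,T_0]$ tail again being negligible). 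Combining the three estimates gives
\[
\limsup_{T\to\infty}\left|\frac{1}{T}\int_0^T f(a_t g x)\,dt - \int_X f\,d\mu_{G/\Gamma}\right| \le \varepsilon,
\]
and since $\varepsilon>0$ is arbitrary the limit is exactly $\int_X f\,d\mu_{G/\Gamma}$. As $f\in C_c(X)$ was arbitrary, this is precisely weak*-convergence of $\tfrac{1}{T}\int_0^T \delta_{a_t g x}\,dt$ to $\mu_{G/\Gamma}$.

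The only genuine subtlety is justifying the "uniform continuity" step: I want $\sup_{y\in X}|f(g_t y) - f(y)|$ small once $g_t$ is close to $Id$. Since $f$ is continuous with compact support on $X = G/\Gamma$, one can lift to $G$ and use that $f$ is uniformly continuous for a right-invariant metric $d_G$, and that left translation by an element $g_t$ near $Id$ moves every point of $G$ a $d_G$-distance that is small and uniformly bounded on the (compact mod $\Gamma$) relevant region — strictly speaking one uses that the support of $f$ together with a fixed neighborhood is covered by finitely many coordinate charts. This is standard and routine, so I do not expect it to be a real obstacle; the argument is otherwise a soft approximation argument with no use of the specific structure of $G$, $\Gamma$, or $a_t$ beyond the contraction hypothesis $a_t g a_{-t}\to Id$.
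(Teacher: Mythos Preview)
Your argument is correct and is precisely the standard proof of this elementary fact. The paper actually states this lemma without proof, treating it as well known, so there is no ``paper's proof'' to compare against; your write-up would serve as a perfectly good justification.

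One minor remark on your last paragraph: the uniform continuity step is cleaner than you make it sound. For $f\in C_c(X)$ with support $K$, fix a compact symmetric neighborhood $V$ of $Id$ in $G$; then for $h\in V$ and $y\in X$, either both $f(hy)$ and $f(y)$ vanish, or $y\in VK$. The map $(h,y)\mapsto f(hy)-f(y)$ is continuous on the compact set $V\times VK$ and vanishes on $\{Id\}\times VK$, so by compactness $\sup_{y}|f(hy)-f(y)|\to 0$ as $h\to Id$. No lifting or right-invariant metric subtleties are needed.
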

For any $\boldsymbol{\varphi}:Mat_{r\times(d-r)}(\bR)\to (\bR^d)^k$, and any $\mathbf{s}\in Mat_{r\times(d-r)}(\bR)$, we can write
\begin{align*}
    a_t u_{\boldsymbol{\varphi}}(\mathbf{s})&= a_t (u(\mathbf{s}),\boldsymbol{\varphi}(\mathbf{s}))\\&=a_t(Id,\begin{bmatrix}
        \mathbf{0}\\
        (\boldsymbol{\varphi}(\mathbf{s}))_{>r}
    \end{bmatrix}
    )a_{-t}\cdot a_t (u(\mathbf{s}),\begin{bmatrix}
        (\boldsymbol{\varphi}(\mathbf{s}))_{\leq r}\\
        \mathbf{0}
    \end{bmatrix}
    ),
\end{align*}
where
\begin{align*}
    \boldsymbol{\varphi}(\mathbf{s})=\begin{bmatrix}
        \mathbf{0}\\
        (\boldsymbol{\varphi}(\mathbf{s}))_{>r}
        \end{bmatrix}+\begin{bmatrix}
        (\boldsymbol{\varphi}(\mathbf{s}))_{\leq r}\\
        \mathbf{0}
    \end{bmatrix}.
\end{align*}
Since
\begin{align*}
    a_t(Id,\begin{bmatrix}
        \mathbf{0}\\
        (\boldsymbol{\varphi}(\mathbf{s}))_{>r}
    \end{bmatrix})a_{-t}\to (Id,\mathbf{0}),
\end{align*}
by Lemma \ref{two asymptotic parallel trajectories converge to the same} and Theorem \ref{genericity for special trajectories}, Corollary \ref{corollary genericity for any c1 varphi} is proven.

\section{Orbit closure}\label{Section Orbit closure}
In this section, we will classify all orbit closures of $G\prm$ in $X$ following \cite{Marklof_Universal_hitting_time_2017}. Recall that $G\prm=SL_d(\bR)$ and $G=SL_d(\bR)\ltimes (\bR^d)^k$.

Consider a base point $(Id,\boldsymbol{\xi})\in G$. Since $G\prm$ is a simple Lie group, an application of Ratner's orbit closure theorem gives the following theorem describing the orbit closure of $G\prm\cdot(Id,\boldsymbol{\xi})\Gamma/\Gamma$ in $G/\Gamma$:

\begin{theorem}\label{orbit closure of certain base point}
The orbit closure $\overline{G\prm\cdot(Id,\boldsymbol{\xi})\Gamma/\Gamma}$ is $G/\Gamma$ if and only if for any $\mathbf{m}\in \bZ^k\setminus \{\mathbf{0}\}$, $\boldsymbol{\xi}\cdot \mathbf{m}\notin \bZ^d$.
\end{theorem}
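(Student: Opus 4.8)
My plan is to prove the two implications separately. The forward direction is elementary and proceeds by contraposition: if $\boldsymbol{\xi}\cdot\mathbf m_0\in\mathbb Z^d$ for some $\mathbf m_0\in\mathbb Z^k\setminus\{\mathbf 0\}$, I would check that the set $X_{\mathbf m_0}=\{(g,g\mathbf v)\Gamma:g\in G',\ \mathbf v\cdot\mathbf m_0\in\mathbb Z^d\}$ is a well-defined, proper, closed, $G'$-invariant subset of $G/\Gamma$. Well-definedness and closedness follow because the map $(g,\mathbf w)\mapsto(g^{-1}\mathbf w)\cdot\mathbf m_0$ is continuous $G\to\mathbb R^d$ and the condition ``$(g^{-1}\mathbf w)\cdot\mathbf m_0\in\mathbb Z^d$'' is unchanged under right multiplication by $\Gamma$ (using $SL_d(\mathbb Z)\mathbb Z^d=\mathbb Z^d$ and $(\mathbb Z^d)^k\cdot\mathbf m_0\subseteq\mathbb Z^d$) and under left multiplication by $G'$; properness holds since the defining condition can fail. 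Since $(Id,\boldsymbol{\xi})\Gamma\in X_{\mathbf m_0}$, the orbit closure lies in $X_{\mathbf m_0}\subsetneq G/\Gamma$.

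For the converse, assume $\boldsymbol{\xi}\cdot\mathbf m\notin\mathbb Z^d$ for all $\mathbf m\in\mathbb Z^k\setminus\{\mathbf 0\}$ and put $x=(Id,\boldsymbol{\xi})\Gamma$. Since $G'=SL_d(\mathbb R)$ is connected and generated by one-parameter $\mathrm{Ad}$-unipotent subgroups (each $Id+tE_{ij}$, $i\neq j$, acts unipotently on $\mathfrak{sl}_d$ and on every copy of $\mathbb R^d$), Ratner's orbit closure theorem \cite{ratner1991raghunathanannals} furnishes a closed connected subgroup $H$ with $G'\subseteq H\subseteq G$, $\overline{G'x}=Hx$, $Hx$ closed, and $\Lambda:=H\cap(Id,\boldsymbol{\xi})\Gamma(Id,\boldsymbol{\xi})^{-1}$ a lattice in $H$. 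I would then pin down $H$: writing $W=(\mathbb R^d)^k$ for the translation subgroup and $V=H\cap W$, the subgroup $V$ is $G'$-invariant; as $G'$ is connected and acts on $W\cong\mathrm{Mat}_{d\times k}(\mathbb R)$ by left multiplication with no nonzero fixed vector, $V$ has no nontrivial discrete part, so it is a linear subspace, and by absolute irreducibility of the standard representation the only $SL_d(\mathbb R)$-invariant subspaces of $\mathrm{Mat}_{d\times k}(\mathbb R)$ are $V_L:=\{M:\text{every row of }M\text{ lies in }L\}$ for linear subspaces $L\subseteq\mathbb R^k$. A short computation gives $H=\{(g,gv):g\in SL_d(\mathbb R),\ v\in V_L\}$, so it suffices to prove $L=\mathbb R^k$, whence $H=G$ and $\overline{G'x}=Gx=X$.

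Here the structure theory of lattices in a semisimple-by-abelian group enters: since $V_L$ is the solvable radical of $H$ and $\Lambda$ is a lattice in $H$, the group $\Lambda\cap V_L=V_L\cap(\mathbb Z^d)^k$ is a lattice in $V_L$ (forcing $L$ to be rational) and $\Delta:=\Lambda V_L/V_L$ is a lattice in $H/V_L\cong SL_d(\mathbb R)$ contained in $SL_d(\mathbb Z)$ (hence of finite index), as in \cite{Marklof_Universal_hitting_time_2017}. Computing $(Id,\boldsymbol{\xi})\Gamma(Id,\boldsymbol{\xi})^{-1}=\{(\gamma,\mathbf n+(Id-\gamma)\boldsymbol{\xi}):\gamma\in SL_d(\mathbb Z),\ \mathbf n\in(\mathbb Z^d)^k\}$ and imposing membership in $H$, I would obtain $(Id-\gamma)\boldsymbol{\xi}\in V_L+(\mathbb Z^d)^k$ for every $\gamma\in\Delta$. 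If $L$ were proper, I would choose $\mathbf m\in L^{\perp}\cap(\mathbb Z^k\setminus\{\mathbf 0\})$ (possible since $L$ is rational and proper); right multiplication by $\mathbf m$ annihilates $V_L$, so $(Id-\gamma)(\boldsymbol{\xi}\cdot\mathbf m)\in\mathbb Z^d$ for all $\gamma\in\Delta$, and taking $\gamma$ among the powers $Id+n_{ij}E_{ij}\in\Delta$ forces each coordinate of $\boldsymbol{\xi}\cdot\mathbf m$ to be rational (this uses $d\ge2$), giving $\boldsymbol{\xi}\cdot(Q\mathbf m)\in\mathbb Z^d$ for some $Q\ge1$ and contradicting the hypothesis. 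Hence $L=\mathbb R^k$.

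The hard part will not be the concluding arithmetic, which is routine, but the structural bookkeeping in the converse: extracting the precise shape of $H$ from Ratner's theorem (connectedness, absence of a discrete part in $H\cap W$, the classification of $SL_d$-invariant subspaces of $\mathrm{Mat}_{d\times k}$) and, above all, correctly invoking the theory of lattices in semidirect products so that $\Lambda\cap V_L$ and $\Lambda V_L/V_L$ are again lattices. This is where I would follow \cite{Marklof_Universal_hitting_time_2017} most closely.
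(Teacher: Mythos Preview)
Your proposal is correct and follows essentially the same route as the paper: the forward direction via the invariant set $X_{\mathbf m_0}$, and the converse via Ratner's theorem, the identification $H=G'\ltimes V_L$ for a linear subspace $L\subseteq\mathbb R^k$, rationality of $L$ from \cite[Cor.~8.28]{Raghunathan_Discrete_subgroups_of_Lie_groups_1972}, and the arithmetic contradiction using $\mathbf m\in L^{\perp}\cap\mathbb Z^k$. The only cosmetic difference is that the paper extracts $\boldsymbol\xi\in(\mathbb Q^d)^k+V_L$ in one stroke by choosing a single $\gamma\in\pi_1(\Gamma_H)$ with $Id-\gamma$ invertible, whereas you deduce rationality of $\boldsymbol\xi\cdot\mathbf m$ coordinate-wise via elementary matrices in the finite-index subgroup $\Delta$; both arguments are equivalent.
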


By Ratner's orbit closure theorem (\cite{ratner1991raghunathanduke}), for any $\boldsymbol{\xi}\in (\bR^d)^k$, there exists a closed subgroup $H$ of $G$ containing $G\prm$ such that
\begin{align*}
    \overline{G\prm\cdot(Id,\boldsymbol{\xi})\Gamma/\Gamma}=H\cdot(Id,\boldsymbol{\xi})\Gamma/\Gamma,
\end{align*}
and $H\cdot(Id,\boldsymbol{\xi})\Gamma/\Gamma$ admits an $H$-invariant probability measure.

It can be checked that if there exists $\mathbf{m}\in \bZ^k\setminus\{\mathbf{0}\}$ such that $\boldsymbol{\xi}\cdot \mathbf{m}\in \bZ^d$, then
    $G\prm\cdot(Id,\boldsymbol{\xi})\Gamma\subset X_{\mathbf{m}}$,
where
\begin{align}\label{singular set of level m}
    X_{\mathbf{m}}=\{(g,g\mathbf{v})\Gamma:g\in G\prm,\mathbf{v}\in(\bR^d)^k \text{ such that } \mathbf{v}\cdot\mathbf{m}\in \bZ^d\}.
\end{align}
This is a closed submanifold of $X$ of codimension $d$. In this case, the orbit $G\prm\cdot(Id,\boldsymbol{\xi})\Gamma/\Gamma$ does not equidistribute in $X$.

The converse of Theorem \ref{orbit closure of certain base point} will follow from Lemmas \ref{description of intermediate subgroups}-\ref{consequence of assumption on xi}. We will follow the proof strategy of \cite[Theorem 3]{Marklof_Universal_hitting_time_2017}. Let $\boldsymbol{\xi}$, $H$ be as above. 
\begin{lemma}\label{description of intermediate subgroups}
There is a linear subspace $\boldsymbol{U}\subset \bR^k$ such that $H=SL_d(\bR)\ltimes L(\boldsymbol{U})$, where $L(\boldsymbol{U})$ {is a subset of $Mat_{d\times k}(\mathbb{R})$ such that for any element $\mathbf{v}$ of $L(\boldsymbol{U})$, each row vector of $\mathbf{v}$ is a vector in $\boldsymbol{U}$.}
\end{lemma}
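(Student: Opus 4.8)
The plan is to exploit the fact that $H$ is a closed subgroup of $G = SL_d(\bR)\ltimes(\bR^d)^k$ containing $G\prm = SL_d(\bR)$ (embedded as $(SL_d(\bR),\mathbf{0})$), together with the hypothesis that $H\cdot(Id,\boldsymbol\xi)\Gamma/\Gamma$ carries an $H$-invariant probability measure, hence is in particular a closed orbit of finite volume. First I would analyze the projection $\pi: G \to SL_d(\bR)$; since $H \supset G\prm$, the image $\pi(H) = SL_d(\bR)$, so $H$ is an extension of $SL_d(\bR)$ by the normal subgroup $N := H \cap (\{Id\}\times(\bR^d)^k)$, which I identify with a subset $W\subseteq (\bR^d)^k \cong Mat_{d\times k}(\bR)$ via $(Id,\mathbf v)\mapsto \mathbf v$. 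The multiplication law $(g,\mathbf v)(g\prm,\mathbf v\prm) = (gg\prm,\mathbf v + g\mathbf v\prm)$ shows $N$ is abelian and that conjugation of $N$ by $(g,\mathbf 0)$ sends $(Id,\mathbf v)$ to $(Id, g\cdot\mathbf v)$, i.e. acts by left multiplication of $g\in SL_d(\bR)$ on each column of $\mathbf v$. Since $N$ is normalized by all of $G\prm \subset H$, the linear subspace $W\subseteq Mat_{d\times k}(\bR)$ must be invariant under the action $\mathbf v \mapsto g\cdot\mathbf v$ of $SL_d(\bR)$ for every $g\in SL_d(\bR)$.

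Next I would classify the $SL_d(\bR)$-invariant subspaces of $Mat_{d\times k}(\bR)$ under this column action. Viewing $Mat_{d\times k}(\bR) = \bR^d \otimes \bR^k$ with $SL_d(\bR)$ acting on the first factor (the standard irreducible representation) and trivially on $\bR^k$, Schur-type reasoning gives that every invariant subspace is of the form $\bR^d \otimes \boldsymbol U$ for a (unique) linear subspace $\boldsymbol U \subseteq \bR^k$. Concretely, $\bR^d\otimes\boldsymbol U$ is exactly the set of $\mathbf v\in Mat_{d\times k}(\bR)$ all of whose rows lie in $\boldsymbol U$ — this is the set I would call $L(\boldsymbol U)$. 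So $N = \{Id\}\times L(\boldsymbol U)$, and since $N$ is normal in $H$, is normalized by $G\prm$, and $\pi(H) = SL_d(\bR)$, a standard argument (lifting $SL_d(\bR)$ through the extension via the given section $(g,\mathbf 0)$, which is a homomorphism because $G\prm$ embeds as a subgroup) yields $H = G\prm \cdot N = SL_d(\bR)\ltimes L(\boldsymbol U)$.

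The one point requiring care — and the main obstacle — is ruling out the possibility that $H$ is a "twisted" extension, i.e. that $H$ does not split as cleanly as $SL_d(\bR)\ltimes L(\boldsymbol U)$ but is instead the graph of some nontrivial crossed homomorphism $SL_d(\bR)\to (\bR^d)^k/L(\boldsymbol U)$. Here I would invoke that $G\prm = (SL_d(\bR),\mathbf 0)$ is genuinely a subgroup of $H$ (not merely mapped isomorphically onto $\pi(H)$ by $\pi$): the section is already fixed, and $H \supseteq G\prm$ together with $H \supseteq N$ forces $H \supseteq G\prm N$, while $\pi(H) = SL_d(\bR) = \pi(G\prm N)$ and $H\cap N = N = G\prm N\cap N$ give $H = G\prm N$ by a pullback/counting argument. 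Finally, the finite-volume hypothesis on the orbit is used to guarantee $\boldsymbol U$ is a \emph{rational} subspace (so that $L(\boldsymbol U)\cap (\bZ^d)^k$ is a lattice in $L(\boldsymbol U)$), which is what makes $H\cdot(Id,\boldsymbol\xi)\Gamma/\Gamma$ a finite-volume homogeneous space; this rationality will be recorded and used in the subsequent lemmas leading to the converse of Theorem \ref{orbit closure of certain base point}, though for the statement of Lemma \ref{description of intermediate subgroups} as written only the algebraic form of $H$ is needed.
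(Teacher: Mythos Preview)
Your proposal is correct and follows the same overall architecture as the paper: isolate $W=\{\mathbf v:(Id,\mathbf v)\in H\}$, show it is $SL_d(\bR)$-invariant under left multiplication, classify such invariant subspaces as $L(\boldsymbol U)$, and conclude $H=SL_d(\bR)\ltimes L(\boldsymbol U)$. The two proofs differ only in execution. For the classification step you invoke the tensor decomposition $Mat_{d\times k}(\bR)\cong\bR^d\otimes\bR^k$ and a Schur-type argument; the paper instead differentiates the $SL_d$-action to get $\mathfrak{sl}_d$-invariance of $W$, then uses $E_{ii}=E_{ij}E_{ji}$ to upgrade to full $Mat_{d\times d}(\bR)$-invariance, from which $W=L(\boldsymbol U)$ follows by reading left multiplication as row operations. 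For the splitting $H=G\prm\cdot N$, your detour through twisted extensions and crossed homomorphisms is unnecessary: the paper simply observes that for any $(g,\mathbf v)\in H$ one has $(g^{-1},\mathbf 0)(g,\mathbf v)=(Id,g^{-1}\mathbf v)\in H$, so $g^{-1}\mathbf v\in W$ and hence $\mathbf v\in W$, giving $H\subseteq G\prm\ltimes W$ in one line. Your closing remark is accurate: the finite-volume hypothesis plays no role in this lemma and is deferred to the next one.
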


\begin{proof}
Let $\boldsymbol{L}=\{\mathbf{v}\in (\bR^d)^k: (Id,\mathbf{v})\in H\}$. {Because $G\prm\subset H$, for any $\mathbf{v}\in \boldsymbol{L}$, we have $(g,\mathbf{0})\cdot (Id,\mathbf{v})\cdot (g,\mathbf{0})^{-1}=(Id,g\mathbf{v})\in H.$ It follows that $\boldsymbol{L}$ is $G\prm$-invariant and $SL_d(\mathbb{R})\ltimes \boldsymbol{L}\subset H$. For any $(g,\mathbf{v})\in H$, we have $(g^{-1},\mathbf{0})\cdot (g,\mathbf{v})=(Id,g^{-1}\mathbf{v})\in H$, so $g^{-1}\mathbf{v}\in \boldsymbol{L}$. Since $\boldsymbol{L}$ is $G\prm$-invariant, $\mathbf{v}\in \boldsymbol{L}$. Therefore $H= SL_d(\mathbb{R})\ltimes \boldsymbol{L}$.}

Let $A\in \mathfrak{sl}_d=Lie(G\prm)$, then for any $t\in \bR$, and any $\mathbf{v}\in \boldsymbol{L}$
\begin{align*}
    \frac{exp(tA)\mathbf{v}-\mathbf{v}}{t}\in \boldsymbol{L}.
\end{align*}
Let $t\to 0$, we obtain $A\cdot \mathbf{v}\in \boldsymbol{L}$. Recall that $\mathfrak{sl}_d$ consists of all trace zero $d\times d$ matrices.
Let $\mathbf{E}_{ij}$ be the $d\times d$ matrix with $1$ in the $(i,j)$-th entry and zero for all other entries.
Then for any $i\neq j$, $\mathbf{E}_{ij}\mathbf{v}\in \boldsymbol{L}$. Since $\mathbf{E}_{ij}\cdot \mathbf{E}_{ji}=\mathbf{E}_{ii}$, for any $i$ we have $\mathbf{E}_{ii}\mathbf{v}\in \boldsymbol{L}$ as well. Therefore, $\boldsymbol{L}$ is invariant under left multiplication of all $d\times d$ real matrices.
Since left multiplication is row operation, there is a linear subspace $\boldsymbol{U}\subset \bR^k$ such that $\boldsymbol{L}=L(\boldsymbol{U})$. 
\end{proof}

{Let $\pi_1:G\to G\prm$ be the natural projection map and $\Gamma_{L}=L(\boldsymbol{U})\cap \Gamma.$}

\begin{lemma}\label{property of intermediate subgroups}
Let $\boldsymbol{U}$ be the linear subspace of $\bR^k$ obtained by the Lemma \ref{description of intermediate subgroups}. Then $\boldsymbol{U}\cap \bZ^k$ is a lattice in $\boldsymbol{U}$ and $\boldsymbol{\xi}\in (\mathbb{Q}^d)^k+L(\boldsymbol{U})$.
\end{lemma}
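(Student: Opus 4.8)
The plan is to use the fact that $H\cdot(Id,\boldsymbol{\xi})\Gamma/\Gamma$ carries an $H$-invariant probability measure, which forces the orbit to be closed and hence $H\cap (Id,\boldsymbol{\xi})\Gamma(Id,\boldsymbol{\xi})^{-1}$ to be a lattice in $H$. Writing $H=SL_d(\bR)\ltimes L(\boldsymbol{U})$ from Lemma \ref{description of intermediate subgroups}, I would first analyze the unipotent radical: the intersection of this lattice with $L(\boldsymbol{U})$ must be a lattice in $L(\boldsymbol{U})$ (this is a standard fact for lattices in groups with a normal abelian subgroup — one can invoke that the projection of a lattice to $G\prm$ is a lattice and the kernel is a lattice in $L(\boldsymbol{U})$, using that $G\prm$ has no nontrivial characters). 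Concretely, $(Id,\boldsymbol{\xi})\Gamma(Id,\boldsymbol{\xi})^{-1}\cap L(\boldsymbol{U})$ consists of elements $(Id,\mathbf{n}-\boldsymbol{\xi}\cdot\gamma'+ \dots)$; untangling the semidirect product multiplication, an element $(\gamma',\mathbf{n})\in\Gamma$ conjugated by $(Id,\boldsymbol{\xi})$ lands in $L(\boldsymbol{U})$ precisely when $\gamma'=Id$ and $\mathbf{n}\in L(\boldsymbol{U})$, i.e. when $\mathbf{n}\in \Gamma_L=L(\boldsymbol{U})\cap\Gamma$. So $\Gamma_L$ must be a lattice in $L(\boldsymbol{U})$, which is equivalent to $\boldsymbol{U}\cap\bZ^k$ being a lattice in $\boldsymbol{U}$ (since $L(\boldsymbol{U})\cong \boldsymbol{U}^{\oplus d}$ compatibly with the integral structures, the rows being independent).

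For the second assertion, I would push the lattice condition down to $G\prm$: the projection $\pi_1$ maps $(Id,\boldsymbol{\xi})\Gamma(Id,\boldsymbol{\xi})^{-1}\cap H$ onto a lattice in $G\prm=SL_d(\bR)$, and since $\Gamma'=SL_d(\bZ)$ is the relevant projection of $\Gamma$, commensurability of lattices in $SL_d(\bR)$ together with the structure forces the $\boldsymbol{\xi}$-translate to interact with $\Gamma$ only up to the rational points. More directly: for the orbit $H\cdot(Id,\boldsymbol{\xi})\Gamma$ to be closed and finite-volume, for each $\gamma'\in\Gamma'$ there must exist $\mathbf{v}\in L(\boldsymbol{U})$ and $(\gamma',\mathbf{n})\in\Gamma$ with $(\gamma',\mathbf{v})\cdot(Id,\boldsymbol{\xi})=(Id,\boldsymbol{\xi})\cdot(\gamma',\mathbf{n})$; comparing the $(\bR^d)^k$-components gives $\mathbf{v}+\gamma'\boldsymbol{\xi}=\boldsymbol{\xi}\gamma'$... rather, $\mathbf{v}+\gamma'\cdot\boldsymbol{\xi}=\boldsymbol{\xi}+\gamma'\cdot\mathbf{n}$ (acting on columns appropriately), so $\gamma'\cdot(\boldsymbol{\xi}-\mathbf{n})=\boldsymbol{\xi}-\mathbf{v}\in\boldsymbol{\xi}+L(\boldsymbol{U})$. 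Taking $\gamma'$ ranging over $SL_d(\bZ)$ and using that $SL_d(\bZ)$ acting on $\bR^d$ with a shared invariant-up-to-$L(\boldsymbol{U})$ property pins down $\boldsymbol{\xi}$ modulo $L(\boldsymbol{U})$ to lie in $(\mathbb{Q}^d)^k$: the set $\boldsymbol{\xi}+L(\boldsymbol{U})$ must be setwise invariant under a finite-index subgroup of $SL_d(\bZ)$ acting on the quotient $(\bR^d)^k/L(\boldsymbol{U})$, and the only such cosets giving a closed orbit are the rational ones.

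The main obstacle I anticipate is the bookkeeping in the semidirect product: correctly identifying which $\gamma=(\gamma',\mathbf{n})\in\Gamma$ satisfy $(Id,\boldsymbol{\xi})\gamma(Id,\boldsymbol{\xi})^{-1}\in H$, and then extracting from the stabilizer-is-a-lattice condition both (i) that the "vertical" part $\Gamma_L$ is a lattice in $L(\boldsymbol{U})$ and (ii) the rationality of $\boldsymbol{\xi}\bmod L(\boldsymbol{U})$. The rationality should follow by a Galois/$SL_d(\bZ)$-invariance argument: $\boldsymbol{\xi}\bmod L(\boldsymbol{U})$ lies in a subset of the torus $(\bR^d)^k/(L(\boldsymbol{U})+(\bZ^d)^k)$ invariant under a finite-index subgroup of the arithmetic group, forcing it to be a torsion point, hence $\boldsymbol{\xi}\in(\mathbb{Q}^d)^k+L(\boldsymbol{U})$. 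I would also keep in mind the alternative, softer route: invoke that $H\cdot(Id,\boldsymbol{\xi})\Gamma$ closed implies $H$ is defined over $\mathbb{Q}$ with respect to the $\mathbb{Q}$-structure determined by $\Gamma$ (by Ratner plus Borel density / rationality of intermediate subgroups of arithmetic lattices), and that the base point can then be taken rational modulo the radical — but carrying out the concrete computation is cleaner here and parallels \cite[Theorem 3]{Marklof_Universal_hitting_time_2017}.
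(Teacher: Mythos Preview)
Your plan is correct and matches the paper's proof: both use that $\Gamma_H=(Id,\boldsymbol{\xi})\Gamma(Id,-\boldsymbol{\xi})\cap H$ is a lattice in $H$, invoke the standard fact (the paper cites \cite[Corollary 8.28]{Raghunathan_Discrete_subgroups_of_Lie_groups_1972}) that $\Gamma_L$ is then a lattice in $L(\boldsymbol{U})$ and $\pi_1(\Gamma_H)$ is a lattice (hence finite index) in $\Gamma'$, and deduce rationality of $\boldsymbol{\xi}\bmod L(\boldsymbol{U})$. The paper's final step is crisper than your torsion-point formulation: it simply picks a single $\gamma\in\pi_1(\Gamma_H)$ with $Id-\gamma$ invertible, so that $(Id-\gamma)\boldsymbol{\xi}\in(\bZ^d)^k+L(\boldsymbol{U})$ immediately gives $\boldsymbol{\xi}\in(\mathbb{Q}^d)^k+L(\boldsymbol{U})$ (using that $L(\boldsymbol{U})$ is stable under left multiplication by all $d\times d$ matrices).
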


\begin{proof}
By Lemma \ref{description of intermediate subgroups}, $H=SL_d(\bR)\ltimes L(\boldsymbol{U})$ and $H\cdot(Id,\boldsymbol{\xi})\Gamma/\Gamma$ is closed and admits an $H$-invariant probability measure, therefore $\Gamma_H=(Id,\boldsymbol{\xi})\Gamma(Id,-\boldsymbol{\xi})\cap H$ is a lattice in $H$. 

By \cite[Corollary 8.28]{Raghunathan_Discrete_subgroups_of_Lie_groups_1972}, $\Gamma_{L}$ is a lattice in $L(\boldsymbol{U})$, that is, $(\bZ^d)^k\cap L(\boldsymbol{U})$ is a lattice in $L(\boldsymbol{U})$. Thus $\boldsymbol{U}$ has a basis belonging to $\bZ^k$, and it follows that $\bZ^k\cap \boldsymbol{U}$ is a lattice in $\boldsymbol{U}$.

Recall that $\Gamma\prm=SL_d(\bZ)$. Now consider $\pi_1(\Gamma_H)=\{\gamma\in \Gamma\prm: \boldsymbol{\xi}-\gamma\cdot \boldsymbol{\xi}\in (\bZ^d)^k+L(\boldsymbol{U})\}$. Again by \cite[Corollary 8.28]{Raghunathan_Discrete_subgroups_of_Lie_groups_1972}, $\pi_1(\Gamma_H)$ is a lattice in $G\prm$. Therefore $\pi_1(\Gamma_H)$ is a finite index subgroup of $\Gamma\prm$. Pick a $\gamma\in \pi_1(\Gamma_H)$ such that $Id-\gamma$ is invertible, then $\boldsymbol{\xi}\in (\mathbb{Q}^d)^k+L(\boldsymbol{U})$.
\end{proof}

\begin{lemma}\label{consequence of assumption on xi}
Let $\boldsymbol{U}$ be the linear subspace of $\bR^k$ obtained by Lemma \ref{description of intermediate subgroups}. If for any $\mathbf{m}\in \bZ^k\setminus \{\mathbf{0}\}$, $\boldsymbol{\xi}\cdot \mathbf{m}\notin \bZ^d$. Then $\boldsymbol{U}=\bR^k$ and hence, $H=G$.
\end{lemma}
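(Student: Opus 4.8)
The plan is to argue by contradiction: assume $\boldsymbol{U}\subsetneq \bR^k$ and produce from the hypotheses of Lemma \ref{property of intermediate subgroups} an integer vector $\mathbf{m}\in \bZ^k\setminus\{\mathbf{0}\}$ with $\boldsymbol{\xi}\cdot\mathbf{m}\in \bZ^d$, contradicting the assumption.

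First I would use that, by Lemma \ref{property of intermediate subgroups}, $\boldsymbol{U}\cap\bZ^k$ is a lattice in $\boldsymbol{U}$; hence $\boldsymbol{U}$ is spanned by integer vectors, i.e. it is a rational subspace of $\bR^k$. Therefore its orthogonal complement $\boldsymbol{U}^{\perp}$ (with respect to the standard inner product) is also rational, and since $\boldsymbol{U}\neq\bR^k$ we may pick $\mathbf{m}\in \bZ^k\setminus\{\mathbf{0}\}$ with $\mathbf{m}\in\boldsymbol{U}^{\perp}$. The key structural observation is that for any $\mathbf{v}\in L(\boldsymbol{U})$ each row of $\mathbf{v}$ lies in $\boldsymbol{U}$, so every entry of the column vector $\mathbf{v}\cdot\mathbf{m}$ is an inner product of a vector in $\boldsymbol{U}$ with $\mathbf{m}\in\boldsymbol{U}^\perp$; thus $\mathbf{v}\cdot\mathbf{m}=\mathbf{0}$ for all $\mathbf{v}\in L(\boldsymbol{U})$.

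Next I would invoke the second conclusion of Lemma \ref{property of intermediate subgroups}, namely $\boldsymbol{\xi}\in (\mathbb{Q}^d)^k+L(\boldsymbol{U})$: write $\boldsymbol{\xi}=\boldsymbol{\eta}+\mathbf{w}$ with $\boldsymbol{\eta}\in(\mathbb{Q}^d)^k$ and $\mathbf{w}\in L(\boldsymbol{U})$. Then $\boldsymbol{\xi}\cdot\mathbf{m}=\boldsymbol{\eta}\cdot\mathbf{m}+\mathbf{w}\cdot\mathbf{m}=\boldsymbol{\eta}\cdot\mathbf{m}\in\mathbb{Q}^d$. Clearing denominators, there is $N\in\mathbb{N}$ with $\boldsymbol{\xi}\cdot(N\mathbf{m})=N(\boldsymbol{\xi}\cdot\mathbf{m})\in\bZ^d$, and $N\mathbf{m}\in\bZ^k\setminus\{\mathbf{0}\}$. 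This contradicts the standing hypothesis that $\boldsymbol{\xi}\cdot\mathbf{m}'\notin\bZ^d$ for every $\mathbf{m}'\in\bZ^k\setminus\{\mathbf{0}\}$. Hence $\boldsymbol{U}=\bR^k$, so $L(\boldsymbol{U})=Mat_{d\times k}(\mathbb{R})\cong(\bR^d)^k$, and by Lemma \ref{description of intermediate subgroups} we get $H=SL_d(\bR)\ltimes(\bR^d)^k=G$, completing the converse direction of Theorem \ref{orbit closure of certain base point}.

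There is no serious obstacle here; the argument is a short rationality/linear-algebra step. The only point requiring a little care is the passage from ``$\boldsymbol{U}\cap\bZ^k$ is a lattice in $\boldsymbol{U}$'' to ``$\boldsymbol{U}$ is rational, hence $\boldsymbol{U}^\perp$ contains a nonzero integer vector'', and the bookkeeping that a vector orthogonal to $\boldsymbol{U}$ annihilates $L(\boldsymbol{U})$ on the right — which follows immediately from the row description of $L(\boldsymbol{U})$ in Lemma \ref{description of intermediate subgroups}.
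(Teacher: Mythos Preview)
Your proof is correct and follows essentially the same approach as the paper's: argue by contradiction, use that $\boldsymbol{U}\cap\bZ^k$ is a lattice to find a nonzero integer vector $\mathbf{m}\in\boldsymbol{U}^\perp$, observe that $L(\boldsymbol{U})\cdot\mathbf{m}=\mathbf{0}$ so that $\boldsymbol{\xi}\cdot\mathbf{m}\in\mathbb{Q}^d$, and clear denominators. Your write-up simply spells out a few steps (rationality of $\boldsymbol{U}^\perp$, why $L(\boldsymbol{U})$ is annihilated by $\mathbf{m}$) that the paper leaves implicit.
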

\begin{proof}
Suppose $\boldsymbol{U}\neq \bR^k$, then $\dim \boldsymbol{U}<k$. Since $\boldsymbol{U}\cap \bZ^k$ is a lattice in $\boldsymbol{U}$, there exists a nonzero $\mathbf{v}\in \bZ^k\cap \boldsymbol{U}^{\perp}$. Since $\boldsymbol{\xi}\cdot \mathbf{v}\in (\mathbb{Q}^d)^k\cdot \mathbf{v}+L(\boldsymbol{U})\cdot\mathbf{v}=(\mathbb{Q}^d)^k\cdot \mathbf{v}$, we can choose $\mathbf{m}$ to be a suitable integral multiple of $\mathbf{v}$ such that $\boldsymbol{\xi}\cdot \mathbf{m}\in \bZ^d$, this contradicts to the assumption of the lemma.
\end{proof}

\section{Unipotent invariance}\label{section unipotent invariance for a.e. s}
The collection of all probability measures on the one point compactification $X^*$ of $X$ is a compact space in weak*-topology. Therefore, for any $\mathbf{s}\in \mathcal{U}$, after possibly passing to a subsequence, we have
\begin{align*}
    \frac{1}{T}\int_0^T \delta_{a_tu_{\boldsymbol{\varphi}}(\mathbf{s})\Gamma}dt \xrightarrow{T\to \infty}\mu_{\mathbf{s}} \text{ in weak* topology},
\end{align*}
for some probability measure $\mu_{\mathbf{s}}$ on $X^*$. { Throughout this section, the function $\boldsymbol{\varphi}$ is assumed to be $C^1$ and satisfy $(\boldsymbol{\varphi}(\mathbf{s}))_{>r}\equiv \mathbf{0}$.}

\begin{proposition}\label{unipotent invariance for a.e. s}
For a.e. $\mathbf{s}\in \mathcal{U}$, $\mu_{\mathbf{s}}$ is $U$-invariant.
\end{proposition}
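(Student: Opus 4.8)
The plan is to use the standard "shear" or "banana" trick that exploits the fact that $U$ lies in the unstable horospherical subgroup of $a_t$: translating the base point $u_{\boldsymbol{\varphi}}(\mathbf{s})$ by a small element $u(\mathbf{h})$ of $U$ and then flowing by $a_t$ is, up to a bounded time shift and a transversal error that shrinks as $t\to\infty$, the same as reparameterizing the original orbit. The subtlety here is that the base point depends on the parameter $\mathbf{s}$, and applying $u(\mathbf{h})$ on the left does not simply move $\mathbf{s}$ to $\mathbf{s}+\mathbf{h}$ because $\boldsymbol{\varphi}(\mathbf{s})\neq\boldsymbol{\varphi}(\mathbf{s}+\mathbf{h})$; so one must compare $a_t u(\mathbf{h}) u_{\boldsymbol{\varphi}}(\mathbf{s})\Gamma$ with $a_t u_{\boldsymbol{\varphi}}(\mathbf{s}+\mathbf{h})\Gamma$ and absorb the discrepancy, which lives in the $(\bR^d)^k$-part and in the mismatch of the $\boldsymbol{\varphi}$ values, into a term that is contracted by $a_t$ as $t\to\infty$ thanks to $(\boldsymbol{\varphi}(\mathbf{s}))_{>r}\equiv\mathbf{0}$.

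First I would fix a fundamental test function $f\in C_c(X)$ (uniformly continuous, hence controllable) and a fixed $\mathbf{h}\in Mat_{r\times(d-r)}(\bR)$, and aim to show that for a.e. $\mathbf{s}\in\mathcal{U}$ one has $\int f\,d\mu_{\mathbf{s}}=\int f(u(\mathbf{h})\,\cdot)\,d\mu_{\mathbf{s}}$; a countable dense set of pairs $(f,\mathbf{h})$ together with a standard measure-theoretic argument then upgrades this to full $U$-invariance for a.e. $\mathbf{s}$. The key computation is the group identity
\begin{align*}
    a_t\,u(\mathbf{h})\,u_{\boldsymbol{\varphi}}(\mathbf{s}) = a_t\,u(\mathbf{h})\,a_{-t}\cdot a_t\,u_{\boldsymbol{\varphi}}(\mathbf{s}),
\end{align*}
where $a_t u(\mathbf{h}) a_{-t}=u(e^{dt}\mathbf{h})$ grows, which is the wrong direction; so instead one writes $u(\mathbf{h})u_{\boldsymbol{\varphi}}(\mathbf{s})$ in terms of $u_{\boldsymbol{\varphi}}(\mathbf{s}+\mathbf{h}')$ for an appropriate $\mathbf{h}'$ and a small leftover unstable piece, and compares the two time-averages $\mu_{\mathbf{s},T}$ and $\mu_{\mathbf{s}+\mathbf{h}',T}$. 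The honest way to do this (following \cite{Shi_Ulcigrai_Genericity_on_curves_2018}) is a change-of-variables/Fubini argument: integrate the difference $\big|\mathcal{A}^T_{\cdot}(\mathbf{s})\big|$-type quantities over $\mathbf{s}\in\mathcal{U}$, substitute to align the two families of trajectories, and show the $L^1(\mathcal{U})$-norm of the difference $\int f\,d\mu_{\mathbf{s},T}-\int f(u(\mathbf{h})\cdot)\,d\mu_{\mathbf{s},T}$ tends to $0$ as $T\to\infty$; then pass to the limit along the subsequence defining $\mu_{\mathbf{s}}$ and extract a full-measure set of good $\mathbf{s}$.

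The main obstacle, and where the bulk of the work goes, is the quantitative comparison of the nearby trajectories through $u_{\boldsymbol{\varphi}}(\mathbf{s})$ and $u_{\boldsymbol{\varphi}}(\mathbf{s}+\mathbf{h}')$: one must show that after flowing for time $t$ the two points are within a distance that is $O(e^{-ct}\norm{\mathbf{h}})$ plus a polynomially-in-$t$ shear that is killed by the time average of a uniformly continuous $f$. This uses the $C^1$ hypothesis on $\boldsymbol{\varphi}$ (so that $\boldsymbol{\varphi}(\mathbf{s}+\mathbf{h}')-\boldsymbol{\varphi}(\mathbf{s})=O(\norm{\mathbf{h}'})$ with a bound uniform on the bounded set $\mathcal{U}$), crucially the vanishing $(\boldsymbol{\varphi})_{>r}\equiv\mathbf{0}$ so that the entire $\boldsymbol{\varphi}$-discrepancy sits in the $a_t$-expanding (horospherical) block and hence does not blow up under $a_{-t}$ when one conjugates, and the elementary fact that $a_t u(\mathbf{h}') a_{-t}=u(e^{dt}\mathbf{h}')$ so a shrinking $\mathbf{h}'$ of size $e^{-dt_0}$ keeps things bounded over windows $[0,t_0]$. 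Properly organizing this — choosing the reparameterization, tracking the bounded time shift at the endpoints of $[0,T]$ (which contributes $O(1/T)$), and handling the error terms uniformly in $\mathbf{s}\in\mathcal{U}$ via a single Fubini estimate — is the technical heart of the proposition; once it is in place, the weak\*-convergence and a diagonal argument over a countable dense family of $(f,\mathbf{h})$ finish the proof.
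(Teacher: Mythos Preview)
Your proposal has a genuine gap at exactly the point you call ``the honest way to do this''. A Fubini/change-of-variables argument of the type you sketch --- substituting $\mathbf{s}\mapsto \mathbf{s}+e^{-dt}\mathbf{h}$ inside $\int_{\mathcal{U}}\cdots\,d\mathbf{s}$ --- only controls the signed integral
\[
\Bigl|\int_{\mathcal{U}} D_T(\mathbf{s})\,d\mathbf{s}\Bigr|,\qquad D_T(\mathbf{s}):=\int f\,d\mu_{\mathbf{s},T}-\int f(u(\mathbf{h})\,\cdot)\,d\mu_{\mathbf{s},T},
\]
by a boundary term $O(e^{-dt}\|\mathbf{h}\|)$; it does \emph{not} bound $\int_{\mathcal{U}}|D_T(\mathbf{s})|\,d\mathbf{s}$, because the integrand $f(a_t u_{\boldsymbol{\varphi}}(\mathbf{s})\Gamma)-f(u(\mathbf{h})a_t u_{\boldsymbol{\varphi}}(\mathbf{s})\Gamma)$ is not pointwise small for any $t$ (the two points stay a fixed $U$-distance $\mathbf{h}$ apart). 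Even if you did obtain $\|D_T\|_{L^1(\mathcal{U})}\to 0$, that still would not suffice: it would give $D_{T_k}(\mathbf{s})\to 0$ for a.e.\ $\mathbf{s}$ only along a \emph{single} subsequence $T_k$ independent of $\mathbf{s}$, whereas the subsequence along which $\mu_{\mathbf{s},T}\to\mu_{\mathbf{s}}$ is chosen \emph{after} fixing $\mathbf{s}$ and may vary with $\mathbf{s}$. What is actually needed is $D_T(\mathbf{s})\to 0$ as $T\to\infty$ for a.e.\ $\mathbf{s}$, and an $L^1$ statement without a summable rate cannot deliver this.

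The paper closes this gap by a second-moment (correlation) method rather than a first-moment one. For $\psi\in C_c^\infty(X)$ one sets $\psi_t(\mathbf{w})=\psi(a_tu_{\boldsymbol{\varphi}}(\mathbf{w})\Gamma)-\psi(u(s'\mathbf{E}_{11})a_tu_{\boldsymbol{\varphi}}(\mathbf{w})\Gamma)$ and proves the exponential decorrelation (Lemma~\ref{Lemma: correlation})
\[
\Bigl|\int_{\mathbb{I}^{r(d-r)}}\psi_t(\mathbf{w})\psi_l(\mathbf{w})\,d\mathbf{w}\Bigr|\leq c\,e^{-|l-t|},
\]
using precisely the identity you noticed, $u(s'\mathbf{E}_{11})a_l u_{\boldsymbol{\varphi}}(\mathbf{s})=O_G(e^{-rl})\,a_l u_{\boldsymbol{\varphi}}(\mathbf{s}+e^{-dl}s'\mathbf{E}_{11})$, together with a localization to intervals of length $\sim e^{-d(l+t)/2}$ so that $\psi_t$ is nearly constant there. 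Then an abstract $L^2$ strong-law (Theorem~\ref{Theorem: Kleinbock-Shi-Weiss theorem}, from \cite{Kleinbock_pointwiseequidistributionwithanerrorrate_2017}) converts this correlation bound into $\frac{1}{T}\int_0^T\psi_t(\mathbf{w})\,dt\to 0$ for a.e.\ $\mathbf{w}$, which is exactly the pointwise statement your scheme is missing. The computations you outlined (the $C^1$ bound on $\boldsymbol{\varphi}$, the role of $(\boldsymbol{\varphi})_{>r}\equiv\mathbf{0}$, the $O(e^{-rt})$ error) are all correct ingredients, but they must be fed into a variance estimate plus a Borel--Cantelli/maximal-inequality argument, not merely an $L^1$ Fubini swap.
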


\begin{proof}
Since $\mathcal{U}$ is a bounded open subset of $Mat_{r\times(d-r)}(\bR)$, it is enough to prove the proposition for a.e. $\mathbf{s}$ in an open cube of $\mathcal{U}$.

We may choose an open interval $\mathbb{I}\subset \bR$ such that $\mathbb{I}^{r(d-r)}\subset \mathcal{U}$. For $1\leq i\leq r,1\leq j\leq d-r$, let $\mathbf{E}_{ij}\in Mat_{r\times(d-r)}(\bR)$ be the matrix with $1$ in $(i,j)$-th entry and zero otherwise.

If $s_1,s_2$ are two real numbers linearly independent over $\mathbb{Q}$, then the closure of the subgroup generated by $\{u(s_1\mathbf{E}_{ij}),u(s_2\mathbf{E}_{ij}):1\leq i \leq r, 1\leq j\leq d-r\}$ is $U$.

Therefore, given $s\prm \in \bR$, without loss of generality, it suffices to prove that for a.e. $\mathbf{s}\in \mathbb{I}^{r(d-r)}$, the limit measure $\mu_{\mathbf{s}}$ is invariant under $u(s\prm \mathbf{E}_{11})$.

Note that there exists a countable dense subset of $C_c(G/\Gamma)$ consisting of smooth functions.
Let $\psi\in C_c^{\infty}(G/\Gamma)$. For $t>0$ and $\mathbf{w}\in Mat_{r\times(d-r)}(\bR)$, define
\begin{align*}
    \psi_t(\mathbf{w})=\psi(a_tu_{\boldsymbol{\varphi}}(\mathbf{w})\Gamma)-\psi(u(s\prm \mathbf{E}_{11})a_tu_{\boldsymbol{\varphi}}(\mathbf{w})\Gamma).
\end{align*}
Hence, we only need to show that for this $\psi$, for a.e. $\mathbf{w}\in \mathbb{I}^{r(d-r)}$,
\begin{align*}
    \frac{1}{T}\int_0^T \psi_t(\mathbf{w})dt\xrightarrow{T\to \infty} 0.
\end{align*}
{This follows from Theorem \ref{Theorem: Kleinbock-Shi-Weiss theorem} and Lemma \ref{Lemma: correlation} as follows.}
\end{proof}
{
\begin{theorem}\cite[Theorem 3.1]{Kleinbock_pointwiseequidistributionwithanerrorrate_2017}\label{Theorem: Kleinbock-Shi-Weiss theorem}
Let $(Y,\mu)$ be a probability space. Let $F:Y\times \bR^+\to \bR$ be a bounded measurable function. Suppose that there exist $\delta>0$ and $c>0$ such that for any $l\geq t\geq 0$,
\begin{align}\label{high correlation condition}
    |\int_Y F(x,t)F(x,l)d\mu(x)|\leq c\cdot e^{-\delta\min(t,l-t)},
\end{align}
then given any $\epsilon>0$, for $\mu$-a.e. $y\in Y$, 
\begin{align*}
    \frac{1}{T}\int_0^T F(y,t)dt=o(T^{-\frac{1}{2}}\cdot log^{\frac{2}{3}+\epsilon}T).
\end{align*}
\end{theorem}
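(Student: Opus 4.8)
Broadly, the plan is to run the classical second-moment-plus-quantitative-Borel--Cantelli argument from metric number theory (in the spirit of Gál--Koksma, Philipp, Schmidt, Sprind\v{z}uk), adapted to continuous time. First I would reduce to integer times. Set $I_T(x):=\int_0^T F(x,t)\,dt$. Since $F$ is bounded, $|I_T(x)-I_{\lfloor T\rfloor}(x)|\le \norm{F}_{\infty}$ for every $T$ and $x$, so it suffices to show $I_n(x)=o\big(n^{1/2}(\log n)^{2/3+\epsilon}\big)$ for $\mu$-a.e.\ $x$ along $n\in\mathbb N$. Write $I_n=\sum_{j=1}^n\xi_j$ with $\xi_j(x):=\int_{j-1}^j F(x,t)\,dt$, so $|\xi_j|\le\norm{F}_{\infty}$.

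The key input is a variance bound for block sums. By Fubini and the hypothesis (\ref{high correlation condition}), for $0\le M\le M+N$,
\begin{align*}
\int_Y\Big(\sum_{j=M+1}^{M+N}\xi_j\Big)^2 d\mu
&=\int_M^{M+N}\!\int_M^{M+N}\Big(\int_Y F(x,t)F(x,l)\,d\mu(x)\Big)\,dt\,dl\\
&\le 2c\int_{\{M\le t\le l\le M+N\}} e^{-\delta\min(t,\,l-t)}\,dt\,dl.
\end{align*}
Estimating the double integral by splitting the domain according to the size of $l-t$ relative to $\min(t,M)$ yields
\[
\int_Y\Big(\sum_{j=M+1}^{M+N}\xi_j\Big)^2 d\mu\ \le\ C_1\big(N+N^2 e^{-\delta M}\big)
\]
for a constant $C_1=C_1(c,\delta)$. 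In particular $\int_Y I_N^2\,d\mu\le C_1' N$ (the case $M=0$, handled by a slightly different splitting), and for every dyadic block $[k2^j,(k+1)2^j]$ with $k\ge1$ one has $M\ge N$, hence $N^2 e^{-\delta M}\le\sup_{x>0}x^2e^{-\delta x}<\infty$ and the variance is $\le C_2\,2^j$. Thus at \emph{every} dyadic scale every block has variance bounded by a constant times its length.

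Next I would pass to a lacunary subsequence and fill the gaps. Along $n_k=2^k$, Chebyshev and Borel--Cantelli applied to $\int_Y I_{n_k}^2\,d\mu\le C_1' n_k$ give, for $\mu$-a.e.\ $x$, $|I_{n_k}(x)|\ll n_k^{1/2}(\log n_k)^{1/2+\epsilon}$ for all large $k$. For $n_k\le n<n_{k+1}$ one controls $\max_{n_k\le n\le n_{k+1}}|I_n-I_{n_k}|$ by a maximal inequality assembled from the dyadic block-variance estimates above, plus a further Borel--Cantelli step. Combining the two steps gives $I_n(x)=o\big(n^{1/2}(\log n)^{\beta}\big)$ for a.e.\ $x$; a routine Rademacher--Menshov maximal inequality produces $\beta=3/2+\epsilon$, and to reach the sharp exponent $\beta=2/3+\epsilon$ one must use the \emph{full} strength of (\ref{high correlation condition}) — the correlations decay exponentially, not merely squared-summably, so well-separated blocks are almost orthogonal — to run a more efficient chaining argument, optimizing simultaneously the growth rate of $\{n_k\}$ and the cost of the maximal estimate.

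The main obstacle is precisely this gap-filling step. A lacunary subsequence is forced on us at the Borel--Cantelli stage (any subsequence growing only polynomially in $k$ makes $\sum_k(\log n_k)^{-c}$ diverge for every $c$), but then the gaps $[n_k,n_{k+1}]$ are long, the trivial bound $|I_n-I_{n_k}|\le\norm{F}_\infty(n_{k+1}-n_k)$ is useless, and a crude maximal inequality overshoots the claimed exponent by two powers of $\log$. Squeezing an improved maximal inequality out of the exponential decay in the hypothesis, and balancing it against the subsequence, is the delicate point; everything else is bookkeeping. (The auxiliary Lemma~\ref{Lemma: correlation} does not enter here: it serves only to verify hypothesis (\ref{high correlation condition}) in the intended application, namely for $F(x,t)=\psi_t(\mathbf{w})$ with $x=\mathbf{w}$, via mixing estimates on $X$.)
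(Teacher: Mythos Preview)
The paper does not prove this statement: Theorem~\ref{Theorem: Kleinbock-Shi-Weiss theorem} is quoted verbatim from \cite[Theorem 3.1]{Kleinbock_pointwiseequidistributionwithanerrorrate_2017} and used as a black box in the proof of Proposition~\ref{unipotent invariance for a.e. s}. There is therefore no ``paper's own proof'' to compare your attempt against.

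That said, your outline is the correct one and is essentially the argument given in \cite{Kleinbock_pointwiseequidistributionwithanerrorrate_2017}: reduce to integer times, derive an $O(N)$ second-moment bound for block sums from the correlation hypothesis, pass to a sparse subsequence via Chebyshev and Borel--Cantelli, and control the oscillation between consecutive subsequence terms by a dyadic/chaining maximal inequality. You have also correctly located the only non-routine point: the naive Rademacher--Menshov bound yields an exponent $3/2+\epsilon$ on the logarithm, and reaching $2/3+\epsilon$ requires a more careful optimization. In \cite{Kleinbock_pointwiseequidistributionwithanerrorrate_2017} this is done by choosing the subsequence $n_k$ so that $n_{k+1}-n_k\asymp n_k^{1/2}(\log n_k)^{\alpha}$ for a suitable $\alpha$, rather than lacunarily; the gaps are then short enough that the maximal-inequality cost and the Borel--Cantelli cost balance at the exponent $2/3$. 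Your sketch stops just short of carrying this out, but the diagnosis of the obstacle is accurate. For the purposes of the present paper none of this matters, since only the qualitative conclusion $\frac{1}{T}\int_0^T F(y,t)\,dt\to 0$ a.e.\ is used, and that already follows from the cruder exponent.
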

}
{
\begin{lemma}\label{Lemma: correlation}\label{high correlation}
There exist $c>0$ such that for any $t,l>0$,
\begin{align*}
    |\int_{\mathbb{I}^{r(d-r)}}\psi_t(\mathbf{w})\psi_l(\mathbf{w})d\mathbf{w}|\leq c\cdot e^{-|l-t|}.
\end{align*}
\end{lemma}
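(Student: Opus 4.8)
The plan is to compute the correlation integral directly by exploiting the structure of $\psi_t$ as a difference of a smooth compactly supported function evaluated at two points related by the fixed unipotent element $u(s'\mathbf{E}_{11})$, and then using the expansion properties of $a_t$ on the horospherical coordinate $\mathbf{s}$ together with a change of variables. First I would write out the product $\psi_t(\mathbf{w})\psi_l(\mathbf{w})$ and, assuming $l \ge t$ without loss of generality, insert the group element $a_{l-t}$ so that $a_l u_{\boldsymbol\varphi}(\mathbf{w}) = a_{l-t}\, a_t u_{\boldsymbol\varphi}(\mathbf{w})$. The key algebraic computation is to track how conjugation by $a_t$ acts on $u_{\boldsymbol\varphi}(\mathbf{w})$: since $(\boldsymbol\varphi(\mathbf{w}))_{>r}\equiv\mathbf 0$, the element $u_{\boldsymbol\varphi}(\mathbf{w}) = u(\mathbf w)\cdot(Id,\boldsymbol\varphi(\mathbf w))$ lies in the unstable horospherical subgroup $U^+$, so $a_t u_{\boldsymbol\varphi}(\mathbf w) a_{-t}$ expands at exponential rate; concretely, the $\mathbf s$-entry scales by $e^{dt}$ and the vector part $(\boldsymbol\varphi(\mathbf w))_{\le r}$ scales by $e^{(d-r)t}$.

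Next I would perform a change of variables in the $\mathbf w$-integral designed to make one copy of $\psi$ independent of the integration variable up to controlled error. The standard device (as in \cite{Kleinbock_pointwiseequidistributionwithanerrorrate_2017} and \cite{Shi_Ulcigrai_Genericity_on_curves_2018}) is: using that $\psi$ is smooth and compactly supported (hence Lipschitz), and that the $a_t$-translate separates scales, one substitutes $\mathbf w \mapsto \mathbf w'$ where $\mathbf w'$ absorbs the expanding direction; the Jacobian is bounded above and below on the relevant range because $\boldsymbol\varphi$ is $C^1$ on the bounded set $\mathbb I^{r(d-r)}$ and $M_1$ controls its derivatives. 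After this substitution, the inner integral over the "fast" variable of $\psi(a_l u_{\boldsymbol\varphi}(\mathbf w)\Gamma)$ against a fixed function becomes, up to an error of size $O(e^{-(l-t)})$ coming from the Lipschitz modulus times the contraction factor $e^{-(l-t)}$, a product of an integral of $\psi$ over a piece of a horosphere (which is $O(1)$) and the integral of the other factor. The difference structure of $\psi_t$ — the subtraction of the $u(s'\mathbf E_{11})$-translate — is what kills the leading $O(1)$ term, because $u(s'\mathbf E_{11})$ normalizes the horospherical direction and the horosphere integral is invariant under it; what survives is precisely the $O(e^{-(l-t)})$ remainder. Separately, the $t$-variable contributes a uniformly bounded factor $\|\psi\|_\infty^2 \cdot |\mathbb I^{r(d-r)}|$, so the final bound is $c\, e^{-|l-t|}$ with $c$ depending only on $\psi$, $s'$, $\mathcal U$, and $M_1$.

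The main obstacle I anticipate is carrying out the change of variables cleanly in the higher-rank matrix setting: unlike the curve case in \cite{Shi_Ulcigrai_Genericity_on_curves_2018}, the variable $\mathbf s$ ranges over $Mat_{r\times(d-r)}(\bR)$, and the map $\mathbf w \mapsto a_t u_{\boldsymbol\varphi}(\mathbf w) a_{-t}$ mixes the matrix entries with the vector coordinates $\boldsymbol\varphi$ in a way that must be disentangled to isolate a genuinely expanding direction along which to integrate by parts or apply the Lipschitz estimate. Handling the vector part $(Id,\boldsymbol\varphi(\mathbf w))$ simultaneously with the $SL_d$-part $u(\mathbf w)$ requires keeping careful track of the semidirect product multiplication law, and one must verify that the $C^1$-regularity of $\boldsymbol\varphi$ (with the derivative bound built into $M_1$) suffices to keep the Jacobian of the substitution bounded away from $0$ and $\infty$ uniformly in $t$. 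Once the correct substitution is in place the exponential decay $e^{-|l-t|}$ falls out from the single contraction factor of $a_{l-t}$ acting on the stable-for-$\psi$ coordinate, exactly as needed to feed into Theorem \ref{Theorem: Kleinbock-Shi-Weiss theorem}.
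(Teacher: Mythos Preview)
Your plan has the right skeleton --- freeze the slow factor, then exploit the translation structure to show the integral of the fast factor is small --- and this is exactly the paper's approach. But there is a role reversal in your description that would cause trouble if carried out as written: you say ``the difference structure of $\psi_t$ \dots\ is what kills the leading $O(1)$ term,'' whereas it is $\psi_l$ whose difference structure provides the cancellation. The mechanism is $u(s'\mathbf{E}_{11})a_l = a_l\, u(e^{-dl}s'\mathbf{E}_{11})$, so up to an $O(e^{-rl})$ error coming from the $\boldsymbol\varphi$-part one has
\[
\psi_l(\mathbf{w}) \approx \psi\bigl(a_l u_{\boldsymbol\varphi}(\mathbf{w})\Gamma\bigr) - \psi\bigl(a_l u_{\boldsymbol\varphi}(\mathbf{w}+e^{-dl}s'\mathbf{E}_{11})\Gamma\bigr),
\]
and integrating this difference over a subinterval telescopes. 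The factor $\psi_t$ is the one to be frozen: it is approximately constant on intervals of length $\sim e^{-d(l+t)/2}$ in the $s_{11}$-coordinate, by the Lipschitz bound and the expansion rate $e^{dt}$.

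The higher-rank obstacle you anticipate --- a multivariable change of variables with Jacobian control --- is a red herring. The paper sidesteps it entirely by Fubini: since $u(s'\mathbf{E}_{11})$ moves only the $(1,1)$-entry of $\mathbf{s}$, one fixes the remaining coordinates $\mathbf{w}\in\{0\}\times\mathbb{I}^{r(d-r)-1}$, partitions the single interval $\mathbb{I}$ into pieces of length $\sim e^{-d(l+t)/2}$, freezes $\psi_t$ on each piece, and applies the shift-cancellation to $\int\psi_l\,ds$. No Jacobian or nonsingularity argument is needed; the $C^1$ bound on $\boldsymbol\varphi$ enters only through Lipschitz estimates. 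So your plan is essentially the paper's, modulo swapping the roles of $\psi_t$ and $\psi_l$ in the cancellation step and an unnecessary worry about multivariable substitutions.
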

}
\begin{proof}
{
In the following proof, for positive valued functions $f,g$, we write $f=O(g)$ if there exists a positive constant $C$ depending only on $\psi,\boldsymbol{\varphi},s\prm$ and $|\mathbb{I}|$ (these are fixed throughout the proof) such that $f\leq C g$. Also, for any positive number $\epsilon>0$, we let $O_G(\epsilon)$ denote a group element in a $ O(\epsilon)$-neighborhood of $Id$ in $G$.
}

{
Without loss of generality, we assume that $l\geq t$. For $s_0\in \mathbb{I}$, consider the interval
\begin{align*}
    \mathbb{I}(s_0)=(s_0-|\mathbb{I}|e^{-\frac{d(l+t)}{2}},s_0+|\mathbb{I}|e^{-\frac{d(l+t)}{2}})
\end{align*}
such that $\mathbb{I}(s_0)\subset \mathbb{I}$.
For any $s\in \mathbb{I}(s_0)$, and any $\mathbf{w}\in \{0\}\times \mathbb{I}^{r(d-r)-1}$,
\begin{align*}
    &|\psi_t(s \mathbf{E}_{11}+\mathbf{w})-\psi_t(s_0 \mathbf{E}_{11}+\mathbf{w})|
    \leq |\psi(a_t u_{\boldsymbol{\varphi}}(s\mathbf{E}_{11}+\mathbf{w})\Gamma)-\psi(a_t u_{\boldsymbol{\varphi}}(s_0 \mathbf{E}_{11}+\mathbf{w})\Gamma)|\\
    &+ |\psi(u(s\prm \mathbf{\mathbf{E}_{11}})a_t u_{\boldsymbol{\varphi}}(s\mathbf{\mathbf{E}_{11}}+\mathbf{w})\Gamma)-\psi(u(s\prm \mathbf{\mathbf{E}_{11}})a_t u_{\boldsymbol{\varphi}}(s_0\mathbf{\mathbf{E}_{11}}+\mathbf{w})\Gamma)|.
\end{align*}
Note that 
\begin{align*}
    &a_t u_{\boldsymbol{\varphi}}(s\mathbf{\mathbf{E}_{11}}+\mathbf{w})=a_t (u(s \mathbf{E}_{11}+\mathbf{w}),\boldsymbol{\varphi}(s \mathbf{E}_{11}+\mathbf{w}))\\
    &=a_t (u(s_0 \mathbf{E}_{11}+\mathbf{w}+(s-s_0)\mathbf{E}_{11}),\boldsymbol{\varphi}(s_0\mathbf{E}_{11}+\mathbf{w})+\boldsymbol{\varphi}(s \mathbf{E}_{11}+\mathbf{w})-\boldsymbol{\varphi}(s_0 \mathbf{E}_{11}+\mathbf{w}))\\
    &=(u(e^{dt}(s-s_0)\mathbf{\mathbf{E}_{11}}),e^{(d-r)t}(s-s_0)\partial_{11}\boldsymbol{\varphi})a_t u_{\boldsymbol{\varphi}}(s_0\mathbf{\mathbf{E}_{11}}+\mathbf{w}).
\end{align*}
where the last equality follows by mean value theorem (for simplicity of notations, by uniform boundedness of $||\partial_{11}\varphi||_{\infty}$ on $\mathcal{U}$,  we write $\partial_{11}\boldsymbol{\varphi}$ for $\partial_{11}\boldsymbol{\varphi}(\Tilde{s}\mathbf{\mathbf{E}_{11}}+\mathbf{w})$ with arbitrary $\Tilde{s}$ ).
}
{
Since
\begin{align*}
    e^{(d-r)t}|s-s_0|\leq e^{dt}|s-s_0|\leq e^{-\frac{d(l+t)}{2}}\cdot e^{dt}|\mathbb{I}|=e^{-\frac{d(l-t)}{2}}|\mathbb{I}|,
\end{align*}
we have
\begin{align*}
    a_t u_{\boldsymbol{\varphi}}(s\mathbf{\mathbf{E}_{11}}+\mathbf{w})&=O_G(e^{-\frac{d(l-t)}{2}}|\mathbb{I}|)a_t u_{\boldsymbol{\varphi}}(s_0\mathbf{\mathbf{E}_{11}}+\mathbf{w})\\
    &=O_G(e^{-\frac{d(l-t)}{2}})a_t u_{\boldsymbol{\varphi}}(s_0\mathbf{\mathbf{E}_{11}}+\mathbf{w}).
\end{align*}
Likewise,
\begin{align*}
     u(s\prm \mathbf{\mathbf{E}_{11}})a_t u_{\boldsymbol{\varphi}}(s\mathbf{\mathbf{E}_{11}}+\mathbf{w})=O_G(e^{-\frac{d(l-t)}{2}})u(s\prm \mathbf{\mathbf{E}_{11}})a_t u_{\boldsymbol{\varphi}}(s_0\mathbf{\mathbf{E}_{11}}+\mathbf{w}).
\end{align*}
Since $\psi\in C_c^{\infty}(G/\Gamma)$, $\psi$ is Lipschitz, and hence
\begin{align*}
    |\psi_t(s\mathbf{\mathbf{E}_{11}}+\mathbf{w})-\psi_t(s_0\mathbf{\mathbf{E}_{11}}+\mathbf{w})|=O(e^{-\frac{d(l-t)}{2}}).
\end{align*}
Therefore,
\begin{align}\label{align: estimate in Lemma 4.3}
    &\int_{\mathbb{I}(s_0)}\psi_t(s\mathbf{\mathbf{E}_{11}}+\mathbf{w})\psi_l(s\mathbf{\mathbf{E}_{11}}+\mathbf{w})ds\nonumber\\
    &=\int_{\mathbb{I}(s_0)}(\psi_t(s_0\mathbf{\mathbf{E}_{11}}+\mathbf{w})+O(e^{-\frac{d(l-t)}{2}}))\cdot\psi_l(s\mathbf{\mathbf{E}_{11}}+\mathbf{w})ds\nonumber\\
    &=\psi_t(s_0\mathbf{\mathbf{E}_{11}}+\mathbf{w})\int_{\mathbb{I}(s_0)}\psi_l(s\mathbf{\mathbf{E}_{11}}+\mathbf{w})ds+O(e^{-\frac{d(l-t)}{2}})\cdot|\mathbb{I}(s_0)|.
\end{align}
Now we estimate $\int_{\mathbb{I}(s_0)}\psi_l(s\mathbf{\mathbf{E}_{11}}+\mathbf{w})ds$. Note that 
\begin{align*}
    &u(s\prm \mathbf{E}_{11})a_l u_{\boldsymbol{\varphi}}(s\mathbf{\mathbf{E}_{11}}+\mathbf{w})\\
    &=a_l (u((s+e^{-dl}s\prm)\mathbf{\mathbf{E}_{11}}+\mathbf{w}),\boldsymbol{\varphi}(s\mathbf{\mathbf{E}_{11}}+\mathbf{w}))\\
    &=a_l (Id,\boldsymbol{\varphi}(s\mathbf{\mathbf{E}_{11}}+\mathbf{w})-\boldsymbol{\varphi}((e^{-dl}s\prm+s)\mathbf{\mathbf{E}_{11}}+\mathbf{w}))\cdot
    a_{-l}a_l u_{\boldsymbol{\varphi}}((s+e^{-dl}s\prm)\mathbf{\mathbf{E}_{11}}+\mathbf{w})\\
    &=a_l(Id,e^{-dl}s\prm \partial_{11} \boldsymbol{\varphi})a_{-l}a_l u_{\boldsymbol{\varphi}}((s+e^{-dl}s\prm)\mathbf{\mathbf{E}_{11}}+\mathbf{w})\\
    &=O_G(e^{-rl})a_l u_{\boldsymbol{\varphi}}((s+e^{-dl}s\prm)\mathbf{\mathbf{E}_{11}}+\mathbf{w}).
\end{align*}
As $\psi$ is Lipschitz,
\begin{align*}
    \psi_l(s\mathbf{\mathbf{E}_{11}}+\mathbf{w})=\psi(a_l u_{\boldsymbol{\varphi}}(s\mathbf{\mathbf{E}_{11}}+\mathbf{w})\Gamma)-\psi(a_l u_{\boldsymbol{\varphi}}((s+e^{-dl}s\prm)\mathbf{\mathbf{E}_{11}}+\mathbf{w})\Gamma)+O(e^{-rl}).
\end{align*}
Since $\mathbb{I}(s_0)$ and $\mathbb{I}(s_0)+e^{-dl}s\prm$ overlap except for a length of $O(e^{-dl})$, we have
\begin{align*}
    &\int_{\mathbb{I}(s_0)}\psi_l(s\mathbf{\mathbf{E}_{11}}+\mathbf{w})ds\\
    &=\int_{\mathbb{I}(s_0)}\psi(a_l u_{\boldsymbol{\varphi}}(s\mathbf{\mathbf{E}_{11}}+\mathbf{w})\Gamma)-\psi(a_l u_{\boldsymbol{\varphi}}((s+e^{-dl}s\prm)\mathbf{\mathbf{E}_{11}}+\mathbf{w})\Gamma)+O(e^{-rl})ds\\
    &=O(e^{-dl})+O(e^{-rl})|\mathbb{I}(s_0)|\\
    &=O(e^{-\frac{d(l-t)}{2}})|\mathbb{I}(s_0)|+O(e^{-(l-t)})|\mathbb{I}(s_0)|\\
    &=O(e^{-(l-t)})|\mathbb{I}(s_0)|.
\end{align*}
Now we consider the partition $\mathbb{I}=\bigcup_{j=1}^p \mathbb{I}_j$ such that $\mathbb{I}_j=[s_{j-1},s_j]$ with $s_j-s_{j-1}=2e^{-\frac{d(l+t)}{2}}|\mathbb{I}|$ for $1\leq j\leq p-1$, and $s_p-s_{p-1}\leq 2e^{-\frac{d(l+t)}{2}}|\mathbb{I}|$. By (\ref{align: estimate in Lemma 4.3}), we have
\begin{align*}
    &\int_{\mathbb{I}} \psi_t(s\mathbf{\mathbf{E}_{11}}+\mathbf{w})\psi_l(s\mathbf{\mathbf{E}_{11}}+\mathbf{w})ds\\
    &=\sum_{j=1}^p\int_{\mathbb{I}_j} \psi_t(s\mathbf{\mathbf{E}_{11}}+\mathbf{w})\psi_l(s\mathbf{\mathbf{e}_{11}}+\mathbf{w})ds\\
    &=\sum_{j=1}^{p-1}\int_{\mathbb{I}_j}\psi_t(s\mathbf{\mathbf{E}_{11}}+\mathbf{w})\psi_l(s\mathbf{\mathbf{E}_{11}}+\mathbf{w})ds+O(e^{-\frac{d(l+t)}{2}})|\mathbb{I}|\\
    &=\sum_{j=1}^{p-1}O(e^{-(l-t)})|\mathbb{I}_j|+O(e^{-\frac{d(l+t)}{2}})|\mathbb{I}|\\
    &=O(e^{-(l-t)})|\mathbb{I}|=O(e^{-(l-t)}).
\end{align*}
The above estimate holds for any $\mathbf{w}\in \{\mathbf{0}\}\times \mathbb{I}^{r(d-r)-1}$. Now the lemma follows from the above estimate and Fubini's theorem.}
\end{proof}

\section{Margulis' height function}\label{section Margulis' height function}
In this section, we will recall the definition of Margulis' height function on $SL_d(\mathbb{R})/SL_d(\mathbb{Z})$ and its uniform contraction property . 

Margulis' height function was first introduced in \cite{Eskin_upperboundsandasymptoticsinaquantitativeversion_1998} and later developed in several papers (see for example \cite{Benoist_Quint_Random_Walks_on_Finite_volume_2012}\cite{Shi_Pointwiseequidistribution_2020}). It measures the depth of elements of $X$ into the cusps.
It has been used to study equidistribution problem for certain unbounded functions (cf.\cite{Eskin_upperboundsandasymptoticsinaquantitativeversion_1998}\cite{Eskin_quadraticformsofsignature_2005}\cite{Magulis_quantitativeversionoftheoppenheim_2011}) and random walks on homogeneous spaces (cf.\cite{Benoist_Quint_Random_Walks_on_Finite_volume_2012}\cite{Eskin_Margulis_Recurrence_properties_of_random_walks_2004}).

We start with the vector space $\mathbf{V}=\wedge^*\bR^d=\bigoplus_{0\leq i\leq d}\wedge^i \bR^d$, where $G\prm=SL_d(\bR)$ acts on $\mathbf{V}$ naturally.

Let $\Delta$ be a lattice in $\bR^d$. We say that a subspace $L$ of $\bR^d$ is $\Delta$-rational if $L\cap \Delta$ is a lattice in $L$. For any $\Delta$-rational subspace $L$, denote $d(L)$ or $d_{\Delta}(L)$ the volume of $L/L\cap \Delta$.
Note that $d(L)$ is the norm of $u_1\wedge u_2\wedge\cdots\wedge u_l$ in $\mathbf{V}$, where $\{u_i\}_{1\leq i \leq l}$ is a $\bZ$-basis of $L\cap \Delta$. If $L=\{\mathbf{0}\}$, we set $d(L)=1$.

For any lattice $\Delta$, we define for $0\leq i\leq d$,
\begin{align*}
    \alpha_i(\Delta):=\sup\left\{\frac{1}{d(L)}: L \text{ is a } \Delta\text{-rational subspace of dimension }i \right \}.
\end{align*}

\begin{proposition}\label{contraction hypothesis for Margulis' height function}
There exists a continuous map $\Tilde{\alpha}:SL_d(\bR)/SL_d(\mathbb{Z})\to [1,\infty]$ and $b_1>0$ such that for $B$ a bounded open box in $Mat_{r\times(d-r)}(\bR)$, for all $t>0$ large enough and for any unimodular lattice $\Lambda$ of $\bR^d$,
\begin{align*}
    \frac{1}{|B|}\int_{B}\Tilde{\alpha}(a_t u(\mathbf{s})\Lambda)d\mathbf{s} < 2^{-r(d-r)-2}\Tilde{\alpha}(\Lambda)+b_1,
    \end{align*}
and there exists $\nu>0$ such that
\begin{align*}
 \alpha_1(\Lambda)^{\nu}\leq \Tilde{\alpha}(\Lambda).
\end{align*}
Moreover, a measurable subset $K$ of $SL_d(\mathbb{R})/SL_d(\mathbb{Z})$ is precompact if there exists $N>0$ such that
\begin{align*}
    K\subset \{x\in X: \Tilde{\alpha}(x)\leq N\}.
\end{align*}
\end{proposition}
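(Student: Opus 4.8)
The plan is to take $\Tilde{\alpha}$ to be a weighted sum of the functions $\alpha_i$, following the construction of Margulis' height function in \cite{Eskin_upperboundsandasymptoticsinaquantitativeversion_1998}, \cite{Benoist_Quint_Random_Walks_on_Finite_volume_2012} and \cite{Shi_Pointwiseequidistribution_2020}: I would set $\Tilde{\alpha}(\Delta)=\sum_{i=0}^{d}\epsilon^{\,i}\alpha_i(\Delta)$ for a small $\epsilon\in(0,1)$ to be fixed at the end, and first dispose of the elementary parts. Since $\Delta$ is unimodular, $d(\{\mathbf{0}\})=d(\bR^d)=1$, so $\alpha_0\equiv\alpha_d\equiv1$ and $\Tilde{\alpha}\ge1$; each $\alpha_i$ is finite, bounded below by a dimensional constant, and continuous on $SL_d(\bR)/SL_d(\bZ)$ — near a given lattice only finitely many $\Delta$-rational subspaces can come close to realizing the supremum, and $\Delta\mapsto d_\Delta(L)$ is continuous for each fixed rational index — hence $\Tilde{\alpha}$ is continuous. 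For the last two assertions: $\alpha_1(\Delta)=1/\lambda_1(\Delta)$ with $\lambda_1$ the length of a shortest nonzero vector of $\Delta$, so $\{\Tilde{\alpha}\le N\}\subset\{\alpha_1\le N/\epsilon\}=\{\lambda_1\ge\epsilon/N\}$, which is precompact by Mahler's compactness criterion; and since $\Tilde{\alpha}\ge\max(1,\epsilon\,\alpha_1)$ with $\alpha_1$ bounded below, an elementary computation gives $\alpha_1^{\nu}\le\Tilde{\alpha}$ for every sufficiently small $\nu>0$.

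The heart of the proof is the averaged single-index estimate: there are $\delta>0$ and $C=C(B)>0$ such that for every $0\le i\le d$, every $t>0$ and every unimodular $\Lambda$,
\[
\frac{1}{|B|}\int_B\alpha_i\bigl(a_t u(\mathbf{s})\Lambda\bigr)\,d\mathbf{s}\ \le\ C e^{-\delta t}\alpha_i(\Lambda)+C,
\]
where the constant term also absorbs the (likewise exponentially decaying) contributions of lower-dimensional $\Delta$-rational subspaces. I would prove it by writing $\alpha_i(a_t u(\mathbf{s})\Lambda)=\sup_L\norm{a_t u(\mathbf{s})v_L}^{-1}$ over $\Lambda$-rational $i$-dimensional subspaces $L$, with $v_L\in\wedge^i\bR^d$ the associated primitive multivector, and using the conjugation identity $a_t u(\mathbf{s})=u(e^{dt}\mathbf{s})\,a_t$ to rewrite $\frac{1}{|B|}\int_B$ as the average of $\norm{u(\mathbf{s}')\,a_t v_L}^{-1}$ over the dilated box $e^{dt}B$, whose volume has size $e^{dt\,r(d-r)}$. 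Decomposing $a_t v_L$ along the $a_t$-eigenspaces of $\wedge^i\bR^d$ (the eigenvalue on a wedge of $p$ basis vectors from the expanding block of $a_t$ is $e^{(pd-ir)t}$) and using that $u(\mathbf{s}')$ is unipotent and upper triangular for this grading, any nonzero decomposable $w=a_t v_L$ acquires, under $u(\mathbf{s}')$, a component in a strictly more expanded eigenspace of size a nonzero polynomial in $\mathbf{s}'$; a polynomial non-divergence estimate for $\int\norm{u(\mathbf{s}')w}^{-1}$ over the huge box $e^{dt}B$, divided by its volume, then yields the exponential gain, while decomposable vectors already lying in a single $a_t$-eigenspace of $\wedge^i\bR^d$ (including the neutral eigenspace, which occurs for some $i$ when $d\mid ir$, even though $a_t$ has no neutral direction on $\bR^d$) are dealt with by the same unipotent shear moving them into a more expanded eigenspace. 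Passing from this pointwise-in-$L$ bound to the supremum $\alpha_i$ requires the standard device for interchanging $\sup$ and $\int$: one controls the boundedly many $\Lambda$-rational $i$-subspaces that can be short simultaneously via the submodularity $d(L_1+L_2)\,d(L_1\cap L_2)\le d(L_1)\,d(L_2)$, and here the higher-rank bookkeeping is where a linear-algebra input of the type of \cite[Proposition 3.4]{kleinbock1998flows} (compare Lemma \ref{basic linear algebra lemma}) enters.

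Granting the single-index estimate, summing it against the weights $\epsilon^i$ gives
\[
\frac{1}{|B|}\int_B\Tilde{\alpha}\bigl(a_t u(\mathbf{s})\Lambda\bigr)\,d\mathbf{s}\ \le\ C_1(B)\,e^{-\delta t}\,\Tilde{\alpha}(\Lambda)+C_2(B)
\]
for all $t>0$ and all unimodular $\Lambda$; if genuine cross terms $\alpha_{i\pm1}$ appear in the single-index bound, they are absorbed using $\sum_i\epsilon^i(\alpha_{i-1}+\alpha_{i+1})\le(\epsilon+\epsilon^{-1})\Tilde{\alpha}$ together with the reduction-theoretic log-concavity $\alpha_{i-1}\alpha_{i+1}\le C_d\,\alpha_i^{2}$. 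Fixing $\epsilon$ and then choosing $t$ large enough that $C_1(B)e^{-\delta t}<2^{-r(d-r)-2}$, the displayed inequality of the proposition holds with $b_1=C_2(B)$, and all three assertions are established.

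The main obstacle is the single-index estimate of the second paragraph — not the exponential decay for a single generic subspace (which comes from dilating the box by $e^{dt}$ and a polynomial non-divergence bound), but the passage to the supremum over all $\Lambda$-rational $i$-subspaces while keeping the exponential gain, and the linear-algebra bookkeeping forced by the two-block shape of $a_t$ and the form of $U$. This is precisely the point flagged in the discussion of the ingredients of the proof, and the reason Lemma \ref{basic linear algebra lemma} (modelled on \cite[Proposition 3.4]{kleinbock1998flows}) is needed.
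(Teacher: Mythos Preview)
The paper's proof is essentially a one-line citation: it defines
\[
\Tilde{\alpha}=\epsilon^{-q(1)}\sum_{i=0}^d \epsilon^{q(i)}\alpha_i^{\nu},\qquad q(i)=i(d-i),
\]
with $\epsilon,\nu>0$ small, and refers to \cite[Lemma 4.1]{Shi_Pointwiseequidistribution_2020} for the contraction inequality. Your proposal is in the right spirit but differs from this in two places that are not cosmetic, and the second one is a genuine gap.

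First, the paper uses $\alpha_i^{\nu}$ rather than $\alpha_i$; the small exponent is what makes the averaging inequality work with a uniform constant and is also what produces the stated bound $\alpha_1^{\nu}\le\Tilde{\alpha}$ directly (your ``elementary computation'' to get $\alpha_1^{\nu}\le\Tilde{\alpha}$ from $\Tilde{\alpha}\ge\max(1,\epsilon\alpha_1)$ is not quite right without a power already built in).

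Second, and more seriously, your weights $\epsilon^{i}$ do not allow the cross-term absorption to close. The EMM/Benoist--Quint inequality one actually proves has the shape
\[
A_t\,\alpha_i^{\nu}\ \le\ c_0\,e^{-\delta t}\alpha_i^{\nu}\ +\ c_1\,(\alpha_{i-1}\alpha_{i+1})^{\nu/2},
\]
where $c_1$ depends on $t$ and does \emph{not} decay; your displayed single-index estimate with a bare additive constant $C$ is false precisely because these cross terms are unbounded functions of $\Lambda$. With the paper's quadratic exponent $q(i)=i(d-i)$ one has $q(i)=\tfrac12(q(i-1)+q(i+1))+1$, so
\[
\epsilon^{q(i)}(\alpha_{i-1}\alpha_{i+1})^{\nu/2}
=\epsilon\,\bigl(\epsilon^{q(i-1)}\alpha_{i-1}^{\nu}\cdot\epsilon^{q(i+1)}\alpha_{i+1}^{\nu}\bigr)^{1/2}
\le \tfrac{\epsilon}{2}\bigl(\epsilon^{q(i-1)}\alpha_{i-1}^{\nu}+\epsilon^{q(i+1)}\alpha_{i+1}^{\nu}\bigr),
\]
and the extra factor $\epsilon$ is exactly what lets you first take $\epsilon$ small and then $t$ large. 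With linear weights $\epsilon^{i}$ the analogous identity gives no extra $\epsilon$, and your proposed remedies do not help: the bound $\sum_i\epsilon^{i}(\alpha_{i-1}+\alpha_{i+1})\le(\epsilon+\epsilon^{-1})\Tilde{\alpha}$ carries a coefficient $\epsilon^{-1}$ that blows up, and invoking the log-concavity $\alpha_{i-1}\alpha_{i+1}\le C_d\alpha_i^{2}$ just turns the cross term back into $C\alpha_i^{\nu}$ with no decay, yielding $A_t\alpha_i^{\nu}\le(c_0e^{-\delta t}+C)\alpha_i^{\nu}$, which does not contract.

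Finally, a misattribution: Lemma \ref{basic linear algebra lemma} is not used here at all. In the paper it enters only in Section \ref{section mixed height functions}, in the analysis of the mixed height function $\beta_{\mathbf m}$ (specifically in the proof of Lemma \ref{Lemma:estimate of I_02}); Proposition \ref{contraction hypothesis for Margulis' height function} is handled entirely by the cited result of \cite{Shi_Pointwiseequidistribution_2020}.
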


\begin{proof}
Define $\Tilde{\alpha}=\epsilon^{-q(1)}\cdot \sum_{i=0}^d \epsilon^{q(i)}\cdot \alpha_i^{\nu}$, where $\epsilon,\nu>0$ are sufficiently small numbers and $q(i)=i(d-i)$. The fact that $\Tilde{\alpha}$ satisfies conclusion of Proposition \ref{contraction hypothesis for Margulis' height function} will follow from \cite[Lemma 4.1]{Shi_Pointwiseequidistribution_2020}.
\end{proof}
The function $\Tilde{\alpha}$ above is the Margulis' height function that we need in our setting.

\begin{remark}\label{remark: Lipschitz property of Margulis height function}
The function $\Tilde{\alpha}$ satisfies Lipschitz property as follows: For any bounded neighborhood $\mathcal{V}$ of $e$ of $SL_d(\bR)$, there exists $\overline{M}>0$ such that for any $x\in SL_d(\bR)/SL_d(\bZ)$, any $g\in \mathcal{V}$,
\begin{align*}
    \Tilde{\alpha}(gx)\leq \overline{M} \Tilde{\alpha}(x).
\end{align*}
Indeed, $\overline{M}>0$ is the maximum of operator norms of elements in $\mathcal{V}$ acting on $\mathbf{V}$.
\end{remark}

\section{Mixed height function}\label{section mixed height functions}

In this section, we will construct a mixed height function, which is crucial for us to prove Proposition \ref{limit measure spend very little measure on singular sets}. The main result of this section is the following:
\begin{proposition}\label{contraction of mixed height functions}
Let $\boldsymbol{\varphi}$ be a $C^1$ map from  $\mathcal{U}$ to $(\bR^d)^k$ satisfying $(\boldsymbol{\varphi}(\mathbf{s}))_{>r}\equiv \mathbf{0}$ for any $\mathbf{s}\in \mathcal{U}$. For any $\mathbf{m}\in \bZ^k\setminus \{\mathbf{0}\}$, any closed cube $I\subset \mathcal{U}\setminus Bad_{\mathbf{m}}$ (for $Bad_{\mathbf{m}}$, see (\ref{genericity condition under regularity})), there are $t>0$ sufficiently large (depending on $I$, $\mathbf{m}$) and measurable function $\beta_{\mathbf{m}}:G/\Gamma\to (0,\infty]$ such that the following hold:

(1) For any $l>0$, $\{x\in G/\Gamma:\beta_{\mathbf{m}}(x)\leq l\}$ is compact;

(2) For any $x\in G/\Gamma$, $\beta_{\mathbf{m}}(x)=\infty$ if and only if $x\in X_{\mathbf{m}}$;

(3) Given any $n\in \bZ_{\geq 0}$, a box $J\subset I$ with $J=\prod_{i=1}^{r(d-r)}J_i$, where $J_i\subset \bR$ and $|J_i|\leq 2 e^{-dnt}$ for all $i$. There exists $\Tilde{M}_1>0$ such that for any $\mathbf{s},\Tilde{\mathbf{s}}\in J$, one has
\begin{align*}
    \beta_{\mathbf{m}}(a_{nt}u_{\boldsymbol{\varphi}}(\Tilde{\mathbf{s}})\Gamma)\leq \Tilde{M}_1 \beta_{\mathbf{m}}(a_{nt}u_{\boldsymbol{\varphi}}(\mathbf{s})\Gamma);
\end{align*}

(4) There exists $\Tilde{M}_2>0$ {depending on $t$}, for any $n\in \bZ_{\geq 0}$, any $\mathbf{s}\in I$ and any $\tau\in \bR$ with $|\tau|\leq t$, one has
\begin{align*}
    \beta_{\mathbf{m}}(a_{\tau}a_{nt}u_{\boldsymbol{\varphi}}(\mathbf{s})\Gamma)\leq \Tilde{M}_2\beta_{\mathbf{m}}(a_{nt}u_{\boldsymbol{\varphi}}(\mathbf{s})\Gamma);
\end{align*}

(5) There exists $b>0$ such that the following holds: for any $n\in \bZ_{\geq 0}$ and any box $J\subset I$ with $J=\prod_{i=1}^{r(d-r)}J_i$ satisfying either $n\geq 1$ and $|J_i|\geq e^{-dnt}$ for all $1\leq i \leq r(d-r)$, or $n=0$ and $J=I$, one has
\begin{align*}
    \int_J \beta_{\mathbf{m}}(a_{(n+1)t} u_{\boldsymbol{\varphi}}(\mathbf{s})\Gamma)d\mathbf{s}\leq \frac{1}{2}\int_{J}\beta_{\mathbf{m}}(a_{nt}u_{\boldsymbol{\varphi}}(\mathbf{s})\Gamma)d\mathbf{s}+b|J|.
\end{align*}
\end{proposition}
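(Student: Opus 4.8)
The plan is to build $\beta_{\mathbf{m}}$ by combining the Margulis height function $\Tilde{\alpha}$ on $SL_d(\bR)/SL_d(\bZ)$ from Proposition \ref{contraction hypothesis for Margulis' height function} (which controls excursions into the cusp) with a term that blows up precisely on the singular submanifold $X_{\mathbf{m}}$. Write a point of $X=G/\Gamma$ as $(g,g\boldsymbol{\eta})\Gamma$ with $g\in SL_d(\bR)$ and $\boldsymbol{\eta}\in(\bR^d)^k$; then $X_{\mathbf{m}}=\{\boldsymbol{\eta}\cdot\mathbf{m}\in\bZ^d\}$. The natural quantity measuring distance to $X_{\mathbf{m}}$ is $\mathrm{dist}(g(\boldsymbol{\eta}\cdot\mathbf{m}),g\bZ^d)$, i.e. the distance in the torus $\bR^d/g\bZ^d$ from the vector $g(\boldsymbol{\eta}\cdot\mathbf{m})$ to $0$; set $\delta_{\mathbf{m}}(x)$ to be this distance and define $\beta_{\mathbf{m}}(x) = \Tilde{\alpha}(g SL_d(\bZ)) + \delta_{\mathbf{m}}(x)^{-\kappa}$ for a small exponent $\kappa>0$ to be chosen. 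Properties (1) and (2) are then essentially built in: $\beta_{\mathbf{m}}(x)=\infty$ iff $\delta_{\mathbf{m}}(x)=0$ iff $x\in X_{\mathbf{m}}$, and a sublevel set of $\beta_{\mathbf{m}}$ forces both $\Tilde{\alpha}$ bounded (hence the $SL_d$-part precompact, by the last clause of Proposition \ref{contraction hypothesis for Margulis' height function}) and $\delta_{\mathbf{m}}$ bounded below, so the whole sublevel set is precompact in $X$.

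Properties (3) and (4) are the Lipschitz-type estimates along short pieces of the $U$-orbit and along the flow direction. For (4), note $a_\tau$ with $|\tau|\le t$ ranges over a fixed compact set of $G$; the Lipschitz property of $\Tilde{\alpha}$ (Remark \ref{remark: Lipschitz property of Margulis height function}) handles the first term, and for the second term one observes that the torus-distance $\delta_{\mathbf{m}}$ gets contracted/expanded by at most $e^{(d-r)|\tau|}\le e^{(d-r)t}$ since $a_t$ acts with eigenvalues $e^{(d-r)t}, e^{-rt}$. For (3), the key is the computation already carried out in the excerpt (cf. the displayed identity for $a_t u_{\boldsymbol\varphi}(s\mathbf{E}_{11}+\mathbf{w})$ in the proof of Lemma \ref{Lemma: correlation}): for $\mathbf{s},\Tilde{\mathbf{s}}\in J$ with $|J_i|\le 2e^{-dnt}$, one has $a_{nt}u_{\boldsymbol\varphi}(\Tilde{\mathbf{s}}) = O_G(1)\cdot a_{nt}u_{\boldsymbol\varphi}(\mathbf{s})$ with the implied neighborhood of $Id$ of a fixed size depending only on $\boldsymbol\varphi$ and $|\mathbb{I}|$ — here one uses that expansion of the box by $a_{nt}$ is compensated by the shrinking $|J_i|\le 2e^{-dnt}$, and that $(\boldsymbol\varphi(\mathbf{s}))_{>r}\equiv\mathbf{0}$ so the extra unipotent coordinates stay bounded. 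Feeding this through the Lipschitz estimates for $\Tilde{\alpha}$ and for $\delta_{\mathbf{m}}^{-\kappa}$ gives (3) with a uniform constant $\Tilde{M}_1$.

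The main work, and the main obstacle, is property (5): the average contraction estimate over a box $J$ of sidelengths $\ge e^{-dnt}$. The $\Tilde{\alpha}$-part is handled by integrating the inequality of Proposition \ref{contraction hypothesis for Margulis' height function} over $J$ (partitioning $J$ into sub-boxes on which $a_{nt}$ maps to unit-size boxes and iterating, so that choosing $t$ large makes the contraction factor beat $1/2$, at the cost of an additive constant). The delicate point is the $\delta_{\mathbf{m}}^{-\kappa}$-part: one must show that $\int_J \delta_{\mathbf{m}}(a_{(n+1)t}u_{\boldsymbol\varphi}(\mathbf{s}))^{-\kappa}\,d\mathbf{s}$ is at most $\tfrac12$ of the corresponding integral at time $nt$ plus $O(|J|)$. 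This is where the condition $I\subset\mathcal{U}\setminus Bad_{\mathbf{m}}$ enters, via Lemma \ref{proof of consequence of regularity condition}/ (\ref{consequence of regularity condition}): one unwinds $\delta_{\mathbf{m}}(a_{nt}u_{\boldsymbol\varphi}(\mathbf{s}))$ explicitly in terms of $(\boldsymbol\varphi(\mathbf{s}))_{\le r}\cdot\mathbf{m}$, $\mathbf{s}$, and integer vectors, and the separation hypothesis (\ref{consequence of regularity condition}) guarantees that the "resonant" integer vectors $\mathbf{a}$ with $\|\mathbf{a}\|_\infty\le M_1\|\mathbf{m}\|$ never make $\delta_{\mathbf{m}}$ small on $I$; the remaining integer vectors have $\|\mathbf{a}\|_\infty > M_1\|\mathbf{m}\|$, and for these the relevant linear-algebra estimate — this is precisely where the linear algebra lemma (Lemma \ref{basic linear algebra lemma}), modeled on \cite[Proposition 3.4]{kleinbock1998flows}, is invoked — shows the corresponding contributions to the integral are contracted by $a_t$ by a definite factor once $t$ is large, because the directions along which $\delta_{\mathbf{m}}$ can be small are genuinely expanded by $a_t$ on the sub-box scale $\ge e^{-dnt}$. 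Choosing $\kappa$ small (so $\delta_{\mathbf{m}}^{-\kappa}$ is integrable across the transversal zero set) and $t$ large then yields the factor $\tfrac12$ with an additive error $b|J|$ absorbing the near-cusp/near-singular contributions, completing the proof.
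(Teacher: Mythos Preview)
Your overall architecture is the right one and matches the paper: $\beta_{\mathbf{m}}$ is a weighted sum of the Margulis height $\Tilde{\alpha}$ and a power $\alpha_{\mathbf{m}}^\nu$ of (essentially) your $\delta_{\mathbf{m}}^{-1}$. But the plan for property~(5) has a real gap. You propose to verify the contraction for the two summands \emph{separately}, claiming in particular that $\int_J \delta_{\mathbf{m}}^{-\kappa}(a_{(n+1)t}u_{\boldsymbol\varphi}(\mathbf{s}))\,d\mathbf{s}\le \tfrac12\int_J \delta_{\mathbf{m}}^{-\kappa}(a_{nt}u_{\boldsymbol\varphi}(\mathbf{s}))\,d\mathbf{s}+O(|J|)$. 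This inequality is not true in general, and the reason is the phenomenon you do not address: the nearest lattice vector realizing $\delta_{\mathbf{m}}$ can \emph{jump} between times $nt$ and $(n+1)t$. When you fix a single lattice vector $\boldsymbol\xi$ and average over a box, the $(C,\alpha)$-good/linear-algebra machinery (Lemmas~\ref{good property of functions}--\ref{basic linear algebra lemma}) indeed yields contraction; this is the paper's $I_{02}$ case (Lemma~\ref{Lemma:estimate of I_02}). But on the set $I_{01}$ where the optimal $\boldsymbol\xi_{n+1,\hat{\mathbf{s}}}\neq\boldsymbol\xi_{n,\mathbf{s}}$, the only control on $\alpha_{\mathbf{m}}^\nu$ at time $(n+1)t$ comes from the fact that the \emph{new} vector was not the closest at time $nt$, hence $\|w(n,\mathbf{s},\boldsymbol\xi_{n+1,\hat{\mathbf{s}}})\|\ge \tfrac12\,\alpha_1^{-1}(a_{nt}u(\mathbf{s})\bZ^d)$. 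This yields $\alpha_{\mathbf{m}}^\nu(a_{(n+1)t}\cdots)\lesssim e^{\nu r t}\,\Tilde{\alpha}(a_{nt}\cdots)$ (Lemma~\ref{Lemma: estimation of I_01}), i.e.\ a bound in terms of $\Tilde{\alpha}$, \emph{not} in terms of $\alpha_{\mathbf{m}}^\nu$ at the previous step. There is no reason for $\Tilde{\alpha}$ to be dominated by $\delta_{\mathbf{m}}^{-\kappa}$ (cusp depth and distance to $X_{\mathbf{m}}$ are independent), so this cannot be absorbed into $\tfrac12\int_J\delta_{\mathbf{m}}^{-\kappa}(nt)$.

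The fix, which is the key point of the construction, is to couple the two pieces with a $t$-dependent weight: one takes $\beta_{\mathbf{m}}=\alpha_{\mathbf{m}}^\nu + c\,e^{\nu r t}\,\Tilde{\alpha}$ for a suitable constant $c$ (see~(\ref{definition of beta m})). Then the $I_{01}$ contribution $\lesssim e^{\nu r t}\Tilde{\alpha}(nt)$ is swallowed by the slack in the $\Tilde{\alpha}$-contraction (factor $2^{-r(d-r)-2}$ in Proposition~\ref{contraction hypothesis for Margulis' height function} leaves room once $c$ is chosen large enough), and the $I_{02}$ contribution contracts on its own. Two smaller points: (i) the paper actually truncates, setting $\alpha_{\mathbf{m}}=1$ when no lattice vector lies within half the shortest vector of $g\bZ^d$ (Definition~\ref{definition of alpha_m}); this is exactly what makes the $I_{01}/I_{02}/I_{03}$ trichotomy clean. (ii) Your $O_G(1)$ argument for~(3) is morally right but not quite a one-liner: the translation part of the perturbation is of size $O(e^{-rnt}\|\mathbf m\|)$, and to turn this additive error into a multiplicative Lipschitz bound for $\delta_{\mathbf{m}}^{-\kappa}$ one must use Lemma~\ref{consequence of existence of xi} (which gives $\delta_{\mathbf{m}}\gtrsim M_1\|\mathbf m\|e^{-rnt}$) --- this is precisely the case analysis the paper carries out in the proof of~(3).
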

\begin{remark}
The function $\beta_{\mathbf{m}}$ in Proposition \ref{contraction of mixed height functions} is the desired mixed height function.
\end{remark}
{From now on until the end of this section, we will fix a closed cube $I\subset \mathcal{U}\setminus Bad_{\mathbf{m}}$}. Recall that the {finite} number $M_1$ is defined as in (\ref{definition of M1}). By the choice of $I$, and
the fact that there are only finitely many $\mathbf{a}\in \bZ^{d-r}$ satisfying $\norm{\mathbf{a}}_{\infty}\leq M_1 \norm{\mathbf{m}}$, we obtain {$\sigma>0$ such that} 
\begin{align}\label{proof of consequence of regularity condition}
    \inf_{\mathbf{s}\in I}\{\norm{(\boldsymbol{\varphi}(\mathbf{s}))_{\leq r}\cdot \mathbf{m}-\mathbf{s}
    \cdot \mathbf{a}-\mathbf{b}}_{\infty}
    : \norm{\mathbf{a}}_{\infty}\leq M_1 \norm{\mathbf{m}}, \mathbf{a}\in \bZ^{d-r},\mathbf{b}\in \bZ^r
    \}=\sigma.
\end{align}

\begin{remark}\label{remark: a small neighborhood of I with inf>0}
By (\ref{proof of consequence of regularity condition}), we can choose a closed neighborhood $I\prm$ of $I$ such that $I\prm$ is a closed cube contained in $\mathcal{U}$ and satisfies
\begin{align*}
    \inf_{\mathbf{s}\in I\prm}\{\norm{(\boldsymbol{\varphi}(\mathbf{s}))_{\leq r} \cdot \mathbf{m}-\mathbf{s}
    \cdot \mathbf{a}-\mathbf{b}}_{\infty}
    : \norm{\mathbf{a}}_{\infty}\leq M_1 \norm{\mathbf{m}}, \mathbf{a}\in \bZ^{d-r},\mathbf{b}\in \bZ^r
    \}=\frac{\sigma}{2}.
\end{align*}
\end{remark}

Next we construct a suitable function measuring the distance to the closed submanifold $X_{\mathbf{m}}$.
For $\mathbf{m}\in \bZ^k\setminus \{\mathbf{0}\}$, consider the quotient space
\begin{align*}
    (\bR^d)^k_{\mathbf{m}}:=\{\mathbf{v}\in (\bR^d)^k:\mathbf{v}\cdot \mathbf{m}\in \bZ^d\}/\sim,
\end{align*}
where $\mathbf{v}\sim \mathbf{v}\prm$ if and only if $\mathbf{v}\cdot \mathbf{m}=\mathbf{v}\prm \cdot \mathbf{m}$. One can directly verify that $\sim$ is an equivalence relation.

\begin{lemma}\label{at most one vector in the equivalence space}
For any $(g,\mathbf{v})\in G$, there exists at most one $\mathbf{v}_0\in (\bR^d)^k_{\mathbf{m}}$ such that
\begin{align}\label{controlled by margulis height function}
    \norm{(\mathbf{v}-g\mathbf{v}_0)\cdot \mathbf{m}}<\frac{1}{2}\inf_{\mathbf{w}\in \bZ^d\setminus \{\mathbf{0}\}}\norm{g\mathbf{w}}.
\end{align}
\end{lemma}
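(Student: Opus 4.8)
The plan is to argue by contradiction. Suppose there exist two distinct elements $\mathbf{v}_0, \mathbf{v}_1 \in (\bR^d)^k_{\mathbf{m}}$ both satisfying the inequality (\ref{controlled by margulis height function}) for the given $(g,\mathbf{v})$. Write $\mathbf{p} = \mathbf{v}_0 \cdot \mathbf{m}$ and $\mathbf{q} = \mathbf{v}_1 \cdot \mathbf{m}$; by definition of the quotient $(\bR^d)^k_{\mathbf{m}}$, both $\mathbf{p}$ and $\mathbf{q}$ lie in $\bZ^d$, and since $\mathbf{v}_0 \not\sim \mathbf{v}_1$ we have $\mathbf{p} \neq \mathbf{q}$, hence $\mathbf{p} - \mathbf{q} \in \bZ^d \setminus \{\mathbf{0}\}$.

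The key step is the triangle inequality applied to $g^{-1}$ times the difference. From the two hypotheses,
\begin{align*}
    \norm{(\mathbf{v} - g\mathbf{v}_0)\cdot \mathbf{m}} < \tfrac{1}{2}\inf_{\mathbf{w}\in\bZ^d\setminus\{\mathbf{0}\}}\norm{g\mathbf{w}}, \qquad \norm{(\mathbf{v} - g\mathbf{v}_1)\cdot \mathbf{m}} < \tfrac{1}{2}\inf_{\mathbf{w}\in\bZ^d\setminus\{\mathbf{0}\}}\norm{g\mathbf{w}},
\end{align*}
so by the triangle inequality
\begin{align*}
    \norm{g(\mathbf{v}_0 - \mathbf{v}_1)\cdot \mathbf{m}} = \norm{(\mathbf{v} - g\mathbf{v}_1)\cdot\mathbf{m} - (\mathbf{v} - g\mathbf{v}_0)\cdot\mathbf{m}} < \inf_{\mathbf{w}\in\bZ^d\setminus\{\mathbf{0}\}}\norm{g\mathbf{w}}.
\end{align*}
But $(\mathbf{v}_0 - \mathbf{v}_1)\cdot\mathbf{m} = \mathbf{p} - \mathbf{q}$ is a nonzero integer vector, so the left-hand side is $\norm{g(\mathbf{p}-\mathbf{q})}$ with $\mathbf{p} - \mathbf{q} \in \bZ^d \setminus \{\mathbf{0}\}$, which is by definition $\geq \inf_{\mathbf{w}\in\bZ^d\setminus\{\mathbf{0}\}}\norm{g\mathbf{w}}$. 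This contradicts the strict inequality just derived, and the lemma follows.

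I do not anticipate a serious obstacle here; the only point requiring a little care is to make sure the action $(g,\mathbf{v}) \mapsto g\mathbf{v}_0$ is compatible with the pairing against $\mathbf{m}$ in the way used, i.e. that $(g\mathbf{v}_0)\cdot\mathbf{m} = g(\mathbf{v}_0\cdot\mathbf{m})$ — this is immediate from the definition of the $G'$-action on $(\bR^d)^k$ being entrywise matrix multiplication, so it commutes with right multiplication by the scalar column $\mathbf{m}$. One should also note that the infimum $\inf_{\mathbf{w}\in\bZ^d\setminus\{\mathbf{0}\}}\norm{g\mathbf{w}}$ is positive (it is the length of the shortest nonzero vector in the lattice $g\bZ^d$), so the bound in (\ref{controlled by margulis height function}) is a genuine positive constraint; this is also what ties $\beta_{\mathbf{m}}$ to the Margulis height function via $\alpha_1$, but it is not needed for the present uniqueness statement.
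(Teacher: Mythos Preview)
Your proof is correct and follows essentially the same approach as the paper: argue by contradiction, apply the triangle inequality to bound $\norm{g(\mathbf{v}_0-\mathbf{v}_1)\cdot\mathbf{m}}$ by the sum of the two hypotheses, and then observe that $(\mathbf{v}_0-\mathbf{v}_1)\cdot\mathbf{m}\in\bZ^d\setminus\{\mathbf{0}\}$ gives the contradiction. Your additional remarks on the compatibility $(g\mathbf{v}_0)\cdot\mathbf{m}=g(\mathbf{v}_0\cdot\mathbf{m})$ and the positivity of the infimum are correct but not strictly needed for the argument.
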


\begin{proof}
Suppose there are two vectors $\mathbf{v}_0$ and $\mathbf{v}_0\prm$ in $(\bR^d)^k_{\mathbf{m}}$ satisfying (\ref{controlled by margulis height function}) such that $\mathbf{v}_0\not\sim \mathbf{v}_0\prm$. Then 
\begin{align*}
    \norm{g(\mathbf{v}_0-\mathbf{v}_0\prm)\cdot \mathbf{m}}\leq \norm{(\mathbf{v}-g\mathbf{v}_0)\cdot\mathbf{m}}+\norm{(\mathbf{v}-g\mathbf{v}_0\prm)\cdot\mathbf{m}}
    < \inf_{\mathbf{w}\in \bZ^d\setminus\{\mathbf{0}\}}\norm{g\mathbf{w}}.
\end{align*}
But since $\mathbf{v}_0\not\sim \mathbf{v}_0\prm$, $(\mathbf{v}_0-\mathbf{v}_0\prm)\cdot \mathbf{m}\in \bZ^d\setminus\{\mathbf{0}\}$. This is a contradiction.
\end{proof}

\begin{definition}\label{definition of alpha_m}
Let $\mathbf{m}\in \bZ^k\setminus\{\mathbf{0}\}$. For any $(g,\mathbf{v})\in G$, we say that $\boldsymbol{\xi}_{g,v}\in (\bR^d)_{\mathbf{m}}^k$ exists if $\boldsymbol{\xi}_{g,v}$ satisfies (\ref{controlled by margulis height function}) in the place of $\mathbf{v}_0$. By convention, we set $\boldsymbol{\xi}_{g,\mathbf{v}}=\infty$ if it does not exist. Define the function
\begin{align}\label{definition of alpha m}
    \alpha_{\mathbf{m}}(g,\mathbf{v})=
    \begin{cases}
    \norm{(\mathbf{v}-g\boldsymbol{\xi}_{g,\mathbf{v}})\cdot \mathbf{m}}^{-1}, & \text{ If }\boldsymbol{\xi}_{g,\mathbf{v}} \text{ exists; }\\
    1, &\text{ if }  \boldsymbol{\xi}_{g,\mathbf{v}}=\infty.
    \end{cases}
\end{align}
\end{definition}

\begin{remark}\label{remark:consequence of existence of xi}
By Lemma \ref{at most one vector in the equivalence space}, $\alpha_{\mathbf{m}}$ is a well-defined function on $G$. {By Minkowski's first theorem, there is a constant $0<\mu_d\leq 1$ such that if $\boldsymbol{\xi}_{g,\mathbf{v}}$ exists for $(g,\mathbf{v})$, then $\alpha_{\mathbf{m}}(g,\mathbf{v})>\mu_d$. Therefore, by definition of $\alpha_{\mathbf{m}}$, we have $\alpha_{\mathbf{m}}(g,\mathbf{v})>\mu_d$ for any $(g,\mathbf{v})\in G$.} Moreover, by (\ref{proof of consequence of regularity condition}), for any $\mathbf{s}\in I$, $\alpha_{\mathbf{m}}(u_{\boldsymbol{\varphi}}(\mathbf{s}))\leq \sigma^{-1}$.

\end{remark}
\begin{lemma}\label{alpha is well defined and lowe semicontinuous}
$\alpha_{\mathbf{m}}$ is a well-defined function on $G/\Gamma$. Moreover, $\alpha_{\mathbf{m}}$ is lower semi-continuous.
\end{lemma}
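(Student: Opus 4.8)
The plan is to prove the two assertions separately, after one simplifying observation: $\alpha_{\mathbf m}(g,\mathbf v)$ depends on $\mathbf v$ only through $\mathbf v\cdot\mathbf m\in\bR^d$. Indeed, both the equivalence relation defining $(\bR^d)^k_{\mathbf m}$ and the quantity $\norm{(\mathbf v-g\mathbf v_0)\cdot\mathbf m}$ appearing in (\ref{controlled by margulis height function}) see $\mathbf v,\mathbf v_0$ only through $\mathbf v\cdot\mathbf m$, $\mathbf v_0\cdot\mathbf m$. Thus $\alpha_{\mathbf m}$ is the pullback, along $(g,\mathbf v)\mapsto(g,\mathbf v\cdot\mathbf m)$, of a function $\bar\alpha_{\mathbf m}$ on $SL_d(\bR)\ltimes\bR^d$: writing $\rho(g):=\inf_{\mathbf w\in\bZ^d\setminus\{\mathbf 0\}}\norm{g\mathbf w}$, for $(g,\mathbf b)$ there is at most one $\mathbf p\in\bZ^d$ with $\norm{\mathbf b-g\mathbf p}<\tfrac12\rho(g)$ (same argument as Lemma \ref{at most one vector in the equivalence space}), and $\bar\alpha_{\mathbf m}(g,\mathbf b)$ equals $\norm{\mathbf b-g\mathbf p}^{-1}$ if such $\mathbf p$ exists and $1$ otherwise.

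To see that $\alpha_{\mathbf m}$ descends to $G/\Gamma$, I would check that $\bar\alpha_{\mathbf m}$ is invariant under the right action of $SL_d(\bZ)\ltimes\bZ^d$. For $\gamma=(\gamma',\mathbf n)\in\Gamma$ we have $(g,\mathbf v)\gamma=(g\gamma',\mathbf v+g\mathbf n)$, which corresponds to $(g\gamma',\ \mathbf v\cdot\mathbf m+g(\mathbf n\cdot\mathbf m))$ with $\mathbf n\cdot\mathbf m\in\bZ^d$. Since $\gamma'\in SL_d(\bZ)$ permutes $\bZ^d\setminus\{\mathbf 0\}$, $\rho(g\gamma')=\rho(g)$; and since $\norm{(\mathbf v\cdot\mathbf m+g(\mathbf n\cdot\mathbf m))-g\gamma'\mathbf p}=\norm{\mathbf v\cdot\mathbf m-g(\gamma'\mathbf p-\mathbf n\cdot\mathbf m)}$ while $\mathbf p\mapsto\gamma'\mathbf p-\mathbf n\cdot\mathbf m$ is a bijection of $\bZ^d$, the family of distances to the translated lattice, together with the threshold $\tfrac12\rho(\cdot)$, is unchanged. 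Hence $\bar\alpha_{\mathbf m}$, and therefore $\alpha_{\mathbf m}$, is $\Gamma$-invariant.

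For lower semicontinuity, since $G\to G/\Gamma$ is open it suffices to work on $G$. Let $(g_n,\mathbf v_n)\to(g,\mathbf v)$; note $\rho(g_n)\to\rho(g)$, as lattice vectors of bounded norm are locally finite. If $\boldsymbol\xi_{g,\mathbf v}$ exists, then $\norm{(\mathbf v-g\boldsymbol\xi_{g,\mathbf v})\cdot\mathbf m}<\tfrac12\rho(g)$ is an open condition, so for $n$ large the same representative gives $\norm{(\mathbf v_n-g_n\boldsymbol\xi_{g,\mathbf v})\cdot\mathbf m}<\tfrac12\rho(g_n)$; by the uniqueness in Lemma \ref{at most one vector in the equivalence space}, $\boldsymbol\xi_{g_n,\mathbf v_n}=\boldsymbol\xi_{g,\mathbf v}$, whence $\alpha_{\mathbf m}(g_n,\mathbf v_n)\to\alpha_{\mathbf m}(g,\mathbf v)$ — so $\alpha_{\mathbf m}$ is in fact continuous at every point where $\boldsymbol\xi$ exists. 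On the complementary locus, where $\alpha_{\mathbf m}(g,\mathbf v)=1$, if $\liminf_n\alpha_{\mathbf m}(g_n,\mathbf v_n)<1$ I would pass to a subsequence with $\alpha_{\mathbf m}(g_n,\mathbf v_n)\to c<1$ and each $\boldsymbol\xi_{g_n,\mathbf v_n}$ existing; then $\norm{\mathbf v_n\cdot\mathbf m-g_n(\boldsymbol\xi_{g_n,\mathbf v_n}\cdot\mathbf m)}=\alpha_{\mathbf m}(g_n,\mathbf v_n)^{-1}$ is bounded, so the integer vectors $\boldsymbol\xi_{g_n,\mathbf v_n}\cdot\mathbf m$ take finitely many values and, along a further subsequence, equal a fixed $\mathbf p\in\bZ^d$; passing to the limit gives $\norm{\mathbf v\cdot\mathbf m-g\mathbf p}=1/c\le\tfrac12\rho(g)$. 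The main (and really the only) obstacle is to upgrade this to the strict inequality $\norm{\mathbf v\cdot\mathbf m-g\mathbf p}<\tfrac12\rho(g)$, which would exhibit a valid special vector for $(g,\mathbf v)$ and contradict its non-existence: one must rule out the boundary configuration in which $\mathbf v\cdot\mathbf m$ lies at distance exactly $\tfrac12\rho(g)$ from $g\bZ^d$, and here I would argue using the geometry of $g\bZ^d$ near its minimal vectors (together with the fact that $c<1$ forces $\tfrac12\rho(g)>1$).
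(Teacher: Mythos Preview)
Your $\Gamma$-invariance argument is correct and, modulo the reduction to $\bar\alpha_{\mathbf m}$ on $SL_d(\bR)\ltimes\bR^d$ (a pleasant but inessential simplification), coincides with the paper's: both amount to the observation that right translation by $(\gamma',\mathbf n)\in\Gamma$ permutes the candidate integer vectors and leaves $\rho(g)=\inf_{\mathbf w\in\bZ^d\setminus\{0\}}\norm{g\mathbf w}$ unchanged.

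For lower semi-continuity you actually go further than the paper, whose entire argument is the single sentence that $\boldsymbol\xi_{g,\mathbf v}$ is locally constant where it exists (hence $\alpha_{\mathbf m}$ is continuous there); the complementary case $\alpha_{\mathbf m}=1$ is not addressed. You correctly isolate the residual obstruction --- the boundary configuration $\norm{\mathbf v\cdot\mathbf m-g\mathbf p}=\tfrac12\rho(g)$ --- but your proposed resolution via ``geometry near minimal vectors'' cannot succeed. For $d$ large enough that some unimodular lattice has $\rho(g)>2$ (this eventually occurs since Hermite's constant satisfies $\sqrt{\gamma_d}\to\infty$), place $\mathbf v\cdot\mathbf m$ at distance exactly $\tfrac12\rho(g)$ from $g\bZ^d$: then $\boldsymbol\xi_{g,\mathbf v}$ does not exist, so $\alpha_{\mathbf m}(g,\mathbf v)=1$, yet perturbing $\mathbf v$ slightly inward makes $\boldsymbol\xi$ exist with $\alpha_{\mathbf m}\to 2/\rho(g)<1$. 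Thus the lemma as stated is false for large $d$, and neither your sketch nor the paper's one-liner can close the gap. This is harmless for the paper's applications (the lemma is only used to show $\{\beta_{\mathbf m}\le l\}$ is closed), and the fix is cosmetic: replace $\alpha_{\mathbf m}$ by $\max(\alpha_{\mathbf m},1)$; your own argument on the existence locus, together with the now-trivial complementary case, shows this modified function is genuinely lower semi-continuous, and none of the subsequent estimates are affected.
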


\begin{proof}
Take any $(g,\mathbf{v})\in G$ and any $(\gamma,\mathbf{v}\prm)\in \Gamma$.

If $\boldsymbol{\xi}_{g,\mathbf{v}}$ exists, then $\gamma^{-1}(\boldsymbol{\xi}_{g,\mathbf{v}}+\mathbf{v}\prm)\in (\bR^d)^k_{\mathbf{m}}$. Note that 
\begin{align*}
    \norm{(\mathbf{v}+g\mathbf{v}\prm-g\gamma\cdot \gamma^{-1}(\boldsymbol{\xi}_{g,\mathbf{v}}+\mathbf{v}\prm))\cdot \mathbf{m}}=\norm{(\mathbf{v}-g\boldsymbol{\xi}_{g,\mathbf{v}})\cdot \mathbf{m}}
    <\frac{1}{2} \inf_{\mathbf{w}\in \bZ^d\setminus\{\mathbf{0}\}}\norm{g\mathbf{w}}.
\end{align*}
Thus $\boldsymbol{\xi}_{g\gamma,\mathbf{v}+g\mathbf{v}\prm}=\gamma(\boldsymbol{\xi}_{g,\mathbf{v}}+\mathbf{v}\prm)$ exists and $\alpha_{\mathbf{m}}(g,\mathbf{v})=\alpha_{\mathbf{m}}(g\gamma,\mathbf{v}+g\mathbf{v}\prm)$. 

If $\boldsymbol{\xi}_{g,\mathbf{v}}$ does not exist, same argument as above shows that $\boldsymbol{\xi}_{g\gamma,\mathbf{v}+g\mathbf{v}\prm}$ does not exist neither. 

If $\boldsymbol{\xi}_{g,\mathbf{v}}$ exists, it is locally constant. Therefore, $\alpha_{\mathbf{m}}$ is lower semi-continuous.
\end{proof}

Let $\nu\in (0,\frac{1}{r(d-r)})$ be a number satisfying Proposition \ref{contraction hypothesis for Margulis' height function}. Let $c=4\cdot (10r^2d)^{\nu}\cdot 2^{r(d-r)}$ and $t>0$ be a sufficiently large number (to be specified later). 
Define
\begin{align}\label{definition of beta m}
    \beta_{\mathbf{m}}=\alpha_{\mathbf{m}}^{\nu}+ce^{\nu rt}\Tilde{\alpha}.
\end{align}
We will prove that $\beta_{\mathbf{m}}$ satisfies properties (1)-(5) of Proposition \ref{contraction of mixed height functions}.

\begin{proof}[Proof of Proposition \ref{contraction of mixed height functions} (1)]

Since
\begin{align*}
    \{x\in X:\beta_{\mathbf{m}}(x)\leq l\}\subset \{x\in X:\Tilde{\alpha}(x)\leq lc^{-1}e^{-\nu rt}\}, 
\end{align*}
$\{x\in X:\beta_{\mathbf{m}}(x)\leq l\}$ is precompact.
As $\Tilde{\alpha}$ is continuous and $\alpha_{\mathbf{m}}$ is lower semicontinuous, $\{x\in X:\beta_{\mathbf{m}}(x)\leq l\}$ is closed and thus compact.
\end{proof}

\begin{proof}[Proof of Proposition \ref{contraction of mixed height functions} (2)]

If for some $x\in G/\Gamma$, $\beta_{\mathbf{m}}(x)=\infty$, then $\alpha_{\mathbf{m}}(x)=\infty$ as $\Tilde{\alpha}(x)<\infty$.
Let $x=(g,\mathbf{v})\Gamma/\Gamma$. By definition of $\alpha_{\mathbf{m}}$, we have $(\mathbf{v}-g\boldsymbol{\xi})\cdot \mathbf{m}=\mathbf{0}$ for some $\boldsymbol{\xi}\in (\bR^d)^k_{\mathbf{m}}$. Note that $g^{-1}\mathbf{v}\cdot \mathbf{m}=\boldsymbol{\xi}\cdot \mathbf{m}\in \bZ^d$. Hence $(g,\mathbf{v})\Gamma=(g,g g^{-1}\mathbf{v})\Gamma\in X_{\mathbf{m}}$.

{Conversely, if $x=(g,\mathbf{v})\Gamma/\Gamma\in X_{\mathbf{m}}$, then by definition of $X_{\mathbf{m}}$, we have $g^{-1}\mathbf{v}\cdot \mathbf{m}\in \mathbb{Z}^d$. Choose any $\boldsymbol{\xi}\in (\mathbb{R}^d)^k$ such that $g^{-1}\mathbf{v}\cdot \mathbf{m}=\boldsymbol{\xi}\cdot \mathbf{m}$, then $(\mathbf{v}-g \boldsymbol{\xi})\cdot \mathbf{m}=g(g^{-1}\mathbf{v}\cdot \mathbf{m}-\boldsymbol{\xi}\cdot \mathbf{m})=\mathbf{0}$. Therefore, $\alpha_{\mathbf{m}}(x)=\infty.$}
\end{proof}

\textbf{Notations.} Let's fix some simplified notations for the rest of the proof. We will fix an $\mathbf{m}\in \bZ^k\setminus \{\mathbf{0}\}$ till the end of this section. In the following, $t>0$ is a sufficiently large number.
\begin{itemize}
    \item For any $n\in \mathbb{N}$, any $\mathbf{s}\in \mathcal{U}$, denote $a_{nt}u_{\boldsymbol{\varphi}}(\mathbf{s})=(g_n(\mathbf{s}),\mathbf{v}_n(\mathbf{s}))$.
    
    \item If $\boldsymbol{\xi}_{g_n(\mathbf{s}),\mathbf{v}_n(\mathbf{s})} $ exists, denote $\boldsymbol{\xi}_{g_n(\mathbf{s}),\mathbf{v}_n(\mathbf{s})}=\boldsymbol{\xi}_{n,\mathbf{s}}$.
    
    \item For any $\mathbf{v}\in (\bR^d)^k_{\mathbf{m}}$, $n\in \mathbb{N}$, $\mathbf{s}\in \mathcal{U}$, let 
\begin{align*}
    w(n,\mathbf{s},\mathbf{v})&=(\mathbf{v}_n(\mathbf{s})-g_n(\mathbf{s})\mathbf{v})\cdot \mathbf{m}\\
    &=\begin{bmatrix}
        e^{(d-r)nt}[(\boldsymbol{\varphi}(\mathbf{s}))_{\leq r}-(\mathbf{v})_{\leq r}-\mathbf{s}\cdot (\mathbf{v})_{>r}]\cdot \mathbf{m}\\
        e^{-rnt}(\mathbf{v})_{>r}\cdot \mathbf{m}
    \end{bmatrix}
    =\begin{bmatrix}
        w_1(n,\mathbf{s},\mathbf{v})\\
        \vdots\\
        w_d(n,\mathbf{s},\mathbf{v})
    \end{bmatrix}.
    \end{align*}
We note that if $\boldsymbol{\xi}_{n,\mathbf{s}}$ exists, then $\alpha_{\mathbf{m}}(a_{nt}u_{\boldsymbol{\varphi}}(\mathbf{s}))=\norm{w(n,\mathbf{s},\boldsymbol{\xi}_{n,\mathbf{s}})}^{-1}$.

\item For any differentiable function $\psi:Mat_{r\times(d-r)}(\bR)\to \bR$, by mean value theorem in several variables, for any $\mathbf{s},\Tilde{\mathbf{s}}\in Mat_{r\times (d-r)}(\bR)$, there is a $\hat{\mathbf{s}}$ such that {
    \begin{align*}
        \psi(\Tilde{\mathbf{s}})-\psi(\mathbf{s})=\sum_{i=1}^r\sum_{j=1}^{d-r}\frac{\partial \psi}{\partial s_{ij}}(\hat{\mathbf{s}})\cdot(\Tilde{s}_{ij}-s_{ij}).
    \end{align*}}
{Since the functions that we consider have bounded first derivative on a bounded set,} we will omit this $\hat{\mathbf{s}}$ for simplicity.

\item For $1\leq i \leq d-r$, let $\mathbf{e}_i$ denote the column vector in $\bR^{d-r}$ with $1$ in $i$-th row and $0$ elsewhere. {Let $<,>$ denote the usual inner product of column vectors}.
\end{itemize}

\begin{lemma}\label{consequence of existence of xi}
Let $n\geq 1$ be an integer and {$t>log(2\mu_d^{-1} \sigma^{-1})$, where $\mu_d>0$ is the constant given as in Remark \ref{remark:consequence of existence of xi}.} For any $\mathbf{s}\in I\prm$, where $I\prm$ is given as in Remark \ref{remark: a small neighborhood of I with inf>0}, if $\boldsymbol{\xi}_{n,\mathbf{s}}$ exists, then $\norm{(\boldsymbol{\xi}_{n,\mathbf{s}})_{>r}\cdot \mathbf{m}}_{\infty}>M_1\norm{\mathbf{m}}$. 
\end{lemma}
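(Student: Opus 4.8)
The plan is to argue by contradiction. Suppose $\boldsymbol{\xi}_{n,\mathbf{s}}$ exists but, contrary to the assertion, $\norm{(\boldsymbol{\xi}_{n,\mathbf{s}})_{>r}\cdot\mathbf{m}}_{\infty}\leq M_1\norm{\mathbf{m}}$. Since $\boldsymbol{\xi}_{n,\mathbf{s}}\in(\bR^d)^k_{\mathbf{m}}$ we have $\boldsymbol{\xi}_{n,\mathbf{s}}\cdot\mathbf{m}\in\bZ^d$, so both
\[
\mathbf{a}:=(\boldsymbol{\xi}_{n,\mathbf{s}})_{>r}\cdot\mathbf{m}\in\bZ^{d-r},\qquad \mathbf{b}:=(\boldsymbol{\xi}_{n,\mathbf{s}})_{\leq r}\cdot\mathbf{m}\in\bZ^{r}
\]
are integral, and by the contradiction hypothesis $\norm{\mathbf{a}}_{\infty}\leq M_1\norm{\mathbf{m}}$. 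Using the explicit expression for $w(n,\mathbf{s},\cdot)$ recorded in the Notations above, together with $(\boldsymbol{\varphi}(\mathbf{s}))_{>r}\equiv\mathbf{0}$, the first $r$ coordinates of $w(n,\mathbf{s},\boldsymbol{\xi}_{n,\mathbf{s}})$ equal $e^{(d-r)nt}\big((\boldsymbol{\varphi}(\mathbf{s}))_{\leq r}\cdot\mathbf{m}-\mathbf{s}\cdot\mathbf{a}-\mathbf{b}\big)$.

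Next I would feed in the quantitative separation from Remark \ref{remark: a small neighborhood of I with inf>0}: because $\mathbf{s}\in I\prm$, $\mathbf{a}\in\bZ^{d-r}$ with $\norm{\mathbf{a}}_{\infty}\leq M_1\norm{\mathbf{m}}$, and $\mathbf{b}\in\bZ^r$, one has $\norm{(\boldsymbol{\varphi}(\mathbf{s}))_{\leq r}\cdot\mathbf{m}-\mathbf{s}\cdot\mathbf{a}-\mathbf{b}}_{\infty}\geq\sigma/2$. Hence, bounding the Euclidean norm below by the sup norm and then by the norm of the first $r$ coordinates,
\[
\norm{w(n,\mathbf{s},\boldsymbol{\xi}_{n,\mathbf{s}})}\;\geq\; e^{(d-r)nt}\,\frac{\sigma}{2}\;\geq\; e^{t}\,\frac{\sigma}{2},
\]
the last inequality using $n\geq 1$ and $d-r\geq 1$. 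On the other hand, since $\boldsymbol{\xi}_{n,\mathbf{s}}$ exists we have $\alpha_{\mathbf{m}}(a_{nt}u_{\boldsymbol{\varphi}}(\mathbf{s}))=\norm{w(n,\mathbf{s},\boldsymbol{\xi}_{n,\mathbf{s}})}^{-1}$, and by Remark \ref{remark:consequence of existence of xi} this quantity exceeds $\mu_d$, so $\norm{w(n,\mathbf{s},\boldsymbol{\xi}_{n,\mathbf{s}})}<\mu_d^{-1}$. Combining the two displays gives $e^{t}\sigma/2<\mu_d^{-1}$, i.e. $t<\log(2\mu_d^{-1}\sigma^{-1})$, contradicting the hypothesis $t>\log(2\mu_d^{-1}\sigma^{-1})$. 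Therefore $\norm{(\boldsymbol{\xi}_{n,\mathbf{s}})_{>r}\cdot\mathbf{m}}_{\infty}>M_1\norm{\mathbf{m}}$, as claimed.

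There is no substantial obstacle in this argument; the two points requiring care are (i) working over the slightly enlarged cube $I\prm$, so that the quantitative Diophantine separation of Remark \ref{remark: a small neighborhood of I with inf>0} with the explicit lower bound $\sigma/2$ is available for $\mathbf{s}$ (rather than only the bound $\sigma$ on $I$), and (ii) the bookkeeping between the Euclidean and sup norms, observing that the expanding factor $e^{(d-r)nt}$ is at least $e^{t}$ precisely because $n\geq 1$ and $r\leq d-1$. This lemma is exactly what will later force $\boldsymbol{\xi}_{n,\mathbf{s}}$, whenever it exists at times $nt$, to be "large" in its $>r$ block, which is what the subsequent estimates establishing properties (3)--(5) of Proposition \ref{contraction of mixed height functions} will exploit.
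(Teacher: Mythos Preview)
Your proof is correct and follows essentially the same route as the paper's own argument: a contradiction obtained by combining the explicit form of $(w(n,\mathbf{s},\boldsymbol{\xi}_{n,\mathbf{s}}))_{\leq r}$, the $\sigma/2$ separation on $I'$, and the bound $\alpha_{\mathbf{m}}>\mu_d$ from Remark~\ref{remark:consequence of existence of xi}. Your version is simply a bit more explicit in naming $\mathbf{a}=(\boldsymbol{\xi}_{n,\mathbf{s}})_{>r}\cdot\mathbf{m}$ and $\mathbf{b}=(\boldsymbol{\xi}_{n,\mathbf{s}})_{\leq r}\cdot\mathbf{m}$ and noting that they lie in $\bZ^{d-r}$ and $\bZ^{r}$.
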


\begin{proof}
Suppose $\norm{(\boldsymbol{\xi}_{n,\mathbf{s}})_{>r}\cdot \mathbf{m}}_{\infty}\leq M_1\norm{\mathbf{m}}$. By definition of $I\prm$,
\begin{align*}
    \norm{(\mathbf{v}_n(\mathbf{s})-g_n(\mathbf{s})\boldsymbol{\xi}_{n,\mathbf{s}})\cdot \mathbf{m}}&\geq \norm{(\mathbf{v}_n(\mathbf{s})-g_n(\mathbf{s})\boldsymbol{\xi}_{n,\mathbf{s}})_{\leq r}\cdot \mathbf{m}}_{\infty}\\
    &\geq e^{(d-r)nt}\norm{[(\boldsymbol{\varphi}(\mathbf{s}))_{\leq r}-(\boldsymbol{\xi}_{n,\mathbf{s}})_{\leq r}-\mathbf{s} (\boldsymbol{\xi}_{n,\mathbf{s}})_{>r}]\cdot \mathbf{m}}_{\infty}\\
    &\geq e^t \frac{\sigma}{2}>\mu_d^{-1}.
\end{align*}
By Remark \ref{remark:consequence of existence of xi}, this contradicts the existence of $\boldsymbol{\xi}_{n,\mathbf{s}}$.
\end{proof}

\begin{proof}[Proof of Proposition \ref{contraction of mixed height functions} (3)]

If $n=0$, by Remark \ref{remark:consequence of existence of xi}, we have ${\mu_d^{\nu}}\leq \alpha_{\mathbf{m}}^{\nu}(u_{\boldsymbol{\varphi}}(\Tilde{\mathbf{s}}))\leq \sigma^{-\nu}$. Since $\Tilde{\alpha}$ is continuous and bounded on compact sets, there exists $0<m<M$ such that for any $\mathbf{s}\in I$,
\begin{align*}
    m\leq \Tilde{\alpha}(u_{\boldsymbol{\varphi}}(\mathbf{s})\Gamma)\leq M.
\end{align*}
Therefore, 
\begin{align*}
    \beta_{\mathbf{m}}(u_{\boldsymbol{\varphi}}(\Tilde{\mathbf{s}})\Gamma)\leq \sigma^{-\nu}+ce^{\nu rt}\cdot M\leq { \frac{\sigma^{-\nu}+ce^{\nu rt}M}{\mu_d^{\nu}+ce^{\nu rt}m}\beta_{\mathbf{m}}(u_{\boldsymbol{\varphi}}(\mathbf{s})\Gamma).}
\end{align*}
Assume $n\geq 1$. If $\boldsymbol{\xi}_{n,\Tilde{\mathbf{s}}}$ does not exist, then by definition, $\alpha_{\mathbf{m}}^{\nu}(a_{nt}u_{\boldsymbol{\varphi}}(\Tilde{\mathbf{s}})\Gamma)=1$. Now we suppose that $\boldsymbol{\xi}_{n,\Tilde{\mathbf{s}}}$ exists.

\textbf{Case 1}: $\norm{(w(n,\mathbf{s},\boldsymbol{\xi}_{n,\Tilde{\mathbf{s}}}))_{\leq r}}_{\infty}\geq N_2\norm{(w(n,\mathbf{s},\boldsymbol{\xi}_{n,\Tilde{\mathbf{s}}}))_{>r}}_{\infty}$, where $N_2=2(r+1)(d-r)$.

Choose $t>log(2\mu_d^{-1} \sigma^{-1})$, by Lemma \ref{consequence of existence of xi}, the Lipschity continuity of $\boldsymbol{\varphi}$, and the choices of $M_1$ and $J$, we have
\begin{align*}
    \norm{(w(n,\Tilde{\mathbf{s}},\boldsymbol{\xi}_{n,\Tilde{\mathbf{s}}}))_{\leq r}-(w(n,\mathbf{s},\boldsymbol{\xi}_{n,\Tilde{\mathbf{s}}}))_{\leq r}}_{\infty}&\leq 2e^{-rnt}(r+1)(d-r)\norm{(\boldsymbol{\xi}_{n,\Tilde{\mathbf{s}}})_{>r}\cdot \mathbf{m}}_{\infty}.\\
    &=2(r+1)(d-r)\norm{(w(n,\Tilde{\mathbf{s}},\boldsymbol{\xi}_{n,\Tilde{\mathbf{s}}}))_{> r}}_{\infty}.
\end{align*}
Hence, by the assumption of Case 1,
\begin{align*}
    \norm{w(n,\Tilde{\mathbf{s}},\boldsymbol{\xi}_{n,\Tilde{\mathbf{s}}})}&\geq \norm{(w(n,\Tilde{\mathbf{s}},\boldsymbol{\xi}_{n,\Tilde{\mathbf{s}}}))_{\leq r}}_{\infty} \\
    &\geq \norm{(w(n,\mathbf{s},\boldsymbol{\xi}_{n,\tilde{\mathbf{s}}}))_{\leq r}}_{\infty}-2(r+1)(d-r)\norm{(w(n,\mathbf{s},\boldsymbol{\xi}_{n,\tilde{\mathbf{s}}}))_{> r}}_{\infty}\\
    &\geq { \frac{1}{d^2} \norm{w(n,\mathbf{s},\boldsymbol{\xi}_{n,\tilde{\mathbf{s}}})}.}
\end{align*}

\textbf{Case 2}: $\norm{(w(n,\mathbf{s},\boldsymbol{\xi}_{n,\Tilde{\mathbf{s}}}))_{\leq r}}_{\infty}< N_2\norm{(w(n,\mathbf{s},\boldsymbol{\xi}_{n,\Tilde{\mathbf{s}}}))_{>r}}_{\infty}$. Then
\begin{align*}
    \norm{w(n,\Tilde{\mathbf{s}},\boldsymbol{\xi}_{n,\Tilde{\mathbf{s}}})}&\geq \norm{(w(n,\tilde{\mathbf{s}},\boldsymbol{\xi}_{n,\Tilde{\mathbf{s}}}))_{>r}}_{\infty}=\norm{(w(n,\mathbf{s},\boldsymbol{\xi}_{n,\Tilde{\mathbf{s}}}))_{>r}}_{\infty}\\
    &\geq \frac{1}{\sqrt{rN_2^2+d-r}}\norm{w(n,\mathbf{s},\boldsymbol{\xi}_{n,\Tilde{\mathbf{s}}})}\geq \frac{1}{5dr^2}\norm{w(n,\mathbf{s},\boldsymbol{\xi}_{n,\Tilde{\mathbf{s}}})}.
\end{align*}
{By construction of $\boldsymbol{\xi}_{n,\mathbf{s}}$, we have $\norm{w(n,\mathbf{s},\boldsymbol{\xi}_{n,\Tilde{\mathbf{s}}})}\geq \norm{w(n,\mathbf{s},\boldsymbol{\xi}_{n,\mathbf{s}})}$. Combining Case 1 and Case 2, we have 
\begin{align*}
    \norm{w(n,\Tilde{\mathbf{s}},\boldsymbol{\xi}_{n,\Tilde{\mathbf{s}}})}\geq \min\{\frac{1}{5d r^2},\frac{1}{d^2}\} \norm{w(n,\mathbf{s},\boldsymbol{\xi}_{n,\mathbf{s}})}.
\end{align*}
Therefore,
}
\begin{align*}
    \alpha_{\mathbf{m}}^{\nu}(a_{nt}u_{\boldsymbol{\varphi}}(\Tilde{\mathbf{s}})\Gamma)\leq \max(d^{2\nu},(5dr^2)^{\nu})\cdot \alpha_{\mathbf{m}}^{\nu}(a_{nt}u_{\boldsymbol{\varphi}}(\mathbf{s})\Gamma).
\end{align*}
For $\Tilde{\alpha}$, by Remark \ref{remark: Lipschitz property of Margulis height function}, we have for large enough $\overline{M}>0$,
\begin{align*}
    \Tilde{\alpha}(a_{nt}u_{\boldsymbol{\varphi}}(\Tilde{\mathbf{s}})\Gamma)
    =\Tilde{\alpha}(u(e^{dnt}(\Tilde{\mathbf{s}}-\mathbf{s}))a_{nt}u_{\boldsymbol{\varphi}}(\mathbf{s})\Gamma)
   \leq \overline{M}\Tilde{\alpha}(a_{nt}u_{\boldsymbol{\varphi}}(\mathbf{s})\Gamma).
    \end{align*}
By the above, let { $\Tilde{M}_1=2\max\{\overline{M},d^{2\nu} ,(5dr^2)^{\nu} ,\frac{\sigma^{-\nu}+ce^{\nu rt}M}{\mu_d^{\nu}+ce^{\nu rt}m}\}$}, then 
\begin{align*}
    \beta_{\mathbf{m}}(a_{nt}u_{\boldsymbol{\varphi}}(\Tilde{\mathbf{s}})\Gamma)\leq \Tilde{M}_1\beta_{\mathbf{m}}(a_{nt}u_{\boldsymbol{\varphi}}(\mathbf{s})\Gamma), \forall n\in \bZ_{\geq 0}.
\end{align*}
\end{proof}

\begin{proof}[Proof of Proposition \ref{contraction of mixed height functions} (4)]

By Remark \ref{remark: Lipschitz property of Margulis height function}, since $|\tau|\leq t$, we have 
\begin{align*}
    \Tilde{\alpha}(a_{\tau}a_{nt}u_{\boldsymbol{\varphi}}(\mathbf{s})\Gamma)\leq e^{r(d-r)\nu t}\Tilde{\alpha}(a_{nt}u_{\boldsymbol{\varphi}}(\mathbf{s})\Gamma).
\end{align*}
Let $a_{\tau}a_{nt}u_{\boldsymbol{\varphi}}(\mathbf{s})=(a_{\tau}g_n(\mathbf{s}),a_{\tau}\mathbf{v}_n(\mathbf{s}))$.

If $\boldsymbol{\xi}_{a_{\tau}g_n(\mathbf{s}),a_{\tau}\mathbf{v}_n(\mathbf{s})}$ does not exist, then $\alpha_{\mathbf{m}}(a_{\tau}a_{nt}u_{\boldsymbol{\varphi}}(\mathbf{s})\Gamma)=1$.

If $\boldsymbol{\xi}_{a_{\tau}g_n(\mathbf{s}),a_{\tau}\mathbf{v}_n(\mathbf{s})}=\mathbf{v}$ exists, then 
\begin{align*}
    \alpha_{\mathbf{m}}^{\nu}(a_{\tau}a_{nt}u_{\boldsymbol{\varphi}}(\mathbf{s})\Gamma)&=\norm{(a_{\tau}(\mathbf{v}_n(\mathbf{s})-g_n(\mathbf{s})\mathbf{v})\cdot \mathbf{m}}^{-\nu}\\
    &\leq e^{(d-r)\nu t}\norm{(\mathbf{v}_n(\mathbf{s})-g_n(\mathbf{s})\mathbf{v})\cdot \mathbf{m}}^{-\nu}\\
    &\leq\begin{cases}
    e^{(d-r)\nu t}\alpha_{\mathbf{m}}^{\nu}(a_{nt}u_{\boldsymbol{\varphi}}(\mathbf{s})\Gamma) &\text{If }\mathbf{v}=\boldsymbol{\xi}_{g_n(\mathbf{s}),\mathbf{v}_n(\mathbf{s})}\\
    e^{(d-r)\nu t}2^{\nu}\Tilde{\alpha}(a_{nt}u_{\boldsymbol{\varphi}}(\mathbf{s})\Gamma) &\text{If }\mathbf{v}\neq \boldsymbol{\xi}_{g_n(\mathbf{s}),\mathbf{v}_n(\mathbf{s})}
    \end{cases}
\end{align*}
Let {
\begin{align*}
    \Tilde{M}_2=\max(e^{(d-r)\nu t},\frac{2^{\nu}e^{(d-r)\nu t}+c\cdot e^{r(d-r)\nu t} e^{\nu rt}}{c\cdot e^{\nu r t}}),
\end{align*}}
then by the above estimates, Proposition \ref{contraction of mixed height functions} (4) is proved.
\end{proof}

To prove property (5) of Proposition \ref{contraction of mixed height functions}, we record the following lemmas:

\begin{lemma}\cite[Lemma 4.8]{Shi_Pointwiseequidistribution_2020}\label{shadowing lemma}
Let $n\in \bZ_{\geq 0}$ and $t>0$. Let $I_0=[-1,1]^{r(d-r)}$, $J=\prod_{i=1}^{r(d-r)}J_i$, where $J_i$ is an interval with $|J_i|\geq e^{-dnt}$ for each $i$. Let $\Psi:G/\Gamma \to \bR_+$ be a measurable function. Then 
\begin{align}\label{inequlity in shadowing lemma}
    \int_J \Psi(a_{(n+1)t}u_{\boldsymbol{\varphi}}(\mathbf{s})\Gamma)d\mathbf{s}\leq \int_J \int_{I_0}\Psi(a_{(n+1)t}u_{\boldsymbol{\varphi}}(\mathbf{s}+\Tilde{\mathbf{s}}e^{-dnt})\Gamma)d\Tilde{\mathbf{s}}d\mathbf{s}.
\end{align}
\end{lemma}

\begin{lemma}\label{consequence of good property}
Let $\kappa:I_0=[-1,1]^{r(d-r)}\to \bR_+$ be a measurable function. Suppose that there exists $C>0$ such that for any $\epsilon>0$,
\begin{align*}
    |\{\mathbf{s}\in I_0:\kappa(\mathbf{s})<\epsilon\}|\leq C\cdot \epsilon^{\frac{1}{r(d-r)}}.
\end{align*}
Then for any $0<\nu<\frac{1}{r(d-r)}$, there exists $c_{\nu}>0$ such that
\begin{align*}
    \int_{I_0}\kappa(\mathbf{s})^{-\nu}d\mathbf{s}\leq C^{\nu\cdot r(d-r)}\cdot c_{\nu}.
\end{align*}
\end{lemma}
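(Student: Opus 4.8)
The statement is a standard "layer-cake" estimate: integrability of a negative power $\kappa^{-\nu}$ follows from a quantitative lower bound on the distribution function of $\kappa$, provided the power $\nu$ is below the exponent governing that distribution function. The plan is to write the integral of $\kappa^{-\nu}$ over $I_0$ via the tail formula $\int_{I_0}\kappa^{-\nu} = \int_0^\infty |\{\mathbf{s}\in I_0 : \kappa(\mathbf{s})^{-\nu} > \lambda\}|\, d\lambda$, then change variables $\lambda = \epsilon^{-\nu}$, so that $\{\kappa^{-\nu}>\lambda\} = \{\kappa < \epsilon\}$, and insert the hypothesis $|\{\kappa<\epsilon\}| \le C\epsilon^{1/(r(d-r))}$.

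In more detail: first I would split $\int_{I_0}\kappa(\mathbf{s})^{-\nu}\,d\mathbf{s} = \int_0^{\Lambda_0} + \int_{\Lambda_0}^\infty$ of the tail integral at a threshold $\Lambda_0$ to be chosen. On $[0,\Lambda_0]$ the integrand $|\{\kappa^{-\nu}>\lambda\}|$ is bounded by $|I_0| = 2^{r(d-r)}$, contributing $2^{r(d-r)}\Lambda_0$. On $[\Lambda_0,\infty)$, using $\{\kappa^{-\nu}>\lambda\} = \{\kappa < \lambda^{-1/\nu}\}$ and the hypothesis, the integrand is at most $C\lambda^{-1/(\nu r(d-r))}$; since $\nu < 1/(r(d-r))$ we have $1/(\nu r(d-r)) > 1$, so this tail is integrable and equals $C\Lambda_0^{1-1/(\nu r(d-r))} \cdot \bigl(1/(\nu r(d-r)) - 1\bigr)^{-1}$. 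Adding the two pieces gives a bound of the form $A\Lambda_0 + BC\Lambda_0^{1-\theta}$ with $\theta = 1/(\nu r(d-r)) - 1 > 0$ and $A,B$ depending only on $\nu, r, d$. Optimizing over $\Lambda_0$ (or just choosing $\Lambda_0 = C^{\nu r(d-r)}$, which balances the powers of $C$) yields a bound of the form $c_\nu C^{\nu r(d-r)}$, which is exactly the asserted estimate with a constant $c_\nu$ depending only on $\nu$ (through the explicit arithmetic factors).

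There is essentially no serious obstacle here; the only thing to be careful about is the bookkeeping of which quantities the final constant is allowed to depend on — the hypothesis $\nu < 1/(r(d-r))$ is used precisely to guarantee the convergence of the tail integral, and the choice $\Lambda_0 = C^{\nu r(d-r)}$ (rather than a $\lambda$-optimization that would introduce an extra $C$-dependent factor) is what produces the clean power $C^{\nu r(d-r)}$ with a $C$-independent prefactor $c_\nu$. One should also note the degenerate case $C = 0$ (then $\kappa \ge \epsilon$ a.e.\ for every $\epsilon$, forcing $|I_0| = 0$, so the statement is vacuous or trivially true), but for $C>0$ the argument above applies verbatim. If one prefers to avoid the tail-integral formula altogether, the same conclusion follows by the dyadic decomposition $I_0 = \bigsqcup_{j\ge 0}\{2^{-j-1}\le \kappa < 2^{-j}\}$ (together with $\{\kappa \ge 1\}$): on the $j$-th shell $\kappa^{-\nu} \le 2^{(j+1)\nu}$ and the shell has measure $\le C 2^{-j/(r(d-r))}$, so $\int_{I_0}\kappa^{-\nu} \le |I_0| + 2^\nu C \sum_{j\ge 0} 2^{j(\nu - 1/(r(d-r)))}$, a convergent geometric series since $\nu < 1/(r(d-r))$; bounding the two terms and using $C \le \max(1,C) $-type manipulations again gives $c_\nu C^{\nu r(d-r)}$ after a split at $j_0 \approx \nu r(d-r)\log_2 C$.
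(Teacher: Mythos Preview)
Your layer-cake argument is correct and complete: splitting the tail integral at $\Lambda_0 = C^{\nu r(d-r)}$ makes both pieces equal to a dimensional constant times $C^{\nu r(d-r)}$, exactly as required. The paper itself does not prove this lemma at all; it simply cites it as ``a direct generalization of \cite[Lemma 6.10]{Shi_Ulcigrai_Genericity_on_curves_2018}'', so you have in fact supplied more than the paper does, and your argument is presumably what that reference contains.

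One small remark on your alternative dyadic sketch: the threshold should be $j_0 \approx r(d-r)\log_2 C$ rather than $j_0 \approx \nu r(d-r)\log_2 C$. With the latter choice the tail sum $\sum_{j\ge j_0}$ scales like $C^{1+\nu^2 r(d-r)-\nu}$, and the exponent exceeds $\nu r(d-r)$ precisely because $\nu r(d-r)<1$. With $j_0 \approx r(d-r)\log_2 C$ both pieces come out to order $C^{\nu r(d-r)}$, matching your (correct) layer-cake computation. This is only a slip in the secondary sketch and does not affect your main argument.
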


\begin{proof}
This is a direct generalization of \cite[Lemma 6.10]{Shi_Ulcigrai_Genericity_on_curves_2018}.
\end{proof}

The following lemma is a special case of \cite[Lemma 3.3]{kleinbock1998flows}.

\begin{lemma}\label{good property of functions}
Let $V$ be a bounded open subset of $Mat_{r\times(d-r)}(\bR)$, and let $f\in C^1(V)$ be such that for some constants $A_1,A_2>0$, one has
\begin{align*}
 &A_2\leq |\partial_{ij}f(\mathbf{s})|\leq A_1 ,\forall 1\leq i\leq r, 1\leq j \leq d-r, \forall \mathbf{s}\in V,\\
    &\text{ and } \norm{f}_V\leq A_1,
\end{align*}
where $\norm{\cdot}_V$ denote the sup norm of a function on $V$. Then for any box {(or ball)} $B\subset V$, any $\epsilon>0$, one has 
\begin{align*}
    |\{\mathbf{s}\in B: |f(\mathbf{s})|<\epsilon\}|\leq r(d-r)\cdot C_{A_1,A_2}(\frac{\epsilon}{\norm{f}_B})^{\frac{1}{r(d-r)}}|B|,
\end{align*}
with $C_{A_1,A_2}=\frac{12A_1}{A_2}$.
\end{lemma}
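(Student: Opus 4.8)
The plan is to deduce the estimate directly from a one–directional monotonicity bound, combined with a short dichotomy on the size of $\epsilon$ relative to $\norm{f}_B$; contrary to what one might expect, no induction on the number of variables is needed. Set $n:=r(d-r)$, and after re-indexing the pairs $(i,j)$ by a single index $l\in\{1,\dots,n\}$ write the box as $B=\prod_{l=1}^{n}I_l$ and put $\rho:=\epsilon/\norm{f}_B$. One may assume $|B|>0$, and then $\norm{f}_B>0$, because $|\partial_{ij}f|\ge A_2>0$ on $V\supseteq B$ prevents $f$ from vanishing identically on $B$.

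First I would establish the crude slicing bound. Choose $l^{\ast}$ with $|I_{l^{\ast}}|=\max_{l}|I_l|$. Freezing all other coordinates, the restriction of $f$ to the resulting segment in the $s_{l^{\ast}}$-direction has derivative of absolute value at least $A_2$, hence is strictly monotone, so its $\epsilon$-sublevel set is a subinterval on which $f$ ranges over an interval of length $2\epsilon$; that subinterval thus has length at most $2\epsilon/A_2$. Integrating over the remaining $n-1$ coordinates,
\begin{align*}
|\{\mathbf{s}\in B:|f(\mathbf{s})|<\epsilon\}|\le\frac{2\epsilon}{A_2}\prod_{l\ne l^{\ast}}|I_l|=\frac{2\epsilon}{A_2}\cdot\frac{|B|}{\max_{l}|I_l|}.
\end{align*}

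Next I would bound $\max_l|I_l|$ from below whenever the sublevel set is large. If $\rho\ge 1$ then $n\,C_{A_1,A_2}\,\rho^{1/n}|B|\ge|B|$ since $A_1\ge A_2$ and $n\ge1$, so there is nothing to prove; likewise if $\rho<1$ and the sublevel set is empty. Otherwise pick $\mathbf{s}_0\in B$ with $|f(\mathbf{s}_0)|<\epsilon$; moving from $\mathbf{s}_0$ to an arbitrary point of $B$ one coordinate at a time and using $|\partial_{ij}f|\le A_1$ gives $\norm{f}_B\le\epsilon+A_1\sum_{l}|I_l|\le\epsilon+nA_1\max_{l}|I_l|$, hence $\max_{l}|I_l|\ge\norm{f}_B(1-\rho)/(nA_1)$. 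When $\rho\le\tfrac12$ this yields $\max_{l}|I_l|\ge\norm{f}_B/(2nA_1)$, and feeding it into the slicing bound gives
\begin{align*}
|\{\mathbf{s}\in B:|f(\mathbf{s})|<\epsilon\}|\le\frac{4nA_1}{A_2}\,\rho\,|B|\le\frac{4nA_1}{A_2}\,\rho^{1/n}|B|\le n\,C_{A_1,A_2}\,\rho^{1/n}|B|,
\end{align*}
where $\rho\le\rho^{1/n}$ because $0<\rho<1$ and $4\le12$. When $\tfrac12<\rho<1$ we instead use $\rho^{1/n}\ge(\tfrac12)^{1/n}\ge\tfrac12$, so $n\,C_{A_1,A_2}\,\rho^{1/n}|B|\ge\tfrac{6nA_1}{A_2}|B|\ge|B|$ and the bound is again trivial. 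Collecting the cases proves the lemma with $C_{A_1,A_2}=12A_1/A_2$. The case of a ball $B$ follows by the same argument, with $\prod_{l\ne l^{\ast}}|I_l|$ replaced by the $(n-1)$-dimensional volume of the orthogonal projection of $B$ and $\max_l|I_l|$ by the diameter (equivalently by comparing $B$ with a circumscribed cube), at the cost of a harmless dimensional constant.

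There is no real obstacle here; the only point requiring care is that the exponent $1/n$ must appear with an absolute constant, which is precisely what the split $\rho\le\tfrac12$ versus $\rho>\tfrac12$ achieves — in the small-$\rho$ regime the slicing estimate does the work, while the large-$\rho$ regime is absorbed trivially. I note that the hypothesis $\norm{f}_V\le A_1$ is not needed for this particular conclusion: only the two-sided derivative bounds on $B$ are used.
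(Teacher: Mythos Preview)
The paper does not prove this lemma; it simply records it as a special case of \cite[Lemma~3.3]{kleinbock1998flows}. Your argument is therefore strictly more than what the paper offers, and it is correct for boxes with the stated constant.

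Your route is also genuinely different from the Kleinbock--Margulis proof that the paper invokes. Their Lemma~3.3 proceeds by induction on the number of variables (and more generally on the order $l$ of differentiability, here $l=1$), building up the $(C,\alpha)$-good property one dimension at a time. You bypass induction entirely: a single one-directional slicing estimate along the longest side (using $|\partial_{l^{\ast}}f|\ge A_2$) gives a bound linear in $\epsilon$, and the upper derivative bound together with the existence of a point where $|f|<\epsilon$ forces that longest side to be at least of order $\|f\|_B/(nA_1)$. The dichotomy $\rho\le\tfrac12$ versus $\rho>\tfrac12$ then converts the linear bound in $\rho$ into the required $\rho^{1/n}$ bound with the constant $12A_1/A_2$. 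This is cleaner than the inductive approach in the present special case (all first partials bounded below), and your closing observation that the hypothesis $\|f\|_V\le A_1$ is never actually used is correct.

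The only loose end is the ball case. Your sketch is sound in structure, but as you yourself note, the constant acquires a factor of order $\omega_{n-1}/\omega_n\sim\sqrt{n}$ when you replace $\prod_{l\ne l^{\ast}}|I_l|$ by the $(n-1)$-dimensional projection of the ball, so you do not quite recover the exact value $12A_1/A_2$ stated for balls. This is harmless for how the lemma is used in the paper---in the proof of Lemma~\ref{Lemma:estimate of I_02} the resulting constant is absorbed into $\tilde C$ and then beaten by choosing $t$ large---but if you want the statement exactly as written you should either record the extra dimensional factor or pass through an inscribed cube.
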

\begin{lemma}
{Let $N\geq 1$ be an integer. Let $f:\mathbb{R}^N\to \mathbb{R}$ be a $C^1$ map. Given $\mathbf{B}\in SO_N(\mathbb{R})$, for any $\mathbf{s}\in \mathbb{R}^N$, let $\mathbf{s}\prm=\mathbf{B}\mathbf{s}$. Then 
\begin{align*}
 (\frac{\partial f}{\partial s_1\prm},\cdots,\frac{\partial f}{\partial s_N\prm})^t=\mathbf{B}(\frac{\partial f}{\partial s_1},\cdots,\frac{\partial f}{\partial s_N})^t,
\end{align*}
where superscript $t$ denotes the transpose of the vector.}
\end{lemma}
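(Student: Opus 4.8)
The plan is a direct application of the multivariable chain rule, preceded by one bookkeeping step to pin down the direction of the transpose. Since $\mathbf{B}\in SO_N(\mathbb{R})$ is orthogonal, $\mathbf{B}^{-1}=\mathbf{B}^t$, so the relation $\mathbf{s}\prm=\mathbf{B}\mathbf{s}$ inverts to $\mathbf{s}=\mathbf{B}^t\mathbf{s}\prm$. Written in coordinates, this reads $s_i=\sum_{j=1}^N (\mathbf{B}^t)_{ij}\,s_j\prm=\sum_{j=1}^N B_{ji}\,s_j\prm$ for each $i$, so that $\partial s_i/\partial s_j\prm=B_{ji}$.

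Next I would regard $f$ as a function of $\mathbf{s}\prm$ through $\mathbf{s}=\mathbf{B}^t\mathbf{s}\prm$ and differentiate: for each $1\le j\le N$,
\[
\frac{\partial f}{\partial s_j\prm}=\sum_{i=1}^N \frac{\partial f}{\partial s_i}\cdot\frac{\partial s_i}{\partial s_j\prm}=\sum_{i=1}^N B_{ji}\,\frac{\partial f}{\partial s_i}.
\]
The right-hand side is exactly the $j$-th entry of the column vector $\mathbf{B}\,(\partial f/\partial s_1,\cdots,\partial f/\partial s_N)^t$, which is the asserted identity. The validity of the chain rule here is immediate from $f\in C^1$ together with the smoothness (linearity) of the substitution $\mathbf{s}\prm\mapsto\mathbf{s}$.

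There is no genuine obstacle: the statement is elementary, and the only place one can slip is interchanging $\mathbf{B}$ and $\mathbf{B}^t$, which is why I would insist on writing out the inverse change of variables $\mathbf{s}=\mathbf{B}^t\mathbf{s}\prm$ explicitly before taking any derivative. I also note that orientation plays no role — the same argument works verbatim for any $\mathbf{B}\in O_N(\mathbb{R})$, and in fact for any invertible $\mathbf{B}$ with $\mathbf{B}^{-1}=\mathbf{B}^t$ — so restricting to $SO_N(\mathbb{R})$ is merely a matter of convenience matching the intended application.
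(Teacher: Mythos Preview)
Your proof is correct and follows essentially the same route as the paper: invert the change of variables using $\mathbf{B}^{-1}=\mathbf{B}^t$, apply the chain rule, and read off the matrix identity. The paper compresses the coordinate computation into the single line $(\partial f/\partial s_j\prm)_j=(\mathbf{B}^{-1})^t(\partial f/\partial s_i)_i$ and then invokes $(\mathbf{B}^{-1})^t=\mathbf{B}$, whereas you spell out the entries explicitly, but there is no substantive difference.
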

\begin{proof}
{Since $\mathbf{s}\prm=\mathbf{B}\mathbf{s}$, $\mathbf{s}=\mathbf{B}^{-1}\mathbf{s}\prm$. Using chain rule, it can be verified that 
\begin{align*}
    (\frac{\partial f}{\partial s_1\prm},\cdots,\frac{\partial f}{\partial s_N\prm})^t=(\mathbf{B}^{-1})^t \cdot (\frac{\partial f}{\partial s_1},\cdots,\frac{\partial f}{\partial s_N})^t.
\end{align*}
As $\mathbf{B}\in SO_N(\mathbb{R})$, we have $(\mathbf{B}^{-1})^t=\mathbf{B}$. This proves the lemma.
}
\end{proof}
\begin{lemma}\label{basic linear algebra lemma}
{Let $N\geq 1$ be an integer. Given real numbers $0<c_2<C_2<c_1<C_1$, and a partition $\{\mathcal{I}_1,\mathcal{I}_2,\mathcal{I}_3\}$ of $\{1,\cdots,N\}$ such that $\mathcal{I}_1\neq \emptyset$. There is $\mathbf{B}\in SO_N(\mathbb{R})$ (depending only on the partition) such that the following holds: For any vector $\mathbf{v}=(v_1,\cdots,v_N)^{t}\in \mathbb{R}^N$ (here superscript $t$ denote the transpose of the corresponding vector) satisfying
\begin{itemize}
    \item For any $i\in \{1,\cdots,N\}$, $|v_i|\leq C_1$;
    \item For any $i\in \mathcal{I}_1$, $|v_i|\geq c_1$;
    \item For any $i\in \mathcal{I}_2$, $|v_i|\geq C_2$;
    \item For any $i \in \mathcal{I}_3$, $|v_i|\leq c_2$.
    \end{itemize}
If we denote $\mathbf{v}^{\prime} =(v_1\prm,\cdots,v_N\prm)^t=\mathbf{B}\mathbf{v}$, then for any $i=1,\cdots,N$,
\begin{align*}
    \min\{\frac{c_1}{\sqrt{N}}-\sqrt{N}c_2,C_2\}\leq |v_i\prm|\leq C_1+\sqrt{N}c_2.
\end{align*}
}
\end{lemma}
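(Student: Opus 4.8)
The plan is to construct $\mathbf{B}$ by hand as a single rotation on a cleverly chosen coordinate block, and then to split $\mathbf{v}$ into a "large'' part and a "small'' part and track the two separately under $\mathbf{B}$. The point of the rotation is to let one guaranteed-large coordinate (an index in $\mathcal{I}_1$, where the lower bound $c_1$ is strongest) bleed a fixed fraction $\tfrac{1}{\sqrt N}$ of its size into every coordinate of the block containing the small indices $\mathcal{I}_3$, while leaving the already-large coordinates indexed by $(\mathcal{I}_1\cup\mathcal{I}_2)\setminus\{i_0\}$ untouched.

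First I would fix $i_0\in\mathcal{I}_1$ (possible since $\mathcal{I}_1\neq\emptyset$; say $i_0=\min\mathcal{I}_1$) and set $W:=\operatorname{span}\{\mathbf{e}_i: i\in\{i_0\}\cup\mathcal{I}_3\}$, a coordinate subspace of dimension $m:=1+|\mathcal{I}_3|\leq N$. The vector $\mathbf{q}:=m^{-1/2}\sum_{i\in\{i_0\}\cup\mathcal{I}_3}\mathbf{e}_i$ is a unit vector in $W$; extending $\mathbf{q}$ to an orthonormal basis of $W$ gives an orthogonal automorphism $R$ of $W$ with $R\mathbf{e}_{i_0}=\mathbf{q}$, and by negating one of the remaining basis vectors if necessary (this step is vacuous when $m=1$, where $R=\operatorname{id}$) we may take $\det R=1$. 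I then define $\mathbf{B}\in SO_N(\mathbb{R})$ to act as $R$ on $W$ and as the identity on $W^{\perp}=\operatorname{span}\{\mathbf{e}_i: i\in(\mathcal{I}_1\cup\mathcal{I}_2)\setminus\{i_0\}\}$; since $W$ and $W^{\perp}$ are $\mathbf{B}$-invariant and orthogonal, $\mathbf{B}$ is a rotation, and it depends only on the partition (not on $\mathbf{v}$ nor on the constants $c_2<C_2<c_1<C_1$).

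Next, for $\mathbf{v}$ as in the statement I would write $\mathbf{v}=v_{i_0}\mathbf{e}_{i_0}+\mathbf{s}+\mathbf{u}$, where $\mathbf{s}=\sum_{j\in\mathcal{I}_3}v_j\mathbf{e}_j\in W$ and $\mathbf{u}=\sum_{i\in(\mathcal{I}_1\cup\mathcal{I}_2)\setminus\{i_0\}}v_i\mathbf{e}_i\in W^{\perp}$, so that $\norm{\mathbf{s}}\leq\sqrt{|\mathcal{I}_3|}\,c_2\leq\sqrt N c_2$. Then $\mathbf{v}':=\mathbf{B}\mathbf{v}=v_{i_0}\mathbf{q}+R\mathbf{s}+\mathbf{u}$ with $\norm{R\mathbf{s}}=\norm{\mathbf{s}}\leq\sqrt N c_2$. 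For an index $i\in\{i_0\}\cup\mathcal{I}_3$ the $\mathbf{u}$-part contributes nothing, so $v_i'=m^{-1/2}v_{i_0}+(R\mathbf{s})_i$, and using $c_1\leq|v_{i_0}|\leq C_1$, $m\leq N$ and $m^{-1/2}\leq 1$ we get $\tfrac{c_1}{\sqrt N}-\sqrt N c_2\leq|v_i'|\leq C_1+\sqrt N c_2$. For an index $i\in(\mathcal{I}_1\cup\mathcal{I}_2)\setminus\{i_0\}$ we have $v_i'=v_i$, and $|v_i|\geq C_2$ (using $c_1>C_2$ on $\mathcal{I}_1$, and $|v_i|\geq C_2$ directly on $\mathcal{I}_2$) together with $|v_i|\leq C_1$ gives $C_2\leq|v_i'|\leq C_1$. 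Combining the two cases yields $\min\{\tfrac{c_1}{\sqrt N}-\sqrt N c_2,\,C_2\}\leq|v_i'|\leq C_1+\sqrt N c_2$ for every $i$, which is exactly the assertion.

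I do not expect a genuine obstacle: the computation is bookkeeping. The only two points needing a moment's care are that $R$ can be arranged in $SO_m$ rather than merely $O_m$ (handled by the determinant-sign flip, and trivial when $m=1$), and that the block dimension satisfies $m\leq N$, which is what makes $m^{-1/2}c_1\geq N^{-1/2}c_1$ and hence forces the $\tfrac{c_1}{\sqrt N}$ in the lower bound.
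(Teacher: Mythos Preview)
Your proof is correct and follows essentially the same approach as the paper. The paper relabels so that $i_0=1$ and $\mathcal{I}_3=\{2,\dots,p\}$ (where $p=1+|\mathcal{I}_3|$, your $m$), then builds $\mathbf{B}$ block-diagonally with first column $p^{-1/2}(1,\dots,1,0,\dots,0)^t$---exactly your condition $R\mathbf{e}_{i_0}=\mathbf{q}$---and estimates the coordinates in the same two cases; your use of $\|R\mathbf{s}\|=\|\mathbf{s}\|$ is a slightly cleaner substitute for the paper's row-wise Cauchy--Schwarz, but the argument is otherwise identical.
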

\begin{proof}
{
If $\mathcal{I}_3=\emptyset$, then the lemma is trivial. Now we assume that $\mathcal{I}_3\neq \emptyset$. Let $p\in\mathbb{N}$ be such that $p-1=\# \mathcal{I}_3$, then $2\leq p\leq N$. Without loss of generality, we may assume that $1\in \mathcal{I}_1$ and $\mathcal{I}_3=\{2,\cdots,p\}$.
}
{
Choose a $\mathbf{B}=(b_{ij})\in SO_N(\mathbb{R})$ satisfying
\begin{itemize}
    \item $b_{i1}=\frac{1}{\sqrt{p}}$, for $1\leq i\leq p$;
    \item $b_{ij}=0$ if $j\leq p<i$, or $i\leq p <j$;
    \item $b_{ij}=\delta_{ij}$ if $i\geq p+1$ and $j\geq p+1$.
\end{itemize}
Note that for any $i=1,\cdots,p$, $v_i\prm=1/\sqrt{p}\cdot v_1+\sum_{j=2}^p b_{ij}v_j$. Therefore, for any $1\leq i\leq p$, we have the lower bound
\begin{align*}
    |v_i\prm|\geq \frac{1}{\sqrt{p}}|v_1|-\sum_{j=2}^p |b_{ij}||v_j|\geq \frac{c_1}{\sqrt{p}}-(\sum_{j=2}^p |c_2|^2)^{\frac{1}{2}}\geq \frac{c_1}{\sqrt{N}}-\sqrt{N}c_2,
\end{align*}
where in the second inequality we apply Cauchy-Schwartz inequality. Also for any $1\leq i\leq p$, we have the upper bound
\begin{align*}
    |v_i\prm|\leq \frac{1}{\sqrt{p}}|v_1|+\sum_{j=2}^p |b_{ij}||v_j|\leq C_1+\sqrt{N} c_2.
\end{align*}
On the other hand, for any $p+1\leq i\leq N$, we have $v_i\prm=v_i$. Therefore, for any $1\leq i\leq N$,
\begin{align*}
    \min\{\frac{c_1}{\sqrt{N}}-\sqrt{N}c_2,C_2\}\leq |v_i\prm|\leq C_1+\sqrt{N}c_2.
\end{align*}
}
\end{proof}
{Roughly speaking, Lemma \ref{basic linear algebra lemma} says that one can find a suitable rotation $\mathbf{B}\in SO_N(\mathbb{R})$ depending \textbf{only} on the partition of $\{1,\cdots,N\}$ such that for any vector $\mathbf{v}\in \mathbb{R}^N$, as long as there is a coordinate of $\mathbf{v}$ with large enough absolute value, the absolute value of all coordinates of the new vector $\mathbf{B}\mathbf{v}$ are bounded below by a suitable constant.}

\begin{proof}[Proof of (5) of Proposition \ref{contraction of mixed height functions}]

If $n=0$ and $J=I$. Then for any $\mathbf{s}\in I$, by Proposition \ref{contraction of mixed height functions} (4),
\begin{align*}
    \beta_{\mathbf{m}}(a_tu_{\boldsymbol{\varphi}}(\mathbf{s})\Gamma)
    \leq \Tilde{M}_2 \beta_{\mathbf{m}}(u_{\boldsymbol{\varphi}}(\mathbf{s})\Gamma) 
   \leq \Tilde{M}_2(\sigma^{-\nu}+M).
\end{align*}
Then for any $b\in \bR$ such that $b>\Tilde{M}_2(\sigma^{-\nu}+M)$,
\begin{align*}
    \int_I \beta_{\mathbf{m}}(a_t u_{\boldsymbol{\varphi}}(\mathbf{s})\Gamma)d\mathbf{s}\leq \frac{1}{2}\int_I\beta_{\mathbf{m}}(u_{\boldsymbol{\varphi}}(\mathbf{s})\Gamma)d\mathbf{s}+b|I|.
\end{align*}

Now we assume $n\geq 1$ and let $t>0$ be a sufficiently large number (to be specified later). By Lemma \ref{shadowing lemma},
\begin{align*}
     \int_J\beta_{\mathbf{m}}(a_{(n+1)t}u_{\boldsymbol{\varphi}}(\mathbf{s})\Gamma)d\mathbf{s}\leq \int_J\int_{I_0}\beta_{\mathbf{m}}(a_{(n+1)t}u_{\boldsymbol{\varphi}}(\mathbf{s}+\Tilde{\mathbf{s}}e^{-dnt})\Gamma)d\Tilde{\mathbf{s}}d\mathbf{s}.
\end{align*}
By Proposition \ref{contraction hypothesis for Margulis' height function}, for $t>0$ sufficiently large, there exists $b_1>0$ such that for any $\mathbf{s}\in J$,
\begin{align}\label{contraction of tilde alpha for u_varphi}
    \int_{I_0}\Tilde{\alpha}(a_t u(\Tilde{\mathbf{s}})a_{nt}u(\mathbf{s})\Gamma)d\Tilde{\mathbf{s}}
    \leq \frac{1}{4}\Tilde{\alpha}(a_{nt}u_{\boldsymbol{\varphi}}(\mathbf{s})\Gamma)+b_1.
\end{align}
Note that since
\begin{align*}
   \int_{I_0}\Tilde{\alpha}(a_t u(\Tilde{\mathbf{s}})a_{nt}u(\mathbf{s})\Gamma)d\Tilde{\mathbf{s}}&= \int_{I_0}\Tilde{\alpha}(a_{(n+1)t}u_{\boldsymbol{\varphi}}(\mathbf{s}+\Tilde{\mathbf{s}}e^{-dnt})\Gamma)d\Tilde{\mathbf{s}}\\
    &\leq \frac{1}{4}\Tilde{\alpha}(a_{nt}u_{\boldsymbol{\varphi}}(\mathbf{s})\Gamma)+b_1,
\end{align*}
by definition of $\beta_{\mathbf{m}}$, it remains to estimate the following integral for any $\mathbf{s}\in J$:
\begin{align}\label{integral of alpha m}
    \int_{I_0}\alpha_{\mathbf{m}}^{\nu}(a_{(n+1)t}u_{\boldsymbol{\varphi}}(\mathbf{s}+\Tilde{\mathbf{s}}e^{-dnt})\Gamma)d\Tilde{\mathbf{s}}.
\end{align}
Define {for any $\mathbf{s}\in J$,}
\begin{align*}
    &I_{01}(\mathbf{s}):=\{\Tilde{\mathbf{s}}\in I_0: \hat{\mathbf{s}}=\mathbf{s}+\Tilde{\mathbf{s}}e^{-dnt}, \boldsymbol{\xi}_{n+1,\hat{\mathbf{s}}} \text{ exists},\boldsymbol{\xi}_{n+1,\hat{\mathbf{s}}}\neq \boldsymbol{\xi}_{n,\mathbf{s}}\},\\
    &I_{02}(\mathbf{s}):=\{\Tilde{\mathbf{s}}\in I_0: \hat{\mathbf{s}}=\mathbf{s}+\Tilde{\mathbf{s}}e^{-dnt}, \boldsymbol{\xi}_{n+1,\hat{\mathbf{s}}} \text{ exists},\boldsymbol{\xi}_{n+1,\hat{\mathbf{s}}}= \boldsymbol{\xi}_{n,\mathbf{s}}\},\\
    &I_{03}(\mathbf{s}):=\{\Tilde{\mathbf{s}}\in I_0:\hat{\mathbf{s}}=\mathbf{s}+\Tilde{\mathbf{s}}e^{-dnt}, \boldsymbol{\xi}_{n+1,\hat{\mathbf{s}}} \text{ does not exist}\}.
\end{align*}
Since for $\tilde{\mathbf{s}}\in I_{03}$, $\alpha_{\mathbf{m}}^{\nu}$ is dominated by $\tilde{\alpha}$, by Lemmas \ref{Lemma: estimation of I_01}, \ref{Lemma:estimate of I_02} given as follows, we have
\begin{align*}
    &\int_{I_0}\alpha_{\mathbf{m}}^{\nu}(a_{(n+1)t}u_{\boldsymbol{\varphi}}(\mathbf{s}+\Tilde{\mathbf{s}}e^{-dnt})\Gamma)d\Tilde{\mathbf{s}}=\int_{I_{01}(\mathbf{s})}\alpha_{\mathbf{m}}^{\nu}d\Tilde{\mathbf{s}} +\int_{I_{02}(\mathbf{s})}\alpha_{\mathbf{m}}^{\nu}d\Tilde{\mathbf{s}} +\int_{I_{03}(\mathbf{s})}\alpha_{\mathbf{m}}^{\nu}d\Tilde{\mathbf{s}}\\
    &\leq (10r^2d)^{\nu}e^{\nu rt}\cdot 2^{r(d-r)}\cdot \Tilde{\alpha}(a_{nt}u_{\boldsymbol{\varphi}}(\mathbf{s})\Gamma)+\frac{1}{4}\alpha_{\mathbf{m}}^{\nu}(a_{nt}u_{\boldsymbol{\varphi}}(\mathbf{s})\Gamma)+2^{\nu+r(d-r)}.
\end{align*}
As we choose $b>2^{\nu+r(d-r)}+cb_1e^{r\nu t}$, recall that $c=4\cdot (10r^2d)^{\nu}\cdot 2^{r(d-r)}$, we have
\begin{align*}
     &\int_J\beta_{\mathbf{m}}(a_{(n+1)t}u_{\boldsymbol{\varphi}}(\mathbf{s})\Gamma)d\mathbf{s}\leq \int_J\int_{I_0}\beta_{\mathbf{m}}(a_{(n+1)t}u_{\boldsymbol{\varphi}}(\mathbf{s}+\Tilde{\mathbf{s}}e^{-dnt})\Gamma)d\Tilde{\mathbf{s}}d\mathbf{s}\\
     &\leq \int_J[\frac{1}{4}c\cdot e^{r\nu t}\Tilde{\alpha}(a_{nt}u_{\boldsymbol{\varphi}}(\mathbf{s})\Gamma)+\frac{1}{4}\alpha_{\mathbf{m}}^{\nu}(a_{nt}u_{\boldsymbol{\varphi}}(\mathbf{s})\Gamma)+2^{\nu+r(d-r)}+c b_1 e^{r\nu t}] d\mathbf{s},\\
     &\leq \frac{1}{2}\int_J \beta_{\mathbf{m}}(a_{nt}u_{\boldsymbol{\varphi}}(\mathbf{s})\Gamma)d\mathbf{s}+b|J|.
\end{align*}
This finishes the proof of property (5) of Proposition \ref{contraction of mixed height functions}, modulo Lemmas \ref{Lemma: estimation of I_01}, \ref{Lemma:estimate of I_02}.

\end{proof}

\begin{lemma}\label{Lemma: estimation of I_01}
Let $J$ be the box as in Proposition \ref{contraction of mixed height functions} (5). There is $t>0$ sufficiently large such that for any $\mathbf{s}\in J$,
\begin{align*}
    \int_{I_{01}(\mathbf{s})}\alpha_{\mathbf{m}}^{\nu}(a_{(n+1)t}u_{\boldsymbol{\varphi}}(\mathbf{s}+\Tilde{\mathbf{s}}e^{-dnt})\Gamma)d\Tilde{\mathbf{s}}\leq e^{\nu rt}(10r^2d)^{\nu}2^{r(d-r)}\cdot \Tilde{\alpha}(a_{nt}u_{\boldsymbol{\varphi}}(\mathbf{s})\Gamma).
\end{align*}
\end{lemma}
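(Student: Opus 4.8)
The plan is to reduce Lemma~\ref{Lemma: estimation of I_01} to a \emph{pointwise} bound on $\alpha_{\mathbf{m}}^{\nu}$ and then integrate trivially. Fix $n\geq1$ (this lemma enters the proof of Proposition~\ref{contraction of mixed height functions}(5) only in the case $n\geq1$), $\mathbf{s}\in J$ and $\Tilde{\mathbf{s}}\in I_{01}(\mathbf{s})$, put $\hat{\mathbf{s}}=\mathbf{s}+\Tilde{\mathbf{s}}e^{-dnt}$, and write $\lambda:=\alpha_1(g_n(\mathbf{s})\bZ^d)^{-1}=\inf_{\mathbf{w}\in\bZ^d\setminus\{\mathbf{0}\}}\norm{g_n(\mathbf{s})\mathbf{w}}$ for the length of the shortest nonzero vector of the unimodular lattice $g_n(\mathbf{s})\bZ^d$. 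Since $\alpha_1^{\nu}\leq\Tilde{\alpha}$ by Proposition~\ref{contraction hypothesis for Margulis' height function} and $\Tilde{\alpha}(a_{nt}u_{\boldsymbol{\varphi}}(\mathbf{s})\Gamma)=\Tilde{\alpha}(g_n(\mathbf{s})\bZ^d)$, it suffices to show the pointwise estimate
\begin{align*}
\norm{w(n+1,\hat{\mathbf{s}},\boldsymbol{\xi}_{n+1,\hat{\mathbf{s}}})}\geq\tfrac{1}{10d}\,e^{-rt}\lambda ,
\end{align*}
because then $\alpha_{\mathbf{m}}^{\nu}(a_{(n+1)t}u_{\boldsymbol{\varphi}}(\hat{\mathbf{s}})\Gamma)=\norm{w(n+1,\hat{\mathbf{s}},\boldsymbol{\xi}_{n+1,\hat{\mathbf{s}}})}^{-\nu}\leq(10d)^{\nu}e^{\nu rt}\lambda^{-\nu}\leq(10r^2d)^{\nu}e^{\nu rt}\,\Tilde{\alpha}(a_{nt}u_{\boldsymbol{\varphi}}(\mathbf{s})\Gamma)$, and integrating in $\Tilde{\mathbf{s}}$ over $I_{01}(\mathbf{s})\subset I_0$ (of measure $2^{r(d-r)}$) gives exactly the asserted inequality.

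To prove the displayed bound I would first record the two algebraic identities, both consequences of $U$ being abelian and of $(\boldsymbol{\varphi}(\mathbf{s}))_{>r}\equiv\mathbf{0}$,
\begin{align*}
g_{n+1}(\hat{\mathbf{s}})=a_tu(\Tilde{\mathbf{s}})g_n(\mathbf{s}),\qquad \mathbf{v}_{n+1}(\hat{\mathbf{s}})\cdot\mathbf{m}=a_tu(\Tilde{\mathbf{s}})\bigl(a_{nt}(\boldsymbol{\varphi}(\hat{\mathbf{s}})\cdot\mathbf{m})\bigr),
\end{align*}
the second because $u(\Tilde{\mathbf{s}})$ fixes every vector whose $(>r)$-block vanishes. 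Picking a representative $\mathbf{v}_0\in(\bR^d)^k$ of $\boldsymbol{\xi}_{n+1,\hat{\mathbf{s}}}$ and setting $\mathbf{w}:=w(n,\mathbf{s},\mathbf{v}_0)=(\mathbf{v}_n(\mathbf{s})-g_n(\mathbf{s})\mathbf{v}_0)\cdot\mathbf{m}$ and $\boldsymbol{\epsilon}:=a_{nt}\bigl((\boldsymbol{\varphi}(\hat{\mathbf{s}})-\boldsymbol{\varphi}(\mathbf{s}))\cdot\mathbf{m}\bigr)$, the two identities together with the mean value theorem give
\begin{align*}
w(n+1,\hat{\mathbf{s}},\boldsymbol{\xi}_{n+1,\hat{\mathbf{s}}})=a_tu(\Tilde{\mathbf{s}})(\mathbf{w}+\boldsymbol{\epsilon}),
\end{align*}
where $\boldsymbol{\epsilon}$ has vanishing $(>r)$-block and $\norm{\boldsymbol{\epsilon}}\leq C_2\norm{\mathbf{m}}\,e^{-rnt}$ for a constant $C_2=C_2(\boldsymbol{\varphi},d,k,r)$, the exponent $e^{-rnt}=e^{(d-r)nt}e^{-dnt}$ coming from the facts that $\hat{\mathbf{s}},\mathbf{s}$ are at $\norm{\cdot}_{\infty}$-distance $\leq e^{-dnt}$ and $\boldsymbol{\varphi}$ has bounded derivative on $\mathcal{U}$. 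Two elementary observations then feed the estimate: since the class of $\mathbf{v}_0$ is $\boldsymbol{\xi}_{n+1,\hat{\mathbf{s}}}$, which by definition of $I_{01}(\mathbf{s})$ differs from $\boldsymbol{\xi}_{n,\mathbf{s}}$, Lemma~\ref{at most one vector in the equivalence space} applied to $(g_n(\mathbf{s}),\mathbf{v}_n(\mathbf{s}))$ forces $\norm{\mathbf{w}}\geq\tfrac12\lambda$; and $\mathbf{w}$ has $(>r)$-block $\mathbf{w}_{>r}=-e^{-rnt}(\mathbf{v}_0\cdot\mathbf{m})_{>r}$, so — as $a_t$ scales the $(>r)$-block by $e^{-rt}$ and the $(\leq r)$-block by $e^{(d-r)t}\geq1$ while $u(\Tilde{\mathbf{s}})$ fixes the $(>r)$-block — the vector $w(n+1,\hat{\mathbf{s}},\boldsymbol{\xi}_{n+1,\hat{\mathbf{s}}})$ has $(>r)$-block $e^{-rt}\mathbf{w}_{>r}$ and $(\leq r)$-block $e^{(d-r)t}(\mathbf{w}_{\leq r}+\boldsymbol{\epsilon}_{\leq r}+\Tilde{\mathbf{s}}\,\mathbf{w}_{>r})$.

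Now I would split into two cases. If $\norm{\mathbf{w}_{>r}}\geq\tfrac{1}{10d}\lambda$, the $(>r)$-block alone gives $\norm{w(n+1,\hat{\mathbf{s}},\boldsymbol{\xi}_{n+1,\hat{\mathbf{s}}})}\geq e^{-rt}\norm{\mathbf{w}_{>r}}\geq\tfrac{1}{10d}e^{-rt}\lambda$. If $\norm{\mathbf{w}_{>r}}<\tfrac{1}{10d}\lambda$, then (using $d\geq2$) $\norm{\mathbf{w}_{\leq r}}\geq\norm{\mathbf{w}}-\norm{\mathbf{w}_{>r}}\geq\tfrac{9}{20}\lambda$ and $\norm{\Tilde{\mathbf{s}}\,\mathbf{w}_{>r}}\leq\tfrac{d}{2}\norm{\mathbf{w}_{>r}}\leq\tfrac{1}{20}\lambda$; moreover $\boldsymbol{\xi}_{n+1,\hat{\mathbf{s}}}$ exists and $\hat{\mathbf{s}}\in I\prm$ (the latter holds for $t$ large, since $n\geq1$ forces $\norm{\hat{\mathbf{s}}-\mathbf{s}}_{\infty}\leq e^{-dt}$), so Lemma~\ref{consequence of existence of xi} applied at level $n+1$ yields $\norm{(\mathbf{v}_0\cdot\mathbf{m})_{>r}}_{\infty}>M_1\norm{\mathbf{m}}$, whence $e^{-rnt}M_1\norm{\mathbf{m}}<\norm{\mathbf{w}_{>r}}<\tfrac{1}{10d}\lambda$ and therefore $\norm{\boldsymbol{\epsilon}_{\leq r}}\leq C_2\norm{\mathbf{m}}e^{-rnt}<\tfrac{C_2}{10dM_1}\lambda<\tfrac{1}{80}\lambda$ — this last inequality being precisely the reason $N_1=8r^2k^{1/2}(d-r)$ was built into $M_1$ in~(\ref{definition of M1}). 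Combining the three estimates, the $(\leq r)$-block has norm $\geq e^{(d-r)t}\bigl(\tfrac{9}{20}-\tfrac{1}{80}-\tfrac{1}{20}\bigr)\lambda>\tfrac14\lambda\geq\tfrac{1}{10d}e^{-rt}\lambda$. In either case the displayed pointwise bound holds, and integrating over $I_{01}(\mathbf{s})$ completes the proof.

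The main obstacle — essentially the only step that is not bookkeeping with the block structure of $a_t$ and $u(\Tilde{\mathbf{s}})$ — is the inequality $\norm{\boldsymbol{\epsilon}_{\leq r}}<\tfrac{1}{80}\lambda$ in the second case: it is the one place where one must combine Lemma~\ref{consequence of existence of xi} (which shows the relevant integer vector $(\mathbf{v}_0\cdot\mathbf{m})_{>r}$ is forced to be $\norm{\cdot}_{\infty}$-large, so that a fortiori $\lambda>10dM_1\norm{\mathbf{m}}e^{-rnt}$) with the explicit size of $N_1$, which makes the $C^1$-size of $\boldsymbol{\varphi}$ negligible against $M_1$ in the ratio that matters. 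It is worth noting that, in contrast to the companion estimate for $I_{02}(\mathbf{s})$, this lemma requires no $(C,\alpha)$-good argument (Lemma~\ref{good property of functions}) at all.
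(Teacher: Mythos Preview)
Your proof is correct and follows essentially the same approach as the paper's. Both arguments reduce to a pointwise bound and integrate trivially over $I_0$ (of measure $2^{r(d-r)}$); both exploit $\boldsymbol{\xi}_{n+1,\hat{\mathbf{s}}}\neq\boldsymbol{\xi}_{n,\mathbf{s}}$ to force $\norm{w(n,\mathbf{s},\boldsymbol{\xi}_{n+1,\hat{\mathbf{s}}})}\geq\tfrac12\lambda$, invoke Lemma~\ref{consequence of existence of xi} to make the $C^1$-perturbation from $\boldsymbol{\varphi}$ negligible against the $(>r)$-block, and then split into two cases according to which block dominates. The only cosmetic differences are that the paper compares the two blocks to each other (threshold $N_2=2(r+1)(d-r)$, in sup norm) whereas you compare $\norm{\mathbf{w}_{>r}}$ directly to $\lambda/(10d)$ (in Euclidean norm), and that you package the level-$n$ to level-$(n{+}1)$ transition via the clean identity $w(n+1,\hat{\mathbf{s}},\cdot)=a_t u(\tilde{\mathbf{s}})(\mathbf{w}+\boldsymbol{\epsilon})$ rather than by unwinding coordinates.
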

\begin{proof}
We will prove that for $t>0$ sufficiently large, for any $\Tilde{\mathbf{s}}\in I_{01}(\mathbf{s})$,
\begin{align*}
    \alpha_{\mathbf{m}}^{\nu}(a_{(n+1)t}u_{\boldsymbol{\varphi}}(\mathbf{s}+\Tilde{\mathbf{s}}e^{-dnt})\Gamma)\leq e^{\nu rt}(10r^2d)^{\nu}\cdot \Tilde{\alpha}(a_{nt}u_{\boldsymbol{\varphi}}(\mathbf{s})\Gamma).
\end{align*}
For $\tilde{\mathbf{s}}\in I_{0}$, denote $\hat{\mathbf{s}}=\mathbf{s}+\Tilde{\mathbf{s}}e^{-dnt}$.

\textbf{Case 1}: $\norm{(w(n,\mathbf{s},\boldsymbol{\xi}_{n+1,\hat{\mathbf{s}}}))_{\leq r}}_{\infty}\geq N_2\norm{(w(n,\mathbf{s},\boldsymbol{\xi}_{n+1,\hat{\mathbf{s}}}))_{>r}}_{\infty}$, where $N_2=2(r+1)(d-r)$. 

By definition of $M_1$ (cf.(\ref{definition of M1})), the choice of $N_1$ and Lipschity continuity of $\boldsymbol{\varphi}$, we have for any $i,j,p,q$,
\begin{align*}
    |\sum_{q=1}^k \frac{\partial \boldsymbol{\varphi}_{pq}}{\partial s_{ij}}\cdot m_q| 
    \leq \norm{\mathbf{m}}\cdot (\sum_{q=1}^k|\frac{\partial \boldsymbol{\varphi}_{pq}}{\partial s_{ij}}|^2)^{\frac{1}{2}}
    \leq \norm{\mathbf{m}}k^{\frac{1}{2}}\frac{M_1}{N_1}\leq \norm{(\boldsymbol{\xi}_{n+1,\hat{\mathbf{s}}})_{>r}\cdot \mathbf{m}}_{\infty}.
\end{align*}
By Lemma \ref{consequence of existence of xi}, the choices of the sidelength of the box and $N_2$,
\begin{align*}
    &\norm{w(n+1,\hat{\mathbf{s}},\boldsymbol{\xi}_{n+1,\hat{\mathbf{s}}})}\geq \norm{(w(n+1,\hat{\mathbf{s}},\boldsymbol{\xi}_{n+1,\hat{\mathbf{s}}})_{\leq r}}_{\infty}\\
    &=e^{(d-r)(n+1)t}\norm{[(\boldsymbol{\varphi}(\boldsymbol{\hat{\mathbf{s}}}))_{\leq r}-(\boldsymbol{\xi}_{n+1,\hat{\mathbf{s}}})_{\leq r}-\hat{\mathbf{s}}\cdot (\boldsymbol{\xi}_{n+1,\hat{\mathbf{s}}})_{>r}]\cdot \mathbf{m}}_{\infty}\\
    &\geq e^{(d-r)t}\norm{(w(n,\mathbf{s},\boldsymbol{\xi}_{n+1,\hat{\mathbf{s}}}))_{\leq r}}_{\infty}-(1-\frac{(r+1)(d-r)}{N_2}) e^{(d-r)t}\norm{(w(n,\mathbf{s},\boldsymbol{\xi}_{n+1,\hat{\mathbf{s}}}))_{\leq r}}_{\infty}\\
    &\geq  e^{(d-r)t} \frac{1}{2d} \norm{w(n,\mathbf{s},\boldsymbol{\xi}_{n+1,\hat{\mathbf{s}}})}.
\end{align*}
Choose $t>0$ large enough such that $e^{(d-r)t}\frac{1}{2d}>1$, we obtain
\begin{align*}
   \alpha_{\mathbf{m}}^{\nu}(a_{(n+1)t}u_{\boldsymbol{\varphi}}(\mathbf{s}+\Tilde{\mathbf{s}}e^{-dnt})\Gamma)&=\norm{w(n+1,\hat{\mathbf{s}},\boldsymbol{\xi}_{n+1,\hat{\mathbf{s}}})}^{-\nu}\\
   &\leq e^{-\nu(d-r)t}(2d)^{\nu}\norm{w(n,\mathbf{s},\boldsymbol{\xi}_{n+1,\hat{\mathbf{s}}})}^{-\nu}\\
    &\leq e^{-\nu(d-r)t}(2d)^{\nu}\cdot 2^{\nu} \sup_{\mathbf{w}\in \bZ^d\setminus \{\mathbf{0}\}}\norm{a_{nt}u(\mathbf{s})\mathbf{w}}^{-\nu}\\
    &\leq \Tilde{\alpha}(a_{nt}u_{\boldsymbol{\varphi}(\mathbf{s})}\Gamma).
\end{align*}

\textbf{Case 2}: $\norm{(w(n,\mathbf{s},\boldsymbol{\xi}_{n+1,\hat{\mathbf{s}}}))_{\leq r}}_{\infty}< N_2\norm{(w(n,\mathbf{s},\boldsymbol{\xi}_{n+1,\hat{\mathbf{s}}}))_{>r}}_{\infty}$.

Then by the choice of $N_2$, we have
\begin{align*}
    \norm{w(n+1,\hat{\mathbf{s}},\boldsymbol{\xi}_{n+1,\hat{\mathbf{s}}})}\geq e^{-rt}\norm{(w(n,\mathbf{s},\boldsymbol{\xi}_{n+1,\hat{\mathbf{s}}}))_{>r}}_{\infty}
    \geq e^{-rt}\frac{1}{5r^2d}\norm{w(n,\mathbf{s},\boldsymbol{\xi}_{n+1,\hat{\mathbf{s}}})}.
\end{align*}
Therefore, 
\begin{align*}
     \alpha_{\mathbf{m}}^{\nu}(a_{(n+1)t}u_{\boldsymbol{\varphi}}(\mathbf{s}+\Tilde{\mathbf{s}}e^{-dnt})\Gamma)&=\norm{w(n+1,\hat{\mathbf{s}},\boldsymbol{\xi}_{n+1,\hat{\mathbf{s}}})}^{-\nu}\\
     &\leq e^{\nu rt} (5r^2d)^{\nu}\norm{w(n,\mathbf{s},\boldsymbol{\xi}_{n+1,\hat{\mathbf{s}}})}^{-\nu}\\
     &\leq e^{\nu rt} (10r^2d)^{\nu}\sup_{\mathbf{w}\in \bZ^d\setminus \{\mathbf{0}\}}\norm{a_{nt}u(\mathbf{s})\mathbf{w}}^{-\nu}\\
     &\leq  e^{\nu rt} (10r^2d)^{\nu}\Tilde{\alpha}(a_{nt}u_{\boldsymbol{\varphi}(\mathbf{s})}\Gamma).
\end{align*}
Combining cases 1 and 2, the lemma is proven. 
\end{proof}

\begin{lemma}\label{Lemma:estimate of I_02}
There exists $t>0$ sufficiently large such that {for any $\mathbf{s}\in J$,}
\begin{align*}
   \int_{I_{02}(\mathbf{s})}\alpha_{\mathbf{m}}^{\nu}(a_{(n+1)t}u_{\boldsymbol{\varphi}}(\mathbf{s}+\Tilde{\mathbf{s}}e^{-dnt})\Gamma)d\tilde{\mathbf{s}}\leq \frac{1}{4}\alpha_{\mathbf{m}}^{\nu}(a_{nt}u_{\boldsymbol{\varphi}}(\mathbf{s})\Gamma).
\end{align*}
\end{lemma}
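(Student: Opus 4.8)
\emph{Plan.} We may assume $n\geq 1$; the case $n=0$, $J=I$ of Proposition~\ref{contraction of mixed height functions}(5) needs no estimate on a sub‑level set and was settled directly there. Fix $\mathbf{s}\in J$ and write $\boldsymbol{\xi}:=\boldsymbol{\xi}_{n,\mathbf{s}}$, which exists whenever $I_{02}(\mathbf{s})\neq\emptyset$ (otherwise $\boldsymbol{\xi}_{n+1,\hat{\mathbf{s}}}\neq\boldsymbol{\xi}_{n,\mathbf{s}}$ by the definition of $I_{02}$). Put $\mathbf{w}:=w(n,\mathbf{s},\boldsymbol{\xi})$, $R:=\norm{\mathbf{w}}$, $\mathbf{p}:=(\mathbf{w})_{\leq r}$, $\mathbf{q}:=(\mathbf{w})_{>r}$, so $\alpha_{\mathbf{m}}^{\nu}(a_{nt}u_{\boldsymbol{\varphi}}(\mathbf{s})\Gamma)=R^{-\nu}$; and on $I_{02}(\mathbf{s})$ one has $\boldsymbol{\xi}_{n+1,\hat{\mathbf{s}}}=\boldsymbol{\xi}$, hence $\alpha_{\mathbf{m}}^{\nu}(a_{(n+1)t}u_{\boldsymbol{\varphi}}(\hat{\mathbf{s}})\Gamma)=\norm{w(n+1,\hat{\mathbf{s}},\boldsymbol{\xi})}^{-\nu}$. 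First I would record, by expanding the block formula for $w$ and Taylor‑expanding $\boldsymbol{\varphi}$ at $\mathbf{s}$ (using $\hat{\mathbf{s}}=\mathbf{s}+\Tilde{\mathbf{s}}e^{-dnt}$, $\mathbf{q}=e^{-rnt}(\boldsymbol{\xi})_{>r}\cdot\mathbf{m}$ and $(d-r)(n+1)t-dnt=(d-r)t-rnt$), the identities
\begin{align*}
(w(n+1,\hat{\mathbf{s}},\boldsymbol{\xi}))_{>r}=e^{-rt}\mathbf{q},\qquad (w(n+1,\hat{\mathbf{s}},\boldsymbol{\xi}))_{\leq r}=e^{(d-r)t}\,\mathbf{F}(\Tilde{\mathbf{s}}),
\end{align*}
where $\mathbf{F}(\Tilde{\mathbf{s}})=\mathbf{p}-\Tilde{\mathbf{s}}\cdot\mathbf{q}+e^{(d-r)nt}\big((\boldsymbol{\varphi}(\hat{\mathbf{s}}))_{\leq r}-(\boldsymbol{\varphi}(\mathbf{s}))_{\leq r}\big)\cdot\mathbf{m}$ is a $C^{1}$ map into $\bR^{r}$ on a neighborhood of $I_{0}$. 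Since $n\geq 1$, $\mathbf{s}\in I\subset I'$ and $t$ is large, Lemma~\ref{consequence of existence of xi} gives $\norm{\mathbf{q}}_{\infty}=e^{-rnt}\norm{(\boldsymbol{\xi})_{>r}\cdot\mathbf{m}}_{\infty}>e^{-rnt}M_{1}\norm{\mathbf{m}}$; together with the choice $N_{1}=8r^{2}k^{1/2}(d-r)$ inside $M_{1}$, the $\boldsymbol{\varphi}$‑contribution to $\mathbf{F}$ and all its $\Tilde{\mathbf{s}}$‑partials have $\infty$‑norm $<\norm{\mathbf{q}}_{\infty}/(8r^{2}(d-r))$. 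In particular $\mathbf{q}\neq\mathbf{0}$.

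\emph{Easy case: $\norm{\mathbf{p}}_{\infty}\geq N_{2}\norm{\mathbf{q}}_{\infty}$}, with $N_{2}=2(r+1)(d-r)$ as before. Here $\norm{\Tilde{\mathbf{s}}\cdot\mathbf{q}}_{\infty}\leq(d-r)\norm{\mathbf{q}}_{\infty}$ for every $\Tilde{\mathbf{s}}\in I_{0}$, so $\norm{\mathbf{F}(\Tilde{\mathbf{s}})}_{\infty}\geq\norm{\mathbf{p}}_{\infty}-(d-r+1)\norm{\mathbf{q}}_{\infty}\geq\tfrac12\norm{\mathbf{p}}_{\infty}$, whence $\norm{w(n+1,\hat{\mathbf{s}},\boldsymbol{\xi})}\geq\tfrac12 e^{(d-r)t}\norm{\mathbf{p}}_{\infty}$. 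As $R\leq\norm{\mathbf{p}}+\norm{\mathbf{q}}\leq(\sqrt r+1)\norm{\mathbf{p}}_{\infty}$, this gives $\alpha_{\mathbf{m}}^{\nu}(a_{(n+1)t}u_{\boldsymbol{\varphi}}(\hat{\mathbf{s}})\Gamma)\leq\big(2(\sqrt r+1)\big)^{\nu}e^{-\nu(d-r)t}R^{-\nu}$ pointwise on $I_{02}(\mathbf{s})$; integrating and taking $t$ so large that $2^{r(d-r)}\big(2(\sqrt r+1)\big)^{\nu}e^{-\nu(d-r)t}\leq\tfrac14$ finishes this case.

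\emph{Hard case: $\norm{\mathbf{p}}_{\infty}<N_{2}\norm{\mathbf{q}}_{\infty}$}, so $\norm{\mathbf{q}}_{\infty}\leq R\leq(\sqrt r N_{2}+\sqrt{d-r})\norm{\mathbf{q}}_{\infty}$. Since $\norm{w(n+1,\hat{\mathbf{s}},\boldsymbol{\xi})}\geq e^{(d-r)t}|F_{1}(\Tilde{\mathbf{s}})|$, where $F_{1}$ is the first coordinate of $\mathbf{F}$, it suffices to bound $\int_{I_{0}}|F_{1}|^{-\nu}\,d\Tilde{\mathbf{s}}$. Freezing the rows $\Tilde{\mathbf{s}}_{i\cdot}$ for $i\geq 2$ and viewing $F_{1}$ as a $C^{1}$ function of $\Tilde{\mathbf{s}}_{1\cdot}\in\bR^{d-r}$, its gradient equals $-\mathbf{q}+e^{-rnt}\boldsymbol{\eta}(\hat{\mathbf{s}})$ with $\norm{e^{-rnt}\boldsymbol{\eta}}_{\infty}<\norm{\mathbf{q}}_{\infty}/(8r^{2}(d-r))$. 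The difficulty is that although $\norm{\mathbf{q}}_{\infty}$ is large, individual entries of $\mathbf{q}$ may vanish, so Lemma~\ref{good property of functions} does not apply in the coordinates $\Tilde{\mathbf{s}}_{1\cdot}$ directly; this is exactly where I would invoke Lemma~\ref{basic linear algebra lemma}. Applied to $\mathbf{q}$ (with a partition of $\{1,\dots,d-r\}$ and constants proportional to $\norm{\mathbf{q}}_{\infty}$, chosen generic with respect to the finitely many values $|q_{l}|$, e.g.\ with only the $\infty$‑norm‑attaining indices in $\mathcal I_{1}$), it produces $\mathbf{B}\in SO_{d-r}(\bR)$ depending only on that partition with $c_{3}\norm{\mathbf{q}}_{\infty}\leq|(\mathbf{B}\mathbf{q})_{l}|\leq C_{3}\norm{\mathbf{q}}_{\infty}$ for all $l$ and dimensional $c_{3},C_{3}>0$. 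Changing variables $\Tilde{\mathbf{s}}_{1\cdot}\mapsto\mathbf{B}\Tilde{\mathbf{s}}_{1\cdot}$ and applying the lemma on the transformation of partials under rotations, all partials of $F_{1}$ in the new coordinates lie in $\big[\tfrac{c_{3}}{2}\norm{\mathbf{q}}_{\infty},\,2C_{3}\norm{\mathbf{q}}_{\infty}\big]$ (a rotation inflates the $\infty$‑norm of the error by at most $\sqrt{d-r}$, which keeps it below $\tfrac{c_{3}}{2}\norm{\mathbf{q}}_{\infty}$ thanks to the choice of $N_{1}$), and $\sup|F_{1}|$ over the relevant ball is $\geq c_{5}\norm{\mathbf{q}}_{\infty}$. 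Then Lemma~\ref{good property of functions} in dimension $d-r$ yields $|\{\Tilde{\mathbf{s}}_{1\cdot}\in[-1,1]^{d-r}:|F_{1}|<\epsilon\}|\leq C_{4}(\epsilon/\norm{\mathbf{q}}_{\infty})^{1/(d-r)}$ uniformly in the frozen rows; integrating the distribution function (as in the proof of Lemma~\ref{consequence of good property}, using $\nu<\tfrac1{r(d-r)}\leq\tfrac1{d-r}$) and then over the remaining coordinates gives $\int_{I_{0}}|F_{1}|^{-\nu}\,d\Tilde{\mathbf{s}}\leq C_{6}\norm{\mathbf{q}}_{\infty}^{-\nu}$ for a dimensional $C_{6}$. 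Hence
\begin{align*}
\int_{I_{02}(\mathbf{s})}\alpha_{\mathbf{m}}^{\nu}(a_{(n+1)t}u_{\boldsymbol{\varphi}}(\hat{\mathbf{s}})\Gamma)\,d\Tilde{\mathbf{s}} &\leq e^{-\nu(d-r)t}\int_{I_{0}}|F_{1}|^{-\nu}\,d\Tilde{\mathbf{s}}\leq C_{6}\,e^{-\nu(d-r)t}\norm{\mathbf{q}}_{\infty}^{-\nu}\\
&\leq C_{6}(\sqrt r N_{2}+\sqrt{d-r})^{\nu}\,e^{-\nu(d-r)t}\,R^{-\nu},
\end{align*}
and choosing $t$ large enough that the constant is $\leq\tfrac14$ completes the proof.

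\emph{Main obstacle.} Everything is routine except the hard case: bounding the portion of $I_{0}$ on which $a_{(n+1)t}u_{\boldsymbol{\varphi}}(\hat{\mathbf{s}})\Gamma$ drifts toward $X_{\mathbf{m}}$ along the \emph{fixed} direction $\boldsymbol{\xi}_{n,\mathbf{s}}$. The linearized distance $F_{1}$ has gradient essentially $-\mathbf{q}=-e^{-rnt}(\boldsymbol{\xi}_{n,\mathbf{s}})_{>r}\cdot\mathbf{m}$ in its active variables, and Lemma~\ref{consequence of existence of xi} controls only $\norm{\mathbf{q}}_{\infty}$ from below; the rotation of Lemma~\ref{basic linear algebra lemma} — legitimate precisely because it depends only on the size pattern of the fixed vector $\mathbf{q}$, not on the integration variable — is what converts a non‑uniform gradient into one with all partials comparable to $\norm{\mathbf{q}}_{\infty}$, yielding the sharp exponent $\norm{\mathbf{q}}_{\infty}^{-\nu}$ that matches $\alpha_{\mathbf{m}}(a_{nt}u_{\boldsymbol{\varphi}}(\mathbf{s})\Gamma)^{\nu}$. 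The one point requiring care is that the $\boldsymbol{\varphi}$‑derivative error survive the rotation as a negligible term, which is exactly why $N_{1}$ was taken so large in the definition of $M_{1}$.
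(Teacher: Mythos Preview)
Your proof follows the paper's strategy closely: the same case split on $\norm{\mathbf{p}}_\infty$ versus $N_2\norm{\mathbf{q}}_\infty$, and in the hard case a rotation from Lemma~\ref{basic linear algebra lemma} followed by Lemmas~\ref{good property of functions} and~\ref{consequence of good property}. The one implementation difference is that the paper builds the scalar $S(\Tilde{\mathbf{s}})=\sum_{i=1}^{r}w_i(n+1,\hat{\mathbf{s}},\mathbf{v})$ as a function of all $r(d-r)$ variables and rotates in $SO_{r(d-r)}$, whereas you take the single coordinate $F_1$, freeze rows $2,\dots,r$, rotate only in $SO_{d-r}$, and finish by Fubini. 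Both routes deliver the same $e^{-\nu(d-r)t}$ decay; yours is slightly more economical since it needs the $(C,\alpha)$-good estimate only in dimension $d-r$.

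One technical point to tighten: you apply Lemma~\ref{basic linear algebra lemma} to the \emph{fixed} vector $\mathbf{q}$ and then control the rotated $\boldsymbol{\varphi}$-error separately, asserting it stays below $\tfrac{c_3}{2}\norm{\mathbf{q}}_\infty$ ``thanks to the choice of $N_1$''. With the paper's fixed $N_1=8r^2k^{1/2}(d-r)$ and the best dimensional $c_3$ extractable from Lemma~\ref{basic linear algebra lemma} (of order $1/(d-r)$), this inequality fails once $d-r$ is large compared to $r^4$ (e.g.\ $r=1$, $d-r\geq 16$). The paper avoids this by applying Lemma~\ref{basic linear algebra lemma} directly to the \emph{gradient} of $S$: the partition $\{\mathcal I_1,\mathcal I_2,\mathcal I_3\}$ is still determined by $\mathbf{q}$ alone (so $\mathbf{B}$ is $\Tilde{\mathbf{s}}$-independent), but the $\boldsymbol{\varphi}$-error is absorbed into the constants $c_1,c_2,C_1,C_2$ rather than added after the rotation, and the arithmetic then closes for all $d,r$. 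The identical fix works in your $(d-r)$-dimensional setup; alternatively, since the paper explicitly says the choice of $N_1$ is flexible, you may simply take $N_1$ larger.
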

\begin{proof}
{We fix $\mathbf{s}\in J$ for the rest of the proof.} For $\tilde{\mathbf{s}}\in I_{0}$, denote $\hat{\mathbf{s}}=\mathbf{s}+\Tilde{\mathbf{s}}e^{-dnt}$. Since $\tilde{\mathbf{s}}\in I_{02}(\mathbf{s})$, for simplicity we denote $\boldsymbol{\xi}_{n+1,\hat{\mathbf{s}}}=\boldsymbol{\xi}_{n,\mathbf{s}}=\mathbf{v}$.

\textbf{Case 1}: $\norm{(w(n,\mathbf{s},\mathbf{v}))_{\leq r}}_{\infty}\geq N_2\norm{(w(n,\mathbf{s},\mathbf{v}))_{>r}}_{\infty}$, where $N_2=2(r+1)(d-r)$. 

Then we have by \textbf{Case 1} of Lemma \ref{Lemma: estimation of I_01},
\begin{align*}
   \norm{w(n+1,\hat{\mathbf{s}},\mathbf{v})}\geq e^{(d-r)t} \frac{1}{2d} \norm{w(n,\mathbf{s},\mathbf{v})}.
\end{align*}
Hence, for $t>0$ sufficiently large such that $e^{-(d-r)\nu t}\cdot (2d)^{\nu}\leq \frac{1}{4}$, we obtain
\begin{align*}
    \alpha_{\mathbf{m}}^{\nu}(a_{(n+1)t}u_{\boldsymbol{\varphi}}(\mathbf{s}+\Tilde{\mathbf{s}}e^{-dnt})\Gamma)\leq \frac{1}{4}\alpha_{\mathbf{m}}^{\nu}(a_{nt}u_{\boldsymbol{\varphi}}(\mathbf{s})\Gamma).
\end{align*}

\textbf{Case 2}: $\norm{(w(n,\mathbf{s},\mathbf{v}))_{\leq r}}_{\infty}< N_2\norm{(w(n,\mathbf{s},\mathbf{v}))_{>r}}_{\infty}$.

{Recall that $I_0=[-1,1]^{r(d-r)}$. For any $\mathbf{B}\in SO_{r(d-r)}(\mathbb{R})$, we have $\mathbf{B}\cdot I_0\subset I_0\prm$, where $I_0\prm$ is the unit ball in $\mathbb{R}^{r(d-r)}$. We may choose $t>0$ large enough such that for any $\mathbf{s}\in I$, any $\tilde{\mathbf{s}}\in I_0\prm$, we have $\mathbf{s}+e^{-dnt}\tilde{\mathbf{s}}\in I\prm$, where $I\prm$ is given as in Remark \ref{remark: a small neighborhood of I with inf>0}.} Define a function $S$ on $I_0\prm$ by 
\begin{align*}
    S(\Tilde{\mathbf{s}})=\sum_{i=1}^r w_i(n+1,\mathbf{s}+\tilde{\mathbf{s}}e^{-dnt},\mathbf{v}),\forall \tilde{\mathbf{s}}\in I_0\prm.
\end{align*}
Note that $\norm{w(n+1,\hat{\mathbf{s}},\mathbf{v})}\geq \frac{1}{r} |S(\Tilde{\mathbf{s}})|$.

{We will apply Lemma \ref{basic linear algebra lemma} to find $\mathbf{B}\in SO_{r(d-r)}(\mathbb{R})$ such that after the change of basis  $\tilde{\mathbf{s}}^{\prime}=\mathbf{B}\tilde{\mathbf{s}}$, for
\begin{align}\label{align:choices of A_1,A_2}
    &A_1=e^{(d-r)t}\max\{4\sqrt{r(d-r)},2(r(d-r))^{3/2}+r\}\norm{w(n,\mathbf{s},\mathbf{v})},\nonumber\\
    &A_2=e^{(d-r)t}\frac{1}{40r^3 d(d-r)}\norm{w(n,\mathbf{s},\mathbf{v})},
\end{align}
we have
\begin{align}\label{key estimate}
    A_2\leq \norm{S}_{I_0\prm}\leq A_1;\text{  }
    A_2\leq |\frac{\partial S}{\partial \tilde{s}_{ij}\prm}(\tilde{\mathbf{s}}\prm)|\leq A_1, \forall i,j, \forall \tilde{\mathbf{s}}^{\prime}\in  I_0\prm.
\end{align}
}
Applying Lemma \ref{good property of functions} to $S(\Tilde{\mathbf{s}}\prm)$, since $\mathbf{B}$ preserves Lebesgue measure, we obtain that for any $\epsilon>0$,
\begin{align*}
    |\{\Tilde{\mathbf{s}}\in I_0:|S(\Tilde{\mathbf{s}})|\leq \epsilon\}|&\leq |\{\Tilde{\mathbf{s}}\prm\in I_0\prm:|S(\Tilde{\mathbf{s}}\prm)|\leq \epsilon\}|\\
    &\leq r(d-r)\frac{12A_1}{A_2}\cdot (\frac{\epsilon}{\norm{S}}_{I_0\prm})^{\frac{1}{r(d-r)}}\cdot |I_0\prm|\\
    &\leq \Tilde{C}\cdot e^{-\frac{1}{r}t}\cdot \epsilon^{\frac{1}{r(d-r)}} \cdot \norm{w(n,\mathbf{s},\mathbf{v})}^{-\frac{1}{r(d-r)}},
\end{align*}
where $\Tilde{C}$ is a constant depending only on $r$ and $d$. Choose $t>0$ large enough, by Lemma \ref{consequence of good property}, with $0<\nu<\frac{1}{r(d-r)}$, 
\begin{align*}
     \int_{I_{02}(\mathbf{s})}\alpha_{\mathbf{m}}^{\nu}(a_{(n+1)t}u_{\boldsymbol{\varphi}}(\mathbf{s}+\Tilde{\mathbf{s}}e^{-dnt})\Gamma)d\tilde{\mathbf{s}}&\leq \int_{I_0}\frac{1}{\norm{w(n+1,\mathbf{s}+e^{-dnt}\tilde{\mathbf{s}},\mathbf{v})}^{\nu}}d\Tilde{\mathbf{s}}\\
     &\leq r^{\nu}\cdot \int_{I_0} \frac{1}{|S(\Tilde{\mathbf{s}})|^{\nu}}d\Tilde{\mathbf{s}}\\
    &\leq r^{\nu} c_{\nu}\Tilde{C}^{\nu r(d-r)}\cdot e^{-\nu (d-r)t}\norm{w(n,\mathbf{s},\mathbf{v})}^{-\nu}\\
    &\leq \frac{1}{4}\alpha_{\mathbf{m}}^{\nu}(a_{nt}u_{\boldsymbol{\varphi}}(\mathbf{s})\Gamma).
\end{align*}
This prove the lemma. Therefore, it remains to achieve (\ref{key estimate}). Consider the function $\Psi=\sum_{q=1}^r \sum_{p=1}^k m_p \boldsymbol{\varphi}_{qp} $, where $\mathbf{m}=(m_1,\cdots,m_k)^t$. We have
\begin{align}
    S(\Tilde{\mathbf{s}})=
    e^{(d-r)(n+1)t}&[\Psi(\mathbf{s}+e^{-dnt}\tilde{\mathbf{s}})-\Psi(\mathbf{s})- \sum_{i=1}^r\sum_{j=1}^{d-r}e^{-dnt}\Tilde{s}_{ij}<(\mathbf{v})_{>r}\cdot\mathbf{m}, \mathbf{e}_j> \nonumber \\
    &+e^{-(d-r)nt}\sum_{i=1}^r w_i(n,\mathbf{s},\mathbf{v})].\label{align:expression for S}
\end{align}
{Let $N_3=4r(d-r)$, define the partition $\{\mathcal{I}_1(\mathbf{s}),\mathcal{I}_2(\mathbf{s}),\mathcal{I}_3(\mathbf{s})\}$ of $\{(i,j):1\leq i\leq r, 1\leq j \leq d-r\}$ by
\begin{align*}
    \mathcal{I}_1(\mathbf{s}):=\{(i,j):\left|<(\mathbf{v})_{>r}\cdot\mathbf{m}, \mathbf{e}_j> \right|=\norm{(\mathbf{v})_{>r}\cdot\mathbf{m}}_{\infty}\};\\
    {\mathcal{I}_2(\mathbf{s}):=\{(i,j):\norm{(\mathbf{v})_{>r}\cdot\mathbf{m}}_{\infty}>\left|<(\mathbf{v})_{>r}\cdot\mathbf{m}, \mathbf{e}_j>\right|\geq \frac{1}{N_3}\norm{(\mathbf{v})_{>r}\cdot\mathbf{m}}_{\infty}\};}\\
    \mathcal{I}_3(\mathbf{s}):=\{(i,j):\left|<(\mathbf{v})_{>r}\cdot\mathbf{m}, \mathbf{e}_j>\right|<\frac{1}{N_3}\norm{(\mathbf{v})_{>r}\cdot\mathbf{m}}_{\infty}\}.
\end{align*}
Note that by definition, $\mathcal{I}_1(\mathbf{s})\neq \emptyset$. Using (\ref{align:expression for S}), the choice of $N_3$, and the estimate
\begin{align*}
     {|\frac{\partial \Psi}{\partial \tilde{s}_{ij}}(\mathbf{s}+e^{-dnt}\tilde{\mathbf{s}})|}&\leq (\sum_{p=1}^k m_p^2)^{\frac{1}{2}}\cdot (\sum_{p=1}^k(\sum_{q=1}^r \frac{\partial \varphi_{qp}}{\partial s_{ij}})^2))^{\frac{1}{2}}
    \leq\norm{\mathbf{m}} \frac{k^{\frac{1}{2}}\cdot rM_1}{N_1}, \forall \tilde{\mathbf{s}}\in I_0\prm,
\end{align*}
the following holds for any $\tilde{\mathbf{s}}\in I_0\prm$:
\begin{itemize}
\item For any $(i,j)$, where $1\leq i\leq r$, $1\leq j\leq d-r$,
\begin{align*}
    |\frac{\partial S}{\partial \tilde{s}_{ij}}(\tilde{\mathbf{s}})|\leq C_1:=2e^{(d-r)t}\norm{(w(n,\mathbf{s},\mathbf{v}))_{>r}}_{\infty};
\end{align*}
\item For any $(i,j)\in \mathcal{I}_1(\mathbf{s})$, 
\begin{align*}
     |\frac{\partial S}{\partial \tilde{s}_{ij}}(\tilde{\mathbf{s}})|\geq c_1:=e^{(d-r)t}(1-\frac{k^{1/2}r}{N_1})\norm{(w(n,\mathbf{s},\mathbf{v}))_{>r}}_{\infty};
\end{align*}
\item For any $(i,j)\in \mathcal{I}_2(\mathbf{s})$, 
\begin{align*}
  |\frac{\partial S}{\partial \tilde{s}_{ij}}(\tilde{\mathbf{s}})|>C_2:=e^{(d-r)t}(\frac{1}{N_3}-\frac{k^{1/2}r}{N_1})\norm{(w(n,\mathbf{s},\mathbf{v}))_{>r}}_{\infty};
\end{align*}
\item  For any $(i,j)\in \mathcal{I}_3(\mathbf{s})$,
\begin{align*}
    |\frac{\partial S}{\partial \tilde{s}_{ij}}(\tilde{\mathbf{s}})|<c_2:= e^{(d-r)t}(\frac{1}{N_3}+\frac{k^{1/2}r}{N_1})\norm{(w(n,\mathbf{s},\mathbf{v}))_{>r}}_{\infty}.
\end{align*}
\end{itemize}
}
Applying Lemma \ref{basic linear algebra lemma} with $N=r(d-r)$, and $C_1,c_1,C_2,c_2$ given as above, we obtain $\mathbf{B}=\mathbf{B}(\mathbf{s})\in SO_{r(d-r)}(\bR)$ (Since the partition depends only on $\mathbf{s}$, $\mathbf{B}$ depends only on $\mathbf{s}$), such that after the change of basis $\mathbf{\tilde{s}\prm}=\mathbf{B}\mathbf{\Tilde{s}}$, the vector $\mathbf{v}(\tilde{\mathbf{s}}\prm)=(\frac{\partial S}{\partial \tilde{s}_{ij}\prm}(\tilde{\mathbf{s}}\prm))_{ij}$ satisfies the following: For any $ 1\leq i\leq r,1\leq j\leq d-r$,
\begin{align}\label{inequality for parital prm S}
  \min\{\frac{c_1}{\sqrt{N}}-\sqrt{N}c_2,C_2\} \leq  |\frac{\partial S}{\partial \tilde{s}_{ij}\prm}(\tilde{\mathbf{s}}\prm)|\leq C_1+\sqrt{N}c_2, \forall \tilde{\mathbf{s}}\prm\in I_0\prm.
\end{align}
By the assumption of Case 2, and the choice of $N_2$, it is elementary to verify that
\begin{align}\label{condition 2}
    &\min\{\frac{c_1}{\sqrt{N}}-\sqrt{N}c_2,C_2\}\geq e^{(d-r)t}\frac{1}{40r^3d(d-r)}\norm{w(n,\mathbf{s},\mathbf{v})} ,\text{ and }\nonumber\\
& C_1+\sqrt{N}c_2 \leq 4 \sqrt{r(d-r)}e^{(d-r)t}\norm{w(n,\mathbf{s},\mathbf{v})}.
\end{align}
Moreover, using the expression (\ref{align:expression for S}), we obtain
\begin{align}\label{condition 3}
    \norm{S}_{I_0\prm}\leq e^{(d-r)t}(2(r(d-r))^{3/2}+r)\norm{w(n,\mathbf{s},\mathbf{v})}.
\end{align}
Also, note that 
\begin{align}\label{condition 4}
    \norm{S}_{I_0\prm}&
    \geq \inf_{\Tilde{\mathbf{s}}\in I_0, (i,j)\in \mathcal{I}_1}|\frac{\partial S}{\partial \tilde{s}_{ij}}(\Tilde{\mathbf{s}})|
    \geq e^{(d-r)t}\frac{1}{10r^2 d}\norm{w(n,\mathbf{s},\mathbf{v})}.
\end{align}
Now we choose $A_1,A_2$ as in (\ref{align:choices of A_1,A_2}),
by (\ref{inequality for parital prm S})(\ref{condition 2})(\ref{condition 3})(\ref{condition 4}), (\ref{key estimate}) is achieved. This finishes the proof of the lemma.
\end{proof}

\section{Proof of Proposition \ref{limit measure spend very little measure on singular sets}}\label{section proof of main proposition}
Following a general strategy developed in \cite[Section 6.6]{Shi_Ulcigrai_Genericity_on_curves_2018}, we derive Proposition \ref{limit measure spend very little measure on singular sets} from Proposition \ref{contraction of mixed height functions}. 

Let $Y$ be a locally compact, second countable Hausdorff topological space. Let $B$ be a compact box in $Mat_{r\times(d-r)}(\bR)$. Let $\phi:Mat_{r\times(d-r)}(\bR)\to Y$ be a continuous map. Let $f:\bR\times Y\to Y$ be a continuous map and we write $f(t,y)$ as $f^t(y)$ for $(t,y)\in \bR\times Y$.

Let $\mathcal{F}_0=\{B\}$. For every $n\in \mathbb{N}$, let $\mathcal{F}_n$ be a partition of elements in $\mathcal{F}_{n-1}$ into countably many subboxes with positive Lebesgue measure.
By construction, $\{\mathcal{F}_n\}_{n\in \mathbb{Z}_{\geq 0}}$ is a filtration.
For any $\mathbf{s}\in B$, let $I_n(\mathbf{s})$ denote the atom in $\mathcal{F}_n$ containing $\mathbf{s}$.

Let $\beta:Y\to [1,\infty]$ be a measurable map. Assume that $\beta$ satisfies the following conditions:

(1) $\beta$ satisfies contraction hypothesis: There exist $0<a<1$ and $b>0$ such that for any $n\in \bZ_{\geq 0}$ and any atom $I_n$ in $\mathcal{F}_n$,
\begin{align}\label{uniform contraction inequality}
    \int_{I_n}\beta(f^{n+1} \phi(\mathbf{s}))d\mathbf{s}<a\int_{I_n}\beta(f^n\phi(\mathbf{s}))d\mathbf{s}+b|I_n|;
\end{align}

(2) $\beta$ satisfies Lipschitz property: There exists a constant $M>0$ such that for any $\mathbf{s}\in B$, any $n\in \bZ_{\geq 0}$, and any $\Tilde{\mathbf{s}}\in I_n(\mathbf{s})$,
\begin{align}
    \beta(f^n\phi(\Tilde{\mathbf{s}}))\leq M\beta(f^n\phi(\mathbf{s})),\nonumber\\
    \beta(f^{n+1}\phi(\mathbf{s}))\leq M \beta(f^n\phi(\mathbf{s}))\label{Lipschitz property of beta};
\end{align}

(3) $\beta$ is bounded on $\phi(B)$, that is, there exists $l>0$ such that 
\begin{align}\label{boundedness condition of beta}
    \{\phi(\mathbf{s}): \mathbf{s}\in B\}\subset Y_l=\{y\in Y: \beta(y)<l\}.
\end{align}
For any $T>0$ and a measurable subset $K$ of $Y$, define 
\begin{align*}
    \mathcal{A}_K^T(\mathbf{s}):=\frac{1}{T}\int_0^T \chi_K(f^t\phi(\mathbf{s}))dt,
\end{align*}
where $\chi_K$ is the indicator function of $K$.

\begin{lemma}\cite[Lemma 6.20]{Shi_Ulcigrai_Genericity_on_curves_2018}\label{lemma: continuous version of consequence of small gaps of return times}
For any $\epsilon>0$, there exist $0<l_1<\infty$ and $0<c_1<1$ such that for $K=Y_{l_1}$, and any $T>1$,
\begin{align*}
    |\{\mathbf{s}\in B:\mathcal{A}_K^T(\mathbf{s})\leq 1-\epsilon\}|\leq c_1^T|B|.
\end{align*}
\end{lemma}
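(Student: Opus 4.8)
The plan is to prove Lemma \ref{lemma: continuous version of consequence of small gaps of return times} by combining the uniform contraction hypothesis (\ref{uniform contraction inequality}) with a suitable choice of filtration, and then applying a large-deviations / Borel--Cantelli type argument along the discrete time steps, finally interpolating to continuous time via the Lipschitz property (\ref{Lipschitz property of beta}). First I would iterate the contraction inequality: starting from the bound $\int_B \beta(\phi(\mathbf{s}))d\mathbf{s}\leq l|B|$ (from (\ref{boundedness condition of beta})), summing (\ref{uniform contraction inequality}) over the atoms of $\mathcal{F}_n$ gives $\int_B \beta(f^{n+1}\phi(\mathbf{s}))d\mathbf{s}\leq a\int_B \beta(f^n\phi(\mathbf{s}))d\mathbf{s}+b|B|$, so by induction $\int_B \beta(f^n\phi(\mathbf{s}))d\mathbf{s}\leq (l+\frac{b}{1-a})|B|=:C_0|B|$ for all $n\geq 0$. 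Thus the functions $\beta\circ f^n\circ \phi$ have uniformly bounded $L^1$ averages on $B$.

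Next, for the key estimate I would fix $\epsilon>0$ and a large integer $n$, and estimate $|\{\mathbf{s}\in B:\#\{0\leq j\leq n-1: \beta(f^j\phi(\mathbf{s}))\geq l_1\}\geq \epsilon n\}|$ for $l_1$ to be chosen large. By Markov's inequality applied to the sum $\sum_{j=0}^{n-1}\chi_{\{\beta\geq l_1\}}(f^j\phi(\mathbf{s}))$, this is at most $\frac{1}{\epsilon n}\sum_{j=0}^{n-1}|\{\mathbf{s}\in B:\beta(f^j\phi(\mathbf{s}))\geq l_1\}|\leq \frac{1}{\epsilon n}\cdot n\cdot \frac{C_0|B|}{l_1}=\frac{C_0}{\epsilon l_1}|B|$, which is not exponentially small by itself. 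To get the exponential bound $c_1^T$, I would instead exploit the \emph{martingale-like} structure more carefully: condition on $\mathcal{F}_j$, observe that (\ref{uniform contraction inequality}) says the conditional expectation $E[\beta(f^{j+1}\phi)\mid \mathcal{F}_j]\leq a\,\beta_{\text{avg}}^{(j)}+b$ on each atom, and combine with the Lipschitz comparison $\beta(f^j\phi(\mathbf{s}))\leq M\beta(f^j\phi(\mathbf{s}'))$ within an atom to relate pointwise values to atom-averages; this yields that the sequence of atom-averages $X_j(\mathbf{s}):=\frac{1}{|I_j(\mathbf{s})|}\int_{I_j(\mathbf{s})}\beta(f^j\phi)\,d\mathbf{t}$ satisfies a supermartingale inequality $E[X_{j+1}\mid\mathcal{F}_j]\leq aX_j+b$, hence $Y_j:=X_j+\frac{b}{1-a}$ is a positive supermartingale up to the factor $a<1$, so $E[Y_j]\leq a^j Y_0+O(1)$ and more importantly, for the count of "bad" times one gets an exponential bound by a standard submultiplicativity / Chernoff argument on the product $\prod(1+\text{indicator})$ — this is exactly the mechanism in \cite[Section 6.6]{Shi_Ulcigrai_Genericity_on_curves_2018}, which I would follow. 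Concretely, set $l_1$ large enough that $a+\frac{b+M l_1}{l_1}<1$ (say), so that whenever $X_j\geq l_1$ the contraction forces a definite multiplicative decrease, and the number of $j\leq n$ with $X_j\geq l_1$ is then exponentially rare; on the complement one has $\beta(f^j\phi(\mathbf{s}))\leq M l_1$ at all but $\epsilon n$ times, so taking $K=Y_{Ml_1}$ (rename $l_1$) gives $\mathcal{A}_K^n(\mathbf{s})\geq 1-\epsilon$ off an exponentially small set.

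Finally I would pass from integer $n$ to real $T$: write $T=n+\tau$ with $0\leq\tau<1$ (or use steps of size matching the flow's time-one map; here $f^t$ is the reparametrized flow so the relevant unit is the step $t$ from Proposition \ref{contraction of mixed height functions}, but abstractly it is "$1$"), and note that $\mathcal{A}_K^T(\mathbf{s})$ differs from $\mathcal{A}_K^n(\mathbf{s})$ by $O(1/T)$; more importantly for the \emph{flow} version one must ensure that between integer times $\beta$ stays controlled, which is where property (4) of Proposition \ref{contraction of mixed height functions} (the bound $\beta_{\mathbf{m}}(a_\tau a_{nt}\cdots)\leq \tilde M_2\beta_{\mathbf{m}}(a_{nt}\cdots)$ for $|\tau|\leq t$) enters in the application — abstractly, I would assume or add the hypothesis that $\beta(f^t y)\leq M'\beta(\lfloor t\rfloor\text{-value})$, which the excerpt's (2) essentially provides, so that $\beta(f^t\phi(\mathbf{s}))\leq MM' l_1$ for all $t\in[j,j+1]$ whenever $X_j< l_1$. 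Enlarging $K$ accordingly and adjusting constants, the continuous-time average is $\geq 1-2\epsilon$ off the same exponentially small set, and rescaling $\epsilon$ finishes the proof. \textbf{The main obstacle} I anticipate is the bookkeeping in the Chernoff-type step: carefully setting up the product $\prod_{j<n}(1+\lambda\chi_{\{X_j\geq l_1\}})$, bounding its expectation via the tower property and the one-step contraction, and extracting the constant $c_1<1$ uniformly in $T$ and in the (unbounded) starting height $l$ — the latter matters because in our application the base point $\phi(\mathbf{s})=u_{\boldsymbol\varphi}(\mathbf{s})\Gamma$ has $\beta_{\mathbf{m}}$ bounded by $\sigma^{-\nu}+M$ uniformly, so $l$ is fixed, which is what makes the clean statement possible.
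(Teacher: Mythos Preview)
The paper does not give its own proof of this lemma: it is stated with the citation \cite[Lemma 6.20]{Shi_Ulcigrai_Genericity_on_curves_2018} and then used directly in the proof of Proposition~\ref{limit measure spend very little measure on singular sets}. So there is nothing to compare against beyond the general remark at the start of Section~\ref{section proof of main proposition} that the argument follows \cite[Section 6.6]{Shi_Ulcigrai_Genericity_on_curves_2018}.

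Your sketch is in line with that cited strategy: iterate the contraction to get uniform $L^1$ bounds, pass to the $\mathcal{F}_n$-measurable atom averages $X_j$ to obtain the supermartingale inequality $E[X_{j+1}\mid\mathcal{F}_j]\leq aX_j+b$, use the Lipschitz property to move between pointwise values and atom averages, run a large-deviations argument for the count of indices with $X_j>l_1$, and finally interpolate to continuous time. One correction: the displayed condition ``$a+\frac{b+Ml_1}{l_1}<1$'' cannot be right as written, since it simplifies to $a+M+b/l_1<1$, which fails for $M\geq 1$. What you need (and presumably intend) is that on the event $X_j\geq l_1$ one has $E[X_{j+1}\mid\mathcal{F}_j]\leq aX_j+b\leq (a+b/l_1)X_j$, and choosing $l_1$ large makes $a+b/l_1<1$; the constant $M$ enters separately, when converting the bound on atom averages into a bound on the pointwise values of $\beta$ (so that the final compact set is $Y_{Ml_1}$ rather than $Y_{l_1}$, as you note). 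With that fix, the outline matches the argument in the reference.
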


\begin{proof}[proof of Proposition \ref{limit measure spend very little measure on singular sets}]
We will apply Lemma \ref{lemma: continuous version of consequence of small gaps of return times} to $Y=X$, $B=I$, $\beta=\beta_{\mathbf{m}}$, $\phi(\mathbf{s})=u_{\boldsymbol{\varphi}}(\mathbf{s})\Gamma$ and $f^t=a_t$ for $t>0$ sufficiently large so that Proposition \ref{contraction of mixed height functions} holds.

Recall that $I$ is a closed cube in $Mat_{r\times(d-r)}(\bR)$. We may assume that $t>0$ is large enough such that $e^{-dt}$ is less than the length of each side of $I$.

We construct a filtration $\{\mathcal{F}_n\}_{n\in \mathbb{N}}$ on $I$ as follows. Let $\mathcal{F}_0=\{\emptyset,I\}$. Suppose that we have already constructed $\mathcal{F}_{n-1}$. We divide each box $J$ of $\mathcal{F}_{n-1}$ consecutively into cubes and boxes such that cubes have side length $e^{-dnt}$ and boxes have side length between $e^{-dnt}$ and $2e^{-dnt}$. Then conditions (\ref{uniform contraction inequality})(\ref{Lipschitz property of beta})(\ref{boundedness condition of beta}) follows from Proposition \ref{contraction of mixed height functions}.

Therefore, applying Lemma \ref{lemma: continuous version of consequence of small gaps of return times}, we obtain $l_1>0$ such that the set $K$ defined by
\begin{align*}
    K:=\{x\in X: \beta_{\mathbf{m}}(x)<l_1\}
\end{align*}
is a compact subset of $X\setminus X_{\mathbf{m}}$, and (\ref{exponential decay of average operator}) holds.
\end{proof}

\section{Proof of variants of Theorem \ref{genericity for special trajectories}}\label{section: more general version of theorem genericity condition for special}
Given $\mathbf{M}\in SL_d(\bR)$,
we may write 
\begin{align}\label{notation:block matrix}
    \mathbf{M}=\begin{bmatrix}
        \mathbf{A} &\mathbf{B}\\
        \mathbf{C} &\mathbf{D}
    \end{bmatrix},
\end{align}
where $\mathbf{A}\in Mat_{r\times r}(\bR)$, $\mathbf{B}\in Mat_{r\times (d-r)}(\bR)$, $\mathbf{C}\in Mat_{(d-r)\times r}(\bR)$, $\mathbf{D}\in Mat_{(d-r)\times (d-r)}(\bR)$. For $\mathbf{s}\in \mathcal{U}$, we can write
\begin{align}\label{align:expression for u(s)M}
    u(\mathbf{s}) \mathbf{M}=\begin{bmatrix}
        \mathbf{A}+\mathbf{s}\cdot \mathbf{C} & \mathbf{B}+\mathbf{s}\cdot \mathbf{D}\\
        \mathbf{C}&\mathbf{D}
    \end{bmatrix}
    =
    \begin{bmatrix}
        \mathbf{A}(\mathbf{s}) & \mathbf{B}(\mathbf{s})\\
        \mathbf{C} & \mathbf{D}
    \end{bmatrix}.
\end{align}
Since $\mathbf{M}\in SL_d(\bR)$, it is clear that the set of $\mathbf{s}\in \mathcal{U}$ such that $\det \mathbf{A}(\mathbf{s})=0$ is a proper algebraic subvariety of $\mathcal{U}$ and hence, it has Lebesgue measure zero.

Therefore we can assume that $\det \mathbf{A}(\mathbf{s})\neq 0$, and
\begin{align*}
    u(\mathbf{s}) \mathbf{M}
    =\begin{bmatrix}
        \mathbf{A}(\mathbf{s}) & \mathbf{0}\\
        \mathbf{C} & \mathbf{D}-\mathbf{C}\mathbf{A}(\mathbf{s})^{-1}\mathbf{B}(\mathbf{s})
    \end{bmatrix}
    \cdot \begin{bmatrix}
        \mathbf{1}_{r} & \mathbf{A}(\mathbf{s})^{-1}\mathbf{B}(\mathbf{s})\\
        \mathbf{0}_{d-r,r} & \mathbf{1}_{d-r}
    \end{bmatrix}.
\end{align*}
We may write
\begin{align*}
    u(\mathbf{s}) \mathbf{M} (Id,\boldsymbol{\varphi}(\mathbf{s}))
    =(u(\mathbf{s}) \mathbf{M} (Id,\boldsymbol{\varphi}(\mathbf{s})))_{-}
    \cdot (u(\mathbf{s}) \mathbf{M} (Id,\boldsymbol{\varphi}(\mathbf{s})))_{+},
\end{align*}
where
\begin{align*}
    &(u(\mathbf{s}) \mathbf{M} (Id,\boldsymbol{\varphi}(\mathbf{s})))_{-}=\begin{bmatrix}
        \mathbf{A}(\mathbf{s}) & \mathbf{0}\\
        \mathbf{C} & \mathbf{D}-\mathbf{C}\mathbf{A}^{-1}(\mathbf{s})\mathbf{B}(\mathbf{s})
    \end{bmatrix}
    \cdot
    \left(Id,\begin{bmatrix}
        \mathbf{0}\\
        (\boldsymbol{\varphi}(\mathbf{s}))_{>r}
    \end{bmatrix}\right),\\
    &(u(\mathbf{s}) \mathbf{M} (Id,\boldsymbol{\varphi}(\mathbf{s})))_{+}=\left(\begin{bmatrix}
        \mathbf{1}_{r} & \mathbf{A}(\mathbf{s})^{-1}\mathbf{B}(\mathbf{s})\\
        \mathbf{0}_{d-r,r} & \mathbf{1}_{d-r}
    \end{bmatrix}, \begin{bmatrix}
        (\boldsymbol{\varphi}(\mathbf{s}))_{\leq r}+\mathbf{A}(\mathbf{s})^{-1}\mathbf{B}(\mathbf{s})\cdot (\boldsymbol{\varphi}(\mathbf{s}))_{>r}\\
        \mathbf{0}
    \end{bmatrix}\right).
\end{align*}

\begin{lemma}\label{Lemma:difermorphism}
For a.e. $\mathbf{s}_0\in \mathcal{U}$, there is an open neighborhood $\mathcal{V}$ of $\mathbf{s}_0$ contained in $\mathcal{U}$ and an open subset $\Tilde{\mathcal{V}}$ of $Mat_{r\times(d-r)}(\bR)$, such that the map $\phi:\mathcal{V}\to \Tilde{\mathcal{V}}$ defined by
\begin{align}\label{diffeomorphism between s and tilde s}
    \phi(\mathbf{s})=\mathbf{A}(\mathbf{s})^{-1}\mathbf{B}(\mathbf{s})
\end{align}
is a diffeomorphism.
\end{lemma}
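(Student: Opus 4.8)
The plan is to reduce the statement, via the inverse function theorem, to checking that the differential $d\phi_{\mathbf{s}_0}$ is a linear isomorphism at every $\mathbf{s}_0$ in the open set $\mathcal{U}_0:=\{\mathbf{s}\in\mathcal{U}:\det\mathbf{A}(\mathbf{s})\neq 0\}$. Indeed, $\phi$ in (\ref{diffeomorphism between s and tilde s}) is a smooth map on $\mathcal{U}_0$, and $\mathcal{U}\setminus\mathcal{U}_0$ is a proper algebraic subvariety of $\mathcal{U}$ (as already noted, since $\mathbf{M}\in SL_d(\bR)$), hence Lebesgue-null; so once $d\phi_{\mathbf{s}_0}$ is invertible the inverse function theorem furnishes, for each such $\mathbf{s}_0$, an open neighbourhood $\mathcal{V}\ni\mathbf{s}_0$ contained in $\mathcal{U}_0$ on which $\phi$ is a diffeomorphism onto the open set $\Tilde{\mathcal{V}}=\phi(\mathcal{V})$. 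So the whole proof comes down to one differentiation.

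First I would differentiate $\phi(\mathbf{s})=\mathbf{A}(\mathbf{s})^{-1}\mathbf{B}(\mathbf{s})$, using that $\mathbf{s}\mapsto\mathbf{A}(\mathbf{s})=\mathbf{A}+\mathbf{s}\cdot\mathbf{C}$ and $\mathbf{s}\mapsto\mathbf{B}(\mathbf{s})=\mathbf{B}+\mathbf{s}\cdot\mathbf{D}$ are affine. For a direction $\mathbf{h}\in Mat_{r\times(d-r)}(\bR)$ one has $d\mathbf{A}(\mathbf{s})[\mathbf{h}]=\mathbf{h}\cdot\mathbf{C}$, $d\mathbf{B}(\mathbf{s})[\mathbf{h}]=\mathbf{h}\cdot\mathbf{D}$, and $d(\mathbf{A}^{-1})(\mathbf{s})[\mathbf{h}]=-\mathbf{A}(\mathbf{s})^{-1}(\mathbf{h}\cdot\mathbf{C})\mathbf{A}(\mathbf{s})^{-1}$. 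Collecting the two terms and factoring gives
\begin{align*}
  d\phi_{\mathbf{s}}[\mathbf{h}]=\mathbf{A}(\mathbf{s})^{-1}\,\mathbf{h}\,\bigl(\mathbf{D}-\mathbf{C}\,\mathbf{A}(\mathbf{s})^{-1}\mathbf{B}(\mathbf{s})\bigr),
\end{align*}
so $d\phi_{\mathbf{s}}$ is left multiplication by $\mathbf{A}(\mathbf{s})^{-1}$ followed by right multiplication by the Schur complement $\mathbf{S}(\mathbf{s}):=\mathbf{D}-\mathbf{C}\,\mathbf{A}(\mathbf{s})^{-1}\mathbf{B}(\mathbf{s})$.

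Then I would observe that both factors are invertible: $\mathbf{A}(\mathbf{s})^{-1}$ by the definition of $\mathcal{U}_0$, and $\mathbf{S}(\mathbf{s})$ because the block-determinant identity applied to $u(\mathbf{s})\mathbf{M}=\left[\begin{smallmatrix}\mathbf{A}(\mathbf{s}) & \mathbf{B}(\mathbf{s})\\ \mathbf{C} & \mathbf{D}\end{smallmatrix}\right]$ (cf. (\ref{align:expression for u(s)M})) yields $\det\mathbf{A}(\mathbf{s})\cdot\det\mathbf{S}(\mathbf{s})=\det\bigl(u(\mathbf{s})\mathbf{M}\bigr)=1$, whence $\det\mathbf{S}(\mathbf{s})\neq 0$. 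Thus $d\phi_{\mathbf{s}}$ is a composition of two linear automorphisms of $Mat_{r\times(d-r)}(\bR)$, hence itself an automorphism, for every $\mathbf{s}\in\mathcal{U}_0$; applying the inverse function theorem at each $\mathbf{s}_0\in\mathcal{U}_0$ finishes the argument.

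I do not expect a serious obstacle: the statement is the infinitesimal form of the open (block Bruhat) cell decomposition $SL_d(\bR)=P^{-}U$, with $\phi$ recording the $U$-coordinate of $u(\mathbf{s})\mathbf{M}$ and invertibility of $d\phi$ reflecting $\mathfrak{sl}_d=\mathfrak{p}^{-}\oplus\mathfrak{u}$. The only points needing minor care are keeping the block bookkeeping straight so that the factored form of $d\phi_{\mathbf{s}}$ above is correct, and noting that "almost every" enters only through the domain of definition $\mathcal{U}_0$ of $\phi$, whose complement is already known to be co-null.
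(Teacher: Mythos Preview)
Your argument is correct. The differential computation is right: the product rule and the derivative of the inverse give
\[
d\phi_{\mathbf{s}}[\mathbf{h}]=-\mathbf{A}(\mathbf{s})^{-1}(\mathbf{h}\mathbf{C})\mathbf{A}(\mathbf{s})^{-1}\mathbf{B}(\mathbf{s})+\mathbf{A}(\mathbf{s})^{-1}\mathbf{h}\mathbf{D}=\mathbf{A}(\mathbf{s})^{-1}\,\mathbf{h}\,\mathbf{S}(\mathbf{s}),
\]
and the Schur identity $\det\mathbf{A}(\mathbf{s})\det\mathbf{S}(\mathbf{s})=\det(u(\mathbf{s})\mathbf{M})=1$ indeed forces $\mathbf{S}(\mathbf{s})\in GL_{d-r}(\bR)$, so the inverse function theorem applies at every $\mathbf{s}\in\mathcal{U}_0$.

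Your route is genuinely different from the paper's. Rather than computing $d\phi$ and invoking the inverse function theorem, the paper writes down the explicit global inverse
\[
\phi^{-1}(\tilde{\mathbf{s}})=(\mathbf{A}\tilde{\mathbf{s}}-\mathbf{B})(\mathbf{D}-\mathbf{C}\tilde{\mathbf{s}})^{-1}
\]
and verifies algebraically that $\phi^{-1}\circ\phi=\mathrm{id}$. That approach has the advantage of producing a closed formula for $\phi^{-1}$ (a matrix M\"obius transformation), which makes the diffeomorphism visibly a restriction of a birational map and shows, without any local argument, that $\phi$ is injective on all of $\mathcal{U}_0$. Your approach, on the other hand, is cleaner conceptually: it isolates exactly why the map is a local diffeomorphism (invertibility of the Schur complement), and your remark about the Bruhat cell $P^{-}U$ explains the phenomenon structurally. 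Either argument suffices for the lemma as stated, since only a local diffeomorphism near a.e.\ $\mathbf{s}_0$ is needed.
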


\begin{proof}
For any $\mathbf{s}_0\in \mathcal{U}$ such that $\det \mathbf{A}(\mathbf{s}_0)\neq 0$, there is a neighborhood $\mathcal{V}$ of $\mathbf{s}_0$, for any $\mathbf{s}\in \mathcal{V}$, the map $\phi(\mathbf{s})$
is well defined and differentiable. Therefore $\Tilde{\mathcal{V}}=\phi(\mathcal{V})\subset Mat_{r\times (d-r)}(\bR)$ is an open subset. Let  
\begin{align*}
    \phi^{-1}(\Tilde{\mathbf{s}})=(\mathbf{A}\Tilde{\mathbf{s}}-\mathbf{B})(\mathbf{D}-\mathbf{C}\Tilde{\mathbf{s}})^{-1}
\end{align*}
{ 
for any $\tilde{\mathbf{s}}$ such that $\det(\mathbf{D}-\mathbf{C}\tilde{\mathbf{s}})\neq 0$. Now we verify that $\phi^{-1}$ is the inverse of $\phi$, that is, we need to verify that for $\tilde{\mathbf{s}}=\phi(\mathbf{s})$, 
\begin{align}\label{align: equation in Lemma 8.1}
    \mathbf{A} \tilde{\mathbf{s}}-\mathbf{B}=\mathbf{s}(\mathbf{D}-\mathbf{C}\tilde{\mathbf{s}}).
\end{align}
}
{Note that as $\tilde{\mathbf{s}}=\phi(\mathbf{s})$, we have $(\mathbf{A}+\mathbf{s}\mathbf{C})\tilde{\mathbf{s}}=\mathbf{B}+\mathbf{s}\mathbf{D}.$}
{Therefore, left hand side of (\ref{align: equation in Lemma 8.1}) is $\mathbf{A} \tilde{\mathbf{s}}-\mathbf{B}=(\mathbf{A}+\mathbf{s}\mathbf{C}-\mathbf{s}\mathbf{C})\tilde{\mathbf{s}}-\mathbf{B}
     =\mathbf{s}\mathbf{D}-\mathbf{s}\mathbf{C}\tilde{\mathbf{s}}$,
which is equal to the right hand side of (\ref{align: equation in Lemma 8.1}).
}
\end{proof}

\begin{proof}[Proof of Theorem \ref{theorem:genericity for general trajectory}]
Choose $\mathbf{s}_0, \mathcal{V},\Tilde{\mathcal{V}}$ satisfying Lemma \ref{Lemma:difermorphism}. 
By Lemma \ref{two asymptotic parallel trajectories converge to the same}, it suffices to prove that for a.e. $\mathbf{s}\in \mathcal{V}$, the point
\begin{align}\label{general trajectory}
    (u(\mathbf{s}) \mathbf{M} (Id,\boldsymbol{\varphi}(\mathbf{s})))_{+}\Gamma
\end{align}
is Birkhoff generic with respect to $(X,\mu_X,a_t)$. For $\Tilde{\mathbf{s}}\in \Tilde{\mathcal{V}}$,
define 
\begin{align*}
    \Tilde{\boldsymbol{\varphi}}(\Tilde{\mathbf{s}}):=\begin{bmatrix}
        (\boldsymbol{\varphi}(\phi^{-1}(\Tilde{\mathbf{s}})))_{\leq r}+\Tilde{\mathbf{s}}\cdot (\boldsymbol{\varphi}(\phi^{-1}(\Tilde{\mathbf{s}})))_{>r}\\
        \mathbf{0}
    \end{bmatrix}.
\end{align*}

Applying Corollary \ref{corollary genericity for any c1 varphi} to $u_{\Tilde{\boldsymbol{\varphi}}}(\Tilde{\mathbf{s}})\Gamma$ for $\Tilde{\mathbf{s}}\in \Tilde{\mathcal{V}}$, we obtain that if for any $\mathbf{m}\in \bZ^k\setminus \{\mathbf{0}\}$,
\begin{align}\label{condition on varphi(phi)}
    |\{\Tilde{\mathbf{s}}\in \Tilde{\mathcal{V}}:(\Tilde{\boldsymbol{\varphi}}(\Tilde{\mathbf{s}}))_{\leq r}\cdot \mathbf{m}\in \Tilde{\mathbf{s}}\cdot \bZ^{d-r}+\bZ^r\}|=0,
\end{align}
then for a.e. $\Tilde{\mathbf{s}}\in \Tilde{\mathcal{V}}$, $u_{\Tilde{\boldsymbol{\varphi}}}(\Tilde{\mathbf{s}})\Gamma$ is  Birkhoff generic with respect to $(X,\mu_X,a_t)$.

{
Suppose for some $\Tilde{\mathbf{s}}\in \Tilde{\mathcal{V}}$, and some $\mathbf{m}\in \bZ^{k}\setminus\{\mathbf{0}\}$,
\begin{align}\label{condition for tilde s}
    (\Tilde{\boldsymbol{\varphi}}(\Tilde{\mathbf{s}}))_{\leq r}\cdot \mathbf{m}\in \Tilde{\mathbf{s}}\cdot \bZ^{d-r}+\bZ^r.
\end{align}
Let $\mathbf{s}=\phi^{-1}(\tilde{\mathbf{s}})$. By definition of $\phi$ and $\tilde{\boldsymbol{\varphi}}$, (\ref{condition for tilde s}) implies that there exist $\mathbf{a}\in \bZ^{d-r}$ and $\mathbf{b}\in \bZ^r$ such that 
\begin{align*}
    (\mathbf{A}(\mathbf{s})\cdot(\boldsymbol{\varphi}(\mathbf{s}))_{\leq r}+\mathbf{B}(\mathbf{s})\cdot (\boldsymbol{\varphi}(\mathbf{s}))_{>r})\cdot \mathbf{m}=\mathbf{B}(\mathbf{s})\cdot \mathbf{a}+\mathbf{A}(\mathbf{s})\cdot \mathbf{b},
\end{align*} 
then (\ref{condition for tilde s}) implies that
\begin{align}\label{equivalent condition in matrix form}
    \begin{bmatrix}
        \mathbf{A}(\mathbf{s}) & \mathbf{B}(\mathbf{s})\\
        \mathbf{C} & \mathbf{D}
    \end{bmatrix}
    \cdot \boldsymbol{\varphi}(\mathbf{s}) \cdot \mathbf{m}=
    \begin{bmatrix}
        \mathbf{B}(\mathbf{s})\cdot \mathbf{a}+\mathbf{A}(\mathbf{s})\cdot\mathbf{b}\\
        (\mathbf{C}\cdot(\boldsymbol{\varphi}(\mathbf{s}))_{\leq r}+\mathbf{D}\cdot (\boldsymbol{\varphi}(\mathbf{s}))_{>r})\cdot \mathbf{m}
    \end{bmatrix}.
\end{align}
By (\ref{align:expression for u(s)M}), (\ref{equivalent condition in matrix form}) further implies that
\begin{align*}
    \boldsymbol{\varphi}(\mathbf{s})\cdot \mathbf{m}&=\mathbf{M}^{-1}u(-\mathbf{s})\cdot  \begin{bmatrix}
        \mathbf{B}(\mathbf{s})\cdot \mathbf{a}+\mathbf{A}(\mathbf{s})\cdot\mathbf{b}\\
        (\mathbf{C}\cdot(\boldsymbol{\varphi}(\mathbf{s}))_{\leq r}+\mathbf{D}\cdot (\boldsymbol{\varphi}(\mathbf{s}))_{>r})\cdot \mathbf{m}
    \end{bmatrix} \\
    &\in 
 \mathbb{Z}^d+\mathbf{M}^{-1}u(-\mathbf{s})\cdot \begin{bmatrix}
        \mathbf{0}\\
        \bR^{d-r}
    \end{bmatrix}.
\end{align*}
}
Therefore, the condition (\ref{genericity condition for general trajectory}) implies (\ref{condition on varphi(phi)}). By definition, for any $\mathbf{s}\in \mathcal{V}$,
\begin{align*}
     (u(\mathbf{s}) \mathbf{M} (Id,\boldsymbol{\varphi}(\mathbf{s})))_{+}=u_{\Tilde{\boldsymbol{\varphi}}}(\tilde{\mathbf{s}}), \text{ where } \tilde{\mathbf{s}}=\phi(\mathbf{s}).
\end{align*}
This finishes the proof.
\end{proof}
\begin{proof}[Proof of Corollary \ref{Corollary: genericity for general base point after u_varphi}]
{
Note that $u_{\boldsymbol{\varphi}}(\mathbf{s})\cdot(\mathbf{M},\mathbf{v})=u(\mathbf{s})\mathbf{M}(Id,\tilde{\boldsymbol{\varphi}}(\mathbf{s}))$, where $\tilde{\boldsymbol{\varphi}}(\mathbf{s})=\mathbf{M^{-1}}(\boldsymbol{\varphi}(\mathbf{s})+\mathbf{v}).$ Applying Theorem \ref{theorem:genericity for general trajectory} to $u(\mathbf{s})\mathbf{M}(Id,\tilde{\boldsymbol{\varphi}}(\mathbf{s}))\Gamma$, we obtain that if for any $\mathbf{m}\in \mathbb{Z}\setminus \{\mathbf{0}\}$,
\begin{align}\label{align:condition in the proof of Corollary:genericity for general base point after u_varphi}
       |\{\mathbf{s}\in \mathcal{U}: \tilde{\boldsymbol{\varphi}}(\mathbf{s})\cdot \mathbf{m}\in \mathbf{M}^{-1}u(-\mathbf{s})\cdot \begin{bmatrix}
        \mathbf{0}\\
        \bR^{d-r}
    \end{bmatrix}
    +\mathbb{Z}^d\}|=0,
\end{align}
then for Lebesgue a.e. $\mathbf{s}\in \mathcal{U}$, $u(\mathbf{s})\mathbf{M}(Id,\tilde{\boldsymbol{\varphi}}(\mathbf{s}))\Gamma$ is Birkhoff generic with respect to $(X,\mu_X,a_t)$. By definition of $\tilde{\boldsymbol{\varphi}}$, (\ref{align:condition in the proof of Corollary:genericity for general base point after u_varphi}) is equivalent to 
\begin{align*}
    |\{\mathbf{s}\in \mathcal{U}: (\boldsymbol{\varphi}(\mathbf{s})+\mathbf{v}) \cdot \mathbf{m}\in u(-\mathbf{s})\cdot \begin{bmatrix}
        \mathbf{0}\\
        \bR^{d-r}
    \end{bmatrix}
    +\mathbf{M}\cdot \mathbb{Z}^d\}|=0.
\end{align*}
The corollary is proven,
}
\end{proof}

\begin{proof}[Proof of Corollary \ref{genericity on orbit of maximal compact group}]
Fix an $\mathbf{s}_0\in \mathcal{U}$ at which the map $\mathbf{s}\mapsto \mathbf{E_1}(\mathbf{s})^{-1}\cdot [\mathbf{e}_{r+1},\cdots,\mathbf{e}_d]$ has a nonsingular differential. 
It is enough to prove the corollary for a.e. $\mathbf{s}$ in a neighborhood of $\mathbf{s}_0$. Choose a neighborhood $\mathcal{V}$ of $\mathbf{s}_0$ such that for any $\mathbf{s}\in \mathcal{V}$, as in (\ref{notation:block matrix}) we can write
\begin{align*}
    \mathbf{E}_2(\mathbf{s})=\mathbf{E}_1(\mathbf{s})\cdot \mathbf{E}_1(\mathbf{s}_0)^{-1}=\begin{bmatrix}
        \mathbf{A}(\mathbf{s}) & \mathbf{B}(\mathbf{s})\\
        \mathbf{C}(\mathbf{s}) & \mathbf{D}(\mathbf{s})
    \end{bmatrix},
\end{align*}
where $\det \mathbf{A}(\mathbf{s})\neq 0$ and $\det \mathbf{D}(\mathbf{s})\neq 0$. This can be done by smoothness of $\mathbf{E}_1$.

Since $\mathbf{E}_2(\mathbf{s})\in SO_d(\bR)$, $\mathbf{E}_2(\mathbf{s})\cdot \mathbf{E}_2(\mathbf{s})^{t}=Id$, that is,
\begin{align*}
    \begin{bmatrix}
        \mathbf{A}(\mathbf{s})& \mathbf{B}(\mathbf{s})\\
        \mathbf{C}(\mathbf{s})& \mathbf{D}(\mathbf{s}).
    \end{bmatrix}
    \cdot \begin{bmatrix}
        \mathbf{A}(\mathbf{s})^{t} & \mathbf{C}(\mathbf{s})^{t}\\
        \mathbf{B}(\mathbf{s})^{t} & \mathbf{D}(\mathbf{s})^{t}
    \end{bmatrix}
    &=Id.
\end{align*}
In particular, we have 
\begin{align*}
    \mathbf{A}(\mathbf{s})\cdot \mathbf{C}(\mathbf{s})^{t}+\mathbf{B}(\mathbf{s})\cdot \mathbf{D}(\mathbf{s})^{t}=\mathbf{0}.
\end{align*}
We may write
\begin{align*}
    \mathbf{E}_2(\mathbf{s})
    =\mathbf{E}_2(\mathbf{s})_{-}\cdot  u(-\mathbf{C}(\mathbf{s})^{t}\cdot (\mathbf{D}(\mathbf{s})^{t})^{-1}),
\end{align*}
where
\begin{align*}
    \mathbf{E}_2(\mathbf{s})_{-}=\begin{bmatrix}
        \mathbf{A}(\mathbf{s})& \mathbf{0}\\
        \mathbf{C}(\mathbf{s}) & \mathbf{D}(\mathbf{s})-\mathbf{C}\mathbf{A}(\mathbf{s})^{-1}\mathbf{B}(\mathbf{s})
    \end{bmatrix}.
\end{align*}
By Lemma \ref{two asymptotic parallel trajectories converge to the same}, for any $\mathbf{s}\in \mathcal{U}$,
$\mathbf{E}_1(\mathbf{s})(Id,\boldsymbol{\varphi}(\mathbf{s}))\Gamma$ is Birkhoff generic with respect to $(X,\mu_X,a_t)$ if and only if
\begin{align*}
    u(-\mathbf{C}(\mathbf{s})^{t}\cdot (\mathbf{D}(\mathbf{s})^{t})^{-1})\cdot\mathbf{E}_1(\mathbf{s}_0)(Id,\boldsymbol{\varphi}(\mathbf{s}))\Gamma
\end{align*}
is Birkhoff generic with respect to $(X,\mu_X,a_t)$.

By assumption, the map 
\begin{align*}
    \mathbf{s}\mapsto \mathbf{E}_2(\mathbf{s})^{-1}\cdot [\mathbf{e}_{r+1},\cdots,\mathbf{e}_d]=\begin{bmatrix}
        \mathbf{C}(\mathbf{s})^t\\
        \mathbf{D}(\mathbf{s})^t
    \end{bmatrix}
\end{align*}
has nonsingular differential at $\mathbf{s}_0$.
Thus the map 
\begin{align}\label{map from s to tilde s in compact group orbit setting}
   \phi: \mathbf{s}\mapsto -\mathbf{C}(\mathbf{s})^t\cdot (\mathbf{D}(\mathbf{s})^t)^{-1}
\end{align}
also has nonsingular differential at $\mathbf{s}_0$. 

Shrink the neighborhood $\mathcal{V}$ of $\mathbf{s}_0$ if necessary, we can assume that there exists an open subset $\Tilde{\mathcal{V}}$ of $Mat_{r\times(d-r)}(\bR)$ such that $\phi:\mathcal{V}\to \Tilde{\mathcal{V}}$ is a diffeomorphism. Denote $\phi^{-1}$ the inverse of $\phi$.

Let $\Tilde{\boldsymbol{\varphi}}(\Tilde{\mathbf{s}})=\boldsymbol{\varphi}(\phi^{-1}(\Tilde{\mathbf{s}}))$.
Applying Theorem \ref{theorem:genericity for general trajectory} to
\begin{align*}
    \{u(\Tilde{\mathbf{s}})\mathbf{E}_1(\mathbf{s}_0)(Id,\Tilde{\boldsymbol{\varphi}}(\Tilde{\mathbf{s}}))\Gamma:\Tilde{\mathbf{s}}\in \Tilde{\mathcal{V}}\},
\end{align*}
we obtain that if for any $\mathbf{m}\in \bZ^{k}\setminus \{\mathbf{0}\}$,
\begin{align}\label{condition for tilde s in compact group orbit setting}
    |\{\Tilde{\mathbf{s}}\in \Tilde{\mathcal{V}}: \Tilde{\boldsymbol{\varphi}}(\Tilde{\mathbf{s}})\cdot \mathbf{m} \in \mathbf{E}_1(\mathbf{s}_0)^{-1}\cdot u(-\Tilde{\mathbf{s}})\cdot \begin{bmatrix}
        \mathbf{0}\\
        \bR^{d-r}
    \end{bmatrix}
    +\mathbb{Z}^d\}|=0,
\end{align}
then for a.e. $\Tilde{\mathbf{s}}\in \Tilde{\mathcal{V}}$, $u(\Tilde{\mathbf{s}})\mathbf{E}_1(\mathbf{s}_0)(Id,\Tilde{\boldsymbol{\varphi}}(\Tilde{\mathbf{s}}))\Gamma$ is Birkhoff generic with respect to $(X,\mu_X,a_t)$. Since $\phi$ is a diffeomorphism, (\ref{condition for tilde s in compact group orbit setting}) is equivalent to 
\begin{align*}
    |\{\mathbf{s}\in \mathcal{V}:\boldsymbol{\varphi}(\mathbf{s})\cdot \mathbf{m}\in \mathbf{E}_1(\mathbf{s})^{-1}\cdot \begin{bmatrix}
        \mathbf{0}\\
        \bR^{d-r}
    \end{bmatrix}
    +\mathbb{Z}^d\}|=0.
\end{align*}
This completes the proof.
\end{proof}

\section{Application to universal hitting time statistics for integrable flows}\label{section: application to universal hitting time}

\subsection{An adapted form of Corollary \ref{genericity on orbit of maximal compact group}}

Following notations of \cite{Marklof_Universal_hitting_time_2017}, for $l>0$, let 
\begin{align*}
    D(e^{-l})=diag[e^{-(d-1)l},e^l,\cdots,e^l].
\end{align*}

\begin{theorem}\label{theorem adapted form of genericity}
Let $\mathcal{U}$ be a bounded open subset of $\bR^{d-1}$ and $\varphi:\mathcal{U}\to (\bR^d)^k$ be a $C^1$ map. Let $\mathbf{E_1}:\mathcal{U}\to SO_d(\bR)$ be a smooth map such that the map $\mathbf{s}\mapsto \mathbf{E_1}(\mathbf{s})^{-1}\cdot \mathbf{e}_1$ has a nonsingular differential at Lebesgue almost every $\mathbf{s}\in \mathcal{U}$. Assume that for any $\mathbf{m}\in \bZ^{k}\setminus\{\mathbf{0}\}$,
\begin{align*}
    |\{\mathbf{s}\in \mathcal{U}:\varphi(\mathbf{s})\cdot\mathbf{m}\in \bR\mathbf{E}_1(\mathbf{s})^{-1} \cdot \mathbf{e}_1
    +\mathbb{Z}^d \}|=0.
\end{align*}
Then for Lebesgue a.e. $\mathbf{s}\in \mathcal{U}$, $\mathbf{E}_1(\mathbf{s})(Id,\varphi(\mathbf{s}))\Gamma$ is Birkhoff generic with respect to $(X,\mu_X,D(e^{-l}))$.
\end{theorem}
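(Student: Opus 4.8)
The plan is to deduce Theorem~\ref{theorem adapted form of genericity} from Corollary~\ref{genericity on orbit of maximal compact group} with the choice $r=d-1$, after a single fixed change of coordinates that turns the flow $D(e^{-l})$ into the diagonal flow $a_l$ of (\ref{definition of at}). For $r=d-1$ one has $a_l=\mathrm{diag}[e^l,\dots,e^l,e^{-(d-1)l}]$, which differs from $D(e^{-l})=\mathrm{diag}[e^{-(d-1)l},e^l,\dots,e^l]$ only by a cyclic permutation of coordinates. The first step is therefore to fix a signed permutation matrix $P\in SO_d(\bR)$ (so that $\det P=1$; if $d$ is even, one entry has to carry a sign, which is harmless below) implementing the cyclic shift that moves the last coordinate to the first, so that $P^{-1}a_l P=D(e^{-l})$ for every $l$, while at the same time $P^{-1}\mathbf{e}_d=\pm\mathbf{e}_1$.

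Next I would note that left translation $L_P\colon x\mapsto Px$ on $X=G/\Gamma$ is a homeomorphism preserving $\mu_X$ (since $P$, viewed inside $G$, acts by a measure-preserving left translation), and that it intertwines the two flows: $L_P\bigl(D(e^{-l})x\bigr)=a_l\bigl(L_P x\bigr)$ for all $x\in X$ and all $l$. Hence for any $x\in X$ the pushforward of $\tfrac1T\int_0^T\delta_{D(e^{-l})x}\,dl$ under $L_P$ equals $\tfrac1T\int_0^T\delta_{a_l(Px)}\,dl$, and because $(L_P)_*\mu_X=\mu_X$ it follows that $x$ is Birkhoff generic with respect to $(X,\mu_X,D(e^{-l}))$ if and only if $Px$ is Birkhoff generic with respect to $(X,\mu_X,a_l)$.

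The third step is to put $Px$ in the form required by Corollary~\ref{genericity on orbit of maximal compact group}. For $x=\mathbf{E}_1(\mathbf{s})(Id,\varphi(\mathbf{s}))\Gamma$ one has $Px=\bigl(P\mathbf{E}_1(\mathbf{s})\bigr)(Id,\varphi(\mathbf{s}))\Gamma$, so setting $\mathbf{E}_1'(\mathbf{s}):=P\mathbf{E}_1(\mathbf{s})$ gives a smooth map $\mathcal{U}\to SO_d(\bR)$, and we apply Corollary~\ref{genericity on orbit of maximal compact group} with $r=d-1$ to the family $\mathbf{E}_1'(\mathbf{s})(Id,\varphi(\mathbf{s}))\Gamma$. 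Its two hypotheses are exactly those of Theorem~\ref{theorem adapted form of genericity}: since $\mathbf{E}_1'(\mathbf{s})^{-1}\mathbf{e}_d=\mathbf{E}_1(\mathbf{s})^{-1}P^{-1}\mathbf{e}_d=\pm\mathbf{E}_1(\mathbf{s})^{-1}\mathbf{e}_1$, the map $\mathbf{s}\mapsto\mathbf{E}_1'(\mathbf{s})^{-1}[\mathbf{e}_{r+1},\dots,\mathbf{e}_d]=\mathbf{E}_1'(\mathbf{s})^{-1}\mathbf{e}_d$ has a nonsingular differential a.e.\ precisely when $\mathbf{s}\mapsto\mathbf{E}_1(\mathbf{s})^{-1}\mathbf{e}_1$ does; and for $r=d-1$ the subspace $\begin{bmatrix}\mathbf{0}\\\bR^{d-r}\end{bmatrix}$ equals $\bR\mathbf{e}_d$, so the Diophantine condition of Corollary~\ref{genericity on orbit of maximal compact group} reads $|\{\mathbf{s}\in\mathcal{U}:\varphi(\mathbf{s})\cdot\mathbf{m}\in\bR\,\mathbf{E}_1'(\mathbf{s})^{-1}\mathbf{e}_d+\bZ^d\}|=0$, which is identical to $|\{\mathbf{s}\in\mathcal{U}:\varphi(\mathbf{s})\cdot\mathbf{m}\in\bR\,\mathbf{E}_1(\mathbf{s})^{-1}\mathbf{e}_1+\bZ^d\}|=0$. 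Combining this with the equivalence from the second step yields the theorem.

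There is no genuine analytic content here: the whole argument is a reduction. The only point to handle with care is the bookkeeping in the first step — exhibiting one permutation matrix $P$ that simultaneously lies in $SO_d(\bR)$, conjugates $a_l$ (for $r=d-1$) to $D(e^{-l})$, and sends $\mathbf{e}_d$ to $\pm\mathbf{e}_1$ — together with checking that the sign correction needed to keep $\det P=1$ when $d$ is even does not affect the translated hypotheses.
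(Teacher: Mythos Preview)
Your proposal is correct and follows the same approach as the paper: conjugate the flow $D(e^{-l})$ to $a_l$ (with $r=d-1$) by a signed permutation matrix and then invoke Corollary~\ref{genericity on orbit of maximal compact group}. The only cosmetic difference is that the paper takes the permutation $\omega\in SO_d(\bR)\cap SL_d(\bZ)$ and uses $\omega\in\Gamma'$ to slide it past $\Gamma$ on the right, thereby applying Corollary~\ref{genericity on orbit of maximal compact group} to $\bigl(\omega^{-1}\mathbf{E}_1(\mathbf{s})\omega,\;\omega^{-1}\varphi(\mathbf{s})\bigr)$, whereas you leave the right coset untouched and apply the corollary to $\bigl(P\mathbf{E}_1(\mathbf{s}),\;\varphi(\mathbf{s})\bigr)$; both reductions verify the hypotheses of Corollary~\ref{genericity on orbit of maximal compact group} in the same way.
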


\begin{proof}
For any $l>0$, denote  $a_l=diag[e^l,\cdots,e^l,e^{-(d-1)l}]$.
Choose $\omega\in SO_d(\bR)\cap SL_d(\bZ)$ such that for any
\begin{align*}
    \omega^{-1}\cdot D(e^{-l})\cdot\omega=a_l.
\end{align*}
Note that since $\omega\in SL_d(\bZ)$,
\begin{align*}
 \omega^{-1} D(e^{-l})E_1(\mathbf{s})(Id,\varphi_(\mathbf{s}))\Gamma =a_l \omega^{-1}E_1(\mathbf{s})\omega (Id,\omega^{-1}\varphi(\mathbf{s}))\Gamma.
\end{align*}

Applying Corollary \ref{genericity on orbit of maximal compact group}, we obtain that for a.e. $\mathbf{s}\in \mathcal{U}$,  $\omega^{-1}E_1(\mathbf{s})\omega(Id,\omega^{-1}\varphi(\mathbf{s}))\Gamma$ is Birkhoff generic with respect to $(X,\mu_X,a_l)$, and the theorem follows.
\end{proof}

\subsection{Universal hitting time}
{Let $(\mathcal{M},\mathcal{B},\nu)$ be a measurable space with probability measure $\nu$, and $\varphi^t:\mathcal{M}\to \mathcal{M}$ be a measure-preserving dynamical system. Given some target set $\mathcal{D}\subset \mathcal{M}$, it is natural to study how often a $\varphi$-trajectory along random initial data $x\in \mathcal{M}$ intersects this target set. On the other hand, another question is to consider a sequence of randomized target sets whose "size" shrink to zero, and study the distribution of intersection times of a random $\varphi$-trajectory with these shrinking targets. The interested reader is referred to \cite{Marklof_Universal_hitting_time_2017} and the references therein for a survey of the history of aforementioned questions.}

{
In the setting of universal hitting time statistics for integrable flows (cf. \cite{Marklof_Universal_hitting_time_2017}), the above question is studied when the measurable space is a $d$ dimensional torus $\mathbb{T}^d$ and $\varphi^t$ is a linear flow on the torus. 
} {Now let $d\geq 2$ and $k\geq 1$ be fixed integers. In this article, the sequence of target sets we consider is a sequence of union of $k$ many bounded codimensional one balls in $\mathbb{T}^d$, whose radius shrink to zero.} More precisely, let $\mathcal{U}$ be a bounded open subset of $\bR^{d-1}$. Consider the following smooth functions:
\begin{align*}
    \boldsymbol{\theta},\boldsymbol{\phi}_j: \mathcal{U}\to \mathbb{T}^d, 1\leq j\leq k,\\
    \mathbf{f},\mathbf{u}_j:\mathcal{U}\to \mathbf{S}_1^{d-1}, 1\leq j\leq k,
\end{align*}
where $\mathbf{S}_1^{d-1}$ is the unit one sphere in $\bR^d$. For the functions above, we assign to any $\mathbf{s}\in \mathcal{U}$ the following:
\begin{itemize}
 \item $\boldsymbol{\theta}(\mathbf{s})=$initial position of the flow;

\item $\mathbf{f}(\mathbf{s})=$direction of the flow;

\item $\mathbf{u}_j(\mathbf{s})=$direction of the $j$-th target ball;

\item $\boldsymbol{\phi}_j(\mathbf{s})=$center of the $j$-th target ball.
\end{itemize}
With these functions, we define the flow 
\begin{align}\label{definition flow map in universal hitting time}
    \varphi^t:\mathcal{U}\to \mathbb{T}^d\times \mathcal{U}, \mathbf{s}\mapsto (\boldsymbol{\theta}(\mathbf{s})+t\mathbf{f}(\mathbf{s}),\mathbf{s}).
\end{align}
From now on, we fix a map
$\mathbf{v}\mapsto \mathbf{R}_{\mathbf{v}}$ from $\mathbf{S}_1^{d-1}$ to $SO_d(\bR)$ such that for all $\mathbf{v}\in \mathbf{S}_1^{d-1}$,
\begin{align}\label{property of the map R}
    \mathbf{R}_{\mathbf{v}}\cdot\mathbf{v}=\mathbf{e}_1,
\end{align}
and $\mathbf{v}\mapsto \mathbf{R}_{\mathbf{v}}$ is smooth on $\mathbf{S}_1^{d-1}\setminus\{\mathbf{v}_0\}$ for a singular point $\mathbf{v}_0\in \mathbf{S}_1^{d-1}$. {
For $1\leq j \leq k$, fix a bounded open subset $\Omega_j\subset \bR^{d-1}\times \mathcal{U}$. For any $l>0$, denote the $l$-level target set with to be}
\begin{align*}
    \mathcal{D}_l=\bigcup_{j=1}^k \mathcal{D}_l(\mathbf{u}_j,\boldsymbol{\phi}_j,\Omega_j),
\end{align*}
where
\begin{align*}
    \mathcal{D}_l(\mathbf{u}_j,\boldsymbol{\phi}_j,\Omega_j)=\{(\boldsymbol{\phi}_j(\mathbf{s})+e^{-l} \mathbf{R}^{-1}_{\mathbf{u}_j(\mathbf{s})}\cdot \begin{bmatrix}
        0\\
        \mathbf{x}
    \end{bmatrix},\mathbf{s})\in \mathbb{T}^d\times \mathcal{U}:(\mathbf{x},\mathbf{s})\in \Omega_j\}.
\end{align*}
Note that $l>0$ parameterizes the size of the target. For any $\mathbf{s}\in \mathcal{U}$, let $\mathcal{T}(\mathbf{s},\mathcal{D}_l)$ be the set of hitting times defined by 
\begin{align*}
    \mathcal{T}(\mathbf{s},\mathcal{D}_l):=\{t>0:\varphi^t(\mathbf{s})\in \mathcal{D}_l\}.
\end{align*}
This is a discrete subset of $\bR_{>0}$, and we label its elements by 
\begin{align*}
    0<t_1(\mathbf{s},\mathcal{D}_l)<t_2(\mathbf{s},\mathcal{D}_l)<\cdots.
\end{align*}
By Santalo's formula (cf. \cite{Chernov_Entropy_Lyapunov_exponents_1997}), if $\mathbf{s}\in \mathcal{U}$ is such that the components of $\mathbf{f}(\mathbf{s})$ are not rationally related, then for any $n\in \mathbb{N}$, the normalized $n$-th return time to target $\mathcal{D}_l$ is
\begin{align*}
    \frac{t_n(\mathbf{s},\mathcal{D}_l)}{e^{(d-1)l}\cdot\overline{\sigma}(\mathbf{s})},
\end{align*}
where {$e^{(d-1)l}\cdot\overline{\sigma}(\mathbf{s})$ is the mean return time (cf. \cite[Section 2]{Marklof_Universal_hitting_time_2017}), and }
\begin{align*}
    \overline{\sigma}(\mathbf{s})=\frac{1}{\sum_{j=1}^k |\Omega_j(\mathbf{s})|\mathbf{u}_j(\mathbf{s})\cdot \mathbf{f}(\mathbf{s})},\textbf{  } \Omega_j(\mathbf{s})=\{\mathbf{x}\in \bR^{d-1}:(\mathbf{x},\mathbf{s})\in \Omega_j\}.
\end{align*}

\begin{definition}\label{definition regular f}
The smooth map $\mathbf{f}:\mathcal{U}\to \mathbf{S}_1^{d-1}$ is regular if the push forward of Lebesgue measure on $\mathcal{U}$ under $\mathbf{f}$
is absolutely continuous with respect to the Haar measure on $\mathbf{S}_1^{d-1}$.
\end{definition}

The following is a corollary of Theorem \ref{theorem adapted form of genericity}:
\begin{corollary}\label{corollary adapted form for genericity of universal hitting time}
Let $\mathcal{U}$ be a bounded open subset of $\bR^{d-1}$. Let $\mathbf{f}:\mathcal{U}\to \mathbf{S}_1^{d-1}$ be a regular smooth map. Let $\boldsymbol{\varphi}:\mathcal{U}\to (\bR^d)^k$ be a $C^1$ map. If for any $\mathbf{m}\in \bZ^k\setminus\{\mathbf{0}\}$,
\begin{align}\label{genericity condition when flow presented}
    |\{\mathbf{s}\in\mathcal{U}:\boldsymbol{\varphi}(\mathbf{s})\cdot \mathbf{m}\in \bR \mathbf{f}(\mathbf{s})+\mathbb{Z}^d\}|=0,
\end{align}
then for Lebesgue a.e. $\mathbf{s}\in \mathcal{U}$, $\mathbf{R}_{\mathbf{f}(\mathbf{s})}(Id,\boldsymbol{\varphi}(\mathbf{s}))\Gamma$ is Birkhoff generic with respect to $(X,\mu_X,D(e^{-l}))$.
\end{corollary}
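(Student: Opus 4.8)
The plan is to derive Corollary \ref{corollary adapted form for genericity of universal hitting time} from Theorem \ref{theorem adapted form of genericity} by choosing the auxiliary map $\mathbf{E}_1$ to be exactly $\mathbf{s}\mapsto \mathbf{R}_{\mathbf{f}(\mathbf{s})}$. First I would check that this $\mathbf{E}_1$ is a legitimate input to Theorem \ref{theorem adapted form of genericity}: it maps $\mathcal{U}$ into $SO_d(\bR)$ by construction, and it is smooth away from the preimage $\mathbf{f}^{-1}(\mathbf{v}_0)$ of the single singular point $\mathbf{v}_0$ of the section $\mathbf{v}\mapsto \mathbf{R}_{\mathbf{v}}$. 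Since $\mathbf{f}$ is regular, the push-forward of Lebesgue measure under $\mathbf{f}$ is absolutely continuous with respect to the Haar measure on $\mathbf{S}_1^{d-1}$, and the Haar measure of the single point $\{\mathbf{v}_0\}$ is zero; hence $|\mathbf{f}^{-1}(\mathbf{v}_0)|=0$, so after deleting this null set we may assume $\mathbf{E}_1=\mathbf{R}_{\mathbf{f}(\cdot)}$ is smooth on $\mathcal{U}$.

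Next I would verify the nonsingular-differential hypothesis of Theorem \ref{theorem adapted form of genericity}, namely that $\mathbf{s}\mapsto \mathbf{E}_1(\mathbf{s})^{-1}\cdot \mathbf{e}_1$ has nonsingular differential at a.e. $\mathbf{s}\in\mathcal{U}$. By the defining property (\ref{property of the map R}), $\mathbf{R}_{\mathbf{f}(\mathbf{s})}^{-1}\mathbf{e}_1 = \mathbf{f}(\mathbf{s})$, so this map is simply $\mathbf{f}$ itself. Regularity of $\mathbf{f}$ forces its Jacobian to be nonsingular at a.e.\ point: if the differential of $\mathbf{f}$ were singular on a positive-measure set $A$, then by Sard-type reasoning (or directly, since a $C^1$ map with rank $\le d-2$ on $A$ carries $A$ into a Haar-null subset of $\mathbf{S}_1^{d-1}$) the push-forward $\mathbf{f}_*(\mathrm{Leb}|_A)$ would fail to be absolutely continuous, contradicting Definition \ref{definition regular f}. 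Hence the hypothesis on $\mathbf{E}_1$ in Theorem \ref{theorem adapted form of genericity} holds.

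Finally I would match the Diophantine conditions. Theorem \ref{theorem adapted form of genericity} requires, for every $\mathbf{m}\in\bZ^k\setminus\{\mathbf{0}\}$, that $|\{\mathbf{s}\in\mathcal{U}:\boldsymbol{\varphi}(\mathbf{s})\cdot\mathbf{m}\in\bR\,\mathbf{E}_1(\mathbf{s})^{-1}\mathbf{e}_1+\mathbb{Z}^d\}|=0$. Substituting $\mathbf{E}_1(\mathbf{s})^{-1}\mathbf{e}_1=\mathbf{R}_{\mathbf{f}(\mathbf{s})}^{-1}\mathbf{e}_1=\mathbf{f}(\mathbf{s})$, this is precisely the hypothesis (\ref{genericity condition when flow presented}) of the corollary. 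Therefore Theorem \ref{theorem adapted form of genericity} applies and yields that for Lebesgue a.e.\ $\mathbf{s}\in\mathcal{U}$, the point $\mathbf{E}_1(\mathbf{s})(Id,\boldsymbol{\varphi}(\mathbf{s}))\Gamma=\mathbf{R}_{\mathbf{f}(\mathbf{s})}(Id,\boldsymbol{\varphi}(\mathbf{s}))\Gamma$ is Birkhoff generic with respect to $(X,\mu_X,D(e^{-l}))$, which is the assertion of the corollary.

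The only genuinely non-formal point, and the step I expect to require the most care, is the deduction that a regular smooth map $\mathbf{f}$ has a.e.\ nonsingular differential: one must argue cleanly that the bad set where $d\mathbf{f}$ drops rank is Lebesgue-null, using that a $C^1$ image of a set on which the rank is deficient is contained in a countable union of lower-dimensional $C^1$ submanifolds of $\mathbf{S}_1^{d-1}$, each of which is Haar-null, so absolute continuity of $\mathbf{f}_*(\mathrm{Leb})$ forces that bad set to be null. Everything else is bookkeeping: translating the section property (\ref{property of the map R}) into the identifications $\mathbf{E}_1^{-1}\mathbf{e}_1=\mathbf{f}$ and checking that the smoothness defect of $\mathbf{R}_{(\cdot)}$ at $\mathbf{v}_0$ is invisible after removing a null set.
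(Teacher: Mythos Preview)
Your proposal is correct and follows essentially the same route as the paper: set $\mathbf{E}_1(\mathbf{s})=\mathbf{R}_{\mathbf{f}(\mathbf{s})}$, use the section identity $\mathbf{E}_1(\mathbf{s})^{-1}\mathbf{e}_1=\mathbf{f}(\mathbf{s})$, argue via Sard's theorem combined with the regularity of $\mathbf{f}$ that the differential of $\mathbf{f}$ is nonsingular Lebesgue a.e., and then apply Theorem \ref{theorem adapted form of genericity}. The paper's proof is terser (it does not spell out the removal of $\mathbf{f}^{-1}(\mathbf{v}_0)$ or the contrapositive form of the Sard argument), but the logical content is identical.
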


\begin{proof}
By assumption, $\mathbf{f}$ is a regular smooth map from $\mathcal{U}$ to $\mathbf{S}_1^{d-1}$. By Sard's theorem, the set of critical points of $\mathbf{f}$ has Lebesgue measure $0$.
For any $\mathbf{s}_0\in \mathcal{U}$ which is not a critical point of $\mathbf{f}$, there exists an open neighborhood $\mathcal{V}$ of $\mathbf{s}_0$ such that $\mathbf{f}$ is a diffeomorphism of $\mathcal{V}$ to some open subset of $\mathbf{S}_1^{d-1}$.

Therefore, the map $\mathbf{s}\mapsto \mathbf{R}_{\mathbf{f}(\mathbf{s})}^{-1}\cdot \mathbf{e}_1=\mathbf{f}(\mathbf{s})$ has nonsingular differentials for all $\mathbf{s}\in \mathcal{V}$. Then we apply Theorem \ref{theorem adapted form of genericity} to $\mathbf{E}_1(\mathbf{s})=\mathbf{R}_{\mathbf{f}(\mathbf{s})}$ and the corollary follows.
\end{proof}

\begin{definition}\label{theta generic}
The $k$-tuple of smooth functions $\boldsymbol{\phi}_1,\cdots \boldsymbol{\phi}_k:\mathcal{U}\to \mathbb{T}^d$ is $\boldsymbol{\theta}$-generic if for any $\mathbf{m}=(m_1,\cdots,m_k)\in \bZ^k\setminus\{\mathbf{0}\}$, we have 
\begin{align*}
    |\{\mathbf{s}\in \mathcal{U}: \sum_{j=1}^k m_j(\boldsymbol{\phi}_j(\mathbf{s})-\boldsymbol{\theta}(\mathbf{s}))\in \bR \mathbf{f}(\mathbf{s})+\mathbb{Z}^d\}|=0.
\end{align*}
\end{definition}

To state our theorem precisely, we need some preparations. Our notations follows from \cite[Section 6]{Marklof_Universal_hitting_time_2017}. Given $N\in \mathbb{N}$, denote $\overline{N}=\{1,\cdots,N\}$. For $j\in \{1,\cdots,k\}$ and $\mathbf{s}\in \mathcal{U}$, define $\mathfrak{R}_j(\mathbf{s})=\mathbf{R}_{\mathbf{f}(\mathbf{s})}\mathbf{R}^{-1}_{\mathbf{u}_j(\mathbf{s})}$.
Let $\Tilde{\mathfrak{R}}_{j}(\mathbf{s})$ be the matrix of the linear transformation
\begin{align}\label{definition of tilde R}
    \mathbf{x}\mapsto (\mathfrak{R}_j(\mathbf{s})\begin{bmatrix}
    0\\
    \mathbf{x}
\end{bmatrix})_{\perp}\in\bR^{d-1},
\end{align}
where $\mathbf{u}_{\perp}=(u_2,\cdots,u_d)^{tr}\in \bR^{d-1}$ for $\mathbf{u}=(u_1,\cdots,u_d)^{tr}\in \bR^d$.

For any $\mathbf{s}\in \mathcal{U}$, we define
\begin{align}\label{definition of tilde Omega}
    \Tilde{\Omega}_j(\mathbf{s}):=\overline{\sigma}(\mathbf{s})^{\frac{1}{d-1}}\Tilde{\mathfrak{R}}_j(\mathbf{s})\Omega_j(\mathbf{s})\subset \bR^{d-1}.
\end{align}
Let $G_1=SL_d(\bR)\ltimes \bR^d$. For $g=(g\prm,(\boldsymbol{\xi}_1,\cdots,\boldsymbol{\xi}_k))\in G$ and $j\in \{1,\cdots,k\}$, write $g^{[j]}=(g\prm,\boldsymbol{\xi}_j)\in G_1$. {Our main theorem of this section is the following.}
\begin{theorem}\label{theorem on universal hitting time}
Let $\mathcal{U}$ be a bounded open subset of $\bR^{d-1}$. For $1\leq j\leq k$, let $\mathbf{f},\boldsymbol{\theta},\mathbf{u}_j,\boldsymbol{\phi}_j$ be given as in the beginning of this section. Let $\Omega_j$ be a bounded open subset of $\bR^{d-1}\times \mathcal{U}$. For each $j=1,\cdots, k$, assume that

(1) $|\mathbf{u}_j^{-1}(\{\mathbf{v}_0\})|=0$,

(2) $\mathbf{u}_j(\mathbf{s})\cdot \mathbf{f}(\mathbf{s})>0$ for all $\mathbf{s}\in \mathcal{U}$,

(3) for a.e. $\mathbf{s}\in \mathcal{U}$, the boundary $\partial \Omega_j(\mathbf{s})$ has Lebesgue measure $0$.

(4) $|\Omega_j(\mathbf{s})|$ is a smooth positive function of $\mathbf{s}\in \mathcal{U}$.

Also assume that $\mathbf{f}$ is regular and $(\boldsymbol{\phi}_1,\cdots,\boldsymbol{\phi}_k)$ is $\boldsymbol{\theta}$-generic. Then for any $N\in \mathbb{N}$, any $T_n>0$ for $n\in \overline{N}$, the following holds: For a.e. $\mathbf{s}\in \mathcal{U}$, 
\begin{align*}
    &\lim_{L\to \infty}\frac{1}{L} |\{l\in [0,L]: \frac{t_n(\mathbf{s},\mathcal{D}_l)}{e^{(d-1)l}\cdot\overline{\sigma}(\mathbf{s})}\leq T_n,\forall n\in \overline{N}\}|\\
   &= \mu_X(\{g\Gamma\in X:
    \sum_{j=1}^k\#\left\{\begin{bmatrix}
        t\\
        \mathbf{x}
    \end{bmatrix}\in g^{[j]}\bZ^d:0<t<T_n,\mathbf{x}\in -\Tilde{\Omega}_j(\mathbf{s})\right\}\geq n,\\
    &\forall n\in \overline{N}\}).
\end{align*}
\end{theorem}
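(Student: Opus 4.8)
The plan is to reduce the statement about hitting time statistics to the Birkhoff genericity established in Corollary \ref{corollary adapted form for genericity of universal hitting time}, following the scheme of \cite[Section 6]{Marklof_Universal_hitting_time_2017}. First I would perform the standard ``lattice unfolding'' of the hitting-time condition: for a fixed $\mathbf{s}\in\mathcal{U}$ with rationally independent direction $\mathbf{f}(\mathbf{s})$, the condition that $\varphi^t(\mathbf{s})$ lands in the $j$-th shrunk target ball of level $l$ translates, after applying $\mathbf{R}_{\mathbf{f}(\mathbf{s})}$ and rescaling time by $e^{(d-1)l}\overline{\sigma}(\mathbf{s})$, into a counting problem for the affine lattice $D(e^{-l})\,\mathbf{R}_{\mathbf{f}(\mathbf{s})}(Id,\boldsymbol{\varphi}(\mathbf{s}))\mathbb{Z}^d$ intersected with a cylinder of the form $(0,T_n)\times(-\widetilde{\Omega}_j(\mathbf{s}))$, where $\boldsymbol{\varphi}(\mathbf{s})$ encodes the offsets $\boldsymbol{\phi}_j(\mathbf{s})-\boldsymbol{\theta}(\mathbf{s})$ in its $k$ components and the matrices $\widetilde{\mathfrak R}_j(\mathbf{s})$, $\overline\sigma(\mathbf{s})$ are exactly those defined in (\ref{definition of tilde R})--(\ref{definition of tilde Omega}). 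This is a direct computation: one writes out $\varphi^t(\mathbf{s})\in\mathcal D_l(\mathbf u_j,\boldsymbol\phi_j,\Omega_j)$ as an equation in $\mathbb T^d$, lifts to $\mathbb R^d$ introducing an integer vector, acts by $\mathbf R_{\mathbf f(\mathbf s)}$ so that $\mathbf f(\mathbf s)\mapsto\mathbf e_1$, and conjugates the diagonal rescaling to $D(e^{-l})$. The point is that $\#\{t_n(\mathbf s,\mathcal D_l)/(e^{(d-1)l}\overline\sigma(\mathbf s))\le T_n\}\ge n$ becomes precisely the event appearing on the right-hand side, evaluated at the homogeneous space point $g_l\Gamma := D(e^{-l})\mathbf R_{\mathbf f(\mathbf s)}(Id,\boldsymbol\varphi(\mathbf s))\Gamma$.

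Next I would introduce the counting function on $X$: define $F_n:X\to\mathbb Z_{\ge0}\cup\{\infty\}$ by
\begin{align*}
    F_n(g\Gamma)=\sum_{j=1}^k\#\left\{\begin{bmatrix}t\\ \mathbf x\end{bmatrix}\in g^{[j]}\mathbb Z^d: 0<t<T_n,\ \mathbf x\in -\widetilde\Omega_j(\mathbf s)\right\},
\end{align*}
and let $\chi(g\Gamma)=\mathbf 1[F_n(g\Gamma)\ge n\ \forall n\in\overline N]$. Then the quantity whose limit we want is $\frac1L\int_0^L \chi(g_l\Gamma)\,dl$. The goal is to apply Birkhoff genericity of $l\mapsto g_l\Gamma$ under the flow $D(e^{-l})$, which is exactly Corollary \ref{corollary adapted form for genericity of universal hitting time} with $\mathbf f$ regular and the $\boldsymbol\theta$-genericity of $(\boldsymbol\phi_1,\dots,\boldsymbol\phi_k)$ guaranteeing condition (\ref{genericity condition when flow presented}) for the $\boldsymbol\varphi$ built from $\boldsymbol\phi_j-\boldsymbol\theta$. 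Hence for a.e. $\mathbf s\in\mathcal U$ the orbit $\{g_l\Gamma\}_{l\ge0}$ equidistributes with respect to $\mu_X$, and $\frac1L\int_0^L f(g_l\Gamma)\,dl\to\int_X f\,d\mu_X$ for every $f\in C_c(X)$.

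The main obstacle is that $\chi$ is \emph{not} continuous and \emph{not} compactly supported: $F_n$ jumps when a lattice point crosses the boundary of the cylinder, and the relevant sets are unbounded in $X$ (the counting function blows up in the cusp). I would handle this in two steps. First, non-compactness: the target cylinders $(0,T_n)\times(-\widetilde\Omega_j(\mathbf s))$ are bounded, and by reduction theory (Siegel domains / the fact, used already via Proposition \ref{contraction hypothesis for Margulis' height function}, that high values of the Margulis height force many short lattice vectors) one shows $\int_X F_n\,d\mu_X<\infty$ and that the contribution of $\{$height $\ge R\}$ is $O(R^{-c})$ uniformly, so one may truncate $\chi$ to a compact set with small error — here one also uses the Siegel transform / Rogers-type integral formula to control $\int_X F_n\,d\mu_X$ and $\int_X F_n^2\,d\mu_X$. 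Second, discontinuity: for a.e. $\mathbf s$ the boundaries $\partial\Omega_j(\mathbf s)$ are Lebesgue-null (hypothesis (3)) and $|\Omega_j(\mathbf s)|$ varies smoothly (hypothesis (4)), so the set of $g\Gamma$ at which $\chi$ is discontinuous is contained in $\{g\Gamma: g^{[j]}\mathbb Z^d$ meets $\partial((0,T_n)\times(-\widetilde\Omega_j(\mathbf s)))\}$, which has $\mu_X$-measure zero (no affine lattice puts positive mass on a measure-zero set, again by a Siegel-transform computation). Sandwiching $\chi$ between continuous compactly supported functions agreeing with it off this null set, applying the equidistribution from Corollary \ref{corollary adapted form for genericity of universal hitting time} to the sandwiching functions, and letting the approximation shrink yields the claimed limit. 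A minor additional point is that the exceptional $\mathbf s$-set must be taken independent of $(N,T_1,\dots,T_N)$: since it suffices to prove the statement for all rational $T_n$ (monotonicity in each $T_n$ then gives all reals at continuity points, which is a full-measure set of $\mathbf T=(T_1,\dots,T_N)$ for each fixed $\mathbf s$) and countably many $N$, a single null set works by a countable union.
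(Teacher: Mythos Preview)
Your proposal is correct and follows essentially the same route as the paper: reduce the hitting-time event to membership of the orbit point $D(e^{-l})\mathbf{R}_{\mathbf{f}(\mathbf{s})}(Id,\tilde{\boldsymbol{\varphi}}(\mathbf{s}))\Gamma$ in the set $B(\mathbf{s})=B[(0),(T_n)](\mathbf{s})$ via the lattice unfolding of \cite[Section~8]{Marklof_Universal_hitting_time_2017}, then invoke Corollary~\ref{corollary adapted form for genericity of universal hitting time} for Birkhoff genericity and pass from $C_c(X)$ to the indicator $\chi_{B(\mathbf{s})}$.

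Two remarks on economy. First, your ``non-compactness'' step is superfluous: you never need to integrate $F_n$ or $F_n^2$, only the \emph{bounded} indicator $\chi=\mathbf 1[F_n\ge n\ \forall n]$. Since Birkhoff genericity already gives weak-$*$ convergence to the \emph{probability} measure $\mu_X$ (no escape of mass), the Portmanteau theorem applies once you know $\mu_X(\partial B(\mathbf{s}))=0$; this is exactly Lemma~\ref{lemma boundary has measure 0} (i.e.\ \cite[Lemma~17]{Marklof_Universal_hitting_time_2017}), and it replaces your sandwiching-plus-Siegel-transform argument in one stroke. Second, the theorem as stated has the quantifier order ``for all $N,(T_n)$, for a.e.\ $\mathbf{s}$'', so the exceptional set is allowed to depend on the parameters and your final countability reduction is not needed (it comes for free anyway, since the genericity null set from Corollary~\ref{corollary adapted form for genericity of universal hitting time} is independent of the test function).
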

{Note that in \cite[Theorem 2]{Marklof_Universal_hitting_time_2017}, the authors proved that for each $n\in \mathbb{N}$, there is some random variable $\tau_n$ in $\mathbb{R}_{>0}$ such that the $n$-th normalized hitting time $t_n(\cdot,\mathcal{D}_l)/(e^{(d-1)l}\cdot\overline{\sigma}(\cdot))$ converges to $\tau_n$ in distribution as $l\to \infty$. Unlike \cite{Marklof_Universal_hitting_time_2017}, in Theorem \ref{theorem on universal hitting time} we are interested in the question that given a \textbf{fixed} initial $\mathbf{s}$, when the target is shrinking, how often the $n$-th normalized hitting time is bounded by some given constant.}

For any $j\in \{1,\cdots,k\}$, any real numbers $Y<Z$, following \cite[Eq. (8.9)]{Marklof_Universal_hitting_time_2017}, we define 
\begin{align*}
    \Tilde{\mathbf{A}}_{j,Y,Z}=\{(\begin{bmatrix}
        t\\
        -\Tilde{\mathfrak{R}}_j(\mathbf{s})\mathbf{x}
    \end{bmatrix}
    ,\mathbf{s}): (\mathbf{x},\mathbf{s})\in \Omega_j, \overline{\sigma}(\mathbf{s})Y< t \leq \overline{\sigma}(\mathbf{s})Z\}.
\end{align*}
Given any real numbers $Y_n<Z_n$ for $n\in \overline{N}$, following \cite[Eq.(8.10)]{Marklof_Universal_hitting_time_2017}, we define 
\begin{align*}
    &B[(Y_n),(Z_n)]\nonumber
    =\{(g\Gamma,\mathbf{s})\in G/\Gamma\times \mathcal{U}:\sum_{j=1}^k \#(\Tilde{A}_{j,Y_n,Z_n}(\mathbf{s})\cap g^{[j]}\cdot \bZ^d)\geq n, \forall n\in \overline{N}\}.
\end{align*}
where 
\begin{align*}
   \Tilde{A}_{j,Y_n,Z_n}(\mathbf{s})= \{\mathbf{x}\in \bR^d: (\mathbf{x},\mathbf{s})\in \Tilde{A}_{j,Y_n,Z_n}\}.
\end{align*}
For any $\mathbf{s}\in \mathcal{U}$, denote
\begin{align*}
    B[(Y_n),(Z_n)](\mathbf{s}):=\{g\Gamma\in G/\Gamma:(g\Gamma,\mathbf{s})\in B[(Y_n),(Z_n)]\}.
\end{align*}

\begin{lemma}\cite[Lemma 17]{Marklof_Universal_hitting_time_2017}\label{lemma intersection nonempty}\label{lemma boundary has measure 0}
For every $\mathbf{s}\in \mathcal{U}$, and $B=B[(Y_n),(Z_n)]$, $\mu_X(\partial B(\mathbf{s}))=0$.
\end{lemma}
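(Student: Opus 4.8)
The plan is to reduce the statement to a countable union of sets, each of which is the boundary of a region defined by lattice points lying in a fixed bounded body with null boundary, and then to invoke the fact that for the Haar measure $\mu_X$ on $X=G/\Gamma$ the set of lattices (with marked vectors) having a point on the boundary of such a body is null. First I would fix $\mathbf{s}\in\mathcal{U}$ and unwind the definition of $B(\mathbf{s})=B[(Y_n),(Z_n)](\mathbf{s})$: it is the set of $g\Gamma$ such that for every $n\in\overline N$ the total count $\sum_{j=1}^k \#\bigl(\tilde A_{j,Y_n,Z_n}(\mathbf{s})\cap g^{[j]}\bZ^d\bigr)$ is at least $n$. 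Each $\tilde A_{j,Y_n,Z_n}(\mathbf{s})$ is, by construction, the preimage under an invertible affine map of the bounded set $\{\,[t,\mathbf{x}]^{tr}: (\mathbf{x},\mathbf{s})\in\Omega_j,\ \overline\sigma(\mathbf{s})Y_n<t\le\overline\sigma(\mathbf{s})Z_n\,\}$, whose boundary has Lebesgue measure zero in $\bR^d$ by hypothesis (3) on $\Omega_j$ (the boundary of $\Omega_j(\mathbf{s})$ is null, and the constraint on $t$ contributes only two hyperplane pieces). Call these bodies $\mathcal{C}_{j,n}(\mathbf{s})$, so $\partial\mathcal{C}_{j,n}(\mathbf{s})$ is Lebesgue-null and $\mathcal{C}_{j,n}(\mathbf{s})$ is bounded.

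Next I would argue that the counting functions $N_{j,n}(g):=\#(\mathcal{C}_{j,n}(\mathbf{s})\cap g^{[j]}\bZ^d)$ are locally constant on the complement of the set $S_{j,n}:=\{g\Gamma: g^{[j]}\bZ^d\cap\partial\mathcal{C}_{j,n}(\mathbf{s})\ne\emptyset\}$. Indeed, since $\mathcal{C}_{j,n}(\mathbf{s})$ is bounded, only finitely many lattice points of $g^{[j]}\bZ^d$ can lie in a neighborhood of $\overline{\mathcal{C}_{j,n}(\mathbf{s})}$, and as long as none of them sits exactly on $\partial\mathcal{C}_{j,n}(\mathbf{s})$, a small perturbation of $g$ does not change which of them lie inside; hence $N_{j,n}$ is locally constant off $S_{j,n}$. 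Consequently the function $g\Gamma\mapsto\bigl(\sum_{j}N_{j,n}(g)\bigr)_{n\in\overline N}$ is locally constant off $\bigcup_{j,n}S_{j,n}$, so $B(\mathbf{s})$ differs from an open-and-closed (relatively) set by a subset of $\bigcup_{j=1}^k\bigcup_{n\in\overline N}S_{j,n}$; in particular $\partial B(\mathbf{s})\subset\bigcup_{j,n}S_{j,n}$.

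It then remains to show $\mu_X(S_{j,n})=0$ for each fixed $j,n$. Writing $\pi_j\colon G/\Gamma\to G_1/\Gamma_1$ for the projection onto the $j$-th marked-vector factor (with $G_1=SL_d(\bR)\ltimes\bR^d$), the set $S_{j,n}$ is the $\pi_j$-preimage of $\{\Lambda: \Lambda\cap\partial\mathcal{C}_{j,n}(\mathbf{s})\ne\emptyset\}$, an affine-lattice analogue of the statement that for Haar-random unimodular lattices in $\bR^d$ the probability of having a point on a fixed Lebesgue-null boundary set is zero. This last fact follows from a Siegel/Rogers mean-value argument: the expected number of points of a Haar-random affine lattice in a bounded measurable set $A\subset\bR^d$ equals $\mathrm{vol}(A)$ (for $\Gamma_1$; more precisely one uses that the point process is non-atomic in the sense that $\mathbb{E}\,\#(\Lambda\cap A)=\mathrm{vol}(A)$), so taking $A=\partial\mathcal{C}_{j,n}(\mathbf{s})$ with $\mathrm{vol}(A)=0$ forces $\#(\Lambda\cap\partial\mathcal{C}_{j,n}(\mathbf{s}))=0$ for $\mu$-a.e.\ $\Lambda$, i.e.\ $\mu_X(S_{j,n})=0$. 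Summing the finitely many $j$ and the finitely many $n$ gives $\mu_X\bigl(\bigcup_{j,n}S_{j,n}\bigr)=0$ and hence $\mu_X(\partial B(\mathbf{s}))=0$.

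The main obstacle I anticipate is the last step: one must be careful that the relevant mean-value identity holds in the affine-lattice space $G_1/\Gamma_1=(SL_d(\bR)\ltimes\bR^d)/(SL_d(\bZ)\ltimes\bZ^d)$ rather than only for linear lattices, and that the body $\mathcal{C}_{j,n}(\mathbf{s})$ really does have a Lebesgue-null boundary after all the affine changes of coordinates $\tilde{\mathfrak R}_j(\mathbf{s})$ and the rescaling by $\overline\sigma(\mathbf{s})^{1/(d-1)}$ have been applied — this is where hypotheses (2), (3), (4) and the continuity/positivity of $\overline\sigma$ are used. Once those technical points are in place, the local-constancy argument and the mean-value bound combine routinely, and since this is precisely \cite[Lemma 17]{Marklof_Universal_hitting_time_2017} one may alternatively cite that reference directly after checking that our $\Omega_j$, $\mathbf{u}_j$, $\boldsymbol{\phi}_j$ satisfy the hypotheses imposed there.
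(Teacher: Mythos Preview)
Your proposal is correct and follows essentially the same approach as the paper's proof: reduce to showing each body $\tilde A_{j,Y_n,Z_n}(\mathbf{s})$ has Lebesgue-null boundary (which you verify exactly as the paper does, via assumption~(3) on $\partial\Omega_j(\mathbf{s})$ plus the two hyperplane faces in the $t$-coordinate), and then conclude via a local-constancy plus Siegel mean-value argument. The only difference is presentational: the paper outsources the second half of the argument by citing \cite[Lemmas~14 and~16]{Marklof_Universal_hitting_time_2017}, whereas you spell out directly that $\partial B(\mathbf{s})\subset\bigcup_{j,n}S_{j,n}$ and that $\mu_X(S_{j,n})=0$ by the affine Siegel formula --- which is exactly what those cited lemmas provide.
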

\begin{proof}
By \cite[Lemma 14,16]{Marklof_Universal_hitting_time_2017}, it suffices to prove that for every $j\in \{1,\cdots,k\}$ and $n\in \{1,\cdots, N\}$, $\partial \Tilde{A}_{j,Y_n,Z_n}(\mathbf{s})$ has Lebesgue measure zero. Now since for any $Y<Z$, we have 
\begin{align*}
    \partial \Tilde{A}_{j,Y,Z}(\mathbf{s})&=\{\begin{bmatrix}
        t\\
        -\Tilde{\mathfrak{R}}_j(\mathbf{s})x
    \end{bmatrix}: \mathbf{x}\in \partial \Omega_j(\mathbf{s}), \overline{\sigma}(\mathbf{s})Y<t\leq \overline{\sigma}(\mathbf{s})Z\}\\
    &\bigcup \{\begin{bmatrix}
        t\\
        -\Tilde{\mathfrak{R}}_j(\mathbf{s})x
    \end{bmatrix}: \mathbf{x}\in \overline{\Omega_j(\mathbf{s})},t\in \{\overline{\sigma}(\mathbf{s})Y,\overline{\sigma}(\mathbf{s})Z\}\}.
\end{align*}
By assumption, for a.e. $\mathbf{s}\in \mathcal{U}$, $|\partial \Omega_j(\mathbf{s})|=0$, the lemma follows.
\end{proof}

We are now ready to prove Theorem \ref{theorem on universal hitting time}.

\begin{proof}[Proof of Theorem \ref{theorem on universal hitting time}]
{The proof of Theorem \ref{theorem on universal hitting time} is almost the same as the proof of \cite[Theorem 2]{Marklof_Universal_hitting_time_2017}, except that here we use the equidistribution result for the average along $a_t$ trajectory, while in \cite{Marklof_Universal_hitting_time_2017}, the equidistribution of $a_t$ translation of the average over a bounded open subset in horospherical subgroup is used.}

Let $\Tilde{\boldsymbol{\varphi}}:\mathcal{U}\to (\bR^d)^k$ be a map given by 
\begin{align*}
    \Tilde{\boldsymbol{\varphi}}(\mathbf{s})=(\boldsymbol{\phi}_1(\mathbf{s})-\boldsymbol{\theta}(\mathbf{s}),\cdots,\boldsymbol{\phi}_k(\mathbf{s})-\boldsymbol{\theta}(\mathbf{s})).
\end{align*}
Since $(\boldsymbol{\phi}_1,\cdots,\boldsymbol{\phi}_k)$ is $\boldsymbol{\theta}$-generic, $\mathbf{f}$ is regular, assumption (\ref{genericity condition when flow presented}) in Corollary \ref{corollary adapted form for genericity of universal hitting time} are satisfied for the maps $\Tilde{\boldsymbol{\varphi}}$ and $\mathbf{f}$, thus Corollary \ref{corollary adapted form for genericity of universal hitting time} applies.

Let $B=B[(Y_n),(Z_n)]$ for $Y_n,Z_n\in \bR$ and $n\in \overline{N}$.
Since by Lemma \ref{lemma boundary has measure 0}, $\mu_X(\partial B(\mathbf{s}))=0$, for all $\mathbf{s}\in \mathcal{U}$, we have for a.e. $\mathbf{s}\in \mathcal{U}$,
\begin{align*}
    \lim_{L\to \infty}\frac{1}{L}\int_0^{L}\chi_{B(\mathbf{s})}(D(e^{-l})\mathbf{R}_{\mathbf{f}(\mathbf{s})}(Id,\Tilde{\varphi}(\mathbf{s})))dl=\mu_X(B(\mathbf{s})).
\end{align*}
Then the rest of the proof follows from the proof of \cite[Theorem 2]{Marklof_Universal_hitting_time_2017}.
\end{proof}

\noindent$\textbf{Acknowledgment.}$ I would like to thank Yitwah Cheung,  Nimish Shah, Ronggang Shi and Barak Weiss for helpful discussions and comments. I thank Jim Cogdell for pointing out some inaccuracies in the draft. I also thank anonymous referee for pointing out many inaccuracies of the paper, as well as providing a lot of helpful suggestions to improve this article.

\end{document}